\newtheorem*{theorem*}{Theorem}
\newtheorem{lemma}{Lemma}[subsection]
\newtheorem{proposition}[lemma]{Proposition}
\newtheorem{remark}[lemma]{Remark}
\newtheorem{example}[lemma]{Example}
\newtheorem{theorem}[lemma]{Theorem}
\newtheorem{definition}[lemma]{Definition}
\newtheorem{notation}[lemma]{Notation}
\newtheorem{property}[lemma]{Property}
\newtheorem{corollary}[lemma]{Corollary}
\newtheorem{conjecture}{Conjecture}
\newtheorem*{conjecture*}{Conjecture}
\sloppy \theoremstyle{plain}
\newcommand{\tr}{\operatorname{Tr}}
\newcommand{\ad}{\operatorname{ad}}
\newcommand{\Hom}{\operatorname{Hom}}
\newcommand{\cc}{\mathbb{C}}
\newcommand{\eps}{\varepsilon}
\newcommand{\re}{\operatorname{Re}}
\renewcommand{\Im}{\operatorname{Im}}
\newcommand{\Z}{{\mathbb Z}}
\newcommand{\R}{{\mathbb R}}
\newcommand{\C}{{\mathbb C}}
\newcommand{\E}{{\mathcal E}}
\newcommand{\Aut}{{\operatorname{Aut}}}
\newcommand{\End}{\operatorname{End}}
\newcommand{\G}{{\mathcal G}}
\newcommand{\Fre}{{Fr\'{e}chet \,}}
\newcommand{\et}{{\'{e}tale }}
\newcommand{\Fou}{{\mathcal{F}}}
\newcommand{\cD}{{\mathcal{D}}}
\newcommand{\g}{{\mathfrak{g}}}
\newcommand{\h}{{\mathfrak{h}}}
\newcommand{\z}{{\mathfrak{z}}}
\newcommand{\Supp}{\mathrm{Supp}}
\newcommand{\gd}{\g^{\sigma}}
\newcommand{\oF}{{\overline{F}}}
\newcommand{\cO}{{\mathcal{O}}}
\newcommand{\GL}{\operatorname{GL}}
\newcommand{\SL}{\operatorname{SL}}
\newcommand{\gl}{{\mathfrak{gl}}}
\newcommand{\sll}{{\mathfrak{sl}}}
\newcommand{\HC}{\operatorname{HC}}
\newcommand{\Fr}{\operatorname{Fr}}
\newcommand{\Mp}{\operatorname{Mp}}
\newcommand{\Sym}{\operatorname{Sym}}
\newcommand{\Spec}{\operatorname{Spec}}
\newcommand{\Sc}{{\mathcal S}}
\newcommand{\Lie}{\operatorname{Lie}}
\newcommand{\Ad}{\operatorname{Ad}}
\begin{document}

\author{Avraham Aizenbud}
\address{Avraham Aizenbud and Dmitry Gourevitch, Faculty of Mathematics
and Computer Science, The Weizmann Institute of Science POB 26,
Rehovot 76100, ISRAEL.} \email{aizenr@yahoo.com}
\author{Dmitry Gourevitch} \email{guredim@yahoo.com}

\title[Generalized Harish-Chandra descent]{Generalized Harish-Chandra descent, Gelfand pairs and an Archimedean analog of Jacquet-Rallis' Theorem}
\keywords{Multiplicity one, Gelfand pairs, symmetric pairs, Luna
Slice Theorem, invariant distributions, Harish-Chandra descent, uniqueness of linear periods. \\
\indent MSC Classes: 20C99, 20G05, 22E45, 22E50, 46F10, 14L24,
14L30}
%
%
%
%
%
%
%
%
%
%
\maketitle
$\quad \quad \quad$ with Appendix \ref{SecRedLocPrin} by Avraham
Aizenbud, Dmitry Gourevitch and Eitan Sayag

\begin{abstract}
In the first part of the paper we generalize a descent technique
due to Harish-Chandra to the case of a reductive group acting on a
smooth affine variety both defined over an arbitrary local field
$F$ of characteristic zero. Our main tool is the Luna Slice
Theorem.

In the second part of the paper we apply this technique to
symmetric pairs. In particular we prove that the pairs
$(\GL_{n+k}(F),\GL_n(F) \times \GL_k(F))$ and
$(\GL_n(E),\GL_n(F))$ are Gelfand pairs for any local field $F$
and its quadratic extension $E$. In the non-Archimedean case, the
first result was proven earlier  by Jacquet and Rallis and the
second by Flicker.

We also prove that any conjugation invariant distribution on
$\GL_n(F)$ is invariant with respect to transposition. For
non-Archimedean $F$ the latter is a classical theorem of Gelfand
and Kazhdan.
\end{abstract}

\setcounter{tocdepth}{2}
 \tableofcontents

\section{Introduction}
Harish-Chandra developed a technique based on Jordan decomposition
that allows to reduce certain statements on conjugation invariant
distributions on a reductive group to the set of unipotent
elements, provided that the statement is known for certain
subgroups (see e.g. \cite{HCh}).

In this paper we generalize an aspect of this technique to the
setting of a reductive group acting on a smooth affine algebraic
variety, using the Luna Slice Theorem. Our technique is oriented
towards proving Gelfand property for pairs of reductive groups.

Our approach is uniform for all local fields of characteristic
zero -- both Archimedean and non-Archimedean.

\subsection{Main results}$ $\\
The core of this paper is Theorem \ref{Gen_HC}:
\begin{theorem*}
Let a reductive group $G$ act on a smooth affine variety $X$, both
defined over a local field $F$ of characteristic zero. Let $\chi$
be a character of $G(F)$.

Suppose that for any $x \in X(F)$ with closed orbit there are no
non-zero distributions on the normal space at $x$ to the orbit
$G(F)x$ which are $(G(F)_x,\chi)$-equivariant, where $G_x$ denotes
the stabilizer of $x$.

Then there are no non-zero $(G(F),\chi)$-equivariant distributions
on $X(F)$.
\end{theorem*}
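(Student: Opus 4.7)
The plan is to prove the contrapositive: I assume there exists a non-zero $(G(F),\chi)$-equivariant distribution $\xi$ on $X(F)$, and aim to produce a point $x \in X(F)$ with closed $G(F)$-orbit together with a non-zero $(G(F)_x,\chi)$-equivariant distribution on the normal space $N_x$ to $G(F)x$ at $x$.

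First I would locate a closed orbit inside $\Supp(\xi)$. Since $\Supp(\xi)$ is a closed $G(F)$-invariant subset and every orbit closure for a reductive action on an affine variety contains a closed orbit (a standard fact from geometric invariant theory), there exists $x \in \Supp(\xi)$ with $G(F)x$ closed. A localization principle for equivariant distributions -- Bernstein's localization in the $p$-adic case and its Nash/Schwartz analog in the Archimedean case -- allows me to replace $X(F)$ by an arbitrarily small $G(F)$-invariant neighborhood of $G(F)x$ without losing non-vanishing.

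Next I would apply the Luna Slice Theorem at $x$ to obtain a $G(F)_x$-stable analytic (or étale) slice $S$ through $x$ such that the natural map $G \times^{G_x} S \to X$ is $G$-equivariant and étale onto a $G(F)$-invariant open neighborhood of $G(F)x$. The standard descent of equivariant distributions along the homogeneous fibre bundle $G \times^{G_x} S \to G/G_x$ then converts $\xi$ into a non-zero $(G(F)_x, \chi|_{G(F)_x})$-equivariant distribution $\eta$ on $S$ with $x \in \Supp(\eta)$. Finally, the slice $S$ is $G(F)_x$-equivariantly isomorphic, near $x$, to a neighborhood of $0$ in $N_x$ (on which $G(F)_x$ acts linearly), and a dilation argument using the scalar action $t \cdot \id$, $t > 0$, which commutes with $G(F)_x$, promotes the resulting non-zero $(G(F)_x,\chi)$-equivariant distribution on a neighborhood of $0 \in N_x$ to one on all of $N_x$, contradicting the hypothesis.

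The main obstacle is executing the last two passages uniformly over both Archimedean and non-Archimedean $F$. In the non-Archimedean case both the bundle descent and the dilation-to-global argument are essentially formal. In the Archimedean case one must work within the Nash/Schwartz category: the descent of distributions along the principal $G(F)_x$-bundle $G \times^{G_x} S \to G/G_x$ has to preserve the appropriate function spaces, and the passage from a neighborhood of $0 \in N_x$ to all of $N_x$ must be carried out by a weak-limit procedure on dilates rather than by naive extension. The existence of a closed orbit in $\Supp(\xi)$ and the Luna Slice Theorem in its analytic form are comparatively standard tools.
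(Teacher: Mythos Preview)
Your overall architecture matches the paper's: find a closed orbit, apply the analytic Luna slice, and use Frobenius descent along the homogeneous fibration to pass from $(G(F),\chi)$-equivariant distributions on $X(F)$ to $(G(F)_x,\chi)$-equivariant distributions on the slice. The difficulty you flag at the end is indeed the crux, but your proposed resolution of it is a genuine gap.

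The problem is the ``dilation argument.'' Having a non-zero $(G(F)_x,\chi)$-equivariant distribution $\eta$ on an open neighborhood $V$ of $0$ in $N_x$ does not, by itself, produce one on all of $N_x$. The dilates $(\rho_t)^*\eta$ live on the expanding sets $t^{-1}V$, but they are not compatible under restriction (the restriction of $(\rho_{t_1})^*\eta$ to $t_2^{-1}V$ is not $(\rho_{t_2})^*\eta$), so they do not glue, and a weak-limit procedure has no reason to converge or to be non-zero. Nor can you cut off and extend by zero: there is in general no $G(F)_x$-invariant compactly supported function on $N_x$ that is $1$ near $0$ (already for $G_x=\GL_n$ acting by conjugation on $\gl_n$ there is no invariant compact neighborhood of $0$). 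So the passage from $V$ to $N_x$ fails as written, in both the Archimedean and non-Archimedean cases.

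The paper replaces the dilation step by a different mechanism. The analytic Luna slice is arranged so that the image $\psi(S)\subset N_x(F)$ is \emph{saturated}: it is the preimage of an open set under the quotient map $N_x(F)\to (N_x/G_x)(F)$. Given $\eta\neq 0$ on a saturated open $U$, pick $y\in\Supp(\eta)$, embed $N_x/G_x$ affinely in some $F^n$, and choose $g\in C_c^\infty(F^n)$ with $g(\pi(y))=1$; then $(g\circ\pi)\cdot\eta$ is still $(G(F)_x,\chi)$-equivariant (because $g\circ\pi$ is $G(F)_x$-invariant), is non-zero, and has support contained in $U$, hence extends by zero to a non-zero equivariant distribution on all of $N_x(F)$. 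That is the lemma you need, and saturatedness is what makes the invariant cutoff available. Once you insert this in place of the dilation argument, your contrapositive proof goes through and coincides with the paper's.
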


In fact, a stronger version based on this theorem is given in
Corollary \ref{Strong_HC_Cor}. This stronger version is based on
an inductive argument. It shows that it is enough to prove that
there are no non-zero equivariant distributions on the normal
space to the orbit $G(F)x$ at $x$ under the assumption that all
such distributions are supported in a certain closed subset which
is the analog of the nilpotent cone.

We apply this stronger version to problems of the following type.
Let a reductive group $G$ act on a smooth affine variety $X$, and
$\tau$ be an involution of $X$ which normalizes the image of $G$
in $\Aut(X)$. We want to check whether any $G(F)$-invariant
distribution on $X(F)$ is also $\tau$-invariant. Evidently, there
is the following
necessary condition on $\tau$:\\
(*) Any closed orbit in $X(F)$ is $\tau$-invariant.\\
In some cases this condition is also sufficient. In these cases we
call the action of $G$ on $X$ \emph{tame}.

This is a weakening of the property called \emph{"density"} in
\cite{RR}. However, it is sufficient for the purpose of proving
Gelfand property for pairs of reductive groups.

In \S \ref{SecTame} we give criteria for tameness of actions. In
particular, we introduce the notion of \emph{"special"} action in
order to show that certain actions are tame (see Theorem
\ref{Invol_HC} and Proposition \ref{SpecWeakReg}). Also, in many
cases one can verify that an action is special using purely
algebraic-geometric means.

In the second part of the paper we restrict our attention to the
case of symmetric pairs. We transfer the terminology on actions to
terminology on symmetric pairs. For example, we call a symmetric
pair $(G,H)$ \emph{tame} if the action of $H \times H$ on $G$ is
tame.

In addition we introduce the notion of a \emph{"regular"}
symmetric pair (see Definition \ref{DefReg}), which also helps to
prove Gelfand property. Namely, we prove Theorem
\ref{GoodHerRegGK}.

\begin{theorem*}
Let $G$ be a reductive group defined over a local field $F$ and
let $\theta$ be an involution of $G$. Let $H:=G^{\theta}$ and let
$\sigma$ be the anti-involution defined by
$\sigma(g):=\theta(g^{-1})$. Consider the symmetric pair $(G,H)$.

Suppose that all its \emph{"descendants"} (including itself, see
Definition \ref{descendant}) are regular. Suppose also that any
closed $H(F)$-double coset in $G(F)$ is $\sigma$-invariant.

 Then every bi-$H(F)$-invariant distribution on $G(F)$ is
$\sigma$-invariant. In particular, by Gelfand-Kazhdan criterion,
the pair $(G,H)$ is a Gelfand pair (see \S \ref{Gel}).
\end{theorem*}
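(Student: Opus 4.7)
The plan is to apply the stronger descent principle of Corollary \ref{Strong_HC_Cor} to the action of $H(F)\times H(F)$ on $G(F)$ by $(h_1,h_2)\cdot g=h_1 g h_2^{-1}$, combined with the anti-involution $\sigma$. Concretely, I would form the enlarged acting group $\widetilde H=(H\times H)\rtimes\langle\sigma\rangle$ (with $\sigma$ swapping the two $H$-factors via $\theta$), take the character $\chi$ that is trivial on $H\times H$ and equals $-1$ on $\sigma$, and aim to show that there are no non-zero $(\widetilde H(F),\chi)$-equivariant distributions on $G(F)$. This is equivalent to the desired $\sigma$-invariance of bi-$H(F)$-invariant distributions.

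The closed $\widetilde H(F)$-orbits on $G(F)$ are exactly the closed $H(F)$-double cosets (each together with its $\sigma$-image), so the hypothesis that every closed double coset is $\sigma$-invariant is precisely the necessary condition needed to set up the descent. Next I would induct on $\dim G$. At a semisimple $x\in G(F)$ whose double coset is closed, the Luna Slice Theorem produces an analytic slice, and transporting $\theta$ to the slice identifies the normal-space data with the descendant symmetric pair $(G_x,H_x)$ of Definition \ref{descendant}. When $(G_x,H_x)$ is a proper descendant, its own descendants lie among those of $(G,H)$ and are therefore regular by hypothesis, so the inductive hypothesis applies and rules out non-zero $\chi$-equivariant distributions on the slice.

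Corollary \ref{Strong_HC_Cor} then reduces the remaining task to the vanishing of $\chi$-equivariant distributions supported on the analog of the nilpotent cone in the slice at those $x$ whose descendant is $(G,H)$ itself. This is exactly the situation that the notion of a \emph{regular} symmetric pair from Definition \ref{DefReg} is designed to handle: regularity of $(G,H)$, which is part of the hypothesis since $(G,H)$ is among its own descendants, is formulated precisely so as to force vanishing of such nilpotently supported, $\sigma$-anti-invariant, bi-$H$-invariant distributions. Once these vanish, the Gelfand-pair conclusion follows from the Gelfand--Kazhdan criterion recalled in \S \ref{Gel}.

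The main obstacle I expect is this final step, namely confirming that the paper's definition of regularity genuinely delivers the required vanishing on the nilpotent cone of the slice; that is where the real content sits. By contrast, the descent portion is essentially formal once Corollary \ref{Strong_HC_Cor} is in hand, the matching of Luna-slice data with descendant symmetric pairs is mostly bookkeeping using the $\theta$-equivariance of the slice construction, and the induction on $\dim G$ terminates because every proper descendant has strictly smaller dimension.
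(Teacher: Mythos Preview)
Your setup with $\widetilde{H\times H}=(H\times H)\rtimes\langle\sigma\rangle$ and the sign character $\chi$, together with the appeal to Corollary~\ref{Strong_HC_Cor}, is exactly right and matches the paper. But there is a genuine gap after that.

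The induction on $\dim G$ does not deliver what you need. Your inductive hypothesis would yield that a proper descendant $(G_x,H_x)$ is a GK-pair, which is a statement about bi-$H_x(F)$-invariant distributions on the \emph{group} $G_x(F)$. What Corollary~\ref{Strong_HC_Cor} requires at a semisimple $g$ (with $x=s(g)$) is the implication
\[
\cD\bigl(R((\g_x)^\sigma)\bigr)^{(\widetilde{H\times H})_g(F),\chi}=0\ \Longrightarrow\ \cD\bigl(Q((\g_x)^\sigma)\bigr)^{(\widetilde{H\times H})_g(F),\chi}=0,
\]
which lives on the \emph{linear} slice $(\g_x)^\sigma$, for the action of the stabilizer together with its extra involution $\tau$. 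The group-level GK property of the descendant does not yield this. Moreover, since regularity of every descendant is already among the hypotheses, the induction would add nothing even if the passage from group to slice were valid.

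The paper proceeds differently: it dispenses with any induction on $\dim G$ and verifies the hypothesis of Corollary~\ref{Strong_HC_Cor} at \emph{every} semisimple $g$ directly from the regularity of the descendant $(G_x,H_x)$ at that particular $g$ --- not only from the regularity of $(G,H)$ itself. The link you call ``mostly bookkeeping'' is in fact the technical heart of the argument: one must exhibit an element $\tau\in(\widetilde{H\times H})_g(F)\setminus(H\times H)_g(F)$ and show that its action on the normal space $(\g_x)^\sigma$ coincides with $\Ad(g')$ for some $g'\in G_x(F)$ which is \emph{admissible} in the sense entering Definition~\ref{DefReg} (so that $\Ad(g')$ commutes with $\theta|_{G_x}$). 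Only once this identification is made does regularity of $(G_x,H_x)$ convert the hypothesis on $R$ into the conclusion on $Q$. The paper carries this out by first replacing $g$ within its double coset by a \emph{normal} element (one that commutes with $\sigma(g)$) and then taking $g'=\theta(g)$; this step, rather than the descent machinery, is where the $\sigma$-invariance of closed double cosets is actually used.
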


Also, we formulate an algebraic-geometric criterion for regularity
of a pair (Proposition \ref{SpecCrit}).  We sum up the various
properties of symmetric pairs and their interrelations in a
diagram in Appendix \ref{Diag}.

As an application and illustration of our methods we prove in \S
\ref{RJR} that the pair $(\GL_{n+k},\GL_n \times \GL_k)$ is a
Gelfand pair by proving that it is regular, along with its
descendants. In the non-Archimedean case this was proven in
\cite{JR} and our proof is along the same lines. Our technique
enabled us to streamline some of the computations in the proof of
\cite{JR} and to extend it to the Archimedean case.

 We also prove (in \S \ref{Sec2RegPairs}) that the
pair $(G(E),G(F))$ is tame for any reductive group $G$ over $F$
and a quadratic field extension $E/F$. This implies that the pair
$(\GL_n(E),\GL_n(F))$ is a Gelfand pair. In the non-Archimedean
case this was proven in \cite{Fli}. Also we prove that the adjoint
action of a reductive group on itself is tame. This is a
generalization of a classical theorem by Gelfand and Kazhdan, see
\cite{GK}.

In general, we conjecture that any symmetric pair is regular. This
would imply the van Dijk conjecture:

\begin{conjecture*}[van Dijk]
Any symmetric pair $(G,H)$ over $\C$ such that $G/H$ is connected
is a Gelfand pair.
\end{conjecture*}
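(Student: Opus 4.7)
The plan is to deduce the van Dijk conjecture from the immediately preceding conjecture---that every symmetric pair is regular---by invoking Theorem \ref{GoodHerRegGK}. For a symmetric pair $(G,H)$ over $\C$ with $G/H$ connected, that theorem requires verifying two hypotheses: (i) every descendant of $(G,H)$, including itself, is regular, and (ii) every closed $H(\C)$-double coset in $G(\C)$ is $\sigma$-invariant, where $\sigma(g) := \theta(g^{-1})$.

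Hypothesis (i) is immediate from the regularity conjecture, since by Definition \ref{descendant} the descendants of a symmetric pair over $\C$ are again symmetric pairs over $\C$.

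For hypothesis (ii), the key observation is that $\sigma$ acts trivially on the symmetric subvariety $S := \{x \in G : \theta(x) = x^{-1}\}$: for $s \in S$,
\[
\sigma(s) = \theta(s^{-1}) = \theta(s)^{-1} = (s^{-1})^{-1} = s.
\]
It therefore suffices to show that every closed $H(\C)$-double coset in $G(\C)$ meets $S(\C)$. Using the $H$-equivariant map $\pi : G \to S$, $\pi(g) := g\theta(g)^{-1}$, closed $H \times H$-orbits on $G$ correspond to closed (i.e.\ semisimple) $H$-conjugation orbits on $\pi(G)$, and the connectedness of $G/H$ forces $\pi(G)$ to equal the identity component $S^{\circ}$. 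A standard structural argument---e.g.\ choosing a $\theta$-split torus containing a given semisimple $s \in S^{\circ}$ and using surjectivity of squaring on $(\C^\times)^r$---produces a square root $t \in S$ with $\pi(t) = t^2 = s$, so $t$ lies in both $S$ and the double coset corresponding to $s$, completing the verification.

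The sole substantive obstacle is the regularity conjecture itself; the implication recorded above is formal once the framework of the paper and the standard structure theory of complex symmetric pairs are in hand. Given that framework, a plausible route to the general conjecture would be to verify the algebraic-geometric criterion of Proposition \ref{SpecCrit} case-by-case, in the spirit of the treatment of $(\GL_{n+k},\GL_n \times \GL_k)$ and $(G(E),G(F))$ carried out in \S \ref{RJR} and \S \ref{Sec2RegPairs}. A uniform proof of regularity across all types appears to demand a structural understanding of nilpotent orbits in arbitrary complex symmetric spaces that goes beyond what is developed here.
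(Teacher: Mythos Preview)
Your reduction of the van Dijk conjecture to the regularity conjecture via Theorem \ref{GoodHerRegGK} is exactly the chain of implications the paper records in \S\S\ref{conj}; neither you nor the paper proves the conjecture outright, only that it would follow from Conjecture \ref{ConjAllReg}.

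Where you diverge is in verifying hypothesis (ii), goodness over $\C$. The paper dispatches this in one line as Corollary \ref{ComplexGood}: by Corollary \ref{ClosedOrbits} every closed $H\times H$-orbit $\Delta \subset G$ satisfies $\sigma(\Delta)=\Delta$ as an algebraic variety, and since $\C$ is algebraically closed $H^1(\C,(H\times H)_g)$ vanishes for every $g$, so Corollary \ref{GoodCrit} gives $\sigma(g)\in H(\C)gH(\C)$ directly. Your route---showing that every closed double coset meets $G^{\sigma}$ by extracting a square root of $s(g)$ inside a $\theta$-split torus---can be made to work over $\C$, but it is more laborious and carries a gap as written: the claim that an arbitrary semisimple $s\in G^{\sigma}(\C)$ lies in a $\theta$-split torus is not automatic (it is related to, but distinct from, the Lie-algebra statement that semisimple elements of $\g^{\sigma}$ lie in a Cartan subspace) and would need its own argument. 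The cohomological shortcut already in the paper is both shorter and avoids this issue entirely.
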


\subsection{Related work}
$ $\\This paper was inspired by the paper \cite{JR} by Jacquet and
Rallis where they prove that the pair $(\GL_{n+k}(F),\GL_n(F)
\times \GL_k(F))$ is a Gelfand pair for any non-Archimedean local
field $F$ of characteristic zero. Our aim was to see to what
extent their techniques generalize.

Another generalization of Harish-Chandra descent using the Luna
Slice Theorem has been carried out in the non-Archimedean case in
\cite{RR}. In that paper Rader and Rallis investigated spherical
characters of $H$-distinguished representations of $G$ for
symmetric pairs $(G,H)$ and checked the validity of what they call
the \emph{"density principle"} for rank one symmetric pairs. They
found out that the principle usually holds, but also found
counterexamples.

In \cite{vD}, van-Dijk investigated rank one symmetric pairs in
the Archimedean case and classified the Gelfand pairs among them.
In \cite{Bos-vD}, van-Dijk and Bosman studied the non-Archimedean
case and obtained results for most rank one symmetric pairs. We
hope that the second part of our paper will enhance the
understanding of this question for symmetric pairs of higher rank.

\subsection{Structure of the paper} $ $\\
In \S \ref{Prel} we introduce notation and terminology which
allows us to speak uniformly about spaces of points of smooth
algebraic varieties over Archimedean and non-Archimedean local
fields, and equivariant distributions on those spaces.

In \S\S \ref{PrelLoc} we formulate a version of the Luna Slice
Theorem for points over local fields (Theorem \ref{LocLuna}). In
\S\S \ref{PrelDist} we formulate results on equivariant
distributions and equivariant Schwartz distributions. Most of
those results are borrowed from \cite{BZ}, \cite{Ber}, \cite{Bar}
and \cite{AGS1}, and the rest are proven in Appendix
\ref{AppSubFrob}.

In \S \ref{SecDescent} we formulate and prove the Generalized
Harish-Chandra Descent Theorem and its stronger version.

\S \ref{DistVerSch} is of interest only in the Archimedean case.
In that section we prove that in the cases at hand if there are no
equivariant Schwartz distributions then there are no equivariant
distributions at all. Schwartz distributions are discussed in
Appendix \ref{AppSubFrob}.

In \S \ref{SecFour} we formulate a homogeneity Theorem which helps
us to check the conditions of the Generalized Harish-Chandra
Descent Theorem. In the non-Archimedean case this theorem had been
proved earlier (see e.g. \cite{JR}, \cite{RS2} or \cite{AGRS}). We
provide the proof for the Archimedean case in Appendix
\ref{AppRealHom}.

In \S \ref{SecTame} we introduce the notion of tame actions and
provide tameness criteria.

In \S \ref{SecSymPairs} we apply our tools to symmetric pairs. In
\S\S \ref{SecTamePairs} we provide criteria for tameness of a
symmetric pair. In \S\S \ref{SecRegPairs} we introduce the notion
of a regular symmetric pair and prove Theorem \ref{GoodHerRegGK}
alluded to above. In \S\S \ref{conj} we discuss conjectures about
the regularity and the Gelfand property of symmetric pairs. In
\S\S \ref{Sec2RegPairs} we prove that certain symmetric pairs are
tame.
 In \S\S
\ref{RJR} we prove that the pair  $(\GL_{n+k}(F),\GL_n(F) \times
\GL_k(F))$ is regular.

In \S \ref{Gel} we recall basic facts on Gelfand pairs and their
connections to invariant distributions. We also prove that the
pairs $(\GL_{n+k}(F),\GL_n(F) \times \GL_k(F))$ and
$(\GL_n(E),\GL_n(F))$ are Gelfand pairs for any local field $F$
and its quadratic extension $E$.

We start Appendix \ref{AppLocField} by discussing different
versions of the Inverse Function Theorem for local fields. Then we
prove a version of the Luna Slice Theorem for points over local
fields (Theorem \ref{LocLuna}). For Archimedean $F$ this  was done
by Luna himself in \cite{Lun2}.

Appendices \ref{AppSubFrob} and \ref{AppRealHom} are of interest
only in the Archimedean case.

In Appendix \ref{AppSubFrob} we discuss Schwartz distributions on
Nash manifolds. We prove  Frobenius reciprocity for them and
construct the pullback of a Schwartz distribution under a Nash
submersion. Also we prove that $G$-invariant distributions which
are (Nashly) compactly supported modulo $G$ are Schwartz
distributions.

In Appendix \ref{AppRealHom} we prove the Archimedean version of
the Homogeneity Theorem discussed in \S \ref{SecFour}.

In Appendix \ref{SecRedLocPrin} we formulate and prove a version
of Bernstein's Localization Principle (Theorem \ref{LocPrin}).
This appendix is of interest only for Archimedean $F$ since for
$l$-spaces a more general version of this principle had been
proven in \cite{Ber}.  This appendix is used in \S
\ref{DistVerSch}.

In \cite{AGS2} we formulated Localization Principle in the setting
of differential geometry. Admittedly, we currently do not have a
proof of this principle in such a general setting. However, in
Appendix \ref{SecRedLocPrin} we present a proof in the case of a
reductive group $G$ acting on a smooth affine variety $X$. This
generality is sufficiently wide for all applications we
encountered up to now, including the one considered in
\cite{AGS2}.

Finally, in Appendix \ref{Diag} we present a diagram that
illustrates the interrelations of various properties of symmetric
pairs.

\subsection{Acknowledgements}
We would like to thank our teacher \textbf{Joseph Bernstein} for
our mathematical education.

We also thank \textbf{Vladimir Berkovich}, \textbf{Joseph
Bernstein}, \textbf{Gerrit van Dijk}, \textbf{Stephen Gelbart},
\textbf{Maria Gorelik}, \textbf{Herve Jacquet}, \textbf{David
Kazhdan}, \textbf{Erez Lapid}, \textbf{Shifra Reif}, \textbf{Eitan
Sayag}, \textbf{David Soudry}, \textbf{Yakov Varshavsky} and
\textbf{Oksana Yakimova} for fruitful discussions, and \textbf{Sun
Binyong}, \textbf{Gerard Schiffmann}, and the referees for useful remarks.

Finally we thank \textbf{Anna Gourevitch} for the graphical design
of Appendix \ref{Diag}.

Both authors are partially supported by BSF grant, GIF grant, and
ISF Center of excellency grant.
\part{Generalized Harish-Chadra descent}
\section{Preliminaries and notation} \label{Prel}
\subsection{Conventions} \label{Conv}

\begin{itemize}
\item
Henceforth we fix a local field $F$ of characteristic zero. All
the algebraic varieties and algebraic groups that we will consider
will be defined over $F$.
\item For a group $G$ acting on a set $X$ we denote by $X^G$ the set of fixed points of $X$. Also, for an element $x \in X$
we denote by $G_x$ the stabilizer of $x$.
\item By a reductive group we mean a (non-necessarily connected) algebraic reductive group.
\item We consider an algebraic variety $X$ defined over $F$ as an algebraic
variety over $\oF$ together with action of the Galois group
$Gal(\oF /F)$. On $X$ we only consider  the Zariski topology. On
$X(F)$ we only consider  the analytic (Hausdorff) topology. We
treat finite-dimensional linear spaces defined over $F$ as
algebraic varieties.
\item The tangent space of a manifold (algebraic, analytic, etc.) $X$ at $x$
will be denoted by $T_xX$.
\item Usually we will use the letters $X, Y, Z, \Delta$ to denote algebraic
varieties and the letters $G, H$ to denote reductive groups. We
will usually use the letters $V, W,U,K,M,N,C,O,S,T$ to denote
analytic spaces (such as $F$-points of algebraic varieties) and
the letter $K$ to denote analytic groups. Also we will use the
letters $L, V, W$ to denote vector spaces of all kinds.
\end{itemize}
\subsection{Categorical quotient} \label{CatQuot}
\setcounter{lemma}{0}
\begin{definition}
Let an algebraic group $G$ act on an algebraic variety $X$. A pair
consisting of an algebraic variety $Y$ and a $G$-invariant
morphism $\pi:X \to Y$ is called \textbf{the quotient of $X$ by
the action of $G$} if for any pair $(\pi ', Y')$, there exists a
unique morphism $\phi : Y \to Y'$ such that $\pi ' = \phi \circ
\pi$. Clearly, if such pair exists it is unique up to a canonical
isomorphism. We will denote it by $(\pi _X, X/G)  $.
\end{definition}

\begin{theorem}[cf. \cite{Dre}] \label{Quotient}
Let a reductive group $G$ act on an affine variety $X$. Then the
quotient $X/G$ exists, and every fiber of the quotient map $\pi_X$
contains a unique closed orbit. In fact, $X/G:=\Spec\cO(X)^G.$
\end{theorem}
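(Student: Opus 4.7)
The plan is to set $Y := \Spec \cO(X)^G$ with $\pi_X : X \to Y$ the morphism induced by the inclusion $\cO(X)^G \hookrightarrow \cO(X)$, and then to verify in turn three things: (a) $Y$ is a genuine affine variety of finite type over $F$; (b) the pair $(\pi_X, Y)$ enjoys the claimed universal property; (c) every non-empty fiber of $\pi_X$ contains a unique closed $G$-orbit.

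For (a), the crucial input is that in characteristic zero any reductive $G$ is linearly reductive, and therefore each rational $G$-module $V$ admits a canonical $G$-equivariant projection (the Reynolds operator) $R_V : V \to V^G$. Embedding $X$ as a closed $G$-invariant subvariety of some finite-dimensional $G$-representation and running Hilbert's classical argument with $R$ in place of Weyl's unitary averaging then yields that $\cO(X)^G$ is a finitely generated $F$-algebra, so $Y = \Spec \cO(X)^G$ is an ordinary affine variety. Statement (b) is formal: any $G$-invariant morphism $\pi' : X \to Y'$ of affine varieties corresponds to a ring homomorphism $\cO(Y') \to \cO(X)$ whose image lies in $\cO(X)^G$, so it factors uniquely through $\cO(Y)$; the case of non-affine $Y'$ reduces to this via an affine open cover of $Y'$ together with the observation that $G$-invariant preimages of affine opens are unions of fibers of $\pi_X$.

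For (c) the decisive ingredient is the following separation property: if $Z_1, Z_2 \subset X$ are two disjoint Zariski-closed $G$-invariant subsets, there exists $f \in \cO(X)^G$ with $f|_{Z_1} = 0$ and $f|_{Z_2} = 1$. One proves this by using the Nullstellensatz to find some $\tilde f \in \cO(X)$ separating $Z_1$ from $Z_2$, extending $\tilde f$ to a finite-dimensional $G$-stable subspace of $\cO(X)$ (possible because $G$ acts locally finitely on $\cO(X)$), and then applying $R$ to obtain an invariant $f$ which still separates the $G$-stable sets $Z_i$. Applied to two distinct closed orbits this yields $\pi_X(O_1) \neq \pi_X(O_2)$, giving uniqueness; existence of a closed orbit in each non-empty fiber follows because any orbit of minimal dimension inside a closed $G$-invariant set is automatically closed.

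The genuine obstacle is part (a) -- finite generation of invariants, a case of Hilbert's fourteenth problem; everything else is formal once the Reynolds operator is at hand. Since the statement is attributed to \cite{Dre}, in practice one simply cites that reference, but the sketch above is the standard proof and applies uniformly in the geometric setup of the paper.
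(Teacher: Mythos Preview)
The paper does not prove this theorem; it simply records it with the attribution ``cf.\ \cite{Dre}'' and uses it as background. You correctly anticipate this at the end of your proposal, and the sketch you give is the standard argument one finds in \cite{Dre} or any treatment of geometric invariant theory in characteristic zero: linear reductivity gives the Reynolds operator, Hilbert's argument gives finite generation, and the separation of disjoint closed $G$-invariant subsets by invariants handles both the universal property and the fiber structure. So your proposal is correct and matches what the paper is implicitly invoking.

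One small remark on your reduction in (b) for non-affine $Y'$: the claim that $(\pi')^{-1}(V_i)$ is a union of fibers of $\pi_X$ is true, but it is not immediate---it relies on (c) (any two points in a fiber of $\pi_X$ have orbit closures meeting the same unique closed orbit, so $\pi'$ is constant on fibers) together with the fact that $\pi_X$ sends closed $G$-invariant subsets to closed subsets (so that the resulting $U_i \subset Y$ are open). Since you establish (c) independently, the logic is sound; just be aware that the order of dependence is (a), then (c), then the non-affine case of (b).
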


\subsection{Algebraic geometry over local fields}
\label{PrelLoc}

\subsubsection{Analytic manifolds}
$ $\\
In this paper we  consider distributions over $l$-spaces, smooth
manifolds and Nash manifolds. $l$-spaces are locally compact
totally disconnected topological spaces and Nash manifolds are
semi-algebraic smooth manifolds.

For basic facts on $l$-spaces and distributions over them we refer
the reader to \cite[\S 1]{BZ}.

For basic facts on Nash manifolds and Schwartz functions and
distributions over them see Appendix \ref{AppSubFrob} and
\cite{AG1}. In this paper we  consider only separated Nash
manifolds.

We  now introduce notation and terminology which  allows a uniform
treatment of the Archimedean and the non-Archimedean cases.

We will use the notion of an analytic manifold over a local field
(see e.g. \cite[Part II, Chapter III]{Ser}). When we say
"\textbf{analytic manifold}" we always mean analytic manifold over
some local field. Note that an analytic manifold over a
non-Archimedean field is in particular an $l$-space and an
analytic manifold over an Archimedean field is in particular a
smooth manifold.

\begin{definition}
A \textbf{B-analytic manifold} is either an analytic manifold over
a non-Archimedean local field, or a Nash manifold.
\end{definition}

\begin{remark} If $X$ is a smooth algebraic variety,
then $X(F)$ is a B-analytic manifold and $(T_xX)(F) = T_x(X(F)).$
\end{remark}

\begin{notation}
Let $M$ be an analytic manifold and $S$ be an analytic
submanifold. We denote by $N_S^M:=(T_M|_Y)/T_S $ the
\textbf{normal bundle to $S$ in $M$}. The \textbf{conormal bundle}
is defined by $CN_S^M:=(N_S^M)^*$. Denote by $\Sym^k(CN_S^M)$ the
k-th symmetric power of the conormal bundle. For a point $y\in S$
we denote by $N_{S,y}^M$ the normal space to $S$ in $M$ at the
point $y$ and by $CN_{S,y}^M$ the conormal space.
\end{notation}

\subsubsection{$G$-orbits on $X$ and $G(F)$-orbits on $X(F)$} 

\begin{lemma}[see  Appendix \ref{AppSub}] \label{OrbitIsOpen}
Let $G$ be an algebraic group and let $H \subset G$ be a closed
subgroup. Then $G(F)/H(F)$ is open and closed in $(G/H)(F)$.
\end{lemma}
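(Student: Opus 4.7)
The plan is to exploit smoothness of the quotient morphism combined with an inverse function theorem for local fields (the version already referenced in the paper for the Luna Slice Theorem, see Appendix \ref{AppLocField}).

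First, I would recall that for an algebraic group $G$ and a closed subgroup $H$, the geometric quotient $G/H$ exists as a smooth algebraic variety, and the canonical morphism $\pi\colon G\to G/H$ is smooth and surjective (this is standard and in fact is a principal $H$-bundle locally in the étale topology). In particular, $\pi$ is a submersion at every point. The key analytic input is that a smooth morphism of $F$-varieties induces, on $F$-points, a map which is locally at every point in its image a projection in suitable analytic coordinates; hence it is an open map $G(F)\to (G/H)(F)$. This part is a direct consequence of the implicit function theorem over $F$, which holds uniformly in the Archimedean and non-Archimedean settings.

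Applying this to $\pi$ shows that the image of $\pi\colon G(F)\to (G/H)(F)$ is open, and this image is exactly the orbit $G(F)\cdot eH = G(F)/H(F)$. So openness is immediate. For closedness, I would note that $(G/H)(F)$ carries a continuous action of $G(F)$, and its partition into $G(F)$-orbits is a partition into open sets: indeed, for any $x\in (G/H)(F)$ the stabilizer $G_x$ is a closed $F$-subgroup and the orbit map $G\to G/H$, $g\mapsto g\cdot x$, is again smooth (it differs from $\pi$ by a translation in the target), so by the same argument its image, namely the $G(F)$-orbit of $x$, is open in $(G/H)(F)$. Since $(G/H)(F)$ is the disjoint union of these open orbits, each orbit is also closed. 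Applying this to the distinguished orbit $G(F)/H(F)$ gives the lemma.

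The only non-routine point is the statement that smoothness of $\pi$ passes to openness of the induced map on $F$-points; in the non-Archimedean case one needs the $p$-adic inverse function theorem and in the Archimedean case the real analytic one. Both ingredients are discussed in Appendix \ref{AppLocField} of the paper, so I would simply cite them rather than reprove them. Everything else is a purely formal consequence of the fact that $(G/H)(F)$ is a topological disjoint union of $G(F)$-orbits, each of which is open for the reason above.
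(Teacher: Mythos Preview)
Your proposal is correct and follows essentially the same route as the paper: the paper's proof observes that the map $G(F)\to (G/H)(F)$ is a submersion, hence open (by the Proposition immediately preceding the corollary in Appendix \ref{AppSub}), so $G(F)/H(F)$ is open, and then notes that every $G(F)$-orbit in $(G/H)(F)$ is open for the same reason, whence each is also closed. Your argument spells out the translation step for other orbits a bit more explicitly, but the structure is identical.
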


\begin{corollary}
Let an algebraic group $G$ act on an algebraic variety $X$. Let $x
\in X(F)$. Then $$N_{Gx,x}^{X}(F) \cong N_{G(F)x,x}^{X(F)}.$$
\end{corollary}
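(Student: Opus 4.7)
The plan is to deduce this directly from Lemma \ref{OrbitIsOpen} by comparing tangent spaces. The key observation is that the orbit $Gx$, as an algebraic variety, is isomorphic to the homogeneous space $G/G_x$, so Lemma \ref{OrbitIsOpen} applied to $H = G_x$ tells us that $G(F)x \cong G(F)/G_x(F)$ sits inside $(Gx)(F) \cong (G/G_x)(F)$ as an open (and closed) analytic submanifold.

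First I would use this openness to identify tangent spaces at $x$. Since $G(F)x$ is open in $(Gx)(F)$ as an analytic submanifold of $X(F)$, the inclusion induces an equality on tangent spaces at $x$, namely $T_x(G(F)x) = T_x((Gx)(F))$. Combined with the remark preceding the statement, which gives $T_x(X(F)) = (T_xX)(F)$ and $T_x((Gx)(F)) = (T_x(Gx))(F)$, this reduces the corollary to an identification of quotient vector spaces.

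Next I would invoke the fact that taking $F$-points is exact on short exact sequences of finite-dimensional $F$-vector spaces. Applying this to
\[
0 \to T_x(Gx) \to T_xX \to N_{Gx,x}^{X} \to 0
\]
yields $N_{Gx,x}^{X}(F) = (T_xX)(F)/(T_x(Gx))(F) = T_x(X(F))/T_x(G(F)x) = N_{G(F)x,x}^{X(F)}$, which is the claimed isomorphism.

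There is no real obstacle here; the only subtlety worth flagging is to make sure the openness of $G(F)x$ inside $(Gx)(F)$ is applied as an assertion about analytic submanifolds of $X(F)$ (so that the tangent space of the orbit inside $X(F)$ agrees with the tangent space of the algebraic orbit's $F$-points), which follows immediately from Lemma \ref{OrbitIsOpen} together with the identification $Gx \cong G/G_x$.
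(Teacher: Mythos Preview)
Your proof is correct and is exactly the argument the paper has in mind: the corollary is stated without proof immediately after Lemma \ref{OrbitIsOpen} and the remark identifying $(T_xX)(F)$ with $T_x(X(F))$, and your proposal simply spells out the intended one-line deduction from these two facts.
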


\begin{proposition} \label{LocClosedOrbit}
Let an algebraic group $G$ act on an algebraic variety $X$.
Suppose that $S \subset X(F)$ is a non-empty closed
$G(F)$-invariant subset. Then $S$ contains a closed orbit.
\end{proposition}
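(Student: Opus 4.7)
The plan is to select $x_0 \in S$ whose $G$-orbit has Zariski closure of minimal dimension, and then show that $G(F)x_0$ is analytically closed in $X(F)$. Concretely, set
\[
d := \min\{\dim \overline{Gx}^{\mathrm{Zar}} : x \in S\},
\]
which is attained at some $x_0 \in S$ because $S \neq \emptyset$ and these dimensions are bounded non-negative integers; the claim is that $G(F) x_0$ is then closed in $X(F)$, producing a closed $G(F)$-orbit inside $S$.

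Suppose for contradiction there exists $y \in \overline{G(F)x_0}^{X(F)} \setminus G(F)x_0$. Since $S$ is analytically closed and $G(F)$-invariant, $y \in S$. Now $G(F)x_0 \subset Gx_0(F) \subset \overline{Gx_0}^{\mathrm{Zar}}(F)$, and the last set is analytically closed in $X(F)$ (being the $F$-points of a Zariski closed subvariety), so $y \in \overline{Gx_0}^{\mathrm{Zar}}(F)$. By Lemma \ref{OrbitIsOpen} applied with $H = G_{x_0}$, the subset $G(F)x_0 \cong G(F)/G_{x_0}(F)$ is closed inside $Gx_0(F) \cong (G/G_{x_0})(F)$ in the analytic topology. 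This forces $y \notin Gx_0(F)$, hence $y \in (\overline{Gx_0}^{\mathrm{Zar}} \setminus Gx_0)(F)$.

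To finish, I use that $G$-orbits are locally closed in $X$, so $\overline{Gx_0}^{\mathrm{Zar}} \setminus Gx_0$ is a $G$-invariant Zariski closed subvariety of strictly smaller dimension than $\overline{Gx_0}^{\mathrm{Zar}}$. Since $Gy \subset \overline{Gx_0}^{\mathrm{Zar}} \setminus Gx_0$, we get $\dim \overline{Gy}^{\mathrm{Zar}} < d$; but $y \in S$, contradicting the minimality of $d$. The only delicate point is the topological bookkeeping in switching between the Zariski and analytic topologies on $X(F)$, and in particular the use of Lemma \ref{OrbitIsOpen} to conclude that $G(F)x_0$ is closed in $Gx_0(F)$; once these points are handled, the proof is a straightforward descent on the dimension of the Zariski closure of the orbit rather than a deep obstruction.
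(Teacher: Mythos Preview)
Your proof is correct and takes essentially the same approach as the paper: both descend along the boundary $\overline{Gx}\setminus Gx$ of orbit closures and invoke Lemma \ref{OrbitIsOpen} to pass from $Gx(F)$ to $G(F)x$. The paper phrases this as Noetherian induction on $X$, while you unwind the induction into a single minimality argument on $\dim\overline{Gx}^{\mathrm{Zar}}$; these are cosmetically different framings of the same descent.
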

\begin{proof}
The proof is by Noetherian induction on $X$. Choose $x \in S$.
Consider $Z:=\overline{Gx} -Gx$.

If $Z(F)\cap S$ is empty then $Gx(F) \cap S$ is closed and hence
$G(F)x \cap S$ is closed by Lemma \ref{OrbitIsOpen}. Therefore
$G(F)x$ is closed.

If $Z(F)\cap S$ is non-empty then $Z(F) \cap S$ contains a closed
orbit by the induction assumption.
\end{proof}

\begin{corollary} \label{OpenClosedAll}
Let an algebraic group $G$ act on an algebraic variety $X$. Let
$U$ be an open $G(F)$-invariant subset of $X(F)$. Suppose that $U$
contains all closed $G(F)$-orbits. Then $U=X(F)$.
\end{corollary}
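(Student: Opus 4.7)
The plan is to deduce the corollary directly from Proposition \ref{LocClosedOrbit} by passing to the complement. Set $S := X(F) \setminus U$. Since $U$ is open and $G(F)$-invariant, $S$ is closed in $X(F)$ and $G(F)$-invariant. The goal is to show $S = \emptyset$.

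First I would argue by contradiction: assume $S$ is non-empty. Then Proposition \ref{LocClosedOrbit} applied to $S$ produces a closed $G(F)$-orbit $G(F)x \subset S$. But by hypothesis every closed $G(F)$-orbit in $X(F)$ is contained in $U$, so $G(F)x \subset U$. This contradicts $G(F)x \subset S = X(F) \setminus U$, completing the proof.

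There is essentially no obstacle here: the whole content sits in Proposition \ref{LocClosedOrbit}, which guarantees the existence of a closed orbit in any non-empty closed $G(F)$-invariant subset. The corollary is just the dual formulation of that proposition via complementation, and the short argument above is all that is required.
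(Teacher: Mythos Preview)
Your proof is correct and is exactly the argument the paper intends: the statement is labeled a corollary of Proposition \ref{LocClosedOrbit} with no separate proof given, and your complementation argument is the immediate deduction that justifies this.
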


\begin{theorem}[\cite{RR}, \S 2 fact A, pages 108-109] \label{LocZarClosed}%
Let a reductive group $G$ act on an affine variety $X$. Let $x
\in X(F)$. Then the following are equivalent:\\
(i) $G(F)x \subset X(F)$ is closed (in the analytic topology).\\
(ii) $Gx\subset X$ is closed (in the Zariski topology).
\end{theorem}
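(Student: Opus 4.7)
The plan is to prove the two implications separately, since the structure of their proofs is quite different.

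For (ii) $\Rightarrow$ (i), I would argue directly. Assume $Gx$ is Zariski closed in $X$. Since $X$ is affine, $Gx$ is then an affine subvariety of $X$, so $(Gx)(F)$ is analytically closed in $X(F)$. The orbit map identifies $Gx$ with $G/G_x$ as $F$-varieties, and Lemma \ref{OrbitIsOpen} applied to $H = G_x$ shows that $G(F)x \cong G(F)/G_x(F)$ is open and closed in $(G/G_x)(F) = (Gx)(F)$. In particular $G(F)x$ is closed in $(Gx)(F)$, and hence closed in $X(F)$.

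The substantive direction is (i) $\Rightarrow$ (ii), which I would prove by contraposition. Assume $Gx$ is not Zariski closed. Then $\overline{Gx}^{\mathrm{Zar}} \setminus Gx$ is a non-empty $G$-stable closed subset of $X$, and Theorem \ref{Quotient} (the closed orbit statement) guarantees the existence of a closed orbit $Gy$ inside $\overline{Gx}^{\mathrm{Zar}}$ distinct from $Gx$. To exhibit a failure of analytic closedness of $G(F)x$, I would invoke the rational form of the Hilbert--Mumford criterion due to Kempf and Rousseau: since $x \in X(F)$ and $\pi_X(x) \in (X/G)(F)$, there exists a one-parameter subgroup $\lambda \colon \mathbb{G}_m \to G$ defined over $F$ such that the morphism $t \mapsto \lambda(t) \cdot x$ extends to a morphism $\mathbb{A}^1 \to X$ whose value $y_0 := \lim_{t \to 0} \lambda(t) \cdot x$ lies in $X(F)$ and in the closed $G$-orbit of the fiber $\pi_X^{-1}(\pi_X(x))$. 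Since $Gx$ itself is not closed, $y_0$ must lie in a different orbit, so $y_0 \notin Gx$ and in particular $y_0 \notin G(F)x$. Now choose any sequence $t_n \in F^{\times}$ with $t_n \to 0$ in the analytic topology; the extended morphism is analytically continuous at $0$, so $\lambda(t_n) \cdot x \to y_0$ in $X(F)$, while each $\lambda(t_n) \cdot x$ belongs to $G(F)x$. This shows $G(F)x$ is not analytically closed, completing the contrapositive.

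The main obstacle is the rationality of the destabilizing one-parameter subgroup $\lambda$ over $F$. Over algebraically closed fields this is the classical Hilbert--Mumford criterion; over $\mathbb{R}$ it was established by Birkes, and over an arbitrary perfect field, hence in particular over any characteristic-zero local field, it follows from the theory of optimal destabilizing cocharacters of Kempf and Rousseau. Once this $F$-rational cocharacter is in hand, the rest of the argument is formal, relying only on Lemma \ref{OrbitIsOpen}, Theorem \ref{Quotient}, and the continuity of morphisms of algebraic varieties in the analytic topology.
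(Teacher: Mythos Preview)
The paper does not supply its own proof of this theorem; it is stated with a citation to \cite{RR} and used as a black box. Your argument is correct and is essentially the one given in \cite{RR}: the direction (ii) $\Rightarrow$ (i) is exactly the combination of ``$F$-points of a closed subvariety are analytically closed'' with Lemma \ref{OrbitIsOpen}, and for (i) $\Rightarrow$ (ii) the contrapositive via a destabilizing $F$-rational one-parameter subgroup is precisely the Kempf--Rousseau (and, over $\R$, Birkes \cite{Brk}) rationality result that \cite{RR} invokes. There is nothing to add.
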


\begin{definition}
Let a reductive group $G$ act on an affine variety $X$. We call an
element $x \in X$ \textbf{$G$-semisimple} if its orbit $Gx$ is
closed.
\end{definition}

In particular, in the case where $G$ acts on itself by
conjugation, the notion of $G$-semisimplicity coincides with the
usual one.

\begin{notation}
Let $V$ be an $F$-rational finite-dimensional representation of a
reductive group $G$. We set
$$Q_G(V):=Q(V):=(V/V^G)(F).$$ Since $G$ is reductive, there is a
canonical embedding $Q(V) \hookrightarrow V(F)$. Let $\pi : V(F)
\to (V/G)(F)$ be the natural map. We set$$\Gamma_G(V):=
\Gamma(V):= \pi^{-1}(\pi(0)).$$ Note that $\Gamma(V) \subset
Q(V)$. We also set$$R_G(V):= R(V):= Q(V) - \Gamma(V).$$
\end{notation}

\begin{notation}
Let a reductive group $G$ act on an affine variety $X$. For a
$G$-semisimple element $x \in X(F)$ we set $$S_x := \{y \in X(F)
\, | \, \overline{G(F)y} \ni x\}.$$
\end{notation}

\begin{lemma} \label{Gamma}
Let $V$ be an  $F$-rational finite-dimensional representation of a
reductive group $G$. Then $\Gamma(V) = S_0$.
\end{lemma}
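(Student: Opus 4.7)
The plan is to establish the two inclusions separately.

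For the easy inclusion $S_0 \subseteq \Gamma(V)$, I would exploit the continuity of $\pi$ in the analytic topology (it is a morphism of algebraic varieties, hence continuous on $F$-points) together with its $G(F)$-invariance: if $y \in S_0$ then $\pi(\overline{G(F)y}) \subseteq \overline{\pi(G(F)y)} = \{\pi(y)\}$, and since $0 \in \overline{G(F)y}$ this forces $\pi(0) = \pi(y)$, i.e.\ $y \in \pi^{-1}(\pi(0)) = \Gamma(V)$.

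For the substantive inclusion $\Gamma(V) \subseteq S_0$, I would take $y \in \Gamma(V)$ and set $S := \overline{G(F)y}$ (analytic closure inside $V(F)$). This is a non-empty closed $G(F)$-invariant subset of $V(F)$, so by Proposition \ref{LocClosedOrbit} it contains some closed $G(F)$-orbit $G(F)z$. Theorem \ref{LocZarClosed} upgrades this to Zariski-closedness of $Gz$ inside $V$. Continuity of $\pi$ places $z$ inside the fiber $\pi^{-1}(\pi(y)) = \pi^{-1}(\pi(0))$, and Theorem \ref{Quotient} tells us this fiber contains a unique closed $G$-orbit; since $\{0\}$ is already a closed $G$-orbit lying in that fiber, we must have $Gz = \{0\}$, so $z = 0$ and $0 \in S$, i.e.\ $y \in S_0$.

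The only genuine difficulty is the transition between the Zariski topology, in which Theorem \ref{Quotient} describes fibers and their unique closed orbits, and the analytic topology, in which $S_0$ is defined. The combination of Proposition \ref{LocClosedOrbit} (to extract a closed $G(F)$-orbit from the analytic closure) with Theorem \ref{LocZarClosed} (to promote its analytic closedness to Zariski closedness) is precisely what is needed to cross that bridge; once this translation is in place the argument is purely formal.
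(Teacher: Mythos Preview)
Your argument is correct. Both inclusions are handled cleanly, and the bridge you build between the analytic and Zariski topologies via Proposition~\ref{LocClosedOrbit} and Theorem~\ref{LocZarClosed} is exactly the right mechanism; the final step, that the unique Zariski-closed $G$-orbit in the fiber $\pi_V^{-1}(\pi_V(0))$ is $\{0\}$ (since $0$ is $G$-fixed), is unproblematic.

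By way of comparison: the paper does not actually prove this lemma. It simply cites \cite[fact~A, p.~108]{RR} for non-Archimedean $F$ and \cite[Theorem~5.2, p.~459]{Brk} for Archimedean $F$. Your proof is therefore different in spirit---you derive the statement internally from results already recorded in the paper (Theorem~\ref{Quotient}, Proposition~\ref{LocClosedOrbit}, Theorem~\ref{LocZarClosed}) rather than appealing to an outside source. This buys a more self-contained presentation, though one should note that Theorem~\ref{LocZarClosed} is itself attributed to the same passage of \cite{RR}, so in the non-Archimedean case the underlying external input is the same; in the Archimedean case your route through Theorem~\ref{LocZarClosed} replaces the direct appeal to Birkes' theorem, which is a genuine simplification at the level of exposition.
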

This lemma follows from \cite[fact A on page 108]{RR} for
non-Archimedean $F$ and \cite[Theorem 5.2 on page 459]{Brk} for
Archimedean $F$.

\begin{example}
Let a reductive group $G$ act on its Lie algebra $\g$ by the
adjoint action. Then $\Gamma(\g)$ is the set of nilpotent elements
of $\g$.
\end{example}

\begin{proposition}
Let a reductive group $G$ act on an affine variety $X$. Let $x,z
\in X(F)$ be $G$-semisimple elements which do not lie in the same
orbit of $G(F)$. Then there exist disjoint $G(F)$-invariant open
neighborhoods $U_x$ of $x$ and $U_z$ of $z$.
\end{proposition}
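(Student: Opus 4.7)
The plan is to split into two cases based on whether $\pi_X(x) = \pi_X(z)$ in the categorical quotient $(X/G)(F)$ (which exists by Theorem~\ref{Quotient}), treating the subtle second case via the Luna Slice Theorem (Theorem~\ref{LocLuna}).

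In the first case, $\pi_X(x) \neq \pi_X(z)$. Since $(X/G)(F)$ is Hausdorff as the $F$-points of an affine variety in the analytic topology, I would choose disjoint open neighborhoods $V_x \ni \pi_X(x)$ and $V_z \ni \pi_X(z)$ in $(X/G)(F)$ and set $U_x := \pi_X^{-1}(V_x) \cap X(F)$, $U_z := \pi_X^{-1}(V_z) \cap X(F)$. These are disjoint, open, and $G(F)$-invariant because $\pi_X$ is $G$-invariant and continuous.

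In the second case, $\pi_X(x) = \pi_X(z)$. Then the closed orbits $Gx$ and $Gz$ lie in the same fibre of $\pi_X$, and by the uniqueness assertion in Theorem~\ref{Quotient} they coincide as subvarieties, $Gx = Gz$. The distinct $G(F)$-orbits $G(F)x$ and $G(F)z$ are nevertheless open-closed in $(Gx)(F)$ by Lemma~\ref{OrbitIsOpen}, and each is closed in $X(F)$ by Theorem~\ref{LocZarClosed}. I would apply Theorem~\ref{LocLuna} at $x$ to obtain a $G_x(F)$-invariant analytic slice $S_x$ through $x$, so that $U_x := G(F)\cdot S_x$ is a $G(F)$-invariant analytic open neighborhood of $G(F)x$. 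Shrinking $S_x$ gives a neighborhood basis of $G(F)x$ among $G(F)$-invariant open subsets of $X(F)$; since $X(F) \setminus G(F)z$ is itself such a neighborhood, I may arrange $U_x \cap G(F)z = \emptyset$, and in particular $z \notin U_x$.

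Now apply Theorem~\ref{LocLuna} at $z$ to obtain an analytic slice $S_z$. Because $z \notin U_x$ and $U_x$ is open, after shrinking $S_z$ I may further assume $S_z \cap U_x = \emptyset$. Set $U_z := G(F)\cdot S_z$; using the $G(F)$-invariance of $U_x$ one then computes
\[
U_x \cap U_z \;=\; U_x \cap G(F)\cdot S_z \;=\; G(F)\cdot (U_x \cap S_z) \;=\; \emptyset,
\]
so $U_x$ and $U_z$ are the desired disjoint $G(F)$-invariant open neighborhoods of $x$ and $z$. The main step is the use of Theorem~\ref{LocLuna} to shrink Luna tubes independently at $x$ and $z$; beyond this, everything reduces to the topology of the quotient and elementary manipulations.
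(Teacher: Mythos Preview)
The paper does not give its own proof of this proposition; it simply cites \cite{Lun2} for Archimedean $F$ and \cite{RR} for non-Archimedean $F$. Your overall strategy---separate via the quotient when $\pi_X(x)\neq\pi_X(z)$, and use a Luna slice when the images agree---is the natural one, and Case~1 is fine.

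In Case~2 there are two genuine gaps. First, Theorem~\ref{LocLuna} is stated only for \emph{smooth} affine $X$, whereas the proposition is for an arbitrary affine variety; as written, your argument only addresses the smooth case. Second, and more seriously, the sentence ``Because $z\notin U_x$ and $U_x$ is open, after shrinking $S_z$ I may further assume $S_z\cap U_x=\emptyset$'' is a logical slip: from $z\notin U_x$ with $U_x$ \emph{open} you cannot deduce that $z$ has a neighborhood disjoint from $U_x$---for that you would need $z\notin\overline{U_x}$, which you have not established. Relatedly, the earlier claim that shrunk Luna tubes form a neighborhood \emph{basis} of $G(F)x$ is asserted without proof and is stronger than required. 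A clean repair (for smooth $X$) is to use a single slice at $x$: in the construction of Theorem~\ref{LocLuna} (Appendix~\ref{AppLun}) the map $\phi$ from $(G\times_{G_x}Z)(F)$ to $X(F)$, restricted to $O$, is an open embedding, and the projection $q:O\to (G/G_x)(F)\cong (Gx)(F)$ is $G(F)$-equivariant. Since $G(F)x$ and $G(F)z$ are disjoint clopen subsets of $(Gx)(F)$ by Lemma~\ref{OrbitIsOpen}, their $q$-preimages are disjoint $G(F)$-invariant open subsets of $O$ whose $\phi$-images are the desired disjoint $G(F)$-invariant open neighborhoods of $x$ and $z$ (one checks that $z$ lies in the image using the strongly \'etale isomorphism $G\times_{G_x}Z\cong X\times_{X/G}Z/G_x$).
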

For the proof of this Proposition see \cite{Lun2} for Archimedean
$F$ and \cite[fact B on page 109]{RR} for non-Archimedean $F$.

\begin{corollary} \label{EquivClassClosed}
Let a reductive group $G$ act on an affine variety $X$. Suppose
that $x \in X(F)$ is a $G$-semisimple element. Then the set $S_x$
is closed.
\end{corollary}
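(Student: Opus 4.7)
The plan is to prove that $S_x$ is sequentially closed (local fields are first-countable, so this suffices): suppose $y_n \to y$ with every $y_n \in S_x$, and show $x \in \overline{G(F)y}$.

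First I locate $y$ in the algebraic fiber of the quotient $\pi_X: X \to X/G$ over $\pi_X(x)$. The induced map $\pi: X(F) \to (X/G)(F)$ supplied by Theorem \ref{Quotient} is continuous and $G(F)$-invariant, with Hausdorff target. Hence $x \in \overline{G(F)y_n}$ forces $\pi(x) = \pi(y_n)$ for every $n$, and continuity of $\pi$ then yields $\pi(y) = \pi(x)$.

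Next I analyse $\overline{G(F)y}$. By Proposition \ref{LocClosedOrbit} this closed $G(F)$-invariant set contains a closed $G(F)$-orbit $G(F)z$; Theorem \ref{LocZarClosed} then makes $Gz$ Zariski-closed, i.e., $z$ is $G$-semisimple. Since $\pi_X(z) = \pi_X(y) = \pi_X(x)$ and each algebraic fiber of $\pi_X$ contains a unique closed $G$-orbit (Theorem \ref{Quotient}), we must have $Gz = Gx$. If in addition $G(F)z = G(F)x$, then $x \in G(F)z \subset \overline{G(F)y}$ and the proof is complete.

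The only remaining scenario is $G(F)z \neq G(F)x$ while $Gz = Gx$, and this is the main delicate point (a single closed $G$-orbit may a priori split into several closed $G(F)$-orbits by Galois cohomology, so uniqueness at the algebraic level does not automatically descend to $F$-points). I rule it out by invoking the proposition stated immediately before the corollary, which produces disjoint $G(F)$-invariant open neighborhoods $U_x \ni x$ and $U_z \ni z$. Since $x \in \overline{G(F)y_n}$ and $U_x$ is open and $G(F)$-invariant, the orbit $G(F)y_n$ meets $U_x$, whence $y_n \in U_x$ for every $n$; passing to the limit gives $y \in \overline{U_x}$. Symmetrically, $z \in \overline{G(F)y}$ together with $G(F)$-invariance of $U_z$ forces $y \in U_z$. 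But disjointness $U_x \cap U_z = \emptyset$ implies $\overline{U_x} \cap U_z = \emptyset$, contradicting $y \in \overline{U_x} \cap U_z$. Hence this case cannot occur, and $y \in S_x$.
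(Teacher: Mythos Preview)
Your proof is correct and follows essentially the same route as the paper's: find a closed $G(F)$-orbit $G(F)z$ inside $\overline{G(F)y}$ via Proposition~\ref{LocClosedOrbit}, then use the separation proposition to derive a contradiction if $G(F)z \neq G(F)x$. Two minor remarks: the paper argues directly with a point $y \in \overline{S_x}$ rather than with sequences (avoiding any appeal to first-countability), and your detour through the quotient map to establish $Gz = Gx$ is unnecessary---the separation proposition already applies to any two $G$-semisimple points lying in distinct $G(F)$-orbits, so the case you flag as ``the main delicate point'' requires no special treatment.
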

\begin{proof}
Let $y \in \overline{S_x}$. By Proposition \ref{LocClosedOrbit},
$\overline{G(F)y}$ contains a closed orbit $G(F)z$. If $G(F)z =
G(F)x$ then $y \in S_x$. Otherwise, choose disjoint open
$G$-invariant neighborhoods $U_z$ of $z$ and $U_x$ of $x$. Since
$z \in \overline{G(F)y}$, $U_z$ intersects $G(F)y$ and hence
contains $y$. Since  $y \in \overline{S_x}$, this means that $U_z$
intersects $S_x$. Let $t \in U_z \cap S_x$. Since $U_z$ is
$G(F)$-invariant, $G(F)t \subset U_z$. By the definition of $S_x$,
$x \in \overline{G(F)t}$ and hence $x \in \overline{U_z}$. Hence
$U_z$ intersects $U_x$ -- contradiction!
\end{proof}

\subsubsection{Analytic Luna slices}

\begin{definition}
Let a reductive group $G$ act on an affine variety $X$. Let $\pi:
X(F) \to (X/G)(F)$ be the natural map. An open subset $U \subset
X(F)$ is called \textbf{saturated} if there exists an open subset
$V \subset (X/G)(F)$ such that $U = \pi^{-1}(V)$.
\end{definition}

\noindent We will use the following corollary of the Luna Slice
Theorem:

\begin{theorem}[see Appendix \ref{AppLun}] \label{LocLuna}
Let a reductive group $G$ act on a smooth affine variety $X$. Let
$x \in X(F)$ be $G$-semisimple. Consider the natural action of the
stabilizer $G_x$ on the normal space $N_{Gx,x}^{X}$.
Then there exist\\
(i) an open $G(F)$-invariant $B$-analytic neighborhood $U$ of
$G(F)x$ in $X(F)$ with a
$G$-equivariant $B$-analytic retract $p:U \to G(F)x$ and\\
(ii) a $G_x$-equivariant $B$-analytic embedding $\psi:p^{-1}(x)
\hookrightarrow N_{Gx,x}^{X}(F)$ with an open saturated image such
that $\psi(x)=0$.
\end{theorem}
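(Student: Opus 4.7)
The plan is to reduce to the classical algebraic Luna slice theorem and then transport its conclusions to the analytic setting via an inverse function theorem for étale morphisms over local fields, established in Appendix \ref{AppLocField}.

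First, observe that $Gx$ is Zariski-closed in $X$ by Theorem \ref{LocZarClosed}, so $x$ is $G$-semisimple in the algebraic sense. Luna's algebraic slice theorem then produces a locally closed affine $G_x$-stable subvariety $S \subset X$ through $x$, together with a strongly étale $G_x$-equivariant morphism $\phi: S \to N_{Gx,x}^{X}$ sending $x$ to $0$, such that the induced $G$-equivariant morphism $\pi: G \times^{G_x} S \to X$ is strongly étale onto a saturated Zariski-open neighborhood of $Gx$. Strong étaleness in Luna's sense gives in particular that the image of $\phi$ is saturated with respect to the categorical quotient $N_{Gx,x}^{X} \to N_{Gx,x}^{X} / G_x$, and similarly for $\pi$; moreover, the corresponding squares between $S$, $N_{Gx,x}^X$ and their categorical quotients are Cartesian.

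Second, transport these data to $F$-points. The inverse function theorem for local fields implies that an étale morphism of smooth $F$-varieties restricts to a local $B$-analytic isomorphism on $F$-points. Applying this to $\phi$ and shrinking $S$ to a $G_x(F)$-invariant $B$-analytic neighborhood of $x$, we obtain the desired $B$-analytic open embedding $\psi: S(F) \hookrightarrow N_{Gx,x}^{X}(F)$ with $\psi(x) = 0$; simultaneously $\pi$ becomes a local $B$-analytic isomorphism along the slice over $x$. To isolate the $G(F)$-orbit, invoke Lemma \ref{OrbitIsOpen}: $G(F) \times^{G_x(F)} S(F)$ is open and closed in $(G \times^{G_x} S)(F)$, so taking $U$ to be its image under $\pi$ yields a $G(F)$-invariant open $B$-analytic neighborhood of $G(F)x$. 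The projection $G \times^{G_x} S \to G/G_x \cong Gx$ then induces the $G$-equivariant $B$-analytic retract $p: U \to G(F)x$, and the fiber $p^{-1}(x)$ is identified with $S(F)$, on which $\psi$ was defined. Saturation of $\psi(S(F))$ in $N_{Gx,x}^{X}(F)$ is inherited from the Cartesian diagram at the algebraic level, since passage to $F$-points of smooth varieties preserves Cartesian diagrams.

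The main obstacle is the second step: making the passage from algebraic étale to $B$-analytic local isomorphism uniform across Archimedean and non-Archimedean $F$, and preserving the $B$-analytic (in the Archimedean case, Nash) structure of all the neighborhoods involved. This is precisely the content of Appendix \ref{AppLocField} and is the place where the proof truly diverges from Luna's original Archimedean argument in \cite{Lun2}. A secondary technical subtlety is arranging that the shrunken slice remains genuinely $G_x(F)$-stable; this is handled by replacing any given neighborhood by a $G_x$-saturated one, using that the Luna slice $S$ is $G_x$-invariant at the algebraic level and that $G_x$-orbits in $S(F)$ remain inside a sufficiently small open saturated set.
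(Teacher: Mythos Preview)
Your overall architecture is correct and matches the paper: invoke the algebraic Luna slice theorem, then pass to $F$-points using that \'etale morphisms become local $B$-analytic isomorphisms (Corollary~\ref{EtLocIs}). The difference lies in \emph{where} you apply the \'etale-to-local-isomorphism step, and this is not merely cosmetic.

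You apply it directly to the maps $\phi:S\to N_{Gx,x}^X$ and $\pi:G\times^{G_x}S\to X$ on the total spaces, and then try to shrink $S(F)$ to a $G_x(F)$-invariant open on which $\phi$ is an embedding with saturated image. The paper instead uses the ``strongly \'etale'' hypothesis to pass to the quotient maps $\phi':Z/G_x\to X/G$ and $\nu':Z/G_x\to N_{Gx,x}^X/G_x$, applies Corollary~\ref{EtLocIs} \emph{there} to get small open neighborhoods $S',S''$ of $\pi_Z(x)$, and then defines the slice as the preimage $S:=\pi_Z^{-1}(S'\cap S'')\cap Z(F)$. Working on the quotient buys three things simultaneously: $S$ is $G_x(F)$-invariant by construction (it is a pullback from $Z/G_x$); the image $\psi(S)$ is saturated in $N_{Gx,x}^X(F)$ (it is the preimage of an open set in $(N/G_x)(F)$); and $U=\phi(O')$ is genuinely open in $X(F)$ and $\phi|_{O'}$ is injective onto it, since by the Cartesian property $\phi$ over $S'$ is the base change of a homeomorphism.

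Your route leaves exactly these three points unresolved. The ``secondary technical subtlety'' you flag is in fact the crux: for noncompact $G_x(F)$ there is no general procedure to shrink an arbitrary open neighborhood of $x$ in $S(F)$ to a $G_x(F)$-invariant one, and taking the $G_x(F)$-saturation of a small set may destroy injectivity of $\phi$. Likewise, your appeal to ``passage to $F$-points preserves Cartesian diagrams'' for saturation is not quite the right statement---you need the open set you end up with to actually be a preimage from the quotient, which again requires having chosen it at the quotient level to begin with. The fix is precisely the paper's: shrink on $Z/G_x$, not on $Z$.
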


\begin{definition}
In the notation of the previous theorem, denote $S:= p^{-1}(x)$
and $N:=N_{Gx,x}^{X}(F)$. We call the quintuple $(U,p,\psi,S,N)$
an \textbf{analytic Luna slice at $x$}.
\end{definition}

\begin{corollary} \label{LocLunCor}
In the notation of the previous theorem, let $y\in p^{-1}(x)$.
Denote $z:=\psi(y)$. Then\\
(i) $(G(F)_x)_z=G(F)_y$\\
(ii) $N_{G(F)y,y}^{X(F)} \cong N_{G(F)_x z, z}^{N}$ as
$G(F)_y$-spaces\\
(iii) $y$ is $G$-semisimple if and only if $z$ is
$G_x$-semisimple.
\end{corollary}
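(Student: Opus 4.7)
\medskip

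\noindent\textbf{Proof plan.} All three statements follow essentially formally from the defining properties of the analytic Luna slice: $p$ is a $G(F)$-equivariant retract onto $G(F)x$, $\psi$ is a $G(F)_x$-equivariant embedding with $\psi(x)=0$, and $\psi(S)$ is a saturated open subset of $N$.

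For (i), the key observation is that $p$ being $G(F)$-equivariant with $p(y)=x$ forces $G(F)_y \subset G(F)_x$: indeed, if $g\in G(F)_y$ then $gx = g\,p(y) = p(gy) = p(y) = x$. Consequently $G(F)_y = (G(F)_x)_y$. Since $\psi$ is an injective $G(F)_x$-equivariant map and $\psi(y)=z$, stabilizers match under $\psi$, so $(G(F)_x)_y = (G(F)_x)_z$, giving the claim.

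For (ii), I will use that, via the action map, a neighborhood of $y$ in $U$ is obtained from a neighborhood of $(1,y)$ in $G(F)\times S$ by the submersion $(g,s)\mapsto g\cdot s$. Differentiating at $(1,y)$ yields a surjection $\g(F)\oplus T_y S \twoheadrightarrow T_y X(F)$ whose restriction to $\g(F)$ has image $T_y(G(F)y)$. Thus
\[
N_{G(F)y,\,y}^{X(F)} \;=\; T_y X(F)/T_y(G(F)y) \;\cong\; T_y S \big/ \bigl(T_y S\cap T_y(G(F)y)\bigr).
\]
Since $G(F)y\cap S = G(F)_x\cdot y$ (by part (i), any $g\in G(F)$ with $gy\in S$ must satisfy $p(gy)=x=g\,p(y)$, hence $g\in G(F)_x$), the intersection on the right is $T_y(G(F)_x\cdot y)$. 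Therefore $N_{G(F)y,y}^{X(F)}\cong N_{G(F)_x y,\,y}^{S}$ as $G(F)_y$-spaces, and transporting along the $G(F)_x$-equivariant embedding $\psi$ identifies this with $N_{G(F)_x z,\,z}^{N}$. I expect this part to be the main source of bookkeeping — one must carefully justify the local model $U\simeq G(F)\times^{G(F)_x} S$ implicit in Theorem \ref{LocLuna} in order to split the tangent space cleanly; this is a standard consequence of Luna's theorem but should be stated explicitly.

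For (iii), closed $G(F)$-orbits in the $G(F)$-invariant open set $U$ are automatically closed in $X(F)$ (since $U$ is $G(F)$-saturated, a $G(F)$-orbit contained in $U$ meets every $G(F)$-invariant closed subset of $X(F)$ in a closed subset of $U$). Using the retract $p$ and the Luna slice formalism, an orbit $G(F)y'\subset U$ is closed in $U$ if and only if $G(F)_x\cdot y' \subset S$ is closed in $S$. Transferring via $\psi$, this is equivalent to $G(F)_x\cdot \psi(y')$ being closed in $\psi(S)$. Finally, because $\psi(S)$ is a \emph{saturated} open subset of $N$, a $G(F)_x$-orbit contained in $\psi(S)$ is closed in $\psi(S)$ if and only if it is closed in $N$ (its closure in $N$ lies in $\pi_N^{-1}(\pi_N(\psi(S))) = \psi(S)$). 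Chaining these equivalences gives (iii).
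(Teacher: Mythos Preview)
The paper states this corollary without proof, treating it as an immediate consequence of the analytic Luna slice theorem (Theorem~\ref{LocLuna}) and its construction in Appendix~\ref{AppLun}. Your arguments for (i) and (ii) are correct and make the implicit reasoning explicit; the dimension/submersion bookkeeping in (ii) is exactly the kind of detail the paper suppresses.

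There is, however, a genuine gap in your argument for (iii). You assert that $U$ is ``$G(F)$-saturated'' and use this to pass from ``$G(F)y$ closed in $U$'' to ``$G(F)y$ closed in $X(F)$''. But Theorem~\ref{LocLuna} only claims that $\psi(S)$ is saturated in $N$; it does \emph{not} claim $U$ is saturated in $X(F)$, and in general it is not. In the construction of Appendix~\ref{AppLun}, $U$ is obtained from $O' = q^{-1}(G(F)/G_x(F))\cap O$, and the restriction to $q^{-1}(G(F)/G_x(F))$ (needed so that $p$ retracts onto $G(F)x$ rather than $(Gx)(F)$) destroys saturation whenever $H^1(F,G_x)\neq 0$. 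For a concrete instance, take $G=\mathbb{G}_m$ acting on $\mathbb{A}^2$ by $t\cdot(a,b)=(t^2a,t^{-2}b)$ over $F=\R$ with $x=(1,1)$: here $G_x=\mu_2$, and the resulting $U$ omits the orbit through $(-1,-1)$ even though it lies in the same fiber of $\pi_X$.

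The clean fix is to route (iii) through the Zariski topology using Theorem~\ref{LocZarClosed}: $y$ is $G$-semisimple iff $Gy$ is Zariski-closed in $X$, and $z$ is $G_x$-semisimple iff $G_xz$ is Zariski-closed in $N_{Gx,x}^X$. The equivalence of these two algebraic conditions follows directly from the strongly \'etale maps $\phi:G\times_{G_x}Z\to X$ and $\nu:Z\to N_{Gx,x}^X$ of Theorem~\ref{Luna} (strong \'etaleness gives $G\times_{G_x}Z\cong X\times_{X/G}Z/G_x$, so $Gy$ is closed in $X$ iff $G_xy$ is closed in $Z$, and similarly for $\nu$). Your steps (b)--(d) are fine and in fact already contain the analytic half of this; only step (a) needs to be replaced.
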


\subsection{Vector systems} \label{vs}
\footnote{Subsection \ref{vs} and in particular the notion of
"vector system" along with the results at the end of \S\S
\ref{GHC} and \S\S \ref{sGHC} are not essential for the rest of
the paper. They are merely included for future reference.}

In this subsection we introduce the term \emph{"vector system"}.
This term allows to formulate statements in wider generality.

\begin{definition}
For an analytic manifold $M$ we define the notions of a
\textbf{vector system} and a \textbf{B-vector system} over it.

For a smooth manifold $M$,  a vector system over $M$ is a pair
$(E,B)$ where $B$ is a smooth locally trivial fibration over $M$
and $E$ is a smooth (finite-dimensional) vector bundle over $B$.

For a Nash manifold $M$, a B-vector system over $M$ is a pair
$(E,B)$ where $B$ is a Nash fibration over $M$ and $E$ is a Nash
(finite-dimensional) vector bundle over $B$.

For an $l$-space $M$, a vector system over $M$ (or a B-vector
system over $M$) is a sheaf of complex linear spaces.
\end{definition}

In particular, in the case where $M$ is a point, a vector system
over $M$ is either a $\C$-vector space if $F$ is non-Archimedean,
or a smooth manifold together with a vector bundle in the case
where $F$ is Archimedean. The simplest example of a vector system
over a manifold $M$ is given by the following.

\begin{definition}
Let $\mathcal{V}$ be a vector system over a point $pt$. Let $M$ be
an analytic manifold. A \textbf{constant vector system with fiber
$\mathcal{V}$} is the pullback of $\mathcal{V}$ with respect to
the map $M \to pt$. We denote it by $\mathcal{V}_M$.
\end{definition}

\subsection{Distributions} \label{PrelDist}

\begin{definition}
Let $M$ be an analytic manifold over $F$. We define
$C_c^{\infty}(M)$ in the following way.

If $F$ is non-Archimedean then $C_c^{\infty}(M)$ is the space of
locally constant compactly supported complex valued functions on
$M$. We do not consider any topology on $C_c^{\infty}(M)$.

If $F$ is Archimedean then $C_c^{\infty}(M)$ is the space of
smooth compactly supported complex valued functions on $M$,
endowed with the standard topology.

For any analytic manifold $M$, we define the space of
distributions $\cD(M)$ by $\cD(M):=C_c^{\infty}(M)^*$. We consider
the weak topology on it.
\end{definition}

\begin{definition}
Let $M$ be a $B$-analytic manifold. We define $\Sc(M)$ in the
following way.

If $M$ is an analytic manifold over non-Archimedean field,
$\Sc(M):=C_c^{\infty}(M)$.

If $M$ is a Nash manifold, $\Sc(M)$ is the space of Schwartz
functions on $M$, namely smooth functions which are rapidly
decreasing together with all their derivatives. See \cite{AG1} for
the precise definition. We consider $\Sc(M)$ as a \Fre space.

For any $B$-analytic manifold $M$, we define the space of
\textbf{Schwartz distributions} $\Sc^*(M)$ by
$\Sc^*(M):=\Sc(M)^*$. Clearly, $\Sc(M)^*$ is naturally embedded
into $\cD(M)$.
\end{definition}

\begin{notation}
Let $M$ be an analytic manifold. For a distribution $\xi \in
\cD(M)$ we denote by $\Supp(\xi)$ the support of $\xi$.

For a closed subset $N \subset M$ we denote
$$\cD_M(N):= \{\xi \in \cD(M)|\Supp(\xi) \subset N\}.$$
More generally, for a locally closed subset $N \subset M$ we
denote
$$\cD_M(N):=\cD_{M\setminus (\overline{N} \setminus N)}(N).$$

Similarly if $M$ is a $B$-analytic manifold and $N$ is a locally
closed subset we define $\Sc^*_M(N)$ in a similar vein.
\footnote{In the Archimedean case, locally closed is considered
with respect to the restricted topology -- cf.~Appendix
\ref{AppSubFrob}.}
\end{notation}

\begin{definition}
Let $M$ be an analytic manifold over $F$ and $\E$ be a vector
system over $M$. We define $C_c^{\infty}(M,\E)$ in the following
way.

If $F$ is non-Archimedean then $C_c^{\infty}(M,\E)$ is the space
of compactly supported sections of $\E$.

If $F$ is Archimedean and $\E = (E,B)$ where $B$ is a fibration
over $M$ and $E$ is a vector bundle over $B$, then
$C_c^{\infty}(M,\E)$ is the complexification of the space of
smooth compactly supported sections of $E$ over $B$.

If $\mathcal{V}$ is a vector system over a point then we denote
$C_c^{\infty}(M,\mathcal{V}) := C_c^{\infty}(M,\mathcal{V}_M)$.
\end{definition}

We define $\cD(M,\E)$, $\cD_M(N,\E)$, $\Sc(M,\E)$, $\Sc^*(M,\E)$
and $\Sc^*_M(N,\E)$ in the natural way.

\begin{theorem}\label{Filt_nonarch}
Let an $l$-group $K$ act on an $l$-space $M$. Let $M =
\bigcup_{i=0}^l M_i$ be a $K$-invariant stratification of $M$. Let
$\chi$ be a character of $K$. Suppose that
$\Sc^*(M_i)^{K,\chi}=0$. Then $\Sc^*(M)^{K,\chi}=0$.
\end{theorem}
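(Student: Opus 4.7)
The plan is to prove this by induction on the length $l$ of the stratification. The base case $l=0$ gives $M=M_0$, so the statement is exactly the hypothesis.

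For the inductive step, I would isolate an open stratum. Reindexing if necessary, assume $M_l$ is open in $M$; then $Z := M \setminus M_l = \bigcup_{i=0}^{l-1} M_i$ is closed and $K$-invariant, and inherits a $K$-invariant stratification of length $l-1$ whose strata are again $M_0, \dots, M_{l-1}$, all of which satisfy the vanishing hypothesis.

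The key input specific to the $l$-space setting is the fundamental short exact sequence
\[ 0 \to C_c^\infty(M_l) \to C_c^\infty(M) \to C_c^\infty(Z) \to 0, \]
where the first map is extension by zero and the second is restriction. Exactness on the right is the only nontrivial point: it holds because $M$ is totally disconnected, so any locally constant compactly supported function on the closed subset $Z$ admits a locally constant compactly supported extension to $M$ by extending by zero across a clopen neighborhood. Dualizing produces
\[ 0 \to \Sc^*_M(Z) \to \Sc^*(M) \to \Sc^*(M_l) \to 0, \]
and pushforward under the closed embedding $Z \hookrightarrow M$ identifies $\Sc^*_M(Z)$ canonically with $\Sc^*(Z)$. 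Both $M_l$ and $Z$ are $K$-invariant, so every map in this sequence is $K$-equivariant. Since taking $(K,\chi)$-invariants is left exact, we obtain
\[ 0 \to \Sc^*(Z)^{K,\chi} \to \Sc^*(M)^{K,\chi} \to \Sc^*(M_l)^{K,\chi}. \]
The rightmost term vanishes by hypothesis and the leftmost by the induction hypothesis applied to $Z$, which forces $\Sc^*(M)^{K,\chi}=0$.

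The argument is essentially formal once the extension-by-zero surjection $C_c^\infty(M)\twoheadrightarrow C_c^\infty(Z)$ is in hand, and this is where the $l$-space hypothesis does the real work; verifying $K$-equivariance of the sequence and the support/pushforward identification is routine. The only thing to watch is that the analogous statement fails for smooth manifolds, since restriction of $C_c^\infty$ to a closed submanifold is far from surjective there --- this is exactly the subtlety that motivates the Schwartz-function machinery invoked in \S \ref{DistVerSch} for the Archimedean case.
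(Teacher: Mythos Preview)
Your proof is correct and is precisely the standard argument underlying \cite[Corollary 1.9]{BZ}, which is what the paper cites in lieu of a proof. The induction via the open--closed exact sequence, together with the identification $\Sc^*_M(Z)\cong\Sc^*(Z)$ specific to $l$-spaces, is exactly the mechanism at work there.
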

This theorem is a direct corollary of \cite[Corollary 1.9]{BZ}.


For the proof of the next theorem see e.g. \cite[\S B.2]{AGS1}.
\begin{theorem} \label{NashFilt}
Let a Nash group $K$ act on a Nash manifold $M$. Let $N$ be a
locally closed subset. Let $N = \bigcup_{i=0}^l N_i$ be a Nash
$K$-invariant stratification of $N$. Let $\chi$ be a character of
$K$. Suppose that for any $k \in \Z_{\geq 0}$ and $0 \leq i \leq
l$, $$\Sc^*(N_i,\Sym^k(CN_{N_i}^M))^{K,\chi}=0.$$ Then
$\Sc^*_M(N)^{K,\chi}=0.$
\end{theorem}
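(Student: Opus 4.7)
The plan is to adapt the $l$-space argument of Theorem \ref{Filt_nonarch} to the Schwartz/Nash setting by combining two standard reductions. First I reduce to the case of a single smooth stratum. Replacing $M$ with the open Nash submanifold $M\setminus(\overline{N}\setminus N)$, one may assume $N$ is closed in $M$; reordering the $N_i$ one may further assume $Z_i:=\bigcup_{j\le i}N_j$ is closed in $N$ for every $i$, so that each $N_i=Z_i\setminus Z_{i-1}$ is an open Nash submanifold of the Nash manifold $M\setminus Z_{i-1}$. There is a natural short exact sequence
\[
0\to \Sc^*_M(Z_{i-1})\to \Sc^*_M(Z_i)\to \Sc^*_{M\setminus Z_{i-1}}(N_i)\to 0
\]
coming from extension-by-zero of Schwartz functions across the Nash closed subset $Z_{i-1}$. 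Taking $(K,\chi)$-equivariants and using left exactness of this functor, an induction on $i$ reduces the theorem to the case where $N$ is itself a single closed Nash $K$-invariant submanifold of $M$.

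Now fix a $K$-invariant closed Nash submanifold $Y\subset M$ (playing the role of a stratum $N_i$). I would use the transversal-jet filtration of Schwartz distributions supported on $Y$: a decreasing $K$-equivariant filtration
\[
\Sc^*_M(Y)=F^0\supset F^1\supset F^2\supset\cdots
\]
whose successive quotients are isomorphic, as $K$-modules, to $\Sc^*(Y,\Sym^k(CN_Y^M))$. In the Nash setting this is built exactly as in the classical smooth case: choose a Nash tubular neighborhood of $Y$, identify a neighborhood of $Y$ with the total space of the Nash bundle $N_Y^M$, and filter by the order of transversal vanishing of Schwartz test functions (equivalently, by the order of transversal differentiation carried by a distribution). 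Taking $(K,\chi)$-equivariants, the hypothesis yields $(F^k/F^{k+1})^{K,\chi}=0$ for every $k$, hence by left exactness $(F^k)^{K,\chi}=(F^{k+1})^{K,\chi}$ for every $k$.

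To finish, it remains to pass from the associated-graded statement to the whole space, i.e.\ to show $\bigcap_k F^k=0$ and that $(-)^{K,\chi}$ commutes with the inverse limit $\Sc^*_M(Y)=\varprojlim \Sc^*_M(Y)/F^k$. This completeness step is the main technical obstacle and is precisely where the Nash/Schwartz hypothesis is essential: the analogous statement fails for arbitrary distributions on smooth manifolds, because Borel-type constructions produce nonzero distributions of infinite transversal order. The required completeness of Schwartz distributions on Nash manifolds is one of the basic tools developed in Appendix \ref{AppSubFrob}; invoking it concludes the proof.
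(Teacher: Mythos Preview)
Your approach is the standard one and matches what the paper is invoking: note that the paper does not actually prove this theorem but refers the reader to \cite[\S B.2]{AGS1}, where exactly the two-step argument you outline (reduce to a single closed smooth stratum via the short exact sequences for supports, then filter by transversal order and use that the filtration is separated) is carried out.

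One small correction: you attribute the key separatedness statement $\bigcap_k F^k=0$ to Appendix~\ref{AppSubFrob} of the present paper, but that appendix does not contain it; it only treats the submersion principle, Frobenius reciprocity, and the result on distributions compactly supported modulo $K$. The fact that a Schwartz distribution supported on a closed Nash submanifold is determined by its finite transversal jets is proved in \cite{AG1} and used in \cite[\S B.2]{AGS1}. Also, you do not need the inverse-limit compatibility you mention: once you know $(F^0)^{K,\chi}=(F^k)^{K,\chi}$ for all $k$ and $\bigcap_k F^k=0$, you are done, since then $(F^0)^{K,\chi}\subset\bigcap_k F^k=0$.
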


\begin{theorem}[Frobenius reciprocity] \label{Frob}
Let an analytic group $K$ act on an analytic manifold $M$. Let $N$
be an analytic manifold with a transitive action of $K$. Let
$\phi:M \to N$ be a $K$-equivariant map.

Let $z \in N$ be a point and $M_z:= \phi^{-1}(z)$ be its fiber.
Let $K_z$ be the stabilizer of $z$ in $K$. Let $\Delta_K$ and
$\Delta_{K_z}$ be the modular characters of $K$ and $K_z$.

Let $\E$ be a $K$-equivariant vector system over $M$. Then\\
(i) there exists a canonical isomorphism $$\Fr: \cD(M_z,\E|_{M_z}
\otimes \Delta_K|_{K_z} \cdot \Delta_{K_z}^{-1})^{K_z} \cong
\cD(M,\E)^K.$$ In particular, $\Fr$ commutes with restrictions to
open sets.

(ii) For B-analytic manifolds $\Fr$ maps $\Sc^*(M_z,\E|_{M_z}
\otimes \Delta_K|_{K_z} \cdot \Delta_{K_z}^{-1})^{K_z}$ to
$\Sc^*(M,\E)^K$.
\end{theorem}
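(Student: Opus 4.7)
The plan is to realize $M$ as locally fibered over $N$ with fiber $M_z$, then transport $K$-invariant distributions on $M$ to $K_z$-invariant ones on $M_z$ by integration over $N$, with the modular twist coming from the lack of $K$-invariance of the natural measure on $N$.

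First, I would establish that $\phi:M\to N$ is a $K$-equivariant submersion which is locally a trivial fibration with fiber $M_z$. Since $K$ acts transitively on $N$ with stabilizer $K_z$, the orbit-stabilizer theorem together with the inverse function theorem over $F$ (as discussed in Appendix \ref{AppLocField}) produces a local analytic section $s:V\to K$ of $k\mapsto k\cdot z$ near $z$, and the map $(n,m)\mapsto s(n)\cdot m$ gives an analytic isomorphism $V\times M_z\cong \phi^{-1}(V)$. Translating by $K$, we cover all of $N$. This reduces questions about $\cD(M,\E)^K$ to questions about distributions in the product situation.

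Second, to define $\Fr$, I would fix a left Haar measure on $K$, yielding a $K$-quasi-invariant measure $\mu$ on $N=K/K_z$ of modulus $\Delta_K\cdot\Delta_{K_z}^{-1}$. Given $\eta\in\cD(M_z,\E|_{M_z}\otimes\Delta_K|_{K_z}\cdot\Delta_{K_z}^{-1})^{K_z}$ and $f\in C_c^\infty(M,\E^*)$, I would set
$$\langle \Fr(\eta),f\rangle \;:=\; \int_N \bigl\langle \eta,\,(k_n^{-1}\cdot f)|_{M_z}\bigr\rangle\,d\mu(n),$$
where $k_n\in K$ is any lift of $n$; the modular twist built into $\eta$ cancels both the ambiguity in the choice of $k_n$ (right-translation by $K_z$) and the failure of $\mu$ to be $K$-invariant, so the integrand is a well-defined compactly supported section of the appropriate density bundle over $N$. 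The distribution $\Fr(\eta)$ is then $K$-invariant by construction of $\mu$. In the non-Archimedean case the partial evaluation $(k_n^{-1}\cdot f)|_{M_z}$ is literally a compactly supported locally constant function on $M_z$; in the Archimedean case one interprets the pushforward via the local trivialization established above, where $f$ restricted to $\phi^{-1}(V)\cong V\times M_z$ becomes a smooth compactly supported family of test functions on $M_z$.

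Third, I would construct the inverse and verify its properties. Using the local trivialization $\phi^{-1}(V)\cong V\times M_z$, any $K$-invariant element of $\cD(M,\E)$, restricted to $\phi^{-1}(V)$, is forced by $K_z$-invariance and the transitive $K$-action on $N$ to be of the form $\mu|_V\boxtimes \eta$ for a unique $K_z$-equivariant $\eta$ with the prescribed modular twist; this reads off the inverse $\Fr^{-1}$ and shows $\Fr$ is a bijection. That $\Fr$ commutes with restriction to open sets is transparent from the integral formula (since the integration is fiberwise and restriction pulls back through the fibration). For part (ii), the preservation of Schwartz distributions in the $B$-analytic setting follows because the partial evaluation map $f\mapsto (k_{(\cdot)}^{-1}\cdot f)|_{M_z}$ on Schwartz sections is realized by a Nash fiberwise restriction along the Nash submersion $\phi$, which by Appendix \ref{AppSubFrob} preserves the Schwartz class. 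The main obstacle is the Archimedean bookkeeping: making the integral $\int_N\langle\eta,\cdot\rangle d\mu$ into a genuine continuous functional on $\Sc(M,\E)$ requires carefully viewing $f\mapsto (k_n^{-1}\cdot f)|_{M_z}$ as a continuous map into an appropriate space of Schwartz sections over $N$, together with the verification that the modular character calculation yields a canonical map independent of the section $s$ and of the choice of Haar measure on $K$.
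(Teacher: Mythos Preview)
Your proposal is correct and, for part (i), follows the classical integral-formula construction that the paper simply cites (\cite{Ber}, \cite{BZ}, \cite{AGS1}, \cite{Bar}) rather than reproving.

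For part (ii), however, the paper's Appendix \ref{AppSubFrob} takes a genuinely different route that sidesteps precisely the ``Archimedean bookkeeping'' you flag as the main obstacle. Instead of verifying directly that the integral formula $f\mapsto\int_N\langle\eta,(k_n^{-1}\cdot f)|_{M_z}\rangle\,d\mu(n)$ is continuous on $\Sc(M,\E)$, the paper works with generalized functions $\G(-)=\Sc^*(-,D_{-})$ (so the modular twist is absorbed into the density bundles), covers $N$ by \emph{finitely many} Nash open sets $U_i$ each admitting a Nash section of $K\to N$ (Theorem \ref{SurSubSec}), and on each $\phi^{-1}(U_i)\cong U_i\times M_z$ defines $\Fr(\xi)$ as the pullback $p_i^*\xi$ along the Nash submersion $p_i:\phi^{-1}(U_i)\to M_z$. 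The pullback $p_i^*$ automatically lands in $\G(\phi^{-1}(U_i))$ by the submersion principle (Theorem \ref{NashSub}), and the pieces glue by the sheaf property of Schwartz generalized functions (Property \ref{pSheaf}). The inverse is the map $\HC$ built from the corollary preceding the theorem. What this buys: no need to analyze continuity of the integral in the Schwartz topology, and no explicit modular-character bookkeeping; the Nash machinery (finite covers, pullback under submersion, sheaf gluing) does the work. Your approach is more hands-on and makes the formula explicit, which is useful for computations, but the paper's is cleaner as a pure existence proof in the Nash category.
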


For the proof of (i) see \cite[\S\S 1.5]{Ber}  and \cite[\S\S 2.21
- 2.36]{BZ}  for the case of $l$-spaces and \cite[Theorem
4.2.3]{AGS1} or \cite{Bar} for smooth manifolds. For the proof of
(ii) see Appendix \ref{AppSubFrob}.

We will also use the following straightforward proposition.

\begin{proposition} \label{Product}
Let $K_i$ be analytic groups acting on analytic manifolds $M_i$
for $i=1 \ldots n$. Let $\Omega_i \subset K_i$ be analytic
subgroups. Let $\E_i \to M_i$ be $K_i$-equivariant vector systems.
Suppose that
$$\cD(M_i,E_i)^{\Omega_i}=\cD(M_i,E_i)^{K_i}$$ for all $i$. Then
$$\cD(\prod M_i, \boxtimes E_i)^{\prod \Omega_i}=\cD(\prod M_i,
\boxtimes E_i)^{\prod K_i},$$ where $\boxtimes$ denotes the
external product.

Moreover, if $\, \Omega_i$, $K_i$, $M_i$ and $\E_i$ are
$B$-analytic then the analogous statement holds for Schwartz
distributions.
\end{proposition}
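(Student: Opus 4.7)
The plan is to reduce to the case $n=2$ by an obvious induction (writing $\prod_{i=1}^n M_i = M_1 \times \prod_{i=2}^n M_i$, and applying the inductive hypothesis to $\prod_{i=2}^n M_i$ with the group $\prod_{i=2}^n \Omega_i \subset \prod_{i=2}^n K_i$; the inductive hypothesis supplies equality of the invariants of the second factor, so that the $n=2$ case applies). So the real content is the $n=2$ statement.

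For $n=2$, let $\xi \in \cD(M_1 \times M_2, \E_1 \boxtimes \E_2)^{\Omega_1 \times \Omega_2}$. The idea is partial evaluation: for any $f_1 \in C_c^\infty(M_1, \E_1)$ define $T_\xi(f_1) \in \cD(M_2, \E_2)$ by $T_\xi(f_1)(f_2) := \xi(f_1 \otimes f_2)$. In the Archimedean case this is well-defined and continuous, because $f_2 \mapsto f_1 \otimes f_2$ is continuous from $C_c^\infty(M_2, \E_2)$ to $C_c^\infty(M_1 \times M_2, \E_1 \boxtimes \E_2)$; in the non-Archimedean case it is purely algebraic. Since $\xi$ is invariant under $\{e\} \times \Omega_2$, the distribution $T_\xi(f_1)$ lies in $\cD(M_2, \E_2)^{\Omega_2}$, hence by hypothesis in $\cD(M_2, \E_2)^{K_2}$. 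Unwinding this gives $\xi(f_1 \otimes (k_2 \cdot f_2)) = \xi(f_1 \otimes f_2)$ for all $k_2 \in K_2$ and all $f_1, f_2$.

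Now $\xi$ agrees with its $\{e\} \times K_2$-translates on all elementary tensors $f_1 \otimes f_2$. In the non-Archimedean case $C_c^\infty(M_1 \times M_2, \E_1 \boxtimes \E_2) = C_c^\infty(M_1, \E_1) \otimes C_c^\infty(M_2, \E_2)$ as a complex vector space (locally one reduces to an ordinary product of $l$-spaces with a trivialized sheaf), so the identity extends by linearity. In the Archimedean case the span of such elementary tensors is dense in the \Fre space $C_c^\infty(M_1 \times M_2, \E_1 \boxtimes \E_2)$ (by a standard partition-of-unity plus Stone--Weierstrass argument on local trivializations), so the identity extends by continuity of $\xi$. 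Either way $\xi$ is invariant under $\{e\} \times K_2$. Swapping the roles of the two factors and repeating the argument shows $\xi$ is also invariant under $K_1 \times \{e\}$, hence under the full group $K_1 \times K_2$.

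The Schwartz version is completely parallel: one replaces $C_c^\infty$ by $\Sc$ throughout. The only point to check is that $\Sc(M_1, \E_1) \otimes \Sc(M_2, \E_2)$ is dense in $\Sc(M_1 \times M_2, \E_1 \boxtimes \E_2)$, which is a standard fact about Schwartz sections on Nash manifolds (see Appendix \ref{AppSubFrob}), together with the obvious continuity of partial evaluation in the Schwartz topology. The main (mild) obstacle is therefore this density statement in the Archimedean/Nash setting; once it is in hand, the proof is a short formal manipulation.
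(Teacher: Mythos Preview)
Your argument is correct and is essentially the standard one: the paper does not give a self-contained proof but cites \cite[proof of Proposition 3.1.5]{AGS1}, and that proof proceeds exactly by the partial evaluation / density-of-tensors argument you wrote out. The only caveat is that the density statement $\Sc(M_1,\E_1)\otimes\Sc(M_2,\E_2)$ dense in $\Sc(M_1\times M_2,\E_1\boxtimes\E_2)$ is not actually proven in Appendix~\ref{AppSubFrob} of the present paper; it is taken from \cite{AG1} (and similarly for $C_c^\infty$), so your reference should point there rather than to the appendix.
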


For the proof see e.g. \cite[proof of Proposition 3.1.5]{AGS1}.

\section{Generalized Harish-Chandra descent} \label{SecDescent}

\subsection{Generalized Harish-Chandra descent}
\label{GHC} $ $\\ In this subsection we will prove the following
theorem.

\begin{theorem} \label{Gen_HC}
Let a reductive group $G$ act on a smooth affine variety $X$. Let
$\chi$ be a character of $G(F)$. Suppose that for any
$G$-semisimple $x \in X(F)$ we have
$$\cD(N_{Gx,x}^X(F))^{G(F)_x,\chi}=0.$$ Then
$$\cD(X(F))^{G(F),\chi}=0.$$
\end{theorem}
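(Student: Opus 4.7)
The plan is a proof by contradiction. Assume there exists a non-zero $\xi \in \cD(X(F))^{G(F),\chi}$, with support $Z := \Supp(\xi)$. Since $Z$ is closed and $G(F)$-invariant, Proposition \ref{LocClosedOrbit} provides a closed orbit $G(F)x \subset Z$; in particular $x$ is $G$-semisimple by Theorem \ref{LocZarClosed}. The overall strategy is to show that $\xi$ vanishes on a $G(F)$-invariant open neighborhood of every such $x$. Once established, the union of these neighborhoods is an open $G(F)$-invariant set containing every closed $G(F)$-orbit, so Corollary \ref{OpenClosedAll} forces it to equal $X(F)$, giving $\xi = 0$ and the desired contradiction.

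The local vanishing at $x$ is obtained via the analytic Luna Slice Theorem \ref{LocLuna}, which produces a quintuple $(U, p, \psi, S, N)$ with $U$ a $G(F)$-invariant open neighborhood of $G(F)x$, $p\colon U\to G(F)x$ a $G$-equivariant retraction, $S=p^{-1}(x)$, $N=N_{Gx,x}^X(F)$, and $\psi\colon S\hookrightarrow N$ a $G(F)_x$-equivariant $B$-analytic embedding onto an open saturated subset with $\psi(x)=0$. Because $x\in Z\cap U$, the restriction $\xi|_U$ is non-zero. Identifying the orbit with $G(F)/G(F)_x$ and applying Frobenius reciprocity (Theorem \ref{Frob}) along $p$ gives a non-zero $\eta \in \cD(S)^{G(F)_x,\chi}$; the modular-character twist inherent in Frobenius is trivial here because $G$ and $G_x$ are both reductive (reductivity of $G_x$ follows from closedness of the orbit by Matsushima's criterion) and hence unimodular. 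Pushing forward by $\psi$ yields a non-zero $(G(F)_x,\chi)$-equivariant distribution $\psi_*\eta$ on the open saturated subset $\psi(S)\subset N$.

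The main obstacle is precisely the mismatch between the hypothesis, which asserts $\cD(N)^{G(F)_x,\chi}=0$ for all of $N$, and what has been produced, namely a non-zero equivariant distribution only on the open saturated subset $\psi(S)$. This gap has to be bridged by producing a non-zero equivariant distribution on all of $N$. I see two natural routes: first, exploit the fact that $\psi(S)$ is saturated, so its complement is a saturated closed $G(F)_x$-invariant subset of $N$; by further shrinking $U$ (equivalently replacing $\psi(S)$ by a smaller saturated open $G(F)_x$-neighborhood of $0$), arrange for the support of $\psi_*\eta$ to be closed in $N$ and extend by zero. Alternatively, use the linear structure of $N$: the dilations $d_\lambda\colon v\mapsto\lambda v$ commute with the linear $G(F)_x$-action, so $(d_\lambda)_*\psi_*\eta$ remains $(G(F)_x,\chi)$-equivariant, and since $\psi(S)$ is an open neighborhood of $0$ its dilates cover any prescribed compact subset of $N$, allowing one to transport $\psi_*\eta$ to a distribution defined on all of $N$.

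Either implementation contradicts the hypothesis $\cD(N)^{G(F)_x,\chi}=0$, forcing $\eta=0$ and hence $\xi|_U=0$, contradicting $x\in\Supp(\xi)$. Thus $\xi$ must vanish in a neighborhood of every closed orbit, and Corollary \ref{OpenClosedAll} completes the proof. I expect the extension/descent from $\psi(S)$ to $N$ to be the delicate step, where the saturatedness of $\psi(S)$ (a key output of the Luna slice construction) is indispensable.
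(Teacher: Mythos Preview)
Your global architecture coincides with the paper's: Luna slice at each $G$-semisimple point, Frobenius reciprocity along $p$ to pass to a $(G(F)_x,\chi)$-equivariant distribution on $S\cong\psi(S)$, and Corollary \ref{OpenClosedAll} to globalize. You also correctly isolate the only nontrivial step, namely relating $\cD(\psi(S))^{G(F)_x,\chi}$ to $\cD(N)^{G(F)_x,\chi}$. However, neither of your two proposed mechanisms actually works.

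Route 1 (shrink and extend by zero) fails because restricting to a smaller saturated open $W'\subset\psi(S)$ does not make the support of the restricted distribution closed in $N$. Indeed, any saturated open neighborhood of $0$ in $N$ contains the entire fiber $\pi_N^{-1}(\pi_N(0))=\Gamma(N)$, which is unbounded; so such neighborhoods never have compact closure in $N$, and the support of $(\psi_*\eta)|_{W'}$ can accumulate on $\partial W'$. Extension by zero is therefore unavailable. Route 2 (dilations) produces, for each $\lambda$, an equivariant distribution on $\lambda\cdot\psi(S)$, but these live on different open sets and there is no mechanism to patch them into a single distribution on $N$; covering compacta is not enough.

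The paper bridges the gap in the opposite logical direction, by a short lemma: if $\cD(N)^{G(F)_x,\chi}=0$ then $\cD(W)^{G(F)_x,\chi}=0$ for every open \emph{saturated} $W\subset N$. The proof is precisely the invariant-cutoff argument your Route 1 is reaching for but does not articulate. Embed $N/G_x$ as a closed subvariety of some $F^n$; given a nonzero $\zeta\in\cD(W)^{G(F)_x,\chi}$, pick $v\in\Supp(\zeta)$ and choose $g\in C_c^\infty(F^n)$ with $g(\pi_N(v))=1$. Define $\zeta'(f):=\zeta\bigl(f\cdot(g\circ\pi_N)\bigr)$ for $f\in C_c^\infty(N)$. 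Saturatedness of $W$ ensures $\Supp\bigl(f\cdot(g\circ\pi_N)\bigr)\subset W$, so $\zeta'$ is well defined on all of $N$; it is $(G(F)_x,\chi)$-equivariant because $g\circ\pi_N$ is $G(F)_x$-invariant; and $v\in\Supp(\zeta')$ since $g\circ\pi_N$ is nonvanishing near $v$, so multiplication by it is a bijection on $C_c^\infty$ of a small neighborhood of $v$. This contradicts $\cD(N)^{G(F)_x,\chi}=0$. Note that the cutoff must be pulled back from the quotient: an arbitrary bump function would destroy equivariance, which is exactly why saturatedness is the indispensable ingredient you anticipated.
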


\begin{remark}
In fact, the converse is also true. We will not prove it since we
will not use it.
\end{remark}

For the proof of this theorem we will need the following lemma

\begin{lemma}
Let a reductive group $G$ act on a smooth affine variety $X$. Let
$\chi$ be a character of $G(F)$. Let $U \subset X(F)$ be an open
saturated subset. Suppose that $\cD(X(F))^{G(F),\chi}=0.$ Then
$\cD(U)^{G(F),\chi}=0.$
\end{lemma}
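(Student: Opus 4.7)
The plan is to prove the lemma by extending any $\xi\in\cD(U)^{G(F),\chi}$ to a $(G(F),\chi)$-equivariant distribution $\tilde\xi$ on $X(F)$; the hypothesis then forces $\tilde\xi=0$ and hence $\xi=0$.

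To produce the extension I would show that the inclusion $C_c^\infty(U)\hookrightarrow C_c^\infty(X(F))$ induces an injection $C_c^\infty(U)/I_U\hookrightarrow C_c^\infty(X(F))/I_{X(F)}$, where $I_M:=\Span\{g\cdot f-\chi(g)f:g\in G(F),\,f\in C_c^\infty(M)\}$. Given $f\in C_c^\infty(U)$ with $f=\sum_i(g_i h_i-\chi(g_i)h_i)$ for some $h_i\in C_c^\infty(X(F))$, I will construct a $G(F)$-invariant cutoff $\tilde\phi$ on $X(F)$, supported in $U$ and equal to $1$ on a neighborhood of $\Supp(f)$. Multiplying through by $\tilde\phi$ and using its $G$-invariance yields
\[
f=\tilde\phi f=\sum_i\bigl(g_i(\tilde\phi h_i)-\chi(g_i)(\tilde\phi h_i)\bigr)\in I_U,
\]
since each $\tilde\phi h_i$ lies in $C_c^\infty(U)$. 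The existence of such $\tilde\phi$ is where the saturation hypothesis is essential: writing $U=\pi^{-1}(V)$ for open $V\subset(X/G)(F)$, the compact set $\pi(\Supp(f))$ lies in $V$ and is disjoint from the closed set $(X/G)(F)\setminus V$. Since $(X/G)$ is affine, generators $p_1,\ldots,p_m$ of $\cO(X)^G$ embed $(X/G)(F)$ as a closed subvariety of $F^m$, and I would pick a bump $\psi$ on $F^m$ (smooth in the Archimedean case, locally constant in the non-Archimedean case) equal to $1$ on a neighborhood of $\pi(\Supp(f))$ and vanishing on a neighborhood of $(X/G)(F)\setminus V$; these two sets have positive distance in $F^m$ by compactness, so the bump exists. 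Then $\tilde\phi:=\psi(p_1,\ldots,p_m)$ is the required $G(F)$-invariant cutoff.

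Once the injectivity of the quotient map is in hand, the extension of $\xi$ follows abstractly --- by Zorn's lemma in the non-Archimedean case and by Hahn--Banach in the Archimedean case. The main obstacle will be verifying the topological condition needed in the Archimedean setting, namely that the induced injection $C_c^\infty(U)/\overline{I_U}\hookrightarrow C_c^\infty(X(F))/\overline{I_{X(F)}}$ of LF quotient spaces is a topological embedding with closed image, so that continuous equivariant functionals actually extend. I expect this to follow from the same cutoff construction applied to convergent sequences: given a sequence in $I_{X(F)}$ converging to an element $f\in C_c^\infty(U)$, multiplying by a fixed $\tilde\phi$ (chosen for a compact neighborhood of $\Supp(f)$) produces a sequence in $I_U$ with supports in a fixed compact subset of $U$ and the same limit $f$ in $C_c^\infty(U)$.
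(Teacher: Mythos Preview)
Your strategy is sound and your key construction --- the $G(F)$-invariant cutoff $\tilde\phi=\psi(p_1,\ldots,p_m)$ pulled back from an affine embedding of $X/G$ --- is exactly the object the paper uses. However, you then take a detour through quotient spaces and Hahn--Banach that is both unnecessary and, in the Archimedean case, genuinely delicate: establishing that $C_c^\infty(U)/\overline{I_U}\to C_c^\infty(X(F))/\overline{I_{X(F)}}$ is a \emph{topological} embedding (not just an injection) is the real content needed for Hahn--Banach on the quotient, and your sketch only addresses sequential closure, not the topology.

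The paper's proof sidesteps all of this. Given a nonzero $\xi\in\cD(U)^{G(F),\chi}$ and a point $x\in\Supp(\xi)$, choose your cutoff $\tilde\phi$ so that $\tilde\phi(x)=1$, and simply define $\xi'\in\cD(X(F))$ by $\xi'(f):=\xi(\tilde\phi\cdot f)$. Since $\tilde\phi$ is $G(F)$-invariant, $\xi'$ is $(G(F),\chi)$-equivariant; since $\Supp(\tilde\phi)\subset U$, the product $\tilde\phi\cdot f$ lies in $C_c^\infty(U)$ for every $f\in C_c^\infty(X(F))$, so $\xi'$ is well defined on all of $X(F)$; and $x\in\Supp(\xi')$, so $\xi'\neq 0$. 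This contradicts the hypothesis. In other words, rather than extending $\xi$ abstractly, multiply it by the invariant cutoff and extend by zero --- the equivariance is then automatic and no functional-analytic extension theorem is needed.
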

\begin{proof}

Consider the quotient $X/G$. It is an affine algebraic variety.
Embed it in an affine space $\mathbb{A}^n$. This defines a map
$\pi:X(F) \to F^n$. Since $U$ is saturated, there exists an open
subset $V \subset (X/G)(F)$ such that $U=\pi^{-1}(V)$. Clearly
there exists an open subset $V' \subset F^n$ such that $V' \cap
(X/G)(F) = V$.

Let $\xi \in \cD(U)^{G(F),\chi}$. Suppose that $\xi$ is non-zero.
Let $x \in \Supp \xi$ and let $y:=\pi(x)$. Let $g \in
C^{\infty}_c(V')$ be such that $g(y) = 1$. Consider $\xi' \in
\cD(X(F))$ defined by $\xi'(f) := \xi(f \cdot (g \circ \pi))$.
Clearly, $\Supp(\xi') \subset U$ and hence we can interpret $\xi'$
as an element in $\cD(X(F))^{G(F),\chi}$. Therefore $\xi'=0$. On
the other hand, $x \in \Supp(\xi')$. Contradiction.
\end{proof}

\begin{proof}[Proof of Theorem \ref{Gen_HC}.] Let $x$
be a $G$-semisimple element. Let ($U_x$,$p_x$,$\psi_x$,
$S_x$,$N_x$) be an analytic Luna slice at $x$.

Let $\xi' = \xi |_{U_x}$. Then $\xi' \in \cD(U_x)^{G(F),\chi}.$ By
Frobenius reciprocity it corresponds to $\xi'' \in
\cD(S_x)^{G_x(F),\chi}$.

The distribution $\xi''$ corresponds to a distribution $\xi''' \in
\cD(\psi_x(S_x))^{G_x(F),\chi}.$

However, by the previous lemma the assumption implies that
$\cD(\psi_x(S_x))^{G_x(F),\chi}=0.$ Hence $\xi'=0$.

Let $S \subset X(F)$ be the set of all $G$-semisimple points. Let
$U= \bigcup_{x\in S} U_x$. We saw that $\xi|_U=0$. On the other
hand, $U$ includes all the closed orbits, and hence by Corollary
\ref{OpenClosedAll} $U=X$.
\end{proof}

The following generalization of this theorem is proven in the same
way.

\begin{theorem} \label{Gen_HC_K}
Let a reductive group $G$ act on a smooth affine variety $X$. Let
$K \subset G(F)$ be an open subgroup and let $\chi$ be a character
of $K$. Suppose that for any $G$-semisimple $x \in X(F)$ we have
$$\cD(N_{Gx,x}^X(F))^{K_x,\chi}=0.$$ Then
$$\cD(X(F))^{K,\chi}=0.$$
\end{theorem}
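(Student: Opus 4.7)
The plan is to repeat the proof of Theorem \ref{Gen_HC} almost verbatim, adjusting only for the fact that $K$ is an open subgroup of $G(F)$ rather than all of $G(F)$.

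First I would verify that the preliminary lemma preceding the proof of Theorem \ref{Gen_HC} generalizes to $K$-equivariant distributions. Its proof constructs, from a putative nonzero $\xi \in \cD(U)^{G(F),\chi}$, an extension $\xi'(f) := \xi(f \cdot (g \circ \pi))$ via a cutoff $g \circ \pi$ pulled back from the categorical quotient $(X/G)(F)$. Since $g \circ \pi$ is $G(F)$-invariant it is a fortiori $K$-invariant, so the identical argument yields: if $U \subset X(F)$ is $G$-saturated open and $\cD(X(F))^{K,\chi} = 0$, then $\cD(U)^{K,\chi} = 0$. The same remark applies to $K_x$-equivariant distributions on the slice representation $N_{Gx,x}^X(F)$, with cutoff pulled back from the $G_x$-quotient; this is the form I will actually need.

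Let $\xi \in \cD(X(F))^{K,\chi}$ and let $x \in X(F)$ be $G$-semisimple. Take an analytic Luna slice $(U_x, p_x, \psi_x, S_x, N_x)$ at $x$ as in Theorem \ref{LocLuna}. The only genuine new point is that $K$ no longer acts transitively on the base $G(F)x$ of the retract $p_x$, so Frobenius reciprocity cannot be applied over all of $U_x$ at once. To remedy this I pass to the $K$-orbit $Kx \subset G(F)x$: since $K$ is open in $G(F)$ and $G(F)_x$ is closed, $Kx$ is open and closed in $G(F)x$, so $p_x^{-1}(Kx)$ is an open $K$-invariant subset of $U_x$ on which $K$ acts transitively on the base. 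Frobenius reciprocity (Theorem \ref{Frob}) then gives
$$
\cD\bigl(p_x^{-1}(Kx)\bigr)^{K,\chi} \;\cong\; \cD(S_x)^{K_x,\chi} \;\cong\; \cD\bigl(\psi_x(S_x)\bigr)^{K_x,\chi},
$$
and the generalized saturation lemma together with the hypothesis $\cD(N_{Gx,x}^X(F))^{K_x,\chi}=0$ forces this space to vanish. Thus $\xi$ vanishes on $p_x^{-1}(Kx)$.

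Since $G(F)x$ decomposes as a disjoint union of $K$-orbits, and each orbit representative is again $G$-semisimple (being $G$-conjugate to $x$), I repeat the previous step at each such representative, using an analytic Luna slice at that point. This shows that $\xi$ vanishes on an open $K$-invariant neighborhood of the whole orbit $G(F)x$. Letting $x$ range over all $G$-semisimple points of $X(F)$ produces an open $K$-invariant set $U$ containing every closed $G(F)$-orbit, and Corollary \ref{OpenClosedAll} gives $U = X(F)$, so $\xi = 0$. The only step requiring any real thought is the decomposition of $G(F)x$ into $K$-orbits before invoking Frobenius reciprocity; everything else is formally identical to the proof of Theorem \ref{Gen_HC}.
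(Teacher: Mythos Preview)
Your proof follows the paper's approach (the paper says only ``proven in the same way'' as Theorem \ref{Gen_HC}), and you have correctly identified the one genuinely new point: since $K$ need not act transitively on $G(F)x$, one must pass to a single $K$-orbit in the base before invoking Frobenius reciprocity.

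There is one small slip at the very end. Corollary \ref{OpenClosedAll} requires $U$ to be $G(F)$-invariant, whereas your $U$ is only $K$-invariant as written, and the complement of a merely $K$-invariant open set need not contain a closed $G(F)$-orbit. The fix is immediate: rather than choosing a fresh Luna slice at each $K$-orbit representative $y_i\in G(F)x$, keep the single slice $(U_x,p_x,\psi_x,S_x,N_x)$ at $x$ and observe that for any $g_i\in G(F)$ with $g_ix=y_i$ the fibre $p_x^{-1}(y_i)=g_iS_x$ embeds $G_{y_i}$-equivariantly into $N_{Gy_i,y_i}^X(F)$ with open saturated image (just transport $\psi_x$ by $g_i$). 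Your Frobenius-plus-saturation-lemma argument then runs verbatim over $p_x^{-1}(Ky_i)$, and since $G(F)x=\bigsqcup_i Ky_i$ you conclude $\xi|_{U_x}=0$. Now $\bigcup_x U_x$ is $G(F)$-invariant, open, and contains every closed $G(F)$-orbit, so Corollary \ref{OpenClosedAll} applies as stated.
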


Now we would like to formulate a slightly more general version of
this theorem concerning $K$-equivariant vector systems.
\footnote{Subsection \ref{vs} and in particular the notion of
"vector system" along with the results at the end of \S\S
\ref{GHC} and \S\S \ref{sGHC} are not essential for the rest of
the paper. They are merely included for future reference.}

\begin{definition}
Let a reductive group $G$ act on a smooth affine variety $X$. Let
$K \subset G(F)$ be an open subgroup. Let $\E$ be a
$K$-equivariant vector system on $X(F)$. Let $x \in X(F)$ be
$G$-semisimple. Let $\E'$ be a $K_x$-equivariant vector system on
$N_{Gx,x}^X(F)$. We say that $\E$ and $\E'$ are
\textbf{compatible} if there exists an analytic Luna slice
$(U,p,\psi,S,N)$ such that $\E|_S = \psi^*(\E')$.
\end{definition}
Note that if $\E$ and $\E'$ are constant with the same fiber then
they are compatible.

The following theorem is proven in the same way as Theorem
\ref{Gen_HC}.
\begin{theorem} \label{Gen_HC_Sys}
Let a reductive group $G$ act on a smooth affine variety $X$. Let
$K \subset G(F)$ be an open subgroup and let $\E$ be a
$K$-equivariant vector system on $X(F)$. Suppose that for any
$G$-semisimple $x \in X(F)$ there exists a $K$-equivariant vector
system $\E'$ on $N_{Gx,x}^X(F)$, compatible with $\E$ such that
$$\cD(N_{Gx,x}^X(F),\E')^{K_x}=0.$$ Then
$$\cD(X(F),\E)^{K}=0.$$
\end{theorem}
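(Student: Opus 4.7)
The plan is to adapt the proof of Theorem \ref{Gen_HC} (and its variant Theorem \ref{Gen_HC_K}) to the vector-system setting, replacing twisted scalar distributions by $\E$-valued ones throughout.

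First I would record the vector-system analog of the preliminary saturatedness lemma used in the proof of Theorem \ref{Gen_HC}: if $\E$ is a $K$-equivariant vector system on $X(F)$, if $U \subset X(F)$ is an open saturated subset with respect to the quotient map $\pi: X(F) \to (X/G)(F)$, and if $\cD(X(F), \E)^K = 0$, then $\cD(U, \E|_U)^K = 0$. The argument is verbatim the scalar one: for any hypothetical $\xi \in \cD(U, \E|_U)^K$ with $x \in \Supp(\xi)$, multiplication by the pullback under $\pi$ of a cutoff function on $(X/G)(F) \subset \A^n(F)$ near $\pi(x)$ extends $\xi$ by zero to a nonzero element of $\cD(X(F), \E)^K$, contradicting the hypothesis.

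Next, given $\xi \in \cD(X(F), \E)^K$, for each $G$-semisimple $x \in X(F)$ I would invoke the compatibility hypothesis to choose a $K_x$-equivariant vector system $\E'_x$ on $N := N_{Gx,x}^X(F)$ together with an analytic Luna slice $(U_x, p_x, \psi_x, S_x, N_x)$ satisfying $\E|_{S_x} = \psi_x^*(\E'_x)$. Applying Frobenius reciprocity (Theorem \ref{Frob}) to the $K$-equivariant retraction $p_x$ over an open $K$-orbit $Kx \subset G(F)x$ converts $\xi|_{p_x^{-1}(Kx)}$ into a $K_x$-equivariant distribution on $S_x$ valued in $\E|_{S_x}$; pullback under the open embedding $\psi_x$ transports this to a $K_x$-equivariant distribution on the saturated open subset $\psi_x(S_x) \subset N$ valued in $\E'_x|_{\psi_x(S_x)}$. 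Combining the saturatedness lemma above (now applied to the $G_x$-action on $N$) with the hypothesis $\cD(N, \E'_x)^{K_x} = 0$ forces this distribution to vanish, so $\xi|_{p_x^{-1}(Kx)} = 0$; running the same argument at representatives of all $K$-orbits in $G(F)x$ yields $\xi|_{U_x} = 0$. The open $G(F)$-invariant union $U := \bigcup_x U_x$ over all $G$-semisimple $x \in X(F)$ contains every closed $G(F)$-orbit by Proposition \ref{LocClosedOrbit}, so by Corollary \ref{OpenClosedAll} we have $U = X(F)$ and therefore $\xi = 0$.

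The main delicacy will be the bookkeeping of the modular-character twist $\Delta_K|_{K_x} \cdot \Delta_{K_x}^{-1}$ appearing in Frobenius reciprocity: since $G$ is reductive and $x$ is $G$-semisimple, the stabilizer $G_x$ is reductive and both $G(F)$ and $G(F)_x$ are unimodular, so when $K = G(F)$ this twist is trivial, while for a general open $K \subset G(F)$ any residual twist may be absorbed into $\E'_x$, which is precisely the flexibility built into the existential formulation of the compatibility condition. A secondary point is that $K$ need not act transitively on $G(F)x$, which is why Frobenius reciprocity must be applied one $K$-orbit at a time; this causes no trouble because the $K$-orbits form an open partition of $G(F)x$ and the same Luna-slice argument applies at each representative.
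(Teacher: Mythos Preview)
Your proposal is correct and follows exactly the approach the paper has in mind: the paper gives no separate argument for Theorem~\ref{Gen_HC_Sys}, simply asserting it is ``proven in the same way as Theorem~\ref{Gen_HC}'', and what you have written is precisely the natural elaboration of that remark, carrying the Luna-slice/Frobenius/saturated-open-set argument over to $\E$-valued distributions. Your attention to the modular twist (which is indeed trivial here since open subgroups of the unimodular groups $G(F)$ and $G_x(F)$ are unimodular) and to the decomposition of $G(F)x$ into open $K$-orbits goes beyond what the paper spells out; the only thing to tighten is the passage from ``$\xi$ vanishes on each $p_{y}^{-1}(Ky)$'' to ``$\xi$ vanishes on $U_x$'', since the compatibility Luna slice at $y$ need not coincide with the one translated from $x$---but ranging over all $G$-semisimple $y$ and invoking Corollary~\ref{OpenClosedAll} on the resulting open cover disposes of this.
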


If $\E$ and $\E'$ are B-vector systems and $K$ is an open
B-analytic subgroup\footnote{In fact, any open subgroup of a
B-analytic group is B-analytic.} of $G(F)$ then the theorem also
holds for Schwartz distributions. Namely, if
$\Sc^*(N_{Gx,x}^X(F),\E')^{K_x}=0$ for any $G$-semisimple $x \in
X(F)$ then $\Sc^*(X(F),\E)^{K}=0$. The proof is the same.

\subsection{A stronger version} \label{sGHC}
$ $\\
In this section we provide means to validate the conditions of
Theorems \ref{Gen_HC}, \ref{Gen_HC_K} and \ref{Gen_HC_Sys} based
on an inductive argument.

More precisely, the goal of this section is to prove the following
theorem.

\begin{theorem} \label{Strong_HC}
Let a reductive group $G$ act on a smooth affine variety $X$. Let
$K \subset G(F)$ be an open subgroup and let $\chi$ be a character
of $K$. Suppose that for any $G$-semisimple $x \in X(F)$ such that
$$\cD(R_{G_x}(N_{Gx,x}^X))^{K_x,\chi}=0$$ we have
$$\cD(Q_{G_x}(N_{Gx,x}^X))^{K_x,\chi}=0.$$ Then for any $G$-semisimple $x \in X(F)$ we have $$\cD(N_{Gx,x}^X(F))^{K_x,\chi}=0.$$
\end{theorem}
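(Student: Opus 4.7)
The plan is strong induction on $d(x) := \dim Q_{G_x}(N_{Gx,x}^X)$. For the base case $d(x) = 0$, one has $V := N_{Gx,x}^X = V^{G_x}$ and $R_{G_x}(V) = \emptyset$, so the theorem's hypothesis forces $\cD(\{0\})^{K_x,\chi} = 0$; hence $\chi|_{K_x}$ is non-trivial, and since $K_x$ acts trivially on $V(F)$, we conclude $\cD(V(F))^{K_x,\chi} = 0$.

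For the inductive step, fix a $G_x$-stable decomposition $V = V^{G_x} \oplus V'$, so that $Q_{G_x}(V) = V'(F)$ and $\Gamma_{G_x}(V) \subset V'(F)$. The argument proceeds in three stages: (A) establish $\cD(R_{G_x}(V))^{K_x,\chi} = 0$; (B) apply the theorem's hypothesis at $x$ to obtain $\cD(V'(F))^{K_x,\chi} = 0$; (C) upgrade to $\cD(V(F))^{K_x,\chi} = 0$ via a slicing argument -- any $\xi \in \cD(V(F))^{K_x,\chi}$ tested against $f \otimes g$ produces $\xi_f : g \mapsto \xi(f \otimes g)$ in $\cD(V'(F))^{K_x,\chi} = 0$ since $K_x$ fixes $V^{G_x}$ pointwise, and density of tensor products forces $\xi = 0$. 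Stage (A) is the heart of the proof: one mimics the Luna-slice argument of Theorem \ref{Gen_HC_K}, applied to $G_x \curvearrowright V'$ restricted to the open $G_x(F)$-invariant subset $R_{G_x}(V)$, so that for $\xi \in \cD(R_{G_x}(V))^{K_x,\chi}$ and each $G_x$-semisimple $z \in R_{G_x}(V)$, a Luna-slice neighborhood $U_z$ together with Frobenius reciprocity reduce vanishing on $U_z$ to the normal-space vanishing $\cD(N_{G_x z, z}^{V'}(F))^{(K_x)_z,\chi} = 0$. To conclude $\bigcup U_z = R_{G_x}(V)$ one needs the analog of Corollary \ref{OpenClosedAll}, which rests on two observations: (i) $z \in R_{G_x}(V)$ forces $\overline{G_x(F)\,z} \subset R_{G_x}(V)$ (since $0 \notin \overline{G_x z}$ implies $\overline{G_x z} \cap \Gamma_{G_x}(V') = \emptyset$), and (ii) $\{0\}$ is the unique closed $G_x$-orbit inside $\Gamma_{G_x}(V')$; hence any closed orbit produced by Proposition \ref{LocClosedOrbit} inside the analytic closure of $R_{G_x}(V) \setminus \bigcup U_z$ in $V'(F)$ must lie in $R_{G_x}(V)$, hence in $\bigcup U_z$ -- contradiction.

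It remains to verify the normal-space vanishing. When $z$ lies in the open saturated image of the Luna slice $\psi$ of $G \curvearrowright X$ at $x$, Corollary \ref{LocLunCor} produces $y := \psi^{-1}(z) \in X(F)$, $G$-semisimple, with $G_y = (G_x)_z$ and $N_{Gy,y}^X \cong V^{G_x} \oplus N_{G_x z, z}^{V'}$ as $G_y$-modules ($G_y$ acting trivially on the first summand). A direct computation then yields
\[ d(x) - d(y) = \dim G_x z + \dim (N_{G_x z, z}^{V'})^{G_y} \ge 1, \]
where the finite-orbit case $\dim G_x z = 0$ is saved by $z \in (V')^{G_y} \setminus \{0\}$. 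The inductive hypothesis at $y$ therefore gives $\cD(N_{Gy,y}^X(F))^{K_y,\chi} = 0$ with $K_y = (K_x)_z$, and the embedding $\eta \mapsto \delta_0 \otimes \eta$ on the $V^{G_x}$-factor descends this to the desired vanishing $\cD(N_{G_x z, z}^{V'}(F))^{(K_x)_z,\chi} = 0$. For $z$ outside the Luna image, the $G_x$-equivariant linear scaling $v \mapsto tv$ for $t \in F^\times$ with $|t|$ sufficiently small moves $z$ into the image of $\psi$ while preserving being in $R_{G_x}(V)$, the stabilizer $(K_x)_z$, and the isomorphism class of $N_{G_x z, z}^{V'}$ as a $(K_x)_z$-module, reducing to the previous case. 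The main obstacle is stage (A), combining two subtleties: adapting Theorem \ref{Gen_HC_K}'s covering argument to the non-closed subset $R_{G_x}(V)$ (resolved by the ``$\{0\}$ is the only closed orbit in $\Gamma$'' observation), and verifying the normal-space vanishing at arbitrary $G_x$-semisimple $z \in R_{G_x}(V)$ rather than only those in the Luna image (resolved by the scaling trick).
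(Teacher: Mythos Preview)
Your proof is correct and follows essentially the same architecture as the paper's: reduce the vanishing on $R_{G_x}(N_{Gx,x}^X)$ to normal-space vanishing at $G_x$-semisimple points $z\in R$ via the Luna-slice argument of Theorem \ref{Gen_HC_K}, scale $z$ into the image of the Luna slice of $G\curvearrowright X$ at $x$, and translate via Corollary \ref{LocLunCor} to a $G$-semisimple $y\in X(F)$ that is ``smaller'' than $x$. The only substantive difference is the choice of well-founded ordering: the paper orders $G$-semisimple points by strict inclusion of stabilizers $G_x\supsetneq G_y$ and argues by Zorn's lemma (a minimal element of $T\setminus T_0$ cannot exist), whereas you order by the integer $d(x)=\dim Q_{G_x}(N_{Gx,x}^X)$ and do explicit strong induction. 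Your dimension computation $d(x)-d(y)=\dim G_x z+\dim(N_{G_x z,z}^{V'})^{G_y}\geq 1$ replaces the paper's one-line observation that $y\in V'(F)\setminus\{0\}$ and $(V')^{G_x}=0$ force $(G_x)_y\subsetneq G_x$. You are also more explicit than the paper about stage (C) (the passage from $Q$ to the full normal space, which the paper buries in the equality defining $T_0$) and about why the covering argument of Theorem \ref{Gen_HC_K} still works on the open set $R$; the key point there, which you identify, is that for $w\in R$ the unique closed orbit in $\overline{G_x(F)w}$ lies in $\pi^{-1}(\pi(w))\subset R$, so saturated Luna neighborhoods taken inside $R$ suffice.
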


 Together with Theorem \ref{Gen_HC_K}, this theorem gives the
following corollary.
\begin{corollary} \label{Strong_HC_Cor}
Let a reductive group $G$ act on a smooth affine variety $X$. Let
$K \subset G(F)$ be an open subgroup and let $\chi$ be a character
of $K$. Suppose that for any $G$-semisimple $x \in X(F)$ such that
$$\cD(R(N_{Gx,x}^X))^{K_x,\chi}=0$$ we have
$$\cD(Q(N_{Gx,x}^X))^{K_x,\chi}=0.$$ Then $\cD(X(F))^{K,\chi}=0.$
\end{corollary}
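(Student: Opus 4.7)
The plan is to proceed by Noetherian induction on the algebraic stabilizer $G_x$, ordered by proper inclusion of algebraic subgroups of $G$ (this order is well-founded since dimension and component count are non-negative integers). Fix a $G$-semisimple $x \in X(F)$, write $H := G_x$, $N := N_{Gx,x}^X$, and decompose the $H$-representation $N = N^H \oplus W$ with $W$ having no non-zero $H$-invariants. Under this splitting, $Q_H(N)$ is canonically identified with $W(F) \subset N(F)$, and $R_H(N) = W(F) \setminus \Gamma_H(N)$ is an open $H(F)$-saturated subset of $W(F)$. I first reduce $\cD(N(F))^{K_x,\chi}=0$ to $\cD(W(F))^{K_x,\chi}=0$: since $K_x \subset H(F)$ acts trivially on $N^H(F)$, for any $\xi \in \cD(N(F))^{K_x,\chi}$ and $f \in \cC(N^H(F))$, the partial distribution $g \mapsto \xi(f \otimes g)$ belongs to $\cD(W(F))^{K_x,\chi}$, so vanishing of all such partials forces $\xi=0$ by density of pure tensors in $\cC(N(F))$. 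The hypothesis of the theorem applied at $x$ then reduces the problem further to proving $\cD(R_H(N))^{K_x,\chi}=0$.

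To prove the latter, I would run a Luna-slice covering argument modeled on the proof of Theorem \ref{Gen_HC_K}, applied to the action of $H$ on the affine $W$ but localized to the saturated open subset $R_H(N)$. Let $\xi \in \cD(R_H(N))^{K_x,\chi}$. For each $H$-semisimple $y \in R_H(N)$ (meaning $H(F)y$ is closed in $W(F)$), we have $y\neq 0$ (since $0\in\Gamma_H(N)$) and $y\in W$, forcing $H_y\subsetneq H$ strictly. Apply Theorem \ref{LocLuna} to the action of $H$ on $W$ at $y$ to obtain an analytic Luna slice $(U_y,p_y,\psi_y,S_y,N_{Hy,y}^W(F))$; combining Frobenius reciprocity (Theorem \ref{Frob}) with the open-saturated-subset lemma used in the proof of Theorem \ref{Gen_HC}, vanishing of $\cD(N_{Hy,y}^W(F))^{(K_x)_y,\chi}$ forces $\xi$ to vanish on the open $H(F)$-invariant neighborhood $U_y \cap R_H(N)$ of $H(F) y$. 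These neighborhoods cover $R_H(N)$: for any $z\in R_H(N)$, saturation of $R_H(N)$ and continuity of the categorical quotient map $\pi$ imply $\overline{H(F) z}\subset R_H(N)$, and Proposition \ref{LocClosedOrbit} produces a closed $H(F)$-orbit $H(F)y\subset\overline{H(F) z}$ (so $y$ is $H$-semisimple in $R_H(N)$); then $z\in U_y$ follows from $y\in\overline{H(F) z}$ and the $H(F)$-invariance of $U_y$. Granting the vanishing of each $\cD(N_{Hy,y}^W(F))^{(K_x)_y,\chi}$, we conclude $\xi = 0$.

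The crux, and the main obstacle, is obtaining $\cD(N_{Hy,y}^W(F))^{(K_x)_y,\chi}=0$: the induction hypothesis concerns points of $X(F)$, whereas $y\in W$ and may lie far outside the image of any analytic Luna slice at $x$. The remedy is a scaling argument. Choose $\epsilon\in F^\times$ with $|\epsilon|$ small enough that $\epsilon y$ lies in the image $\psi(S)$ of a chosen analytic Luna slice $(U,p,\psi,S,N(F))$ for the action of $G$ on $X$ at $x$; this is possible because $\psi(S)$ is an open neighborhood of $0$ in $N(F)$ and $\epsilon y\to 0$ as $|\epsilon|\to 0$. Set $\tilde y_\epsilon := \psi^{-1}(\epsilon y)\in S\subset X(F)$. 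Scaling by $\epsilon$ is an $H$-equivariant linear automorphism of $W$ that preserves stabilizers and transports normal spaces, so Corollary \ref{LocLunCor} combined with this scaling yields $G_{\tilde y_\epsilon}=H_{\epsilon y}=H_y$, $K_{\tilde y_\epsilon}=(K_x)_y$, and
$$N_{G\tilde y_\epsilon,\tilde y_\epsilon}^X(F) \cong N_{H\epsilon y,\epsilon y}^N(F) \cong N_{Hy,y}^N(F) = N_{Hy,y}^W(F) \oplus N^H(F)$$
as $H_y$-representations (using $N = W\oplus N^H$ and $Hy\subset W$). Since $H_y\subsetneq H=G_x$, the induction hypothesis applies at $\tilde y_\epsilon$ and yields $\cD(N_{G\tilde y_\epsilon,\tilde y_\epsilon}^X(F))^{K_{\tilde y_\epsilon},\chi}=0$; then a partial-distribution argument (as in the first reduction, using that $(K_x)_y$ acts trivially on $N^H(F)$) extracts $\cD(N_{Hy,y}^W(F))^{(K_x)_y,\chi}=0$. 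The base case of the induction is automatic: if $H$ is minimal among stabilizers of $G$-semisimple points in $X(F)$, then the scaling argument shows any $H$-semisimple $y\in R_H(N)$ would yield a $G$-semisimple point with stabilizer $H_y\subsetneq H$, a contradiction; hence $R_H(N)=\emptyset$ and the reduction via the hypothesis trivializes.
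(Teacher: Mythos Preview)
Your argument is essentially the paper's own proof: the paper derives the Corollary by combining Theorem~\ref{Gen_HC_K} with Theorem~\ref{Strong_HC}, and your Noetherian induction together with the Luna-slice-plus-scaling step reproduces the proof of Theorem~\ref{Strong_HC} almost verbatim (your partial-distribution reduction from $\cD(N(F))$ to $\cD(Q(N))$ is exactly the equality of the two descriptions of $T_0$ that the paper asserts without comment, and your covering argument on $R_H(N)$ is the content of applying Theorem~\ref{Gen_HC_K} to the open saturated set $R$). The one omission is that you only establish $\cD(N_{Gx,x}^X(F))^{K_x,\chi}=0$ for every $G$-semisimple $x$, which is Theorem~\ref{Strong_HC}, and never pass to $\cD(X(F))^{K,\chi}=0$; you need one more invocation of Theorem~\ref{Gen_HC_K} at the end to close the argument.
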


From now till the end of the section we fix $G$, $X$, $K$ and
$\chi$. Let us introduce several definitions and notation.

\begin{notation}
Denote \itemize  \item  $T \subset X(F)$ the set of all
$G$-semisimple points.
\item For $x,y \in T$ we say that $x>y$ if $G_x\supsetneqq G_y$.
\item $T_0 := \{x \in T\ |\ \cD(Q(N_{Gx,x}^X))^{K_x,\chi}=0 \} = \{x \in T\ |\ \cD((N_{Gx,x}^X))^{K_x,\chi}=0 \} .$
\end{notation}

\begin{proof}[Proof of Theorem \ref{Strong_HC}]
We have to show that $T=T_0$. Assume the contrary.

Note that every chain in $T$ with respect to our ordering has a
minimum. Hence by Zorn's lemma every non-empty set in $T$ has a
minimal element. Let $x$ be a minimal element of $T - T_0$. To get
a contradiction, it is enough to show that
$\cD(R(N_{Gx,x}^X))^{K_x,\chi}=0$.

Denote $R:=R(N_{Gx,x}^X)$. By Theorem \ref{Gen_HC_K}, it is enough
to show that for any $y\in R$ we have
$$\cD(N_{G(F)_x y,y}^R)^{(K_x)_y,\chi}=0.$$

Let $(U,p,\psi,S,N)$ be an analytic Luna slice at $x$.

Since $\psi(S)$ is open and contains 0, we can assume, upon
replacing $y$ by $\lambda y$ for some $\lambda \in F^{\times}$,
that $y \in \psi(S)$. Let $z \in S$ be such that $\psi(z)=y$. By
Corollary \ref{LocLunCor}, $G(F)_z=(G(F)_x)_y\subsetneqq G(F)_x$
and $N_{G(F)_x y,y}^R \cong N_{G z,z}^X(F)$. Hence $(K_x)_y = K_z$
and therefore
$$\cD(N_{G(F)_x y,y}^R)^{(K_x)_y,\chi} \cong \cD(N_{G
z,z}^X(F))^{K_z,\chi}.$$

However $z < x$ and hence $z \in T_0$ which means that $\cD(N_{G
z,z}^X(F))^{K_z,\chi}=0$.
\end{proof}

\begin{remark}
One can rewrite this proof such that it will use Zorn's lemma for
finite sets only, which does not depend on the axiom of choice.
\end{remark}

\begin{remark}
As before, Theorem \ref{Strong_HC} and Corollary
\ref{Strong_HC_Cor} also hold  for Schwartz distributions, with a
similar proof.
\end{remark}

Again, we can formulate a more general version of Corollary
\ref{Strong_HC_Cor} concerning vector systems.
\footnote{Subsection \ref{vs} and in particular, the notion of
"vector system" along with the results at the end of \S\S
\ref{GHC} and \S\S \ref{sGHC} are not essential for the rest of
the paper. They are merely included for future reference.}

\begin{theorem} \label{Strong_HC_Sys}
Let a reductive group $G$ act on a smooth affine variety $X$. Let
$K \subset G(F)$ be an open subgroup and let $\E$ be a
$K$-equivariant vector system on $X(F)$.

Suppose that for any $G$-semisimple $x \in X(F)$ satisfying\\
(*) for any $K_x \times F^{\times}$-equivariant vector system
$\E'$ on $R(N_{Gx,x}^X)$ (where $F^{\times}$ acts by homothety)
compatible with $\E$ we have $\cD(R(N_{Gx,x}^X),\E')^{K_x}=0$,

the following holds\\
(**)  there exists a $K_x \times F^{\times}$-equivariant vector
system $\E'$ on $Q(N_{Gx,x}^X)$ compatible with $\E$ such that
$$\cD(Q(N_{Gx,x}^X),\E')^{K_x}=0.$$

Then $\cD(X(F),\E)^{K}=0.$
\end{theorem}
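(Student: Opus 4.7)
The plan is to adapt the proof of Theorem \ref{Strong_HC}, combining its Noetherian-induction argument with the vector-system bookkeeping used in Theorem \ref{Gen_HC_Sys}. By Theorem \ref{Gen_HC_Sys}, it suffices to exhibit, for every $G$-semisimple $x \in X(F)$, a $K_x$-equivariant vector system $\E_x$ on $N_{Gx,x}^X(F)$ compatible with $\E$ and satisfying $\cD(N_{Gx,x}^X(F),\E_x)^{K_x}=0$. Writing $T$ for the set of $G$-semisimple points, set
\[
T_0 := \{x \in T : \text{some compatible } \E_x \text{ on } N_{Gx,x}^X(F) \text{ with vanishing } K_x\text{-invariants exists}\};
\]
the goal is $T = T_0$. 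The key preliminary observation, analogous to the identity $\{x : \cD(Q(N))^{K_x,\chi} = 0\} = \{x : \cD(N)^{K_x,\chi} = 0\}$ appearing before the proof of Theorem \ref{Strong_HC}, is that any compatible $K_x \times F^{\times}$-equivariant vector system $\E_x^Q$ on $Q(N_{Gx,x}^X)$ with vanishing $K_x$-invariants pulls back, along the $G_x$-invariant splitting $N_{Gx,x}^X = (N_{Gx,x}^X)^{G_x} \oplus Q(N_{Gx,x}^X)$ (from reductivity of $G_x$), to a compatible $\E_x$ on $N_{Gx,x}^X(F)$ with the same property; here one combines Proposition \ref{Product} with a standard Fubini-style argument for vector-system-valued distributions. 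Hence (**) at $x$ implies $x \in T_0$.

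Order $T$ by $x > y \iff G_x \supsetneqq G_y$, so that every chain has a minimum. Assume $T \neq T_0$ and pick a minimal element $x$ of $T \setminus T_0$. By the theorem's hypothesis applied at $x$, a contradiction will follow once condition (*) is verified: for every $K_x \times F^{\times}$-equivariant $\E'$ on $R(N_{Gx,x}^X)$ compatible with $\E$, we must show $\cD(R(N_{Gx,x}^X), \E')^{K_x} = 0$. This in turn follows from a second application of Theorem \ref{Gen_HC_Sys}, this time to the $G_x$-action on $R(N_{Gx,x}^X)$ with vector system $\E'$. For each $G_x$-semisimple $y \in R$, fix an analytic Luna slice $(U,p,\psi,S,N)$ at $x$ and choose $\lambda \in F^{\times}$ so that $\lambda y \in \psi(S)$ (possible since $\psi(S)$ is an open neighborhood of $0$ in $N$ and $R$ is $F^{\times}$-stable). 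Set $z := \psi^{-1}(\lambda y) \in S$; by Corollary \ref{LocLunCor}(i), $G_z = (G_x)_{\lambda y} \subsetneqq G_x$, so $z < x$. By minimality of $x$, $z \in T_0$, and under the identification of Corollary \ref{LocLunCor}(ii) the compatible $\E_z$ transports, via the $F^{\times}$-equivariance of $\E'$, to the required compatible vector system on $N_{G_x \cdot \lambda y,\, \lambda y}^{R}$ with vanishing $(K_x)_{\lambda y} = K_z$-invariants.

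The principal obstacle will be keeping compatibility of vector systems in sync across the three operations appearing in the argument: pullback along the Luna slice $\psi$, pullback along the projection $N \twoheadrightarrow Q(N)$, and homothety by $\lambda$. The $F^{\times}$-equivariance built into conditions (*) and (**) is precisely what makes the scaling step preserve both vanishing and compatibility, while Proposition \ref{Product} takes care of the product decomposition. Once these compatibilities are verified, the induction on stabilizers closes verbatim as in the proof of Theorem \ref{Strong_HC}. As before, the same argument yields an analogous statement for Schwartz distributions when $\E$ is a $B$-vector system and $K$ is $B$-analytic.
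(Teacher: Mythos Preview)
Your argument follows the paper's approach: the paper says only that the proof is the same as that of Theorem \ref{Strong_HC}, augmented by a lemma asserting that compatibility of vector systems is transitive through nested Luna slices (if $\E'$ on $N$ is compatible with $\E$ and $\E''$ on the inner normal space $N_{G_xz,z}^N$ is compatible with $\E'$, then the transported $\E'''$ on $N_{Gy,y}^X$ is compatible with $\E$). Your write-up spells out the same skeleton in more detail, including the $Q(N)\to N$ passage and the $F^\times$-scaling.

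One point to make explicit: in your inductive step you take $\E_z$ compatible with $\E$ (coming from $z\in T_0$) and transport it to a vector system on $N_{G_x\cdot\lambda y,\,\lambda y}^{R}$, asserting it is compatible with $\E'$. Strictly speaking this is the \emph{converse} of the direction in which the paper states its lemma. The converse does follow from the definitions by the same composition-of-Luna-slices argument, and the $F^\times$-equivariance you built into $T_0$ (via (**)) lets you match germs at the origin independently of the choice of slice; but since this is exactly where the vector-system version departs from the character version, it is worth isolating as a separate lemma rather than leaving it inside the phrase ``transports to the required compatible vector system''.
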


The proof is the same as the proof of Theorem \ref{Strong_HC}
using the following lemma which follows from the definitions.

\begin{lemma}
Let a reductive group $G$ act on a smooth affine variety $X$. Let
$K \subset G(F)$ be an open subgroup and let $\E$ be a
$K$-equivariant vector system on $X(F)$. Let $x \in X(F)$ be
$G$-semisimple. Let $(U, p, \psi, S, N)$ be an analytic Luna slice
at $x$.

Let $\E'$ be a $K_x$-equivariant vector system on $N$ compatible
with $\E$. Let $y \in S$ be $G$-semisimple, and let $z :=
\psi(y)$. Let $\E''$ be a $(K_x)_z$-equivariant vector system on
$N_{G_xz,z}^N$ compatible with $\E'$. Consider the isomorphism
$N_{G_xz,z}^N(F) \cong N_{Gy,y}^X(F)$ and let $\E'''$ be the
corresponding $K_y$-equivariant vector system on $N_{Gy,y}^X(F)$.

Then $\E'''$ is compatible with $\E$.
\end{lemma}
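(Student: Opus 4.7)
The plan is to prove this lemma by \emph{composing} analytic Luna slices. The desired compatibility at $y$ will be witnessed by a Luna slice at $y$ in $X$ that is manufactured from the given slice $(U,p,\psi,S,N)$ at $x$ in $X$ together with an auxiliary Luna slice at $z$ in $N$ that witnesses the hypothesized compatibility of $\E'$ with $\E''$.

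First I unpack the hypotheses. The compatibility of $\E$ with $\E'$ supplies the slice $(U,p,\psi,S,N)$ with $\E|_S = \psi^*\E'$. The compatibility of $\E'$ with $\E''$ (for the $G_x$-action on $N$) supplies an analytic Luna slice $(U_0,p_0,\psi_0,S_0,N_0)$ at $z$ in $N$ with $\E'|_{S_0} = \psi_0^*\E''$, where $N_0 = N_{G_xz,z}^N(F)$. Since $\psi(S)$ is an open neighborhood of $z=\psi(y)$ in $N$, after shrinking $U_0$ I may assume $U_0 \subset \psi(S)$, so that $S_0 \subset \psi(S)$ and $\psi(\psi^{-1}(S_0)) = S_0$.

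Next I build a Luna slice at $y$ in $X$ by setting $S' := \psi^{-1}(S_0)$, $U' := G(F)\cdot S'$, and $\psi' := \psi_0 \circ (\psi|_{S'})$, viewed as a map into $N_{Gy,y}^X(F)$ via the $G(F)_y$-equivariant isomorphism $N_0 \cong N_{Gy,y}^X(F)$ furnished by Corollary \ref{LocLunCor}(ii). The retract $p':U'\to G(F)y$ is defined equivariantly: on a point $g\cdot s$ with $s\in S'$, set $p'(g\cdot s) := gh\cdot y$, where $h\in G(F)_x$ satisfies $p_0(\psi(s)) = h\cdot z$. Using $G(F)_y = (G(F)_x)_z$ from Corollary \ref{LocLunCor}(i) together with the $G(F)_x$-equivariance of $p_0$, one checks that $p'$ is well-defined and $G$-equivariant with $(p')^{-1}(y)=S'$; the image of $\psi'$ equals $\psi_0(S_0)$, which is open and saturated because $\psi_0$ has open saturated image. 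Hence $(U',p',\psi',S',N_{Gy,y}^X(F))$ is a genuine analytic Luna slice at $y$ in $X$.

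Finally, the compatibility reduces to a short chain of identifications: $\E|_{S'} = (\psi^*\E')|_{S'} = (\psi|_{S'})^*(\E'|_{S_0}) = (\psi|_{S'})^*\psi_0^*\E'' = (\psi')^*\E'' = (\psi')^*\E'''$, where the final equality uses the identification of $\E''$ with $\E'''$ across $N_0 \cong N_{Gy,y}^X(F)$. The main technical obstacle is the verification that this composed data genuinely forms a Luna slice --- openness of $U'$ in $X(F)$, well-definedness and $G$-equivariance of $p'$, and the open/saturated nature of $\psi'$'s image --- but each piece reduces to bookkeeping on the fibre-bundle structure $U \cong G(F)\times^{G(F)_x}S$ implicit in the original slice and the analogous structure for the slice at $z$ in $N$.
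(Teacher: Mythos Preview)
Your proof is correct and is precisely what the paper has in mind: the paper states only that the lemma ``follows from the definitions,'' and your argument is exactly the unpacking of those definitions --- composing the Luna slice at $x$ in $X$ with the Luna slice at $z$ in $N$ to obtain a Luna slice at $y$ in $X$, and then chaining the pullback identities $\E|_{S'} = (\psi|_{S'})^*\E'|_{S_0} = (\psi|_{S'})^*\psi_0^*\E'' = (\psi')^*\E'''$. One small point of care: in the lemma as stated, the given quintuple $(U,p,\psi,S,N)$ should be read as the slice \emph{witnessing} the compatibility of $\E$ with $\E'$ (otherwise the choice of $y\in S$ would be incoherent); you have correctly made this assumption, and it is the intended reading.
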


Again, if $\E$ and $\E'$ are B-vector systems then the theorem
holds also for Schwartz distributions.

\section{Distributions versus Schwartz distributions}
\label{DistVerSch}

\setcounter{lemma}{0}

In this section $F$ is Archimedean. The tools developed in the
previous section enable us to prove the following version of the
Localization Principle.

\begin{theorem}[Localization Principle] \label{LocPrin}
Let a reductive group $G$ act on a smooth algebraic variety $X$.
Let $Y$ be an algebraic variety and $\phi:X \to Y$ be an affine
algebraic $G$-invariant map. Let $\chi$ be a character of $G(F)$.
Suppose that for any $y \in Y(F)$ we have
$\cD_{X(F)}((\phi^{-1}(y))(F))^{G(F),\chi}=0$. Then
$\cD(X(F))^{G(F),\chi}=0$.
\end{theorem}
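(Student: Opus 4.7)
The plan is to apply Corollary~\ref{Strong_HC_Cor} (Strong Harish-Chandra Descent) to the action of $G$ on $X$. This reduces the theorem to the following local statement: for every $G$-semisimple $x \in X(F)$ satisfying $\cD(R(N_{Gx,x}^X))^{G(F)_x,\chi}=0$, one must establish $\cD(Q(N_{Gx,x}^X))^{G(F)_x,\chi}=0$. Fix such an $x$, and set $V := N_{Gx,x}^X(F)$ and $y := \phi(x) \in Y(F)$. The premise means that any $(G(F)_x,\chi)$-equivariant distribution on $Q(V)$ is automatically supported in $\Gamma(V)$, the cone of points $v \in Q(V)$ with $0 \in \overline{G(F)_x v}$, so the task reduces to proving $\cD_{Q(V)}(\Gamma(V))^{G(F)_x,\chi}=0$.

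The key idea is to transport such distributions back to $X(F)$ via an analytic Luna slice $(U,p,\psi,S,N)$ at $x$ combined with Frobenius reciprocity (Theorem~\ref{Frob}). Starting from $\mu \in \cD_{Q(V)}(\Gamma(V))^{G(F)_x,\chi}$, I would extend $\mu$ to a distribution $\widetilde\mu$ on $V$ with $\Supp \widetilde\mu \subset \Gamma(V) \subset V$ by using the $G_x$-equivariant splitting $V = V^{G_x} \oplus Q(V)$ and tensoring with the delta at $0$ on $V^{G_x}$ (equivalently, pushing forward along the closed embedding $Q(V) \hookrightarrow V$). Restricting $\widetilde\mu$ to the open saturated subset $\psi(S) \subset V$ and applying Frobenius reciprocity gives a $(G(F),\chi)$-equivariant distribution $\widetilde\xi$ on $U$.

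The crucial geometric observation is that, since $\phi$ is $G$-invariant and therefore factors through the categorical quotient $\pi_X\colon X \to X/G$ by the universal property of Theorem~\ref{Quotient}, any point $z \in S$ with $\psi(z) \in \Gamma(V)$ satisfies $Gx \subset \overline{Gz}$, whence $\pi_X(z) = \pi_X(x)$ and $\phi(z) = y$. Consequently $\Supp \widetilde\xi \subset \phi^{-1}(y)(F) \cap U$. Now the hypothesis $\cD_{X(F)}(\phi^{-1}(y)(F))^{G(F),\chi}=0$, restricted to the open $G(F)$-invariant subset $U$, forces $\widetilde\xi=0$, and injectivity of the construction $\mu \mapsto \widetilde\xi$ gives $\mu=0$, completing the descent step.

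The main obstacle will be the careful bookkeeping of supports across the Luna slice and Frobenius reciprocity, handling the modular characters that appear in Theorem~\ref{Frob}, and in particular justifying the passage from the global hypothesis on $X(F)$ to the analogous statement on the open subset $U$. This last passage is the genuinely Archimedean content of the principle: in the non-Archimedean case it follows at once from Bernstein's localization principle \cite{Ber}, but in the Archimedean case one must argue more carefully, e.g.\ by a partition-of-unity or sheaf-theoretic extension-by-zero argument exploiting that $\phi^{-1}(y)(F)$ is closed in $X(F)$ and that $U$ is a $G$-invariant open neighborhood of $G(F)x$ with a $G$-equivariant retract onto the orbit.
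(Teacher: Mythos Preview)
Your approach is essentially identical to the paper's: reduce via Corollary~\ref{Strong_HC_Cor} to showing $\cD_{Q(V)}(\Gamma(V))^{G(F)_x,\chi}=0$, then transport through the Luna slice and Frobenius reciprocity back into $X(F)$ and invoke the hypothesis on the fibre $\phi^{-1}(y)$. Two of your anticipated difficulties are non-issues: the modular characters in Theorem~\ref{Frob} are trivial here because $G(F)$ and $G_x(F)$ are $F$-points of reductive groups, hence unimodular; and the injectivity of your map $\mu\mapsto\widetilde\mu|_{\psi(S)}$ holds simply because $\psi(S)$ is \emph{saturated} in $N$, so it contains the entire fibre $\Gamma(V)=\pi_N^{-1}(\pi_N(0))$, and restriction from $N$ to $\psi(S)$ loses nothing on distributions supported in $\Gamma(V)$.

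Your one genuine obstacle---passing from the hypothesis on $X(F)$ to the open set $U$---is not resolved by partition of unity, and the paper's argument is both simpler and sharper than what you propose. The key point you are missing is Corollary~\ref{EquivClassClosed}: the set
\[
S_x \;=\; \{\,z\in X(F)\ :\ x\in\overline{G(F)z}\,\}
\]
is \emph{closed in all of $X(F)$}, not merely in $U$. Via Lemma~\ref{Gamma} and the Luna slice one identifies $G(F)\psi^{-1}(\Gamma(V))$ with $S_x$, which is therefore a closed subset of $X(F)$ contained in $U$. Hence any distribution on $U$ supported in $S_x$ extends by zero to a distribution on $X(F)$ supported in $S_x\subset\pi_X^{-1}(\pi_X(x))\subset\phi^{-1}(y)(F)$, and the hypothesis kills it. No sheaf gymnastics is needed; the closedness of $S_x$ is exactly the ingredient that makes the Archimedean case go through as cleanly as the non-Archimedean one.
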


For the proof see Appendix \ref{SecRedLocPrin}.

In this section we use this theorem to show that if there are no
$G(F)$-equivariant Schwartz distributions on $X(F)$ then there are
no $G(F)$-equivariant distributions on $X(F)$.

\begin{theorem}   \label{NoSNoDist}
Let a reductive group $G$ act on a smooth affine variety $X$. Let
$V$ be a finite-dimensional algebraic representation of $G(F)$.
Suppose that $$\Sc^*(X(F),V)^{G(F)}=0.$$ Then
$$\cD(X(F),V)^{G(F)}=0.$$
\end{theorem}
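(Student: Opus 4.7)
The proof rests on two tools from the preceding material: the Localization Principle (Theorem \ref{LocPrin}), and the result from Appendix \ref{AppSubFrob} asserting that any $G(F)$-invariant distribution which is Nashly compactly supported modulo $G(F)$ is automatically a Schwartz distribution.

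First I would form the categorical quotient $\phi: X \to X/G$, which exists and is affine by Theorem \ref{Quotient} since $G$ is reductive and $X$ affine, and apply the Localization Principle to the $G$-invariant morphism $\phi$. Although Theorem \ref{LocPrin} is stated for scalar-valued distributions twisted by a character, it adapts readily to a finite-dimensional coefficient representation $V$: any $G(F)$-invariant $V$-valued distribution has components, in a chosen basis of $V$, that span a finite-dimensional $G(F)$-stable subspace of $\cD(X(F))$, to which the scalar Principle can be applied. This reduces the theorem to the assertion
\[\cD_{X(F)}(F_y, V)^{G(F)} = 0 \quad \text{for every } y \in (X/G)(F),\]
where $F_y := \phi^{-1}(y)(F)$.

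Next, fix $y$ and $\xi \in \cD_{X(F)}(F_y, V)^{G(F)}$. Its support lies in the preimage under the $G$-invariant Nash map $\phi : X(F) \to (X/G)(F)$ of the singleton $\{y\}$, a Nashly compact subset of the quotient. This is precisely the condition that $\xi$ is Nashly compactly supported modulo $G(F)$. Invoking the Appendix \ref{AppSubFrob} result, $\xi$ is then a Schwartz distribution, so $\xi \in \Sc^*(X(F), V)^{G(F)}$, which vanishes by hypothesis. Hence $\xi = 0$, completing the proof.

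The main technical obstacle lies in the second step: one must make precise the notion of ``Nashly compactly supported modulo $G(F)$'' and verify that having support in the Nash-map preimage of a single point really does entail this property. The fiber $F_y$ itself is typically non-compact (for instance, it may contain the full nilpotent cone when $y$ is the image of a $G$-fixed point), so one cannot appeal to naive compactness of $\Supp(\xi)$ itself. The argument requires a genuine Nash structure on $(X/G)(F)$ --- inherited from an affine embedding of the possibly singular variety $X/G$ --- compatible with $\phi$, so that Nash compactness of $\{y\}$ in the quotient base lifts to the required compactness of $\Supp(\xi)$ modulo the $G(F)$-action. By contrast, the extension of the Localization Principle from characters to finite-dimensional coefficient representations is routine and causes no real difficulty.
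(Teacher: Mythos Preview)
Your overall strategy --- Localization Principle followed by Theorem \ref{CompSchwartz} --- is exactly the paper's approach. However, the step you correctly flag as the main obstacle is also the step you do not actually carry out, and your sketched resolution (lifting Nash compactness of $\{y\}$ through the quotient map via some Nash structure on the possibly singular variety $(X/G)(F)$) is not how the paper proceeds and would be difficult to make rigorous as stated.

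The paper's verification that the fiber $F_y = \pi_X^{-1}(y)(F)$ is Nashly compact modulo $G(F)$ is concrete and relies on the structure of the categorical quotient, not on Nash geometry of the base. By Theorem \ref{Quotient}, the fiber $\pi_X^{-1}(y)$ contains a \emph{unique} closed $G$-orbit, and hence $F_y$ contains only finitely many closed $G(F)$-orbits. Choosing representatives $x_1,\ldots,x_r$ of these and compact neighborhoods $C_i \ni x_i$, the set $G(F)C'$ with $C' = \bigcup_i C_i$ is an open $G(F)$-invariant subset of $X(F)$ containing every closed $G(F)$-orbit in $F_y$; by Corollary \ref{OpenClosedAll} it therefore contains all of $F_y$. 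Since $F_y$ is closed and semi-algebraic, this exhibits the required $Z$ and $C'$ in the definition of Nashly compact modulo $G(F)$. This is the missing ingredient in your argument.

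A minor remark: the passage from characters to a finite-dimensional coefficient representation $V$ is handled in the paper via Remark \ref{RemLocVectSys} (the Localization Principle holds verbatim for constant vector systems), rather than by decomposing $\xi$ into scalar components as you suggest.
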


For the proof we will need the following definition and theorem.

\begin{definition} $ $

(i) Let a topological group $K$ act on a topological space $M$. We
call a closed $K$-invariant subset $C \subset M$ \textbf{compact
modulo $K$} if there exists a compact subset $C' \subset M$ such
that $C \subset KC'.$

(ii) Let a Nash group $K$ act on a Nash manifold $M$. We call a
closed $K$-invariant subset $C \subset M$ \textbf{Nashly compact
modulo $K$} if there exist a compact subset $C' \subset M$ and
semi-algebraic closed subset $Z \subset M$ such that $C \subset Z
\subset KC'.$
\end{definition}

\begin{remark}
Let a reductive group $G$ act on a smooth affine variety $X$. Let
$K:=G(F)$ and $M:=X(F)$. Then it is easy to see that the notions
of compact modulo $K$ and Nashly compact modulo $K$ coincide.
\end{remark}

\begin{theorem} \label{CompSchwartz}
Let a Nash group $K$ act on a Nash manifold $M$. Let $E$ be a
$K$-equivariant Nash bundle over $M$. Let $\xi \in \cD(M,E)^K$ be
such that $\Supp(\xi)$ is Nashly compact modulo $K$. Then $\xi \in
\Sc^*(M,E)^K$.
\end{theorem}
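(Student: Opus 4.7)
The plan is to extend $\xi$ from a functional on $\cC(M,E)$ to a continuous linear functional on $\Sc(M,E)$. Since $\cC(M,E)$ is dense in $\Sc(M,E)$ and $\xi$ is continuous on $\cC(M,E)$, it suffices to dominate $|\xi(f)|$ by a Schwartz seminorm of $f$, uniformly for $f \in \cC(M,E)$. The key idea is to use $K$-invariance together with the Nashly-compact condition to reduce, slice-by-slice, to compactly supported distributions, which are automatically Schwartz.

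First, I fix a compact $C' \subset M$ and a semi-algebraic closed $Z \subset M$ with $\Supp(\xi) \subset Z \subset K \cdot C'$. I then cover $C'$ by finitely many \emph{Nash tubes}: $K$-invariant open neighborhoods $U_i$ of closed $K$-orbits $K \cdot x_i$ through points $x_i \in C'$, equipped with Nash isomorphisms $U_i \simeq K \times_{K_{x_i}} S_i$, where $S_i$ is a Nash slice transverse to $K \cdot x_i$. Producing these slices is the Nash-category analog of Theorem \ref{LocLuna} for Nash groups acting on Nash manifolds. By compactness of $C'$, finitely many tubes already cover $C'$, so $U := \bigcup_i U_i$ is a $K$-invariant open neighborhood of $K \cdot C' \supset \Supp(\xi)$. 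By shrinking the slices if necessary and using that $Z$ is semi-algebraic with $Z \subset K \cdot C'$, I arrange that $S_i \cap Z$ has compact closure inside $S_i$.

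Next, I apply Frobenius reciprocity for Schwartz distributions (Theorem \ref{Frob}(ii)) tube by tube: via the Nash fibration $U_i \to K/K_{x_i}$, the $K$-invariant restriction $\xi|_{U_i}$ corresponds to a $K_{x_i}$-invariant distribution $\eta_i$ on $S_i$ whose support lies in $S_i \cap Z$, and hence has compact closure. A distribution on a Nash manifold with compact support is automatically Schwartz, since its evaluation on any smooth function is controlled by finitely many derivatives on a fixed compact set, and the corresponding sup-norm seminorms are continuous Schwartz seminorms. Thus each $\eta_i$ is Schwartz, and Frobenius in reverse gives $\xi|_{U_i} \in \Sc^*(U_i, E|_{U_i})^K$. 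A $K$-invariant Nash partition of unity subordinate to $\{U_i\}$ then patches these into a global $\xi \in \Sc^*(M,E)^K$.

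The main technical obstacle is the geometric step: constructing the Nash tubes $U_i \simeq K \times_{K_{x_i}} S_i$ in sufficient generality, with the compatibility that $S_i \cap Z$ is relatively compact. Theorem \ref{LocLuna} provides such slices in the algebraic setting, but here one needs a Nash analog valid for Nash groups acting on Nash manifolds, and one must then verify the compactness of $S_i \cap Z$ from the semi-algebraic structure of $Z$ and the inclusion $Z \subset K \cdot C'$. Once the slice construction is in place, the Frobenius reduction and the patching via $K$-invariant Nash partitions of unity are standard.
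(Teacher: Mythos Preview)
Your approach has a genuine gap at the geometric step you yourself flag. The theorem is stated for an \emph{arbitrary} Nash group $K$ acting on an arbitrary Nash manifold $M$: there is no reductivity assumption, no properness, and orbits $K\cdot x_i$ through points of $C'$ need not be closed. A Nash analog of the Luna slice theorem in this generality is not available (Luna's theorem requires reductive groups and closed orbits), so the tube decomposition $U_i \simeq K\times_{K_{x_i}} S_i$ cannot be taken for granted. Likewise, $K$-invariant Nash partitions of unity subordinate to a $K$-invariant cover are not a standard tool for general Nash actions, and your claim that one can always shrink $S_i$ so that $S_i\cap Z$ is relatively compact is asserted rather than proved.

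The paper sidesteps all of this with a single observation: you do not need a slice for the $K$-action on $M$. Take any relatively compact semi-algebraic open $U\supset C'$ in $M$ (no $K$-invariance required) and consider the action submersion $m_U:K\times U\to KU$. Pull $\xi|_{KU}$ back along $m_U$; by Proposition~\ref{EnoughPull} it suffices to show the pullback is Schwartz. Now $K$ acts \emph{freely} on $K\times U$ by left translation on the first factor, so Frobenius reciprocity is trivial here and identifies the pulled-back distribution with a distribution $\eta$ on the slice $\{1\}\times U\cong U$. But $\eta$ is simply the restriction to $U$ of a global distribution on $M$ (obtained the same way from $m:K\times M\to M$), and restriction of any distribution to a relatively compact open semi-algebraic set is automatically Schwartz (Lemma~\ref{takayata}). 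No slice theorem, no invariant partition of unity, no control on $S_i\cap Z$ is needed. The moral: rather than slicing $M$ by the $K$-action, lift to $K\times U$ where the action is free and the slice is $U$ itself.
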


The statement and the idea of the proof of this theorem are due to
J. Bernstein. For the proof see Appendix \ref{KInvAreSchwartz}.

\begin{proof}[Proof of Theorem  \ref{NoSNoDist}]
Fix any $y \in (X/G)(F)$ and denote $M:=\pi_X^{-1}(y)(F)$.

By the Localization Principle (Theorem \ref{LocPrin} and Remark
\ref{RemLocVectSys}), it is enough to prove that
$$\Sc^*_{X(F)}(M,V)^{G(F)}=\cD_{X(F)}(M,V)^{G(F)}.$$ Choose $\xi
\in \cD_{X(F)}(M,V)^{G(F)}$. $M$ has a unique closed stable
$G$-orbit and hence a finite number of closed $G(F)$-orbits. By
Theorem \ref{CompSchwartz}, it is enough to show that $M$ is
Nashly compact modulo $G(F)$. Clearly $M$ is semi-algebraic.
Choose representatives $x_i$ of the closed $G(F)$-orbits in $M$.
Choose compact neighborhoods $C_i$ of $x_i$. Let $C' := \bigcup
C_i$. By Corollary \ref{OpenClosedAll}, $G(F)C' \supset M$.
\end{proof}

\section{Applications of Fourier transform and the Weil
representation} \label{SecFour}

Let $G$ be a reductive group   and $V$ be a finite-dimensional
 $F$-rational representation of $G$. Let $\chi$ be a
character of $G(F)$. In this section we provide some tools to
verify that $\Sc^*(Q(V))^{G(F),\chi}=0$ provided that
$\Sc^*(R(V))^{G(F),\chi}=0$.

\subsection{Preliminaries}
$ $\\
For this subsection let $B$ be a non-degenerate bilinear form on a
finite-dimensional vector space $V$ over $F$. We also fix an
additive character $\kappa$ of $F$. If $F$ is Archimedean we take
$\kappa(x):=e^{2\pi \mathrm{i} \re(x)}$.

\begin{notation}
We identify $V$ and $V^*$ via $B$ and endow $V$ with the self-dual
Haar measure with respect to $\psi$. Denote by $\Fou_B: \Sc^*(V)
\to \Sc^*(V)$ the Fourier transform. For any B-analytic manifold
$M$ over $F$ we also denote by $\Fou_B:\Sc^*(M \times V) \to
\Sc^*(M \times V)$ the partial Fourier transform.
\end{notation}

\begin{notation}
Consider the homothety action of $F^{\times}$ on $V$ given by
$\rho(\lambda)v:= \lambda^{-1}v$. It gives rise to an action
$\rho$ of $F^{\times}$ on $\Sc^*(V)$.

Let $|\cdot |$ denote the normalized absolute value. Recall that
for $F =\R$, $|\lambda|$ is equal to the classical absolute value
but for $F =\C$, $|\lambda| = (\re \lambda)^2+ (\Im \lambda)^2$.
\end{notation}

\begin{notation}
We denote by $\gamma(B)$ the Weil constant.  For its definition
see e.g. \cite[\S 2.3]{Gel}  for non-Archimedean $F$ and \cite[\S
1]{RS1} for Archimedean $F$.

For any $t\in F^{\times}$ denote $\delta_B(t)=
\gamma(B)/\gamma(tB)$.
\end{notation}

Note that $\gamma(B)$ is an 8-th root of unity and if $\dim V$ is
odd and $F \neq \C$ then $\delta_B$ is \textbf{not} a
multiplicative character.

\begin{notation}
We denote $$Z(B):=\{x \in V\ |\ B(x,x)=0 \}.$$
\end{notation}

\begin{theorem} [non-Archimedean homogeneity] \label{NonArchHom}
Suppose that $F$ is \textbf{non-Archimedean}. Let $M$ be a
B-analytic manifold over $F$. Let $\xi \in \Sc^*_{V\times
M}(Z(B)\times M)$ be such that $\Fou_B(\xi) \in \Sc^*_{V\times
M}(Z(B)\times M)$. Then for any $t \in F^{\times}$, we have
$\rho(t)\xi=\delta_B(t) |t|^{\dim V/2} \xi$ and $\xi=
\gamma(B)^{-1} \Fou_B(\xi)$. In particular, if $\dim V$ is odd
then $\xi = 0$.
\end{theorem}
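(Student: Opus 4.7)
The plan is to reinterpret the two support conditions on $\xi$ as invariance under the Weil representation $\omega$ of $SL_2(F)$ (or of its metaplectic double cover when $\dim V$ is odd) attached to the quadratic form $B$, acting on $\Sc(V\times M)$ trivially on the $M$-factor. The relevant operators are
\begin{align*}
\omega(u(b))\varphi(x,m) &= \kappa\bigl(\tfrac{b}{2}B(x,x)\bigr)\varphi(x,m),\\
\omega(w)\varphi &= \gamma(B)\,\Fou_B(\varphi),\\
\omega(d(t))\varphi(x,m) &= \delta_B(t)\,|t|^{\dim V/2}\,\varphi(tx,m),
\end{align*}
where $u(b)=\bigl(\begin{smallmatrix}1&b\\0&1\end{smallmatrix}\bigr)$, $w=\bigl(\begin{smallmatrix}0&-1\\1&0\end{smallmatrix}\bigr)$, $d(t)=\bigl(\begin{smallmatrix}t&0\\0&t^{-1}\end{smallmatrix}\bigr)$, and the dual representation acts on $\Sc^{*}(V\times M)$.

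The first step is to observe that $\Supp(\xi)\subset Z(B)\times M$ forces $\xi$ to be fixed by every $\omega(u(b))^{*}$, because $\kappa(\tfrac{b}{2}B(x,x))$ equals $1$ on $Z(B)\times M$, and in the non-Archimedean setting any Schwartz function on $V\times M$ that vanishes on the closed set $Z(B)\times M$ is in fact supported on a clopen set disjoint from it, so a smooth multiplier equal to $1$ on $Z(B)\times M$ acts trivially on $\xi$. Applying the same reasoning to $\Fou_B(\xi)$, together with the conjugacy relation $w^{-1}u(b)w=l(-b)$ for $l(b)=\bigl(\begin{smallmatrix}1&0\\b&1\end{smallmatrix}\bigr)$ and the defining identity $\omega(w)\varphi=\gamma(B)\Fou_B(\varphi)$, yields that $\xi$ is also fixed by every $\omega(l(b))^{*}$. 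Since $\{u(b):b\in F\}$ and $\{l(b):b\in F\}$ generate $SL_2(F)$, I conclude that $\xi$ is fixed by the full dual Weil action (via the canonical metaplectic lifts when $\dim V$ is odd). Specializing invariance to $d(t)$ immediately produces $\rho(t)\xi=\delta_B(t)|t|^{\dim V/2}\xi$, and specializing to $w$ produces $\xi=\gamma(B)^{-1}\Fou_B(\xi)$. When $\dim V$ is odd, the scalar $\delta_B$ is not a multiplicative character of $F^{\times}$ while $\rho$ is a genuine group action, so comparing the torus identity for $t_1t_2$ with its iteration for $t_1,t_2$ forces $\xi=0$.

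The hard part will be the careful tracking of the Weil cocycle in the odd-dimensional case, where $\omega$ is only a projective representation of $SL_2(F)$: one must verify that the canonical lifts of the two unipotent subgroups to the metaplectic cover satisfy the relations needed to deduce invariance under $d(t)$ with exactly the factor $\delta_B(t)|t|^{\dim V/2}$. This cocycle discrepancy is precisely what produces the non-multiplicativity of $\delta_B$ and hence the vanishing of $\xi$ in odd dimension; the auxiliary support argument in the first step is elementary thanks to the locally constant nature of non-Archimedean Schwartz functions.
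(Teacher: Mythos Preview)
The paper does not prove this theorem; it simply refers the reader to \cite[\S\S 8.1]{RS2} and \cite[\S\S 3.1]{JR}. Your argument is precisely the standard Weil-representation proof found in those references: the support condition on $\xi$ (resp.\ on $\Fou_B\xi$) translates, via the non-Archimedean fact that $\kappa(\tfrac{b}{2}B(\cdot,\cdot))-1$ is locally constant and vanishes on a clopen neighborhood of $Z(B)$, into invariance under the upper (resp.\ lower) unipotent subgroup of the metaplectic group; these generate, and the torus and Weyl-element formulas follow. This is exactly the approach of Jacquet--Rallis, so there is nothing to add on the level of strategy.

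One small point worth tightening: when $\dim V$ is odd the passage from ``invariant under the canonical lifts of $u(b)$ and $l(b)$'' to the explicit formula $\rho(t)\xi=\delta_B(t)|t|^{\dim V/2}\xi$ is not just ``specializing to $d(t)$'', since $d(t)$ does not have a canonical lift; rather one must express $d(t)$ as a word in unipotents and $w$ and track the Weil cocycle through the computation. You correctly flag this as the delicate step, but the way your sketch is phrased (``specializing invariance to $d(t)$'') slightly obscures that this is a computation rather than a tautology. The references carry this out, and your outline is otherwise complete.
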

For the proof see e.g. \cite[\S\S 8.1]{RS2} or \cite[\S\S
3.1]{JR}.

For the Archimedean version of this theorem we will need the
following definition.
\begin{definition}
Let $M$ be a B-analytic manifold over $F$. We say that a
distribution $\xi \in \Sc^*(V \times M)$ is
\textbf{adapted to $B$} if either \\
(i) for any $t \in F^{\times}$ we have $\rho(t)\xi=
\delta(t)|t|^{\dim V/2} \xi$ and $ \xi$ is proportional to $\Fou_B \xi$ or\\
(ii) $F$ is Archimedean and for any $t \in F^{\times}$ we have
$\rho(t)\xi= \delta(t)t|t|^{\dim V/2} \xi$.
\end{definition}

Note that if $\dim V$ is odd and $F \neq \C$ then every
$B$-adapted distribution is zero.

\begin{theorem} [Archimedean homogeneity] \label{ArchHom}
Let $M$ be a Nash manifold. Let $L\subset \Sc^*_{V\times
M}(Z(B)\times M)$ be a non-zero subspace such that for all $\xi
\in L $ we have $\Fou_B(\xi) \in L$ and $B \cdot \xi \in L$ (here
$B$ is viewed as a quadratic function).

Then there exists a non-zero distribution $\xi \in L$ which is
adapted to $B$.
\end{theorem}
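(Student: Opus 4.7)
The plan is to exploit the $\sll_2(F)$-action on Schwartz distributions coming from the Weil representation. Let $M_B$ denote multiplication by $B$ (viewed as a quadratic function on $V$) and set $\Delta_B := \Fou_B^{-1}\circ M_B \circ \Fou_B$, which up to a fixed scalar is the Laplacian associated to $B$. A direct computation gives $[M_B,\Delta_B]$ proportional to $E + \dim V/2$, where $E$ is the Euler operator generating the infinitesimal action of $\rho$. Hence, up to normalization, $e := M_B$, $f := \Delta_B$ and $h := E + \dim V/2$ form an $\sll_2$-triple acting on $\Sc^*(V\times M)$. Since $L$ is stable under $M_B$ and $\Fou_B$, it is stable under $\Delta_B$, and then by the commutation relations stable under $h$ as well; thus $L$ is an $\sll_2$-submodule on which $\rho$ acts infinitesimally.

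Next I would use the support condition $\Supp(\xi)\subset Z(B)\times M$ to extract a highest-weight vector for this $\sll_2$-action. Away from the singular locus $\{0\}\times M$, the subvariety $Z(B)\times M$ is a smooth hypersurface in $V\times M$, so any Schwartz distribution supported there has finite transverse order along $B$; consequently $M_B$ is locally nilpotent on the restriction of $L$ to $(V\setminus\{0\})\times M$, and standard $\sll_2$-representation theory produces a nonzero $\xi_0\in L$ annihilated by $M_B$. The alternative --- that every element of $L$ is supported entirely on $\{0\}\times M$ --- is handled by induction on $\dim V$: such distributions are finite sums $\sum_\alpha \eta_\alpha\otimes \delta_0^{(\alpha)}$ with $\eta_\alpha\in \Sc^*(M)$, and one produces the desired adapted distribution from them by direct calculation of the actions of $\Fou_B$ and $\rho$ on polynomial-times-delta-function distributions.

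A highest-weight vector $\xi_0$ is automatically an eigenvector of $h$, hence a homogeneous distribution of definite degree under the identity component of $F^{\times}$. Decomposing $\xi_0$ further into eigenvectors of the maximal compact subgroup of $F^{\times}$ yields a joint eigenvector of the full $F^{\times}$-action whose eigenvalue, after bookkeeping with the Weil-representation formalism, matches either $\delta(t)|t|^{\dim V/2}$ (case (i)) or $\delta(t) t |t|^{\dim V/2}$ (case (ii)); in case (i) the corresponding homogeneity-eigenspace is preserved by $\Fou_B$, which squares to a scalar there, so one further extracts a nonzero eigenvector of $\Fou_B$, giving the proportionality $\xi_0\propto \Fou_B\xi_0$. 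The main obstacle throughout is controlling the residual contribution at the origin, where $B$ is singular and $M_B$ ceases to be nilpotent on distributions supported at $\{0\}\times M$; this is where the Archimedean argument becomes substantially more delicate than the non-Archimedean Theorem \ref{NonArchHom} (whose short proof uses only a character identity on $F^{\times}$), and it forces the induction on $\dim V$ described above.
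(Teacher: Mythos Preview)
Your $\sll_2$-setup matches the paper's, but two gaps prevent the argument from closing.

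First, a vector $\xi_0$ with $M_B\xi_0=0$ is not ``automatically an eigenvector of $h$'': the subspace $\ker e$ is $h$-stable, but an individual vector in it need not be an $h$-eigenvector without further structure on the module. Even if $\xi_0$ were an $h$-eigenvector, being adapted means (via $\Pi(\diag(t,t^{-1}))=\delta^{-1}(t)|t|^{-\dim V/2}\rho(t)$) having weight exactly $0$ or $1$, whereas a highest-weight vector can sit at any non-negative integer weight; your ``bookkeeping'' does not bridge this discrepancy.

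Second, your concern about the origin is misplaced and obscures the decisive step. A tempered distribution has globally finite order $N$, and every derivative of $B^{N+1}$ of order $\le N$ vanishes on $Z(B)$ (at $0$ as well, since $B^{N+1}$ vanishes there to order $2(N+1)$); hence $B^{N+1}\xi=0$ outright for any Schwartz $\xi$ supported in $Z(B)$, with no case split and no induction on $\dim V$, and $M_B$ \emph{is} nilpotent on distributions supported at the origin. The paper uses this observation twice: since both $\xi$ and $\Fou_B\xi$ lie in $L$, both $e$ and $f$ are nilpotent on $\xi$, and an elementary lemma (Lemma~\ref{FinDim}: $e^k v=f^n v=0$ implies $U_{\C}(\sll_2)v$ is finite-dimensional) then makes $L':=U_{\C}(\sll_2)\xi\subset L$ finite-dimensional. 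Finite-dimensionality is what allows one to integrate to $\Mp(2,F)$, use the central character to force $\dim V$ even or $F=\C$, factor through $\SL_2$, and finally pick out a weight-$0$ or weight-$1$ vector (Lemma~\ref{sl2rep}). Your approach never establishes finite-dimensionality and therefore cannot carry out any of these steps.
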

For Archimedean $F$ we prove this theorem in Appendix
\ref{AppRealHom}. For non-Archimedean $F$ it follows from Theorem
\ref{NonArchHom}.

We will also use the following trivial observation.

\begin{lemma}
Let a B-analytic group $K$ act linearly on $V$ and preserving $B$.
Let $M$ be a B-analytic $K$-manifold over $F$. Let $\xi \in
\Sc^*(V \times M)$ be a $K$-invariant distribution. Then
$\Fou_B(\xi)$ is also $K$-invariant.
\end{lemma}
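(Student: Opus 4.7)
The plan is to reduce the statement to its Schwartz-function analog and then dualize. The partial Fourier transform $\Fou_B$ acts only in the $V$-direction, and the $K$-action on $V\times M$ is diagonal, so what really needs to be checked is the compatibility of $\Fou_B$ with the $K$-action on the $V$-factor alone.

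First I would verify the statement for $f \in \Sc(V\times M)$ by a direct change of variables. Writing $(k\cdot f)(v,m)=f(k^{-1}v,k^{-1}m)$ and $\Fou_B(f)(w,m)=\int_V f(v,m)\kappa(B(v,w))\,dv$, substitute $v\mapsto kv$ inside $\Fou_B(k\cdot f)(w,m)$. Two inputs come into play: (a) the self-dual Haar measure attached to $B$ is $K$-invariant, which holds because $K$ preserves $B$; and (b) $B(kv,w)=B(v,k^{-1}w)$, which is just $K$-invariance of $B$ rephrased via the identification $V\cong V^*$ induced by $B$. These two facts turn the integral into $\Fou_B(f)(k^{-1}w,k^{-1}m)=(k\cdot \Fou_B(f))(w,m)$, so $\Fou_B$ commutes with the $K$-action on Schwartz functions.

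Second, the distributional claim follows by adjunction. The Fourier transform on $\Sc^*(V\times M)$ is defined by $\langle \Fou_B(\xi),f\rangle=\langle \xi,\Fou_B(f)\rangle$ for $f\in \Sc(V\times M)$. Combining $K$-invariance of $\xi$ with the Schwartz-function statement gives
\[
\langle \Fou_B(\xi),k\cdot f\rangle = \langle \xi, \Fou_B(k\cdot f)\rangle = \langle \xi, k\cdot \Fou_B(f)\rangle = \langle \xi, \Fou_B(f)\rangle = \langle \Fou_B(\xi),f\rangle,
\]
which is the $K$-invariance of $\Fou_B(\xi)$.

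There is no real obstacle; the authors themselves flag this as a trivial observation. The only substantive inputs are the two essentially tautological facts that a linear group preserving a nondegenerate bilinear form preserves both the associated self-dual Haar measure and the dual pairing induced by $B$, and the argument is uniform across Archimedean and non-Archimedean $F$.
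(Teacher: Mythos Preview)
Your proof is correct. The paper does not give a proof of this lemma at all---it is introduced with the phrase ``We will also use the following trivial observation''---so your argument supplies exactly the standard verification the authors left implicit.
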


\subsection{Applications}
$ $\\
The following two theorems easily follow form the results of the
previous subsection.

\begin{theorem} \label{Non_arch_Homog}
Suppose that $F$ is non-Archimedean. Let $G$ be a reductive group.
Let $V$ be a finite-dimensional  $F$-rational representation of
$G$. Let $\chi$ be character of $G(F)$. Suppose that
$\Sc^*(R(V))^{G(F),\chi}=0$. Let $V = V_1 \oplus V_2$ be a
$G$-invariant decomposition of $V$. Let $B$ be a $G$-invariant
symmetric non-degenerate bilinear form on $V_1$. Consider the
action $\rho$ of $F^{\times}$ on $V$ by homothety on $V_1$.

Then any $\xi \in \Sc^*(Q(V))^{G(F),\chi}$ satisfies
$\rho(t)\xi=\delta_B(t) |t|^{\dim V_1/2} \xi$ and $\xi= \gamma(B)
\Fou_B \xi$. In particular, if $\dim V_1$ is odd then $\xi = 0$.
\end{theorem}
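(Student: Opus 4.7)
The plan is to combine the hypothesis $\Sc^*(R(V))^{G(F),\chi}=0$ with the non-Archimedean homogeneity theorem (Theorem \ref{NonArchHom}), applied partially along the $V_1$-direction and treating $V_2(F)$ as the auxiliary manifold ``$M$'' in that theorem. Let $\xi\in\Sc^*(Q(V))^{G(F),\chi}$. Since $R(V)=Q(V)\setminus\Gamma(V)$ is open in $Q(V)$, the hypothesis immediately gives $\Supp(\xi)\subset\Gamma(V)$.

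I would first reduce to the case $V_1^G=0$. Since $B$ is $G$-invariant and $G$ is reductive, the induced $G$-equivariant isomorphism $V_1\cong V_1^*$ identifies $V_1^G$ with the annihilator of $Q(V_1)$; in particular, $V_1^G$ and $Q(V_1)$ are $B$-orthogonal and $B$ restricts non-degenerately to $Q(V_1)$. Replacing $V_1$ by $Q(V_1)$ and absorbing $V_1^G$ into $V_2$ puts us in the situation $V_1^G=0$, so that $Q(V)=V_1(F)\oplus Q(V_2)$. In this situation the partial Fourier transform $\Fou_B$ in the $V_1$-direction preserves $\Sc^*(Q(V))$, since it does not alter support along the $V_2$-factor.

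Next I would verify that both $\xi$ and $\Fou_B\xi$ are supported in $Z(B)\times V_2(F)$. For any $x=(x_1,x_2)\in\Gamma(V)$, Lemma \ref{Gamma} provides a sequence $g_n\in G(F)$ with $g_nx\to 0$, and the $G$-invariance of $B$ then forces $B(x_1,x_1)=\lim B(g_nx_1,g_nx_1)=0$, so $x_1\in Z(B)$. On the other hand, since $G$ preserves $B$ (and hence the self-dual Haar measure on $V_1$), $\Fou_B$ commutes with the $G(F)$-action, so $\Fou_B\xi$ is again in $\Sc^*(Q(V))^{G(F),\chi}$; applying the vanishing hypothesis to $\Fou_B\xi$ and rerunning the same support argument places $\Supp(\Fou_B\xi)\subset Z(B)\times V_2(F)$ as well.

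At this point Theorem \ref{NonArchHom}, applied with $V_1$ playing the role of ``$V$'' and $V_2(F)$ the role of ``$M$'', directly yields the homogeneity relation $\rho(t)\xi=\delta_B(t)|t|^{\dim V_1/2}\xi$, the Fourier self-duality $\xi=\gamma(B)\Fou_B\xi$, and the vanishing when $\dim V_1$ is odd. I expect the main obstacle to be the reduction to $V_1^G=0$: one must check that $\Fou_B$ genuinely preserves $\Sc^*(Q(V))^{G(F),\chi}$, which rests on the orthogonal splitting $V_1=V_1^G\oplus Q(V_1)$ under $B$ supplied by $G$-invariance of $B$ together with reductivity of $G$.
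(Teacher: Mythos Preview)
Your argument is correct and is precisely the route the paper has in mind: the paper does not spell out a proof but simply states that the result ``easily follows from the results of the previous subsection,'' meaning Theorem~\ref{NonArchHom} together with the observation (your Step~3) that the $G$-invariance of $B$ forces $\Gamma(V)\subset Z(B)\times V_2(F)$ and that $\Fou_B$ commutes with the $G(F)$-action. Your reduction to $V_1^G=0$ via the $B$-orthogonal splitting $V_1=V_1^G\oplus Q(V_1)$ is in fact more careful than the paper, which tacitly treats the decomposition as one of $Q(V)$ (compare the parallel Theorem~\ref{Homog}, where this is made explicit); note only that after your reduction the constants $\delta_B$, $\gamma(B)$, $\dim V_1$ in the conclusion become those attached to $B|_{Q(V_1)}$, which is the only reading under which the statement is meaningful.
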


\begin{theorem} \label{Homog}
Let $G$ be a reductive group. Let $V$ be a finite-dimensional
 $F$-rational representation of $G$. Let $\chi$ be character
of $G(F)$. Suppose that $\Sc^*(R(V))^{G(F),\chi}=0$. Let $Q(V) = W
\oplus (\bigoplus_{i=1}^kV_i)$ be a $G$-invariant decomposition of
$Q(V)$. Let $B_i$ be $G$-invariant symmetric non-degenerate
bilinear forms on $V_i$. Suppose that any $\xi \in
\Sc^*_{Q(V)}(\Gamma(V))^{G(F),\chi}$ which is adapted to each
$B_i$ is zero.

Then $\Sc^*(Q(V))^{G(F),\chi}=0.$
\end{theorem}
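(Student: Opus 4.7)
The plan is to argue by contradiction. Set $L := \Sc^*(Q(V))^{G(F),\chi}$ and assume $L \neq 0$.

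First I would show $L \subset \Sc^*_{Q(V)}(\Gamma(V))^{G(F),\chi}$. Since $R(V) = Q(V) \setminus \Gamma(V)$ is open in $Q(V)$, the restriction of any $\xi \in L$ to $R(V)$ lies in $\Sc^*(R(V))^{G(F),\chi}$, which vanishes by hypothesis; hence $\Supp(\xi) \subset \Gamma(V)$. Next, I would observe that each $B_i$, pulled back to $Q(V)$, is a $G$-invariant polynomial vanishing at $0$; since $\Gamma(V) = \pi^{-1}(\pi(0))$, such polynomials vanish on $\Gamma(V)$. In particular, writing $M_i := W \oplus \bigoplus_{j \neq i} V_j$, we obtain $\Gamma(V) \subset Z(B_i) \times M_i$, so $L \subset \Sc^*_{V_i \times M_i}(Z(B_i)\times M_i)$ for every $i$.

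Next I would check that $L$ is stable under multiplication by $B_i$ and under the partial Fourier transform $\Fou_{B_i}$. For multiplication this is immediate from $G$-invariance of $B_i$ (together with the obvious preservation of support). For Fourier transform, $G$-equivariance is preserved because $B_i$ and the associated self-dual measure on $V_i$ are $G$-invariant; the resulting $G$-equivariant distribution again lies in $L$, and therefore by the first step is automatically supported in $\Gamma(V)$.

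Now I would iterate Theorem~\ref{ArchHom} (which includes Theorem~\ref{NonArchHom} in the non-Archimedean case). Set $L_0 := L$. Applying Theorem~\ref{ArchHom} with $V = V_1$, $M = M_1$, and the nonzero subspace $L_0$, I obtain a nonzero $\xi \in L_0$ adapted to $B_1$. Let $L_1 \subset L_0$ be the subspace of distributions adapted to $B_1$; it is nonzero. The key commutativity point is that for $j \geq 2$, the operators $\Fou_{B_j}$, multiplication by $B_j$, and the homothety on $V_j$ act on a summand transverse to $V_1$, hence commute with $\Fou_{B_1}$ and with the $V_1$-homothety; therefore they preserve the condition of being adapted to $B_1$, so they preserve $L_1$. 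Applying Theorem~\ref{ArchHom} again with $(V, M) = (V_2, M_2)$ to $L_1$ yields a nonzero subspace $L_2 \subset L_1$ whose elements are adapted to both $B_1$ and $B_2$. Iterating through $i = 1, \dots, k$ produces a nonzero element of $L$ adapted to each $B_i$, supported on $\Gamma(V)$, and $(G(F),\chi)$-equivariant. This contradicts the standing hypothesis.

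The main technical point, and the only place where one might stumble, is the bookkeeping in the inductive step: verifying that the defining conditions of ``adapted to $B_i$'' are stable under the operations $\Fou_{B_j}$ and $\cdot B_j$ for $j > i$, as well as under the $F^{\times}$-homotheties on the other summands. Once this commutativity on disjoint summands is checked, the inductive application of Theorem~\ref{ArchHom} is mechanical.
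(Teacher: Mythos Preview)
Your approach is the intended one (the paper merely says the result follows easily from Theorem~\ref{ArchHom}), and the outline is correct. One point needs repair: the set of distributions in $L_0$ adapted to $B_1$ is \emph{not} a linear subspace, because ``adapted'' is a disjunction of two distinct homogeneity conditions (i) and (ii), and even within (i) the eigenvalue $c$ in $\Fou_{B_1}\xi = c\,\xi$ can vary. So your $L_1$ as written is not a subspace, and Theorem~\ref{ArchHom} cannot be applied to it at the next step. The fix is immediate: once Theorem~\ref{ArchHom} produces a nonzero $\xi_1 \in L_0$ adapted to $B_1$, take $L_1$ to be the simultaneous eigenspace in $L_0$ for the specific $V_1$-homothety character and (in case~(i)) the specific $\Fou_{B_1}$-eigenvalue that $\xi_1$ realizes. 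This $L_1$ is a genuine linear subspace, contains $\xi_1$, consists entirely of $B_1$-adapted elements, and is preserved by $\Fou_{B_j}$ and multiplication by $B_j$ for $j\geq 2$ by the commutativity you observed. With this adjustment your induction goes through verbatim.
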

\begin{remark}
One can easily generalize Theorems \ref{Homog} and
\ref{Non_arch_Homog} to the case of constant vector systems.
\end{remark}

\section{Tame actions} \label{SecTame}
\setcounter{lemma}{0}

In this section we consider problems of the following type. A
reductive group $G$ acts on a smooth affine variety $X$, and
$\tau$ is an automorphism of $X$ which normalizes the image of $G$
in $\Aut(X)$. We want to check whether any $G(F)$-invariant
Schwartz distribution on $X(F)$ is also $\tau$-invariant.

\begin{definition}
Let $\pi$ be an action of a reductive group $G$ on a smooth affine
variety $X$.
We say that an algebraic automorphism $\tau$ of $X$ is \textbf{$G$-admissible} if \\
(i) $\tau$ normalizes $\pi(G(F))$ and $\tau^2\in\pi(G(F))$.\\ (ii)
For any closed $G(F)$-orbit $O \subset X(F)$, we have $\tau(O)=O$.
\end{definition}

\begin{proposition} \label{AdmisDescends}
Let $\pi$ be an action of a reductive group $G$ on a smooth affine
variety $X$. Let $\tau$ be a $G$-admissible automorphism of $X$.
Let $K:=\pi(G(F))$ and let $\widetilde{K}$ be the group generated
by $\pi(G(F))$ and $\tau$. Let $x \in X(F)$ be a point with closed
$G(F)$-orbit. Let $\tau' \in \widetilde{K}_x - K_x$. Then $d\tau'
|_{N_{Gx,x}^X}$ is $G_x$-admissible.
\end{proposition}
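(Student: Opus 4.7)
The claim splits into the two defining properties of $G_x$-admissibility: (i) $d\tau'|_N$ normalizes the action of $G_x(F)$ on $N:=N_{Gx,x}^X(F)$ with $(d\tau'|_N)^2$ lying in the image of $G_x(F)$; and (ii) every closed $G_x(F)$-orbit in $N$ is stable under $d\tau'|_N$.

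I would dispatch (i) by a direct computation. Write $\tau'=k\tau$ with $k\in K:=\pi(G(F))$. Since $\tau$ normalizes $K$, so does $\tau'$; as $\tau'$ fixes $x$, conjugation by $\tau'$ preserves the stabilizer $K_x=\pi(G_x(F))$, so $d\tau'|_N$ normalizes the $G_x(F)$-action on $N$. Next, $\tau'^{2}=k(\tau k\tau^{-1})\tau^{2}\in K\cdot K\cdot K=K$ using $\tau k\tau^{-1}\in K$ and $\tau^{2}\in K$; since $\tau'^{2}x=x$, in fact $\tau'^{2}\in K_x$, so $(d\tau'|_N)^{2}$ lies in the image of $G_x(F)\to\Aut(N)$.

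The substance is in (ii). My strategy is to apply the Luna slice theorem not to $G$ but to the larger algebraic group $\widetilde G\subset\Aut(X)$ generated by $\pi(G)$ and $\tau$. That $\widetilde G$ is algebraic follows from $\tau^{2}\in\pi(G)$ together with the fact that $\tau$, normalizing $\pi(G(F))$, also normalizes $\pi(G)$ algebraically (Zariski density of $F$-points of a reductive group in characteristic zero); it is reductive because $\pi(G)$ is of finite index in $\widetilde G$. Since condition (ii) of $G$-admissibility ensures that every closed $G(F)$-orbit is $\tau$-stable, the closed $\widetilde G(F)$-orbits on $X(F)$ coincide with the closed $G(F)$-orbits; in particular $x$ is $\widetilde G$-semisimple and $N_{\widetilde Gx,x}^X=N_{Gx,x}^X=N$. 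Apply Theorem \ref{LocLuna} to $\widetilde G$ at $x$ to obtain $(\widetilde U,\widetilde p,\widetilde\psi,\widetilde S,N)$. Crucially, $\widetilde S=\widetilde p^{-1}(x)$ is preserved by $\widetilde K_x$ (hence by $\tau'$), and $\widetilde\psi$ is $\widetilde G_x$-equivariant, so in particular $\widetilde\psi(\tau'y)=d\tau'\bigl(\widetilde\psi(y)\bigr)$ for $y\in\widetilde S$.

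Now take a closed $G_x(F)$-orbit $O'\subset N$. By linearity of $d\tau'|_N$ and scaling (which commutes with the $G_x$-action), $d\tau'(O')=O'$ is equivalent to the analogous statement for $\lambda O'$ for any $\lambda\in F^{\times}$, so after rescaling I may assume $O'\subset\widetilde\psi(\widetilde S)$. Pick $z\in O'$ and set $y:=\widetilde\psi^{-1}(z)\in\widetilde S$. A closed $G_x(F)$-orbit is a closed $\widetilde G_x(F)$-orbit (finite index), so $z$ is $\widetilde G_x$-semisimple; by Corollary \ref{LocLunCor}(iii) for $\widetilde G$, $y$ is $\widetilde G$-semisimple, hence $\widetilde G(F)y$ is closed and coincides with the closed orbit $G(F)y$. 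By the admissibility of $\tau$, $\tau(G(F)y)=G(F)y$, whence $\tau'(y)\in G(F)y$. Write $\tau'(y)=gy$ with $g\in G(F)$; since $y,\tau'(y)\in\widetilde S$ and $\widetilde p$ is $G$-equivariant, $g\cdot x=\widetilde p(gy)=\widetilde p(\tau'y)=x$, so $g\in G_x(F)$. Applying $\widetilde\psi$ and using equivariance yields $d\tau'(z)=g\cdot z\in O'$; hence $d\tau'(O')\subseteq O'$, and the reverse inclusion follows by the same argument applied to $(d\tau')^{-1}=\tau'^{-1}$.

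The main obstacle is the preliminary construction of the reductive algebraic group $\widetilde G$ and the justification that Theorem \ref{LocLuna} applies to it; once this is in place the rest of (ii) is a formal chase through the Luna slice. An alternative, should the algebraic construction of $\widetilde G$ prove awkward, is to build a $\widetilde K_x$-equivariant analytic slice by hand by averaging a $K_x$-equivariant slice over the two-element quotient $\widetilde K_x/K_x$.
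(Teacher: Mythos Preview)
Your proof is correct and follows essentially the same approach as the paper: construct the algebraic group $\widetilde{G}$ generated by $\pi(G)$ and $\tau$, take an analytic Luna slice at $x$ with respect to $\widetilde{G}$ (rather than $G$), and use equivariance of the slice map together with Corollary \ref{LocLunCor}(iii) to transport the admissibility of $\tau$ on $X$ down to $d\tau'$ on the normal space. You supply more justification than the paper does (reductivity of $\widetilde{G}$, the rescaling step, and the argument that $g\in G_x(F)$), but the architecture is the same.
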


\begin{proof}
Let $\widetilde{G}$ denote the group generated by $\pi(G)$ and
$\tau$. We check that the two properties of $G_x$-admissibility
hold for $d\tau'|_{N_{Gx,x}^X}$. The first one is obvious. For the
second, let $y \in N_{Gx,x}^X(F)$ be an element with closed
$G_x$-orbit. Let $y' = d\tau'(y)$. We have to show that there
exists $g \in G_x(F)$ such that $gy = y'$. Let $(U,p,\psi,S,N)$ be
an analytic Luna slice at $x$ with respect to the action of
$\widetilde{G}$. We can assume that there exists $z \in S$ such
that $y=\psi(z)$. Let $z' = \tau'(z)$. By Corollary
\ref{LocLunCor}, $z$ is $G$-semisimple. Since $\tau$ is
admissible, this implies that there exists $g \in G(F)$ such that
$gz = z'$. Clearly, $g \in G_x(F)$ and $gy = y'$.
\end{proof}

\begin{definition}
We call an action of a reductive group $G$ on a smooth affine
variety $X$ \textbf{tame} if for any $G$-admissible $\tau : X \to
X$, we have $\Sc^*(X(F))^{G(F)} \subset \Sc^*(X(F))^{\tau}.$
\end{definition}

\begin{definition}
We call an  $F$-rational representation $V$ of a reductive group
$G$ \textbf{linearly tame} if for any $G$-admissible linear map
$\tau : V \to V$, we have $\Sc^*(V(F))^{G(F)} \subset
\Sc^*(V(F))^{\tau}.$

We call a representation \textbf{weakly linearly tame} if for any
$G$-admissible linear map $\tau : V \to V$, such that
$\Sc^*(R(V))^{G(F)} \subset \Sc^*(R(V))^{\tau}$ we have
$\Sc^*(Q(V))^{G(F)} \subset \Sc^*(Q(V))^{\tau}.$
\end{definition}

\begin{theorem} \label{Invol_HC}
Let a reductive group $G$ act on a smooth affine variety $X$.
Suppose that for any $G$-semisimple $x \in X(F)$, the action of
$G_x$ on $N_{Gx,x}^X$ is weakly linearly tame. Then the action of
$G$ on $X$ is tame.
\end{theorem}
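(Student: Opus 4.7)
The strategy is to reduce to the Schwartz-distribution version of Corollary \ref{Strong_HC_Cor} by encoding the $G$-admissible automorphism $\tau$ as an extra element of a larger reductive group. I form the algebraic extension $\widetilde{G}$ of $G$ by $\Z/2\Z$ whose nontrivial coset acts on $X$ as $\tau\cdot\pi(G)$; this is well-defined because $\tau^2\in\pi(G(F))$, and $\widetilde{G}$ is reductive because $\widetilde{G}/G$ is finite. Set $K := \pi(G(F))\cup \tau\pi(G(F))$, an open subgroup of $\widetilde{G}(F)$, and define a character $\chi$ of $K$ by $\chi|_{\pi(G(F))}=1$ and $\chi(\tau)=-1$. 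The standard symmetrization $\xi\mapsto \xi-\tau\xi$, combined with the facts that $\tau$ normalizes $\pi(G(F))$ and $\tau^2\in\pi(G(F))$, shows that the desired conclusion $\Sc^*(X(F))^{G(F)}\subset \Sc^*(X(F))^{\tau}$ is equivalent to the single vanishing $\Sc^*(X(F))^{K,\chi}=0$.

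I then apply the Schwartz version of Corollary \ref{Strong_HC_Cor} to the $\widetilde{G}$-action on $X$ with this $K$ and $\chi$. Admissibility (ii) says every closed $G(F)$-orbit is $\tau$-stable, so by Theorem \ref{LocZarClosed} the $\widetilde{G}$- and $G$-semisimple points of $X(F)$ coincide, and $\widetilde{G}x=Gx$ (hence $N_{\widetilde{G}x,x}^{X}=N_{Gx,x}^{X}$) at each such point. The inductive hypothesis to verify thus becomes: for every $G$-semisimple $x\in X(F)$ with $\Sc^*(R(N_{Gx,x}^X))^{K_x,\chi}=0$, one has $\Sc^*(Q(N_{Gx,x}^X))^{K_x,\chi}=0$. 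To check this, fix such an $x$; admissibility produces $g\in G(F)$ with $g^{-1}\tau(x)=x$, and setting $\tau' := g^{-1}\tau$ gives a representative of $K_x\setminus \pi(G(F))_x$ fixing $x$. Proposition \ref{AdmisDescends} tells me that $d\tau'$ on $N := N_{Gx,x}^X(F)$ is $G_x$-admissible, and a short computation using that $\tau$ normalizes $\pi(G(F))$ shows $(\tau')^2\in\pi(G(F))_x$.

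The weak linear tameness of the action of $G_x$ on $N_{Gx,x}^X$, applied to the $G_x$-admissible map $d\tau'$, yields the implication: if $\Sc^*(R(N))^{G(F)_x}\subset \Sc^*(R(N))^{d\tau'}$ then $\Sc^*(Q(N))^{G(F)_x}\subset \Sc^*(Q(N))^{d\tau'}$. Exactly the same symmetrization $\xi\mapsto \xi-d\tau'\xi$ as in the first paragraph, now using $(\tau')^2\in\pi(G(F))_x$ and characteristic zero, translates each such containment into the vanishing of the corresponding $(K_x,\chi)$-equivariant Schwartz distribution space. This verifies the inductive hypothesis of Corollary \ref{Strong_HC_Cor}, and the theorem follows. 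The only bookkeeping item is the careful construction of $\widetilde{G}$ as a reductive algebraic group from the automorphism-level data for $\tau$; once this is in place, the argument is mechanical, with all the real content concentrated in Corollary \ref{Strong_HC_Cor} (via the Luna slice theorem and Frobenius reciprocity) and Proposition \ref{AdmisDescends}, so I do not expect any substantive analytic obstacle.
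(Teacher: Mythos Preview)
Your argument is essentially the paper's own proof: form the enlarged group $\widetilde{G}$, take the open subgroup generated by $\pi(G(F))$ and $\tau$ with the sign character $\chi$, then apply the Schwartz version of Corollary~\ref{Strong_HC_Cor} and invoke Proposition~\ref{AdmisDescends} plus weak linear tameness at each semisimple point. Two cosmetic remarks: your $K$ is what the paper calls $\widetilde{K}$ (the paper reserves $K$ for $\pi(G(F))$ itself), and you should dispose of the trivial case $\tau\in\pi(G(F))$ before defining $\chi$, since otherwise $\chi$ is not well-defined; with that adjustment your proof matches the paper's.
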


The proof is rather straightforward except for one minor
complication: the group of automorphisms of $X(F)$ generated by
the action of $G(F)$ is not necessarily a group of $F$-points of
any algebraic group.

\begin{proof}
Let $\tau:X \to X$ be an admissible automorphism.

Let $\widetilde{G} \subset \Aut(X)$ be the algebraic group
generated by the actions of $G$ and $\tau$. Let $K \subset
\Aut(X(F))$ be the B-analytic group generated by the action of
$G(F)$. Let $\widetilde{K} \subset \Aut(X(F))$ be the B-analytic
group generated by the actions of $G$ and $\tau$. Note that
$\widetilde{K} \subset \widetilde{G}(F)$ is an open subgroup of
finite index. Note that for any $x \in X(F)$, $x$ is
$\widetilde{G}$-semisimple if and only if it is $G$-semisimple. If
$K = \widetilde{K}$ we are done, so we will assume $K \neq
\widetilde{K}$. Let $\chi$ be the character of $\widetilde{K}$
defined by $\chi(K)=\{1\}$, $\chi(\widetilde{K} - K) = \{-1\}$.

It is enough to prove that $\Sc^*(X)^{\widetilde{K},\chi}=0$. By
Generalized Harish-Chandra Descent (Corollary \ref{Strong_HC_Cor})
it is enough to prove that for any $G$-semisimple $x \in X$ such
that $$\Sc^*(R(N_{Gx,x}^X))^{\widetilde{K}_x,\chi}=0$$ we have
$$\Sc^*(Q(N_{Gx,x}^X))^{\widetilde{K}_x,\chi}=0.$$ Choose any
automorphism $\tau'\in \widetilde{K}_x - K_x$. Note that $\tau'$
and $K_x$ generate $\widetilde{K}_x$. Denote $$\eta :=
d\tau'|_{N_{Gx,x}^X(F)}.$$ By Proposition \ref{AdmisDescends},
$\eta$ is $G_x$-admissible. Note that
$$\Sc^*(R(N_{Gx,x}^X))^{K_x}= \Sc^*(R(N_{Gx,x}^X))^{G(F)_x} \text{ and }
\Sc^*(Q(N_{Gx,x}^X))^{K_x}= \Sc^*(Q(N_{Gx,x}^X))^{G(F)_x}.$$
Hence we have $$\Sc^*(R(N_{Gx,x}^X))^{G(F)_x} \subset
\Sc^*(R(N_{Gx,x}^X))^{\eta}.$$ Since the action of $G_x$ is weakly
linearly tame, this implies that $$\Sc^*(Q(N_{Gx,x}^X))^{G(F)_x}
\subset \Sc^*(Q(N_{Gx,x}^X))^{\eta}$$ and therefore
$\Sc^*(Q(N_{Gx,x}^X))^{\widetilde{K}_x,\chi}=0$.
\end{proof}

\begin{definition} \label{DefSpec}
We call an  $F$-rational representation $V$ of a reductive group
$G$  \textbf{special} if there is no non-zero $\xi \in
\Sc^*_{Q(V)}(\Gamma(V))^{G(F)}$ such that for any $G$-invariant
decomposition $Q(V) = W_1 \oplus W_2$ and any two $G$-invariant
symmetric non-degenerate bilinear forms $B_i$ on $W_i$ the Fourier
transforms $\Fou_{B_i}(\xi)$ are also supported in $\Gamma(V)$.
\end{definition}

\begin{proposition} \label{SpecWeakTameAct}
Every special representation $V$ of a reductive group $G$ is
weakly linearly tame.
\end{proposition}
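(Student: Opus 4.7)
The plan, given $\xi \in \Sc^*(Q(V))^{G(F)}$, is to prove $\tau\xi = \xi$ by introducing $\eta := \xi - \tau\xi$ and deriving $\eta = 0$ from the definition of ``special''. First, $G$-admissibility of $\tau$ means that $\tau$ normalizes the action of $G(F)$ on $V$ and that $\tau^2 \in \pi(G(F))$, so for any $G(F)$-invariant distribution $\xi$ one has $\tau^2\xi = \xi$; consequently $\eta$ is $G(F)$-invariant and $\tau\eta = -\eta$. Since $\xi|_{R(V)} \in \Sc^*(R(V))^{G(F)}$ is $\tau$-invariant by hypothesis, $\eta|_{R(V)} = 0$; together with the $\tau$-invariance of $R(V)$ and $\Gamma(V)$ (consequences of $G$-admissibility via Lemma \ref{Gamma}), this places $\eta$ in $\Sc^*_{Q(V)}(\Gamma(V))^{G(F)}$.

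To invoke specialness and conclude $\eta = 0$, I would verify that $\Fou_{B_i}(\eta)$ is supported on $\Gamma(V)$ for every $G$-invariant decomposition $Q(V) = W_1 \oplus W_2$ and every pair of $G$-invariant symmetric non-degenerate bilinear forms $B_i$ on $W_i$. Fix such data and consider $\Fou_{B_1}(\eta)$. Since $B_1$ is $G$-invariant, $\Fou_{B_1}$ intertwines the $G(F)$-action, so $\Fou_{B_1}(\eta)$ is again $G(F)$-invariant; hence its restriction to the open set $R(V)$ lies in $\Sc^*(R(V))^{G(F)}$ and is $\tau$-invariant by the standing hypothesis. On the other hand, $\tau\eta = -\eta$ together with the identity $\Fou_{B_1}\circ\tau_* = \Fou_{(B_1)_\tau}$ (where $(B_1)_\tau(x,y) := B_1(x, \tau y)$) propagates $\tau$-antiinvariance through the Fourier transform; combined with the $\tau$-invariance, the restriction to $R(V)$ vanishes, so $\Fou_{B_1}(\eta)$ is supported on $\Gamma(V)$. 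The same argument applies to $\Fou_{B_2}(\eta)$, and specialness then forces $\eta = 0$.

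The main obstacle is the propagation of $\tau$-antiinvariance through $\Fou_{B_1}$ when $B_1$ is only $G$-invariant and not $\tau$-invariant, because the identity above produces a Fourier transform with respect to a different (and generally non-symmetric) bilinear form $(B_1)_\tau$. I would handle this either by replacing $B_1$ with the symmetrized $\tau$-invariant form $\frac{1}{2}(B_1 + \tau^*B_1)$ (which is both $G$- and $\tau$-invariant and for which $\tau$ commutes with $\Fou$ up to an explicit scalar $|\det\tau|^{\pm 1}$, so that $\tau\eta=-\eta$ immediately transfers to $\tau$-antiinvariance of the transform), dealing with possible degeneracy of the average by perturbation inside the affine family of $G$-invariant forms on $W_1$; or by using the $\langle\tau\rangle$-orbit of forms to relate $\Fou_{B_1}(\eta)|_{R(V)}$ and $\Fou_{\tau^*B_1}(\eta)|_{R(V)}$ and deducing simultaneous vanishing from the fact that both are $G(F)$-invariant restrictions, hence $\tau$-invariant by hypothesis, yet linked by an anti-symmetric relation.
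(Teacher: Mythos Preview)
Your overall architecture is exactly that of the paper: set $\eta=\xi-\tau\xi$, observe $\eta\in\Sc^*_{Q(V)}(\Gamma(V))^{G(F)}$, and then argue that each partial Fourier transform $\Fou_{B_i}(\eta)$ is again supported on $\Gamma(V)$ so that specialness forces $\eta=0$. The divergence is entirely in how you handle the ``obstacle'' you flag.

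That obstacle is in fact a non-issue, and your proposed workarounds are both unnecessary and incomplete. The paper's key observation (its Lemma right after the proposition) is that a $G$-admissible linear automorphism $\tau$ automatically preserves every $G$-invariant decomposition $Q(V)=W_1\oplus W_2$ \emph{and} every $G$-invariant symmetric non-degenerate form $B_i$ on $W_i$. The reason is short: since $\tau$ is admissible, it fixes every closed $G(F)$-orbit, hence acts trivially on a Zariski-dense subset of $V/G$, hence $\cO(V)^G\subset\cO(V)^\tau$. Extending each $B_i$ by zero on $W_{3-i}$ gives a $G$-invariant quadratic polynomial on $Q(V)$, so it is $\tau$-invariant; its radical $W_{3-i}$ is therefore $\tau$-stable, and restricting back to $W_i$ shows $B_i$ itself is $\tau$-invariant. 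Once you know this, $\Fou_{B_i}$ commutes with $\tau$ on the nose, $\tau\eta=-\eta$ transports directly to $\tau\Fou_{B_i}(\eta)=-\Fou_{B_i}(\eta)$, and combined with the $\tau$-invariance on $R(V)$ coming from the hypothesis you are done.

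By contrast, your formula $\Fou_{B_1}\circ\tau_*=\Fou_{(B_1)_\tau}$ tacitly assumes $\tau(W_1)=W_1$ (otherwise $(B_1)_\tau$ is not even a form on $W_1$ and the partial transform along $W_1$ does not intertwine with $\tau$ in any useful way), which is precisely what you have not established. The symmetrization $\tfrac12(B_1+\tau^*B_1)$ does not help without first knowing $\tau$ preserves $W_1$, and the ``perturbation inside the affine family'' to avoid degeneracy is left as a hope rather than an argument. Replace all of this by the one-line invariance lemma above and the proof becomes clean.
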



The proposition follows immediately from the following lemma.

\begin{lemma}
Let $V$ be an  $F$-rational representation of a reductive group
$G$. Let $\tau$ be an admissible linear automorphism of $V$. Let
$V = W_1 \oplus W_2$ be a $G$-invariant decomposition of $V$ and
$B_i$ be $G$-invariant symmetric non-degenerate  bilinear forms on
$W_i$. Then $W_i$ and $B_i$ are also $\tau$-invariant.
\end{lemma}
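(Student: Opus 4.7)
The plan is to first show that $\tau$ acts trivially on the algebra $F[V]^G$ of $G$-invariant polynomials on $V$, and then to extract both $\tau(W_i)=W_i$ and $\tau$-invariance of $B_i$ from this. For the first step, let $f \in F[V]^G$. For any $x \in V(F)$ whose $G(F)$-orbit is closed, Theorem \ref{LocZarClosed} implies $Gx$ is Zariski-closed, so $f$ is constant on $Gx$; admissibility of $\tau$ then gives $\tau(x)\in G(F)\cdot x$, hence $f(\tau(x))=f(x)$. By the principal orbit theorem for reductive group actions on smooth affine varieties in characteristic zero, the locus $V^{cc}:=\{x\in V : Gx \text{ is Zariski-closed}\}$ contains a nonempty $F$-rational Zariski-open subset $U\subset V$; its $F$-points $U(F)$ are Zariski-dense in $V$ (since $F$ is infinite), and the identity $f\circ\tau=f$ holds on $U(F)$. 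Since $f\circ\tau$ and $f$ are polynomials agreeing on a Zariski-dense subset of $V$, they coincide.

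Next I would extend each $B_i$ to a bilinear form $\widetilde B_i$ on $V$ via $\widetilde B_i(v,w):=B_i(v_i,w_i)$, using the decomposition $v=v_1+v_2$. The associated quadratic polynomial $q_i(v):=\widetilde B_i(v,v)$ is $G$-invariant, hence $\tau$-invariant by the first step; polarization (using linearity of $\tau$) upgrades this to $\widetilde B_i(\tau v,\tau w)=\widetilde B_i(v,w)$ for all $v,w\in V$. Non-degeneracy of $B_j$ on $W_j$ identifies the radical of $\widetilde B_j$ with $W_{3-j}$, a characterization preserved by any invertible linear map preserving $\widetilde B_j$, so $\tau(W_i)=W_i$. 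Restricting the $\tau$-invariance of $\widetilde B_i$ to $W_i\times W_i$ then yields the $\tau$-invariance of $B_i$.

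The only technical input is the assertion that $V^{cc}$ contains a nonempty $F$-rational Zariski-open subset of $V$, which is the standard principal orbit theorem together with the fact that the principal stratum, being canonically defined, descends to $F$. Once this is granted, the remainder of the argument is routine invariant-theoretic manipulation.
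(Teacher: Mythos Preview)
Your overall plan matches the paper's: first establish that $\cO(V)^G \subset \cO(V)^{\tau}$, then deduce the lemma. Your second step---extending each $B_i$ to $\widetilde B_i$ on all of $V$, using $G$-invariance of the quadratic form $q_i$ to get $\tau$-invariance via step one, polarizing, and recovering $W_{3-i}$ as the radical of $\widetilde B_i$---is correct and in fact spells out what the paper leaves implicit.

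The gap is in your first step. The assertion that the closed-orbit locus $V^{cc}$ contains a nonempty Zariski-open subset of $V$ is false in general: take $G=\GL_1$ acting on the one-dimensional representation $V$ by scaling. The only closed orbit is $\{0\}$, so $V^{cc}=\{0\}$, which contains no nonempty open set. (Principal-isotropy and Luna-stratification statements concern the orbit type of the unique \emph{closed} orbit in each fiber of $\pi:V\to V/G$; they do not assert that the generic orbit in $V$ is itself closed.) So your density argument, as written, does not go through.

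The paper's proof of $\cO(V)^G\subset\cO(V)^{\tau}$ avoids this by working downstairs on $V/G$ rather than upstairs on $V$. Since $\tau$ normalizes the $G$-action it induces an automorphism $\bar\tau$ of $V/G$, and one checks that $\bar\tau$ fixes every point of $\pi(V(F))$: for any such $y$, the fiber $\pi^{-1}(y)(F)$ contains a closed $G(F)$-orbit (Proposition \ref{LocClosedOrbit}), admissibility forces $\tau$ to preserve that orbit, hence $\bar\tau(y)=y$. Since $\pi(V(F))$ is Zariski-dense in $V/G$, one concludes $\bar\tau=\id$. Equivalently, your pointwise argument is salvaged by noting that for \emph{every} $x\in V(F)$ (not just those with closed orbit) one already has $f(\tau x)=f(x)$: pass to a closed orbit $G(F)z\subset\overline{G(F)x}$, use that $f$ is constant on orbit closures, and apply admissibility to $z$. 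No appeal to a principal-orbit theorem is needed.
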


This lemma follows in turn from the following one.

\begin{lemma}
Let $V$ be an  $F$-rational representation of a reductive group
$G$. Let $\tau$ be an admissible automorphism of $V$. Then
$\cO(V)^{G} \subset \cO(V)^{\tau}$.
\end{lemma}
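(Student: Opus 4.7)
The plan is to prove that for any $f \in \cO(V)^G$, the polynomial $g := f - f\circ\tau \in \cO(V)$ vanishes identically. Since $V$ is a finite-dimensional vector space over the infinite field $F$, $V(F)$ is Zariski dense in $V$, so it suffices to check $g \equiv 0$ on $V(F)$.

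First I would establish that $g$ is itself $G$-invariant, hence descends to $\bar g \in \cO(V/G)$ via the quotient map $\pi_V$ of Theorem \ref{Quotient}. Writing $\pi \colon G \to \Aut(V)$ for the action, $\tau$ normalizes $\pi(G(F))$ by $G$-admissibility; when $G$ is connected, $G(F)$ is Zariski dense in $G$, so the algebraic map $h \mapsto \tau\pi(h)\tau^{-1}$, which sends $G(F)$ into the closed subgroup $\pi(G) \subseteq \Aut(V)$, must send all of $G$ into $\pi(G)$. Hence for any $h \in G$ there exists $h' \in G$ with $\tau\pi(h) = \pi(h')\tau$, and then $(f \circ \tau)(\pi(h)v) = f(\pi(h')\tau v) = f(\tau v) = (f \circ \tau)(v)$, so $f \circ \tau \in \cO(V)^G$ and $g = \pi_V^* \bar g$ for some $\bar g \in \cO(V/G)$.

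Next, for $v \in V(F)$ with $Gv$ closed in $V$---equivalently, by Theorem \ref{LocZarClosed}, with $G(F)v$ closed in $V(F)$---the $G$-admissibility of $\tau$ gives $\tau(v) \in G(F)v$, and the $G$-invariance of $f$ yields $f(\tau v) = f(v)$, so $g(v) = 0$. For an arbitrary $v \in V(F)$, I would apply Proposition \ref{LocClosedOrbit} to the non-empty closed $G(F)$-invariant subset $\overline{Gv}(F) \subseteq V(F)$, obtaining a closed $G(F)$-orbit $G(F)v_0$ with $v_0 \in V(F) \cap \overline{Gv}$ and $Gv_0$ closed. Since $\pi_V$ is a $G$-invariant morphism, it is constant on $\overline{Gv}$, so $\pi_V(v) = \pi_V(v_0)$; consequently $g(v) = \bar g(\pi_V(v)) = \bar g(\pi_V(v_0)) = g(v_0) = 0$. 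Thus $g$ vanishes on all of $V(F)$, and by Zariski density of $V(F)$ in $V$ we conclude $g \equiv 0$.

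The main technical point is ensuring that $g$ is $G$-invariant as a polynomial (not just $G(F)$-invariant on $V(F)$), which requires $\tau$ to normalize the algebraic subgroup $\pi(G)$ rather than merely $\pi(G(F))$. For connected reductive $G$ this is immediate from Zariski density of $G(F)$ in $G$; the disconnected case is handled componentwise, reducing to Zariski density for the identity component. Once this is in hand, Proposition \ref{LocClosedOrbit} together with the categorical quotient does the rest.
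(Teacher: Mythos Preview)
Your proof is correct and follows essentially the same route as the paper's: both reduce to $G$-semisimple points via Proposition~\ref{LocClosedOrbit}, invoke admissibility on the resulting closed $G(F)$-orbit, and finish by Zariski density. The only difference is that the paper simply asserts that $\tau$ acts on $V/G$ and then checks triviality on the Zariski-dense image $\pi(V(F))$, whereas you explicitly justify that $f\circ\tau\in\cO(V)^{G}$ (so that $g$ descends to $V/G$)---a point the paper passes over in silence.
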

\begin{proof}
Consider the projection $\pi:V \to V/G$. We have to show that
$\tau$ acts trivially on $V/G$ and  let $x \in \pi(V(F))$. Let
$X:=\pi^{-1}(x)$. By Proposition \ref{LocClosedOrbit} $G(F)$ has a
closed orbit in $X(F)$. The automorphism $\tau$ preserves this
orbit and hence preserves $x$. Thus $\tau$ acts trivially on
$\pi(V(F))$, which is Zariski dense in $V/G$. Hence $\tau$ acts
trivially on $V/G$.
\end{proof}

Now we introduce a criterion that allows to prove that a
representation is special. It follows immediately from Theorem
\ref{ArchHom}.

\begin{lemma} \label{SpecActCrit}
Let $V$ be an  $F$-rational representation of a reductive group
$G$. Let $Q(V) = \bigoplus W_i$ be a $G$-invariant decomposition.
Let $B_i$ be symmetric non-degenerate $G$-invariant bilinear forms
on $W_i$. Suppose that any $\xi \in
\Sc^*_{Q(V)}(\Gamma(V))^{G(F)}$ which is adapted to all $B_i$ is
zero. Then $V$ is special.
\end{lemma}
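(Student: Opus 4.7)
The plan is to argue by contradiction via iterated application of Theorem \ref{ArchHom}. If $V$ were not special, fix a non-zero $\xi_0 \in \Sc^*_{Q(V)}(\Gamma(V))^{G(F)}$ witnessing this: for every $G$-invariant decomposition $Q(V) = U_1 \oplus U_2$ and every pair of $G$-invariant non-degenerate symmetric bilinear forms on $U_1, U_2$, both partial Fourier transforms of $\xi_0$ remain supported in $\Gamma(V)$. My goal is to manufacture from $\xi_0$ a non-zero element of $\Sc^*_{Q(V)}(\Gamma(V))^{G(F)}$ adapted to each of the given forms $B_1, \ldots, B_k$, which will contradict the hypothesis of the lemma.

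First I would introduce, for each subset $S \subseteq \{1, \ldots, k\}$, the $G$-invariant decomposition $Q(V) = W_S \oplus W_{S^c}$ with $W_S := \bigoplus_{i \in S} W_i$, together with the form $B_S := \bigoplus_{i \in S} B_i$ on $W_S$. Then define
\[
L := \bigl\{\xi \in \Sc^*(Q(V))^{G(F)} : \Fou_{B_S}(\xi) \in \Sc^*_{Q(V)}(\Gamma(V)) \text{ for every } S \subseteq \{1, \ldots, k\}\bigr\},
\]
where $\Fou_{B_S}$ denotes partial Fourier transform in the $W_S$-variables (with $\Fou_{B_\emptyset} := \id$). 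Applying the non-speciality property of $\xi_0$ to the decomposition $Q(V) = W_S \oplus W_{S^c}$ with forms $B_S$ and $B_{S^c}$ places $\xi_0 \in L$, so $L \neq 0$. Since each $B_i$ is a $G$-invariant polynomial on $W_i$ and all $G$-invariant polynomials vanish on $\Gamma(V)$, one has $\Gamma(V) \subset Z(B_i) \times \bigoplus_{j \neq i} W_j$, so $L$ sits inside the support class required by Theorem \ref{ArchHom} for each $B_i$. I would then verify that $L$ is stable under each $\Fou_{B_i}$ (using that $\Fou_{B_i}^2$ is a reflection preserving the cone $\Gamma(V)$, and that Fourier transforms in disjoint direct summands commute) and under multiplication by each $B_i$ (since multiplication cannot enlarge support, commutes with $\Fou_{B_j}$ for $j \neq i$, and is converted by $\Fou_{B_i}$ into a constant-coefficient differential operator applied to $\Fou_{B_i}(\xi)$).

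Next I would invoke Theorem \ref{ArchHom} iteratively. Viewing $Q(V) = W_1 \oplus \bigoplus_{j \neq 1} W_j$, apply the theorem to $L$ with $B := B_1$ to extract a non-zero $\xi_1 \in L$ adapted to $B_1$. Let $L^{(1)} \subseteq L$ be the subspace cut out by the specific adapted-ness condition satisfied by $\xi_1$---in case (i), the specific eigenvalue of $\Fou_{B_1}$ together with the homothety law on $W_1$; in case (ii), the alternative homothety law. Because $\Fou_{B_2}$ and multiplication by $B_2$ act only in the $W_2$-variables, they commute with both $\Fou_{B_1}$ and the $W_1$-homothety, so $L^{(1)}$ remains stable under them. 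Applying Theorem \ref{ArchHom} to $L^{(1)}$ with $B := B_2$ yields a non-zero $\xi_2$ adapted to both $B_1$ and $B_2$. Iterating through $B_3, \ldots, B_k$ eventually produces a non-zero $\xi_k \in \Sc^*_{Q(V)}(\Gamma(V))^{G(F)}$ adapted to every $B_i$, which is the desired contradiction.

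The main obstacle is verifying the stability of $L$, particularly the claim that $\Fou_{B_i}(L) \subseteq L$. The key observation is that $\Fou_{B_i} \circ \Fou_{B_j} = \Fou_{B_i \oplus B_j}$ when $i \neq j$, because they are partial Fourier transforms in disjoint direct summands of $Q(V)$; consequently any composition of the operators $\Fou_{B_i}$ collapses---up to the reflection supplied by $\Fou_{B_i}^2$---to a single $\Fou_{B_S}$, which is precisely what the ``arbitrary decomposition'' clause in the definition of special controls.
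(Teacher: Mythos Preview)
Your proposal is correct and is precisely the argument the paper has in mind: the paper's own proof consists of the single sentence ``It follows immediately from Theorem \ref{ArchHom},'' and your write-up unpacks that claim by building the space $L$, checking its stability under each $\Fou_{B_i}$ and each multiplication by $B_i$, and iterating Theorem \ref{ArchHom} across the summands. The only point worth making explicit in your write-up is why the reflection $r_i = \Fou_{B_i}^2$ preserves $\Gamma(V)$: this holds because $r_i$ is $G$-equivariant (each $W_j$ is $G$-stable) and fixes $0$, hence descends to a self-map of $Q(V)/G$ fixing $\pi(0)$.
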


\part{Symmetric and Gelfand pairs}
\section{Symmetric pairs} \label{SecSymPairs}
In this section we apply our tools to symmetric pairs. We
introduce several properties of symmetric pairs and discuss their
interrelations. In Appendix \ref{Diag} we present a diagram that
illustrates the most important ones.

\subsection{Preliminaries and notation}

\begin{definition}
A \textbf{symmetric pair} is a triple $(G,H,\theta)$ where $H
\subset G$ are reductive groups, and $\theta$ is an involution of
$G$ such that $H = G^{\theta}$. We call a symmetric pair
\textbf{connected} if $G/H$ is connected.

For a symmetric pair $(G,H,\theta)$ we define an antiinvolution
$\sigma :G \to G$ by $$\sigma(g):=\theta(g^{-1}),$$ denote
$\g:=\Lie G$, $\h := \Lie H$. Let $\theta$ and $\sigma$ act on
$\g$ by their differentials and denote $$\gd:=\{a \in \g\ |\
\sigma(a)=a\}=\{a \in \g\ |\ \theta(a)=-a\}.$$ Note that $H$ acts
on $\gd$ by the adjoint action. Denote also $$G^{\sigma}:=\{g \in
G\ |\ \sigma(g)=g\}$$ and define a \textbf{symmetrization map}
$s:G \to G^{\sigma}$ by
$$s(g):=g \sigma(g).$$
\end{definition}

We will consider the action of $H\times H$ on $G$ by left and
right translation and the conjugation action of $H$ on $G^\sigma$.

\begin{definition}
Let $(G_1,H_1,\theta_1)$ and $(G_2,H_2,\theta_2)$ be symmetric
pairs. We define their \textbf{product} to be the symmetric pair
$(G_1 \times G_2,H_1 \times H_2,\theta_1 \times \theta_2)$.
\end{definition}

\begin{theorem} \label{RegFun}
For any connected symmetric pair $(G,H,\theta)$ we have
$\mathcal{O}(G)^{H \times H} \subset \mathcal{O}(G)^{\sigma}$.
\end{theorem}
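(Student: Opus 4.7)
The plan is to exploit a Cartan-type decomposition of $G$ in order to reduce the desired invariance to a pointwise verification on a Zariski-dense subset where $\sigma$ acts trivially. Note first that $\sigma$ preserves the space of $H \times H$-invariants: since $\theta|_H = \id_H$, the anti-involution $\sigma$ satisfies $\sigma(h_1 g h_2) = h_2^{-1} \sigma(g) h_1^{-1}$, so $f \circ \sigma \in \mathcal{O}(G)^{H \times H}$ whenever $f \in \mathcal{O}(G)^{H \times H}$. It therefore suffices to exhibit a Zariski-dense subset of $G$ on which every such $f$ coincides with $f \circ \sigma$.

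The natural candidate is $\Omega := HAH$, where $A \subset G$ is a maximal $\theta$-split torus (that is, $\theta(a) = a^{-1}$ for every $a \in A$). Then $A \subset G^\sigma$, so $\sigma|_A = \id_A$, and for $h_1, h_2 \in H$ and $a \in A$,
\[
\sigma(h_1 a h_2) \;=\; \sigma(h_2)\sigma(a)\sigma(h_1) \;=\; h_2^{-1}\, a\, h_1^{-1} \;\in\; HaH.
\]
Consequently $f(\sigma(h_1 a h_2)) = f(a) = f(h_1 a h_2)$, so $f$ coincides with $f \circ \sigma$ on $\Omega$, and everything reduces to proving that $\Omega$ is Zariski-dense in $G$.

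This density is the main obstacle, and is precisely the point at which the hypothesis that $G/H$ is connected enters. I would prove it by a standard argument. Infinitesimally, setting $\mathfrak{a} := \Lie A$, one has $\g = \h + \gd$, and the Kostant--Rallis density asserts that the $\Ad(H)$-translates of $\mathfrak{a}$ form a Zariski-dense subset of $\gd$. A direct root-space computation (using that $\theta$ sends the root space $\g_\alpha$ of $\mathfrak{a}$ to $\g_{-\alpha}$) shows that the differential of the multiplication map $\mu : H \times A \times H \to G$, $(h_1, a, h_2) \mapsto h_1 a h_2$, is surjective at every $(e, a, e)$ with $a \in A$ regular (i.e.\ $a^{2\alpha} \neq 1$ for all restricted roots $\alpha$). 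Hence $\mu$ is dominant onto the identity component $G^\circ$; the connectedness of $G/H$ forces $G = H \cdot G^\circ$, and left-translating a Zariski-dense subset of $G^\circ$ by $H$ yields Zariski density of $\Omega = HAH$ in $G$. Once this is in hand, the regular function $f - f \circ \sigma$ vanishes on $\Omega$ and hence on all of $G$, completing the proof.
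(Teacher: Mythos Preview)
Your argument is correct, and it follows the same overall template as the paper: exhibit a Zariski-dense subset of $G$ of the form $H \cdot S \cdot H$ with $S \subset G^{\sigma}$, so that every $H \times H$-invariant function is $\sigma$-invariant on this set and hence everywhere. The difference lies in the choice of $S$ and the resulting density argument.

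You take $S = A$, a maximal $\theta$-split torus, and then must work to show that $HAH$ is dense: this requires invoking the Kostant--Rallis picture of regular elements in $\mathfrak{a}$, a root-space computation for the differential of $\mu$ at regular points, and the passage from $G^{\circ}$ to $G$ via connectedness of $G/H$. The paper instead takes $S = G^{\sigma}$ itself, which makes the density argument almost immediate. The multiplication map $H \times G^{\sigma} \to G$ has differential $\h \oplus \g^{\sigma} \to \g$ at $(1,1)$, and this is an isomorphism by Lemma~\ref{BilForm} (or just by the decomposition $\g = \h \oplus \g^{\sigma}$). Hence the map is \'etale at the identity, its image contains a Zariski-open neighborhood of $1$, and therefore $HG^{\sigma}$ has open (hence dense) image in the connected variety $G/H$; so $HG^{\sigma}H$ is dense in $G$. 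Your approach buys you an explicit abelian slice and a connection to the structure theory of symmetric spaces, which could be useful elsewhere, but for the purposes of this theorem the paper's choice of the full $G^{\sigma}$ avoids all appeal to regular elements and restricted root systems.
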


\begin{proof}
Consider the multiplication map $H \times G^{\sigma} \to G$. It is
\et at $1 \times 1$ and hence its image $HG^{\sigma}$ contains an
open neighborhood of 1 in $G$. Hence the image of $HG^{\sigma}$ in
$G/H$ is dense. Thus $HG^{\sigma}H$ is dense in $G$. Clearly
$\mathcal{O}(H G^{\sigma} H)^{H \times H} \subset \mathcal{O}(H
G^{\sigma} H)^{\sigma}$ and hence $\mathcal{O}(G)^{H \times H}
\subset \mathcal{O}(G)^{\sigma}$.
\end{proof}
\begin{corollary} \label{ClosedOrbits}
For any connected symmetric pair $(G,H,\theta)$ and any closed $H
\times H$ orbit $\Delta \subset G$, we have
$\sigma(\Delta)=\Delta$.
\end{corollary}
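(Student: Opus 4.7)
The plan is to deduce the corollary from Theorem \ref{RegFun} together with the categorical quotient machinery of \S \ref{CatQuot}.

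First I would observe that $\sigma(\Delta)$ is again a closed $H \times H$-orbit in $G$. Since $\theta$ fixes $H$ pointwise, $\sigma$ restricted to $H$ is just inversion, and a direct computation gives $\sigma(h_1 g h_2) = h_2^{-1} \sigma(g) h_1^{-1}$ for all $h_1,h_2 \in H$ and $g \in G$. Hence $\sigma(HgH) = H\sigma(g)H$, so $\sigma$ permutes $H \times H$-orbits, and because $\sigma$ is an algebraic automorphism it preserves closedness.

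Next I would invoke Theorem \ref{Quotient} applied to the action of the reductive group $H \times H$ on the affine variety $G$: the quotient map $\pi_G : G \to G/(H\times H)$ has the property that each fiber contains a unique closed orbit, and points of $G/(H\times H)$ are separated by the invariant regular functions $\mathcal{O}(G)^{H\times H}$. So to conclude $\sigma(\Delta) = \Delta$ it suffices to check that every $f \in \mathcal{O}(G)^{H\times H}$ takes the same value on $\Delta$ as on $\sigma(\Delta)$.

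This is where Theorem \ref{RegFun} enters: by that theorem, any $f \in \mathcal{O}(G)^{H\times H}$ is automatically $\sigma$-invariant, so $f(\sigma(g)) = f(g)$ for all $g \in \Delta$, and in particular $f$ agrees on $\Delta$ and $\sigma(\Delta)$. Therefore the two closed orbits lie in the same fiber of $\pi_G$, and uniqueness of the closed orbit in each fiber forces $\sigma(\Delta) = \Delta$. There is no real obstacle here once Theorem \ref{RegFun} is in hand; the only small thing to verify carefully is that $\sigma$ indeed preserves the class of closed $H\times H$-orbits, which is the short computation above.
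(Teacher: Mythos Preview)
Your argument is correct and is essentially the paper's own proof: both use Theorem \ref{RegFun} to conclude that $\Delta$ and $\sigma(\Delta)$ lie in the same fiber of the categorical quotient $G \to G/(H\times H)$, and then invoke uniqueness of the closed orbit in each fiber (Theorem \ref{Quotient}). The paper packages this by introducing the semidirect product $\widetilde{\Upsilon} = \{1,\sigma\} \ltimes (H\times H)$ and noting that $G/(H\times H) = G/\widetilde{\Upsilon}$, but the content is identical to what you wrote.
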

\begin{proof}
Denote $\Upsilon:=H \times H$. Consider the action of the
2-element group $(1,\tau)$ on $\Upsilon$ given by $\tau(h_1,h_2):=
(\theta(h_2), \theta(h_1))$. This defines the semi-direct product
$\widetilde{\Upsilon}:= (1,\tau) \ltimes \Upsilon $. Extend the
two-sided action of $\Upsilon$ to $\widetilde{\Upsilon}$ by the
antiinvolution $\sigma$. Note that the previous theorem implies
that $G/\Upsilon = G/\widetilde{\Upsilon}$. Let $\Delta$ be a
closed $\Upsilon$-orbit. Let $\widetilde{\Delta}:=\Delta \cup
\sigma(\Delta)$. Let $a := \pi_G(\widetilde{\Delta}) \subset
G/\widetilde{\Upsilon}$. Clearly, $a$ consists of one point. On
the other hand, $G/\widetilde{\Upsilon} = G/\Upsilon$ and hence
$\pi_G^{-1}(a)$ contains a unique closed $G$-orbit. Therefore
$\Delta = \widetilde{\Delta} = \sigma(\Delta)$.
\end{proof}

\begin{corollary} \label{GoodCrit}
Let $(G,H,\theta)$ be a connected symmetric pair. Let $g \in G(F)$
be $H\times H$-semisimple. Suppose that the Galois cohomology
$H^1(F,(H \times H)_g)$ is trivial. Then $\sigma(g) \in
H(F)gH(F)$.
\end{corollary}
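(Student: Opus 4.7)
The plan is a short Galois-descent argument built on Corollary \ref{ClosedOrbits}.

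First, since $g$ is $H\times H$-semisimple, Theorem \ref{LocZarClosed} implies that the orbit $\Delta := (H\times H)\cdot g \subset G$ is Zariski closed. Applying Corollary \ref{ClosedOrbits} to $\Delta$ yields $\sigma(\Delta) = \Delta$; in particular $\sigma(g) \in \Delta(\oF)$, so there exist $(a_1, a_2) \in (H\times H)(\oF)$ with $a_1 g a_2 = \sigma(g)$.

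Next, I would consider the transporter subscheme
$$T_g := \{(h_1, h_2) \in H\times H \mid h_1 g h_2 = \sigma(g)\}.$$
Because $g$ and $\sigma(g)$ are $F$-rational, $T_g$ is a closed subvariety of $H\times H$ defined over $F$, and by the previous paragraph $T_g(\oF) \neq \emptyset$. A direct check shows that the right action $(h_1,h_2)\cdot (k_1,k_2) := (h_1 k_1, k_2 h_2)$ makes $T_g$ into a (right) torsor under the stabilizer $(H\times H)_g$: the identity $(k_1 g k_2 = g) \Rightarrow (h_1 k_1) g (k_2 h_2) = h_1 g h_2 = \sigma(g)$ shows invariance, while the assignment $((h_1,h_2),(h_1',h_2')) \mapsto (h_1^{-1} h_1', h_2' h_2^{-1})$ gives freeness and transitivity over $\oF$.

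Finally, I would invoke the standard principle that a $K$-torsor over $F$ with an $\oF$-point determines a class in $H^1(F,K)$ which is trivial precisely when the torsor has an $F$-rational point. Under the hypothesis $H^1(F,(H\times H)_g) = 0$, this forces $T_g(F) \neq \emptyset$, and any $(h_1,h_2) \in T_g(F)$ gives $\sigma(g) = h_1 g h_2 \in H(F) g H(F)$. The entire argument is formal once Corollary \ref{ClosedOrbits} is in hand; there is no real obstacle, only the mild verification that the transporter is a torsor under the stabilizer.
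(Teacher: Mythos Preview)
Your argument is correct and is exactly the standard Galois-descent that the paper has in mind; in fact the paper gives no proof of this corollary at all, treating it as immediate from Corollary~\ref{ClosedOrbits} together with the classical fact that the transporter between two $F$-rational points lying in the same geometric orbit is a torsor under the stabilizer, hence has an $F$-point when $H^1$ vanishes. The only cosmetic point is the action convention: in the paper the two-sided action is $(h_1,h_2)\cdot g = h_1 g h_2^{-1}$, so the stabilizer and transporter should be written accordingly, but this does not affect your argument.
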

For example, if $(H \times H)_g$ is a product of general linear
groups over some field extensions then $H^1(F,(H \times H)_g)$ is
trivial.

\begin{definition} \label{DefGoodPair}
A symmetric pair $(G,H,\theta)$ is called \textbf{good} if for any
closed $H(F) \times H(F)$ orbit $O \subset G(F)$, we have
$\sigma(O)=O$.
\end{definition}

\begin{corollary} \label{ComplexGood}
Any connected symmetric pair over $\C$ is good.
\end{corollary}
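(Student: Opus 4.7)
The plan is to apply Corollary \ref{GoodCrit} directly, observing that all the required hypotheses become automatic in the complex case. Let $(G,H,\theta)$ be a connected symmetric pair over $\mathbb{C}$ and let $O \subset G(\mathbb{C})$ be a closed $H(\mathbb{C}) \times H(\mathbb{C})$-orbit. Pick any $g \in O$; then $(H \times H) \cdot g = O$ is closed in the analytic topology, so by Theorem \ref{LocZarClosed} the orbit $(H \times H)\cdot g$ is Zariski-closed in $G$, i.e.\ $g$ is $H \times H$-semisimple.

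Next, I invoke the triviality of Galois cohomology over $\mathbb{C}$: since $\mathrm{Gal}(\overline{\mathbb{C}}/\mathbb{C})$ is trivial, $H^{1}(\mathbb{C}, A) = 0$ for every algebraic group $A$ over $\mathbb{C}$, in particular for $A = (H \times H)_g$. Corollary \ref{GoodCrit} then gives $\sigma(g) \in H(\mathbb{C}) g H(\mathbb{C}) = O$, so $\sigma(O) \subset O$. Finally, $\sigma^{2} = \operatorname{id}$ (since $\sigma(g) = \theta(g^{-1})$ and $\theta$ is an involution), hence $\sigma(O) = O$, which is exactly the definition of $(G,H,\theta)$ being good.

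There is essentially no obstacle: the only point that might look delicate is why one is allowed to apply Corollary \ref{GoodCrit} to an arbitrary point of a closed analytic orbit, and this is handled cleanly by the equivalence of Zariski-closedness and analytic closedness of orbits in Theorem \ref{LocZarClosed}. Everything else is a one-line appeal to the vanishing of Galois cohomology over an algebraically closed field of characteristic zero.
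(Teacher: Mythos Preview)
Your proof is correct and follows exactly the approach intended by the paper: the corollary is stated without proof immediately after Corollary~\ref{GoodCrit}, and your argument simply makes explicit the two observations that (a) over $\C$ the Galois group is trivial so $H^1(\C,(H\times H)_g)=0$ automatically, and (b) Theorem~\ref{LocZarClosed} identifies closed analytic orbits with $H\times H$-semisimple points. Nothing is missing.
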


\begin{definition} \label{DefGKPair}
A symmetric pair $(G,H,\theta)$ is called a \textbf{GK-pair} if
$$\Sc^*(G(F))^{H(F) \times H(F)} \subset \Sc^*(G(F))^{\sigma}.$$
\end{definition}

We will see later in \S \ref{Gel} that GK-pairs satisfy a Gelfand
pair property that we call GP2 (see Definition \ref{GPs} and
Theorem \ref{DistCrit}). Clearly every GK-pair is good and we
conjecture that the converse is also true. We will discuss it in
more detail in \S\S \ref{conj}.

\begin{lemma} \label{BilForm}
Let $(G,H,\theta)$ be a symmetric pair. Then there exists a
$G$-invariant $\theta$-invariant non-degenerate symmetric bilinear
form $B$ on $\g$. In particular, $\g=\g^\sigma\oplus\h$ is an
orthogonal direct sum with respect to $B$.
\end{lemma}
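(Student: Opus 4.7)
The plan is to construct $B$ piecewise on the reductive decomposition $\g = \z \oplus \g'$, where $\z$ is the center of $\g$ and $\g' = [\g,\g]$ is semisimple. Because $F$ has characteristic zero and $G$ is reductive, this decomposition exists, and both summands are stable under $\theta$ (since $\theta$ is an automorphism of $\g$) and under the adjoint action of $G$.

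On the semisimple summand $\g'$, I would use the Killing form $K$: it is $G$-invariant and non-degenerate by Cartan's criterion, and automatically $\theta$-invariant since $\theta$ is a Lie algebra automorphism. The Killing form is useless on $\z$, so on the abelian part I would construct an auxiliary form by hand. The involution $\theta|_\z$ splits $\z = \z^+ \oplus \z^-$ into its $\pm 1$-eigenspaces; pick any non-degenerate symmetric bilinear forms on $\z^+$ and on $\z^-$ and declare these two subspaces mutually orthogonal. The resulting form $B_\z$ on $\z$ is manifestly $\theta$-invariant and non-degenerate, and it is $G$-invariant because the adjoint action on $\z$ is trivial. Then take $B := K|_{\g'} \oplus B_\z$ with $\g' \perp \z$; this $B$ has all required properties.

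For the orthogonality of $\h$ and $\gd$: since $\h$ and $\gd$ are respectively the $+1$ and $-1$ eigenspaces of $\theta$ on $\g$, for any $x \in \h$ and $y \in \gd$ the $\theta$-invariance of $B$ yields
\[
B(x,y) = B(\theta x, \theta y) = B(x,-y) = -B(x,y),
\]
hence $B(x,y) = 0$. Combined with $\g = \h \oplus \gd$ (which follows from $\theta$ being an involution in characteristic zero), this gives the claimed orthogonal decomposition.

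There is essentially no obstacle here beyond the observation that the Killing form alone is inadequate when $Z(G)$ is non-trivial; the only care needed is to manufacture the form on $\z$ by hand and then glue the two pieces.
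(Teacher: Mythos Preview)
Your approach is essentially the paper's: split $\g = \g' \oplus \z$, take the Killing form on the semisimple part $\g'$, and build the form on the center $\z$ by hand. Your deduction of $\h \perp \g^\sigma$ directly from the $\theta$-invariance of $B$ is slicker than the paper's Step~1, which verifies orthogonality for the Killing form by the trace computation $\tr(\ad(A)\ad(C)) = 0$ (using that $\ad(A)\ad(C)$ exchanges $\h$ and $\g^\sigma$); your argument covers any $\theta$-invariant form at once.

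One small gap: you assert that the adjoint action of $G$ on $\z$ is trivial, but the paper explicitly allows $G$ disconnected, and the component group $G/G^0$ can act nontrivially on $\z$ (e.g.\ $G = T \rtimes W$ with $W$ a finite group permuting the cocharacters of a torus $T$). The fix is that $G/G^0$ and $\theta$ act on the torus $Z(G^0)^0$ through its automorphism group, hence through a finite subgroup of $\GL_n(\Z)$ on the cocharacter lattice; averaging the standard inner product on $\Q^n$ over this finite group (and, for non-split tori, over the Galois action as well) yields a positive-definite rational form whose extension to $F$ is the required non-degenerate $G$- and $\theta$-invariant form on $\z$. The paper's Step~2 is no more explicit on this point (``easily follows''), so this is a wrinkle shared with the original rather than a divergence from it.
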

\begin{proof}$ $\\
\indent
Step 1. Proof for semisimple $\g$.\\
Let $B$ be the Killing form on $\g$. Since it is non-degenerate,
it is enough to show that $\h$ is orthogonal to $\g^{\sigma}$. Let
$A \in \h$ and $C \in \g^{\sigma}$. We have to show
$\tr(\ad(A)\ad(C))=0$. This follows from the fact that
$\ad(A)\ad(C)(\h) \subset \g^{\sigma}$ and
$\ad(A)\ad(C)(\g^{\sigma}) \subset \h$.

Step 2. Proof in the general case.\\
Let $\g = \g' \oplus \z$ such that $\g'$ is semisimple and $\z$ is
the center. It is easy to see that this decomposition is invariant
under $Aut(\g)$ and hence $\theta$-invariant.
 Now the proposition easily follows from the
previous case.
\end{proof}
\begin{remark} \label{ExpMap}
Let $(G,H,\theta)$ be a symmetric pair. Let $\mathcal{U}(G)$ be
the set of unipotent elements in $G(F)$ and $\mathcal{N}(\g)$ the
set of nilpotent elements in $\g(F)$. Then the exponent map $exp:
\mathcal{N}(\g) \to \mathcal{U}(G)$ is $\sigma$-equivariant and
intertwines the adjoint action with conjugation.
\end{remark}
%


\begin{lemma} \label{SL2triple}
Let $(G,H,\theta)$ be a symmetric pair. Let $x \in \gd$ be a
nilpotent element. Then there exists a group homomorphism
$\phi:\SL_2 \to G$ such that $$d\phi(\begin{pmatrix}
  0 & 1 \\
  0 & 0\end{pmatrix}) = x, \quad
d\phi(\begin{pmatrix}
  0 & 0 \\
  1 & 0\end{pmatrix}) \in \gd \text{ and }\quad
\phi(\begin{pmatrix}
  t & 0 \\
  0 & t^{-1}\end{pmatrix}) \in H.$$
In particular $0 \in \overline{\Ad(H)(x)}$.
\end{lemma}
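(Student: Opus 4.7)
The plan is to establish a $\theta$-equivariant form of the Jacobson--Morozov theorem: given the nilpotent $x \in \gd$, I want to produce an $\sll_2$-triple $(e,h,f)$ in $\g$ with $e=x$, $h \in \h$, and $f \in \gd$. Once such a triple is available, the Lie algebra homomorphism $\sll_2 \to \g$ sending the standard basis to $(x,h,f)$ integrates, since $\SL_2$ is simply connected, to an algebraic $F$-group homomorphism $\phi:\SL_2 \to G$; the two conditions $h \in \h$ and $f \in \gd$ translate directly into the displayed conditions on $\phi$.

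To construct such a triple, first apply the classical Jacobson--Morozov theorem (valid in characteristic zero) to $x \in \g$, obtaining some $\sll_2$-triple $(x, h_0, f_0)$. Since $\theta(x) = -x$, the triple $(-x,\theta(h_0),\theta(f_0))$ is also an $\sll_2$-triple; negating the first and third entries (a symmetry of the $\sll_2$ relations) yields a second triple $(x,\theta(h_0),-\theta(f_0))$ with the same nilpositive part. By Kostant's uniqueness lemma for $\sll_2$-triples, there is a unique $u$ in the unipotent radical $U^x$ of $Z_G(x)^\circ$ with $\Ad(u)h_0 = \theta(h_0)$ and $\Ad(u)f_0 = -\theta(f_0)$. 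Applying $\theta$ to both identities and invoking the uniqueness of $u$ (together with the fact that $\theta$ preserves $U^x$, as $U^x = U^{-x} = U^{\theta(x)}$) shows that $\theta(u) \cdot u$ centralizes the whole $\sll_2$-subalgebra $\mathfrak{s}$ spanned by $x,h_0,f_0$; but $U^x \cap Z_G(\mathfrak{s}) = \{1\}$ by weight considerations for $\ad h_0$, so $\theta(u) = u^{-1}$.

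Since $u$ is unipotent and $\mathrm{char}\,F = 0$, write $u = \exp(z)$; then $\theta(u) = u^{-1}$ forces $\theta(z) = -z$, i.e.\ $z \in \gd \cap \Lie(Z_G(x))$. Setting $v := \exp(z/2)$ one computes $v^{-1}\theta(v) = u^{-1}$, while $v$ centralizes $x$. A short verification then shows that $h := \Ad(v) h_0$ satisfies $\theta(h) = \Ad(v^{-1}u)h_0 = \Ad(v)h_0 = h$, so $h \in \h$, and similarly $f := \Ad(v) f_0 \in \gd$; the triple $(x,h,f)$ is the desired $\theta$-compatible $\sll_2$-triple. For the ``in particular'' clause, note that $\phi(\diag(t,t^{-1})) \in H(F)$ and $\Ad(\phi(\diag(t,t^{-1})))(x) = t^2 x$ for every $t \in F^\times$; letting $|t| \to 0$ (possible because $F$ is a local field) yields $0 \in \overline{\Ad(H(F))(x)}$.

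The main delicate point is the passage from a conjugator $u$ equating two $\sll_2$-triples to the $\theta$-compatible square root $v$, together with ensuring that everything is defined over $F$ rather than merely over $\oF$. This is handled by performing the entire construction inside the unipotent group $U^x$, on which $\exp$ is a polynomial isomorphism defined over $F$; so Kostant uniqueness, the identity $\theta(u)=u^{-1}$, and the square-root extraction all descend automatically from $\oF$ to $F$.
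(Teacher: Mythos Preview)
Your argument is correct, but it takes a different route from the paper's. The paper uses a pure Lie-algebra averaging trick, following Kostant--Rallis: starting from any Jacobson--Morozov triple $(x_-,s,x)$, replace $s$ by $s':=\tfrac{1}{2}(s+\theta(s))\in\h$; one checks $[s',x]=2x$ and $s'\in[x,\g]$ (since both $s=[x,x_-]$ and $\theta(s)=-[x,\theta(x_-)]$ lie there), so Morozov's lemma completes $(x,s')$ to a new triple $(x_-,s',x)$; then replace this $x_-$ by $x_-':=\tfrac{1}{2}(x_--\theta(x_-))\in\gd$, and a direct bracket computation shows $(x_-',s',x)$ is still an $\sll_2$-triple. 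No group-level conjugation or square root is needed.

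Your approach instead invokes Kostant's transitivity of $U^x$ on $\sll_2$-triples with fixed nilpositive, derives $\theta(u)=u^{-1}$ from uniqueness, and extracts a square root $v=\exp(\tfrac{1}{2}\log u)$ inside the unipotent group to symmetrize the triple. This is a standard and perfectly valid technique; it is more group-theoretic and a bit heavier (you need the structure of $Z_G(x)$ and the polynomial exponential on its unipotent radical), whereas the paper's proof stays entirely inside $\g$ and uses only the Morozov completion lemma. Both arguments are field-rational in characteristic~$0$; the paper's is shorter, yours makes the role of the conjugacy class of triples more visible.
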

This lemma was essentially proven for $F=\C$ in \cite{KR}. The
same proof works for any $F$ and we repeat it here for the
convenience of the reader.

\begin{proof}

By the Jacobson-Morozov Theorem (see \cite[Chapter III, Theorems
17 and 10]{Jac}) we can complete $x$ to an $\sll_2$-triple
$(x_-,s,x)$. Let $s':= \frac{s+\theta(s)}{2}$. It satisfies
$[s',x]=2x$ and lies in the ideal $[x,\g]$ and hence by the
Morozov Lemma (see \cite[Chapter III, Lemma 7]{Jac}), $x$ and $s'$
can be completed to an $\sll_2$ triple $(x_-,s',x)$. Let
$x'_-:=\frac{x_- - \theta(x_-)}{2}$. Note that $(x_-',s',x)$ is
also an $\sll_2$-triple. Exponentiating this $\sll_2$-triple to a
map $\SL_2 \to G$ we get the required homomorphism.
\end{proof}

\begin{notation} \label{dx}
In the notation of the previous lemma we denote $$D_t(x):=
\phi(\begin{pmatrix}
  t & 0 \\
  0 & t^{-1}\end{pmatrix})\in H \text{ and } d(x):=d\phi(\begin{pmatrix}
  1 & 0 \\
  0 & -1\end{pmatrix})\in\h.$$
These elements depend on the choice of $\phi$. However, whenever
we use this notation, nothing will depend on their choice.
\end{notation}

\subsection{Descendants of symmetric pairs}

Recall that for a symmetric pair $(G,H,\theta)$ we consider the
$H\times H$ action on $G$ by left and right translation and the
conjugation action of $H$ on $G^\sigma$.

\begin{proposition} \label{PropDescend}
Let  $(G,H,\theta)$ be a symmetric pair. Let $g \in G(F)$ be $H
\times H$-semisimple. Let $x=s(g)$. Then \\
(i) $x$ is semisimple (both as an element of $G$ and with respect
to the
$H$-action).\\
(ii) $H_x \cong (H \times H)_g$ and $(\g_x)^{\sigma} \cong N_{H g
H,g}^G$ as $H_x$-spaces.
\end{proposition}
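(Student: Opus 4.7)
The plan is to use the symmetrization map $s\colon G \to G$, $s(g) = g\sigma(g)$, as the bridge between the $H\times H$-action on $G$ and the $H$-conjugation action on $G^\sigma$. The identity $s(h_1 g h_2) = h_1 s(g) h_1^{-1}$ for $h_1, h_2 \in H$ (which uses $\sigma(h_i) = h_i^{-1}$) shows that $s$ is $H\times H$-equivariant with the target carrying only the conjugation action through the first factor, and implies $s^{-1}(H\cdot x) = HgH$ set-theoretically. Also $\sigma(x) = x$, so $x \in G^\sigma$.

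For (i), I would analyze the Jordan decomposition $x = x_s x_u$. Since $\sigma$ is an algebraic antiinvolution, it preserves this decomposition, so $\sigma(x_s) = x_s$ and $\sigma(x_u) = x_u$. The centralizer $G_{x_s}$ is $\theta$-stable, and hence $(G_{x_s}, H_{x_s}, \theta|_{G_{x_s}})$ is itself a symmetric pair; moreover $X_u := \log(x_u)$ lies in $(\g_{x_s})^\sigma$ and is nilpotent. Assuming $x_u \neq 1$, apply Lemma \ref{SL2triple} inside this smaller pair to produce a cocharacter $D_t \in H_{x_s}$ with $\Ad(D_t)X_u = t^2 X_u$. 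Since $D_t$ commutes with $x_s$, we get $D_t x D_t^{-1} \to x_s$, while $D_t g D_t^{-1} \in HgH$ and $s(D_t g D_t^{-1}) = D_t x D_t^{-1}$. Passing to the Zariski closure of this curve (and invoking Theorem \ref{LocZarClosed} to identify analytic with Zariski closedness of $HgH$) yields a point $g_\infty \in HgH$ with $s(g_\infty) = x_s$; but $s(HgH) = H\cdot x$, and $x_s$ is not $G$-conjugate (a fortiori not $H$-conjugate) to $x$ when $x_u \neq 1$, a contradiction. Hence $x_u = 1$ and $x$ is $G$-semisimple. Closedness of $H\cdot x$ then follows from $s^{-1}(H\cdot x) = HgH$ together with the fact that $s$ is a smooth surjection onto its image.

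For the stabilizer part of (ii), I would define $\Phi\colon H_x \to (H\times H)_g$ by $\Phi(h) = (h, g^{-1}hg)$. Membership $g^{-1}hg \in H$ means $\theta(g^{-1}hg) = g^{-1}hg$; using $\theta(g) = \sigma(g)^{-1}$ and $\theta|_H = \id$ this rearranges to $hx = xh$, i.e.\ $h \in H_x$. The inverse is projection onto the first factor: if $h_1 g h_2^{-1} = g$, applying $\sigma$ gives $h_2\sigma(g) = \sigma(g)h_1$, and multiplying the two relations yields $h_1 x = x h_1$.

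For the normal space I would compute the differential of $s$ at $g$. Using right-translation to identify $T_g G \cong T_x G \cong \g$, a direct chain-rule calculation (applied to $s(\exp(tW)g) = \exp(tW)\,x\,\exp(-t\theta(W))$) yields $ds_g(W) = W - \Ad(x)\theta(W)$. The kernel is $\Ad(g)\h$ (the tangent to the fiber $gH = s^{-1}(x)$), and the image is $T_x G^\sigma$. Moreover $ds_g$ sends $\h$ onto $(1 - \Ad(x))\h = T_x(H\cdot x)$ and annihilates $\Ad(g)\h$, so it descends to an $H_x$-equivariant isomorphism
\[
N_{HgH,g}^G \;=\; \g/(\h + \Ad(g)\h) \;\xrightarrow{\sim}\; T_x G^\sigma / T_x(H\cdot x).
\]
Finally, the natural inclusion $(\g_x)^\sigma \hookrightarrow T_x G^\sigma$ (immediate from the defining equations together with $\theta(x) = x^{-1}$, which follows from $\sigma(x)=x$) induces the required $H_x$-isomorphism with the quotient: injectivity uses that $\Ad(x)$ is semisimple on $\g$ by (i), so $(1-\Ad(x))^2 a = 0$ with $a \in \h$ forces $a \in \h_x$ and the corresponding element of $(\g_x)^\sigma$ vanishes; a dimension count (via the already-established isomorphism $(H\times H)_g \cong H_x$) closes the argument. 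The main obstacle I anticipate is the limiting step in (i): the curve $D_t g D_t^{-1}$ need not converge in $G(F)$ as $t \to 0$, so one must pass to the Zariski closure and appeal to Theorem \ref{LocZarClosed}.
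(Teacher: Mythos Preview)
Your argument for part (i) contains a genuine gap. You assume a contradiction $x_u \neq 1$ and consider the curve $t \mapsto D_t g D_t^{-1}$ inside the closed set $HgH$; you then claim that ``passing to the Zariski closure'' produces a point $g_\infty \in HgH$ with $s(g_\infty) = x_s$. This inference is not justified: for a morphism of affine varieties one only has $s(\overline{C}) \subset \overline{s(C)}$, not equality, and the curve $t \mapsto D_t g D_t^{-1}$ typically does \emph{not} extend to $t=0$ in $G$ (conjugation by a one-parameter subgroup generally introduces negative weights on the matrix entries of $g$). The paper avoids this by reversing the order: it first establishes that $H\cdot x$ is closed (using that the symmetrization map is closed), and \emph{then} applies the $\SL_2$-argument directly to $x$ to conclude $x_s \in \overline{H\cdot x} = H\cdot x$, hence $x_u=1$. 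You already have all the ingredients for this reordering, since you show $s^{-1}(H\cdot x)=HgH$.

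For part (ii) you take a genuinely different route from the paper. The paper fixes a $G$- and $\theta$-invariant form $B$ (Lemma~\ref{BilForm}), introduces the twisted involution $\theta'(y)=x\theta(y)x^{-1}$, and computes $N_{HgH,g}^G \cong (\h+\Ad(g)\h)^\perp = \{a:\theta(a)=\theta'(a)=-a\}=(\g_x)^\sigma$ in one line. Your approach via the differential $ds_g$ is correct and more intrinsic (it avoids choosing $B$), but your final ``dimension count'' is incomplete: knowing $(H\times H)_g\cong H_x$ does not by itself give $\dim(\g_x)^\sigma$. A clean way to finish in your framework is to use (i): since $\Ad(x)$ is semisimple, $\g = \g_x \oplus \operatorname{im}(1-\Ad(x))$ is a $\theta$-stable decomposition, hence also stable under $-\Ad(x)\theta$; intersecting with $T_xG^\sigma$ gives $T_xG^\sigma = (\g_x)^\sigma \oplus \big(\operatorname{im}(1-\Ad(x))\cap T_xG^\sigma\big)$, and a short computation identifies the second summand with $(1-\Ad(x))\h = T_x(H\cdot x)$.
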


\begin{proof}
$ $\\ \indent (i)
Clearly the image of $HgH$ in $G/H$ is closed.
Since the symmetrization map, viewed as a map from $G/H$ to $G$, is a closed embedding, it follows that the $H$-orbit of $x$ is closed. This means that $x$ is
semisimple with respect to the $H$-action. Now we have to show
that $x$ is semisimple as an element of $G$ . Let $x = x_sx_u$ be
the Jordan decomposition of $x$. The uniqueness of the Jordan
decomposition implies that both $x_u$ and $x_s$ belong to
$G^{\sigma}$. To show that $x_u=1$ it is enough to show that
$\overline{\Ad(H)(x)} \ni x_s$. We will do that in several steps.
$ $\\
\indent Step 1. Proof for the case when $x_s = 1$.\\
It follows immediately from Remark \ref{ExpMap} and Lemma
\ref{SL2triple}.

Step 2. Proof for the case when
$x_s \in Z(G)$.\\
This case follows from Step 1 since conjugation acts trivially on
$Z(G)$.

Step 3. Proof in the general case.\\
Note that $x \in G_{x_s}$ and $G_{x_s}$ is $\theta$-invariant. The
statement follows from Step 2 for the group $G_{x_s}$.

(ii) The projection to the first coordinate gives rise to an isomorphism $(H
\times H)_g \cong H_x$. Let us now show that $(\g_x)^{\sigma}
\cong N_{H g H,g}^G$. First of all, $N_{H g H,g}^G \cong \g / (\h+
\Ad(g)\h).$ Let $\theta'$ be the involution of $G$ defined by
$\theta'(y)= x \theta(y) x^{-1}$. Note that $\Ad(g)\h=
\g^{\theta'}$. Fix a non-degenerate $G$-invariant symmetric
bilinear form $B$ on $\g$ as in Lemma \ref{BilForm}. Note that $B$
is also $\theta'$-invariant and hence $$(\Ad(g)\h)^{\bot} = \{a
\in \g| \theta'(a)=-a\}.$$ Now $$N_{H g H,g}^G \cong (\h+
\Ad(g)\h)^{\bot} = \h^{\bot} \cap \Ad(g)\h^{\bot} = \{a \in \g|
\theta(a)=\theta'(a)=-a\}= (\g_x)^{\sigma}.$$
\end{proof}

It is easy to see that the isomorphism $N_{H g H,g}^G \cong
(\g_x)^{\sigma}$ is independent of the choice of $B$.

\begin{definition} \label{descendant}
In the notation of the previous proposition we will say that the
pair $(G_x,H_x,\theta|_{G_x})$ is a \textbf{descendant} of
$(G,H,\theta)$.
\end{definition}

\subsection{Tame symmetric pairs} \label{SecTamePairs}

\begin{definition} \label{DefTamePairs}
We call a symmetric pair $(G,H,\theta)$\\
(i) \textbf{tame} if the action of $H\times H$ on $G$ is tame.\\
(ii) \textbf{linearly tame} if the action of $H$ on $\g^{\sigma}$
is linearly tame.\\
(iii) \textbf{weakly linearly tame} if the action of $H$ on
$\g^{\sigma}$ is weakly linearly tame.
\end{definition}

\begin{remark}
Evidently, any good tame symmetric pair is a GK-pair.
\end{remark}

The following theorem is a direct corollary of Theorem
\ref{Invol_HC}.

\begin{theorem} \label{LinDes}
Let $(G,H,\theta)$ be a symmetric pair. Suppose that all its
descendants (including itself)  are weakly linearly tame. Then
$(G,H,\theta)$ is tame and linearly tame.
\end{theorem}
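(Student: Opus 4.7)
The plan is to derive both conclusions by invoking Theorem \ref{Invol_HC} twice: once for the $H \times H$ action on $G$ (to obtain tameness) and once for the $H$ action on $\g^\sigma$ (to obtain linear tameness, using that a tame action on a vector space is automatically linearly tame, since linear maps form a subclass of algebraic automorphisms).

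For tameness, I consider the action of $H \times H$ on $G$ and take any $H \times H$-semisimple $g \in G(F)$. Proposition \ref{PropDescend} supplies $H_x$-equivariant isomorphisms $(H \times H)_g \cong H_x$ and $N^G_{HgH, g} \cong (\g_x)^\sigma$, where $x = s(g)$. The resulting normal-slice action is precisely the defining representation of the descendant pair $(G_x, H_x, \theta|_{G_x})$, and the hypothesis says exactly that this descendant is weakly linearly tame. Theorem \ref{Invol_HC} then yields tameness of $H \times H$ on $G$, i.e., that $(G, H, \theta)$ is tame.

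For linear tameness, I apply Theorem \ref{Invol_HC} to $H$ acting on $\g^\sigma$. At an $H$-semisimple $y \in \g^\sigma(F)$, I fix a $\theta$-invariant non-degenerate $G$-invariant symmetric bilinear form $B$ on $\g$ (Lemma \ref{BilForm}). Then $y$ is a semisimple element of $\g$, and $B$ produces the orthogonal decomposition $\g^\sigma = [\h, y] \oplus (\g_y)^\sigma$, identifying $N^{\g^\sigma}_{Hy, y}$ with $(\g_y)^\sigma$ as an $H_y$-module. This is exactly the defining representation of the pair $(G_y, H_y, \theta|_{G_y})$. Identifying the latter with a descendant of $(G, H, \theta)$ in the sense of Definition \ref{descendant} -- by producing a semisimple element $g \in G(F)$ with $(G_{s(g)}, H_{s(g)}) = (G_y, H_y)$, e.g.~taking $g \in G^\sigma$ sufficiently generic inside $(Z(G_y))^\sigma$ so that $s(g) = g^2$ has centralizer exactly $G_y$ -- lets the hypothesis apply. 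Theorem \ref{Invol_HC} then yields tameness of $H$ on $\g^\sigma$, and hence linear tameness.

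The main technical obstacle is precisely this last identification of $(G_y, H_y, \theta|_{G_y})$ as a descendant: once the existence of a semisimple $g \in G^\sigma \cap Z(G_y)$ with $G_g = G_y$ is in hand, the rest of the argument is a direct bookkeeping application of Proposition \ref{PropDescend} and Theorem \ref{Invol_HC}. This existence should follow from elementary facts about semisimple elements in reductive groups -- pick $g$ in the identity component of $(Z(G_y))^\sigma$ which is generic in the sense that no proper $\theta$-stable reductive subgroup of $G_y$ contains it -- so that no real obstacle arises.
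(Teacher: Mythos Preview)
Your approach coincides with the paper's: the paper simply declares Theorem~\ref{LinDes} a ``direct corollary of Theorem~\ref{Invol_HC}'' and gives no further argument. Your treatment of the tameness assertion via Proposition~\ref{PropDescend} is exactly the intended one.

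For linear tameness you have been more careful than the paper and correctly isolated the one nontrivial point it glosses over: after applying Theorem~\ref{Invol_HC} to $H$ acting on $\g^\sigma$, one must know that for every $H$-semisimple $y\in\g^\sigma(F)$ the action of $H_y$ on $N^{\g^\sigma}_{Hy,y}$ is a descendant action in the sense of Definition~\ref{descendant}. Your proposed fix --- producing a generic $g\in (Z(G_y))^\sigma(F)$ with $G_{g^2}=G_y$ --- is morally right, but as written it leaves two loose ends: why such a $g$ exists over $F$, and why it is $H\times H$-semisimple (Proposition~\ref{PropDescend}(i) only goes in the other direction).

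There is a cleaner way to close this that avoids both issues and stays entirely inside the machinery already set up. Take the analytic Luna slice $(U,p,\psi,S,N)$ at $1\in G(F)$ for the $H\times H$-action; here $(H\times H)_1\cong H$ and $N=N^G_{H,1}\cong\g^\sigma$. Since $\psi(S)$ is open and saturated and contains $0$, for $t\in F^\times$ small enough $ty\in\psi(S)$; set $g_t:=\psi^{-1}(ty)\in S$. By Corollary~\ref{LocLunCor}(iii), $g_t$ is $H\times H$-semisimple because $ty$ is $H$-semisimple in $\g^\sigma$. By Corollary~\ref{LocLunCor}(i)--(ii) one has $(H\times H)_{g_t}\cong H_{ty}=H_y$ and $N^G_{Hg_tH,g_t}\cong N^{\g^\sigma}_{H\cdot ty,\,ty}=N^{\g^\sigma}_{Hy,y}$ as $H_y$-spaces. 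Comparing with Proposition~\ref{PropDescend}(ii), the $H_y$-representation $N^{\g^\sigma}_{Hy,y}$ is exactly the defining representation of the descendant at $g_t$, hence weakly linearly tame by hypothesis. This feeds directly into Theorem~\ref{Invol_HC} and finishes the linear tameness claim without ever needing to name $G_y$ explicitly or to argue separately that $y$ is semisimple in $\g$.
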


\begin{definition} \label{DefSpecPair}
We call a symmetric pair $(G,H,\theta)$ \textbf{special} if
$\g^{\sigma}$ is a special representation of $H$ (see Definition
\ref{DefSpec}).
\end{definition}

The following proposition follows immediately from Proposition
\ref{SpecWeakTameAct}.
\begin{proposition} \label{SpecWeakReg}
Any special symmetric pair is weakly linearly tame.
\end{proposition}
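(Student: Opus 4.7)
The plan is to observe that this proposition is essentially a translation from the language of representations to the language of symmetric pairs, so the proof reduces to a direct application of Proposition \ref{SpecWeakTameAct} to the specific representation attached to the symmetric pair.

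More concretely, I would unwind the two definitions in play. By Definition \ref{DefSpecPair}, saying that the symmetric pair $(G,H,\theta)$ is special means precisely that $\g^{\sigma}$, viewed as an $F$-rational representation of the reductive group $H$ via the adjoint action, is a special representation in the sense of Definition \ref{DefSpec}. Similarly, by Definition \ref{DefTamePairs}(iii), saying that $(G,H,\theta)$ is weakly linearly tame means precisely that the same representation $\g^{\sigma}$ of $H$ is weakly linearly tame in the sense of the previous section.

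Thus the statement to be proved reads: if the representation $\g^{\sigma}$ of $H$ is special, then it is weakly linearly tame. But this is a direct instance of Proposition \ref{SpecWeakTameAct}, which asserts that every special $F$-rational representation of a reductive group is weakly linearly tame. Applying that proposition to the pair $(H,\g^{\sigma})$ gives the desired conclusion, and no further argument is needed.

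There is no real obstacle: all the content sits in Proposition \ref{SpecWeakTameAct} (whose proof in turn reduces, via the two elementary lemmas just preceding it, to the fact that an admissible automorphism $\tau$ acts trivially on $V/G$ and hence preserves any $G$-invariant decomposition and any $G$-invariant non-degenerate symmetric form). The only role of this proposition is to record, in the dictionary of symmetric pairs, the translation of that fact.
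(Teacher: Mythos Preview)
Your proposal is correct and matches the paper's own proof exactly: the paper simply states that the proposition ``follows immediately from Proposition \ref{SpecWeakTameAct}.'' Your unwinding of Definitions \ref{DefSpecPair} and \ref{DefTamePairs}(iii) is precisely the translation that makes this immediate.
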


Using Lemma \ref{BilForm} it is easy to prove the following
proposition.
\begin{proposition}
A product of special symmetric pairs is special.
\end{proposition}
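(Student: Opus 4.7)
The plan is to reduce by induction to a product of two special pairs $(G_i,H_i,\theta_i)$, $i=1,2$, and prove directly that the product $(G,H,\theta)$ is special. Write $V_i:=\g_i^{\sigma_i}$, so that $V:=\g^{\sigma}=V_1\oplus V_2$ carries the componentwise action of $H=H_1\times H_2$. The first step is to unravel the product structure: since $H$ acts componentwise, $V^H=V_1^{H_1}\oplus V_2^{H_2}$, hence $Q(V)=Q(V_1)\oplus Q(V_2)$; and the categorical quotient satisfies $V/H=V_1/H_1\times V_2/H_2$, which yields $\Gamma(V)=\Gamma(V_1)\times\Gamma(V_2)$. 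By Lemma \ref{BilForm} applied to each $(G_i,H_i,\theta_i)$, $V_i$ carries a non-degenerate $H_i$-invariant symmetric bilinear form $B_{V_i}$; this auxiliary form is the precise ingredient from that lemma that we shall need.

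Now let $\xi\in\Sc^*_{Q(V)}(\Gamma(V))^{H(F)}$ satisfy the Fourier hypothesis of Definition \ref{DefSpec}; the goal is to show $\xi=0$. The idea is to slice in the $V_2$-direction: for each $\phi_2\in\Sc(Q(V_2))$, define $\xi_{\phi_2}\in\Sc^*(Q(V_1))$ by $\xi_{\phi_2}(\phi_1):=\xi(\phi_1\otimes\phi_2)$. Since $\Gamma(V_1)$ is closed and $\Supp(\xi)\subset\Gamma(V_1)\times\Gamma(V_2)$, a standard neighborhood argument gives $\Supp(\xi_{\phi_2})\subset\Gamma(V_1)$, and $H_1(F)$-invariance of $\xi_{\phi_2}$ follows from $H_1$-invariance of $\xi$.

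To verify the Fourier condition for $\xi_{\phi_2}$ against an $H_1$-invariant decomposition $Q(V_1)=W_1\oplus W_2$ with non-degenerate symmetric $H_1$-invariant forms $B_1,B_2$, I would apply the hypothesis on $\xi$ to the two $H$-invariant decompositions $Q(V)=W_1\oplus(W_2\oplus Q(V_2))$ with form $B_1$ on $W_1$ and $B_2\oplus B_{V_2}$ on the complement, and $Q(V)=(W_1\oplus Q(V_2))\oplus W_2$ with form $B_1\oplus B_{V_2}$ on the first piece and $B_2$ on $W_2$; both complementary forms are non-degenerate and $H$-invariant precisely by the choice of $B_{V_2}$. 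Since partial Fourier transform in any $Q(V_1)$-direction commutes with partial pairing against $\phi_2$ in the $Q(V_2)$-direction, one obtains $\Fou_{B_i}(\xi_{\phi_2})=(\Fou_{B_i}\xi)_{\phi_2}$, which is supported in $\Gamma(V_1)$ by the same support argument as before, for $i=1,2$. Specialness of $V_1$ therefore forces $\xi_{\phi_2}=0$.

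As this holds for every $\phi_2\in\Sc(Q(V_2))$, and $\Sc(Q(V_1))\otimes\Sc(Q(V_2))$ is dense in $\Sc(Q(V))$, we conclude $\xi=0$. The main point requiring care is the commutation of partial Fourier transform with partial pairing, which is routine once conventions are fixed; the conceptual content is the use of Lemma \ref{BilForm} to manufacture the auxiliary non-degenerate form $B_{V_2}$, without which one could not package the desired $Q(V_1)$-direction partial Fourier transforms into admissible binary decompositions of $Q(V)$ in the sense of Definition \ref{DefSpec}.
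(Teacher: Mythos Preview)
Your proposal is correct and follows the approach the paper indicates: the paper's entire proof is the sentence ``Using Lemma \ref{BilForm} it is easy to prove the following proposition,'' and your argument is precisely a detailed execution of this, with the auxiliary form $B_{V_2}$ from Lemma \ref{BilForm} serving exactly the role you describe. One minor point to tighten: Lemma \ref{BilForm} gives a non-degenerate form on $V_2=\g_2^{\sigma_2}$, and you need it on $Q(V_2)$; this restriction is still non-degenerate because $V_2^{H_2}$ and its $H_2$-complement are orthogonal under any $H_2$-invariant form (as $(Q(V_2)^*)^{H_2}=0$), but it is worth saying so.
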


Now we would like to give a criterion of speciality for symmetric
pairs. Recall the notation $d(x)$ of \ref{dx}.

\begin{proposition}[Speciality criterion] \label{SpecCrit}
Let $(G,H,\theta)$ be a symmetric pair. Suppose that for any
nilpotent $x \in \gd$ either\\
(i)
$\tr(\ad(d(x))|_{\h_x}) < \dim Q(\g^{\sigma}) $
or\\
(ii) $F$ is non-Archimedean and
$\tr(\ad(d(x))|_{\h_x}) \neq \dim Q(\g^{\sigma})$.

Then the pair $(G,H,\theta)$ is special.
\end{proposition}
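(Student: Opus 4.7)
The plan is to apply the speciality criterion of Lemma~\ref{SpecActCrit}. By Lemma~\ref{BilForm}, $\g^{\sigma}$ carries a non-degenerate $H$-invariant symmetric bilinear form which descends to such a form $B$ on $Q(\g^{\sigma})$. Taking the trivial decomposition $Q(\g^{\sigma}) = Q(\g^{\sigma})\oplus 0$, it then suffices to show that any distribution $\xi \in \Sc^*_{Q(\g^{\sigma})}(\Gamma(\g^{\sigma}))^{H(F)}$ that is adapted to $B$ must vanish.

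To analyze $\xi$ I would proceed by Noetherian induction on nilpotent $H$-orbit closures in $\g^{\sigma}$, combined with the stratification theorems \ref{Filt_nonarch} and \ref{NashFilt}. The inductive step reduces to showing that $\xi$ cannot be nontrivially supported on a single $H(F)$-orbit through a nonzero nilpotent $x\in\g^{\sigma}$. For such $x$, Lemma~\ref{SL2triple} supplies an $\sll_2$-triple completing $x$ together with the one-parameter subgroup $D_t(x)\in H(F)$, whose adjoint action on $\g^{\sigma}$ is $\exp(\log t\cdot\ad(d(x)))$ and satisfies $\Ad(D_t(x))(x)=t^2x$.

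The core of the argument then plays two incompatible homogeneity properties of $\xi$ against each other. On the one hand, $H(F)$-invariance forces $\xi$ to be invariant under $D_t(x)$; on the other, $B$-adaptedness forces $\rho(t)\xi = \delta_B(t)|t|^{\dim Q(\g^{\sigma})/2}\xi$ in case (i) of adaptedness and, only when $F$ is Archimedean, possibly $\rho(t)\xi = \delta_B(t)\,t\,|t|^{\dim Q(\g^{\sigma})/2}\xi$ in case (ii). The composition $\Ad(D_t(x))\circ\rho(t^{-2})$ fixes $x$ and acts linearly on the normal space $N:=N_{Hx,x}^{\g^{\sigma}}$; Frobenius reciprocity (Theorem~\ref{Frob}) then dictates the scalar by which $\xi$ transforms under this composition in terms of the determinant of the induced action on $N$. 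Identifying $N\cong((\g^{\sigma})_x)^*$ via $B$, decomposing $\g_x=\h_x\oplus(\g^{\sigma})_x$, and exploiting the $\sll_2$-symmetry on $\g_x$ to convert the contribution of $(\g^{\sigma})_x$ into one involving $\h_x$, this reduces the numerical constraint to a relation purely in $\tr(\ad(d(x))|_{\h_x})$ and $\dim Q(\g^{\sigma})$. The two homogeneity exponents match only under the critical equality that hypothesis~(i) or (ii) excludes, forcing $\xi$ to vanish at $x$ and completing the induction.

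The main obstacle is the arithmetic bookkeeping in this last step: tracking Jacobian factors arising from the Frobenius descent (including across the higher conormal layers $\Sym^k(CN)$ that appear when $\xi$ has transverse order along $Hx$), reconciling $|\cdot|$-versus-multiplicative-character conventions with the Weil factor $\delta_B$, and performing the $\sll_2$-representation-theoretic identification that translates the trace on $(\g^{\sigma})_x$ into the trace on $\h_x$ used in the hypothesis. The strict inequality in hypothesis~(i) is needed over Archimedean $F$ precisely because the extra Archimedean adaptedness case~(ii) shifts the target exponent, so only a strict inequality rules out both admissible exponents simultaneously---whereas over non-Archimedean $F$ the single exponent in case (i) of adaptedness is excluded by mere inequality.
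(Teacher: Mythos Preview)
Your proposal is correct and follows essentially the same route as the paper. The paper streamlines the ``two incompatible homogeneities'' idea by packaging $H(F)$-invariance and $B$-adaptedness into a single $(H(F)\times F^{\times})$-equivariance condition (with $F^{\times}$ acting by homothety and the character $(1,\chi)$ encoding adaptedness), and then restricts to the one-parameter subgroup $K=\{(D_t(x),t^2)\}$ inside the stabilizer of $x$; this makes the Frobenius bookkeeping cleaner, since the modular character of the stabilizer at $(D_t(x),t^2)$ is directly $|t|^{\tr(\ad(d(x))|_{\h_x})}$, with no need for your conversion step from $(\g^{\sigma})_x$ to $\h_x$. The higher transverse layers $\Sym^k(CN)$ are controlled by Lemma~\ref{EigenInt} (the eigenvalues of $\ad(d(x))$ on the normal space $\gd/[x,\h]$ are non-positive integers), which plays exactly the role of the ``$\sll_2$-representation-theoretic identification'' you allude to and ensures these layers only push the exponent further from the forbidden value; you should invoke this lemma explicitly. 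Finally, the paper replaces your Noetherian induction by the finiteness of nilpotent $H$-orbits (Kostant--Rallis), which is what makes Theorem~\ref{NashFilt} directly applicable in the Archimedean case.
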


For the proof we will need the following auxiliary results.

\begin{lemma} \label{EquivDef}
Let $(G,H,\theta)$ be a symmetric pair. Then $\Gamma(\gd)$ is the
set of all nilpotent elements in $Q(\gd)$.
\end{lemma}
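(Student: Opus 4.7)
The plan is to invoke Lemma \ref{Gamma} to replace $\Gamma(\gd)$ with
$S_0 = \{y \in \gd(F) : 0 \in \overline{\Ad(H(F))y}\}$
and then prove the two inclusions separately.

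For the inclusion ``every nilpotent element of $Q(\gd)$ lies in $\Gamma(\gd)$'': given a nilpotent $x \in \gd$, I would apply Lemma \ref{SL2triple} to obtain $D_t(x) \in H(F)$ with $\Ad(D_t(x))(x) = t^2 x$. As $t$ tends to $0$ analytically in $F$ (or has sufficiently large valuation in the non-Archimedean case), the point $t^2 x$ tends to $0$ in $\gd(F)$, so $0 \in \overline{\Ad(H(F))x}$, i.e.\ $x \in S_0 = \Gamma(\gd)$. I still need to verify that a nilpotent $x \in \gd$ automatically lies in $Q(\gd)$; for this I decompose $x = x_1 + x_2$ along the $H$-invariant splitting $\gd = \gd^H \oplus Q(\gd)$ (which exists by reductivity of $H$). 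Since $D_t(x)$ acts trivially on $\gd^H$ and preserves $Q(\gd)$, matching components in the identity $t^2 x_1 + t^2 x_2 = x_1 + \Ad(D_t(x))(x_2)$ yields $x_1 = t^2 x_1$ for all $t \in F^{\times}$, forcing $x_1 = 0$.

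For the reverse inclusion: if $x \in \Gamma(\gd)$, then $0 \in \overline{\Ad(H(F))x}$ analytically, hence a fortiori $0$ lies in the Zariski closure of $\Ad(H)x$, so every $H$-invariant polynomial on $\gd$ takes the same value at $x$ and at $0$. Applying this to the restrictions to $\gd$ of $G$-invariant polynomials on $\g$, I conclude that $x$ lies in the fiber over $0$ of the adjoint quotient $\g \to \g /\!\!/ G$. By Chevalley's classical description this fiber is precisely the nilpotent cone of $\g$, so $x$ is nilpotent.

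The main (minor) obstacles are passing between analytic and Zariski closures (handled by the explicit $F$-rational curve $t \mapsto t^2 x$ for one direction and by closure monotonicity for the other), and justifying that a nilpotent element cannot have a nonzero component in $\gd^H$; both are resolved cleanly by the $\SL_2$-triple of Lemma \ref{SL2triple}.
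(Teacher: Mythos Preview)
Your proof is correct and follows essentially the same approach as the paper, which simply states that the lemma ``is a direct corollary from Lemma \ref{SL2triple}.'' You have filled in the details the paper leaves implicit: the forward inclusion via the $\SL_2$-triple and the curve $t\mapsto t^2x$, and the reverse inclusion via restriction of $G$-invariant polynomials and Chevalley's description of the nilpotent cone (which the paper evidently regards as standard). Your extra verification that a nilpotent $x\in\gd$ automatically lies in $Q(\gd)$ is a nice bonus not spelled out in the paper.
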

This lemma is a direct corollary from Lemma \ref{SL2triple}.

\begin{lemma} \label{EigenInt}
Let $(G,H,\theta)$ be a symmetric pair. Let $x \in \gd$ be a
nilpotent element. Then all the eigenvalues of
$\ad(d(x))|_{\gd/[x,\h]}$ are non-positive integers.
\end{lemma}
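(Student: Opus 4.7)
The plan is to complete $x$ to an $\sll_2$-triple and analyze $\g$ as an $\sll_2$-module via the adjoint action, using standard finite-dimensional $\sll_2$ representation theory. By Lemma \ref{SL2triple}, $x$ sits in an $\sll_2$-triple $(x_-', d(x), x)$ with $x, x_-' \in \gd$ and $d(x) \in \h$. Setting $E := \ad(x)$, $F := \ad(x_-')$, $H := \ad(d(x))$, and using that $\theta$ acts as $+1$ on $\h$ and $-1$ on $\gd$, one checks that $\theta$ commutes with $H$ and anticommutes with $E$ and $F$. In particular $\theta$ preserves every $H$-weight space $\g(k) := \ker(H - k)$, and $\g(k) = \h(k) \oplus \gd(k)$.

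Since $H$ acts with integer eigenvalues on every finite-dimensional $\sll_2$-module, the eigenvalues of $\ad(d(x))$ on $\gd/[x,\h]$ are automatically integers. To show non-positivity, it suffices to prove $\gd(m) \subset [x,\h]$ for every integer $m \geq 1$. Decompose $\g$ into irreducible $\sll_2$-submodules $V_n$ (of highest weight $n$); since each weight space $V_n^k$ is one-dimensional, $\theta$ acts on it as $\pm 1$. The anticommutation $\theta E = -E\theta$, combined with the fact that $E : V_n^k \to V_n^{k+2}$ is an isomorphism for $-n \le k \le n-2$, forces the sign of $\theta$ to alternate on consecutive non-zero weight spaces of $V_n$; equivalently, $\h \cap V_n$ and $\gd \cap V_n$ are each supported on every other weight.

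The key step is then immediate. Fix $m \geq 1$ and an irreducible summand $V_n$ with $V_n^m \cap \gd \neq 0$; then $V_n^m \subset \gd$ and $m \leq n$, so by alternation $V_n^{m-2} \subset \h$. The inequalities $-n \leq m - 2 \leq n - 2$ both hold (since $1 \leq m \leq n$), so $E : V_n^{m-2} \to V_n^m$ is an isomorphism, giving $V_n^m = \ad(x)(\h \cap V_n^{m-2}) \subset [x,\h]$. Summing over all irreducible summands yields $\gd(m) \subset [x,\h]$ for every $m \geq 1$, which completes the proof. No serious obstacle arises: the argument is clean $\sll_2$ bookkeeping once the alternation-of-sign property of $\theta$ on the weight spaces of each $V_n$ is established and the surjectivity range of the raising operator is verified.
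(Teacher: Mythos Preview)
Your argument is correct and takes a genuinely different route from the paper's. The paper simply observes that the projection $\g = \h \oplus \gd \to \gd$ descends to an $\ad(d(x))$-equivariant surjection $\g/[x,\g] \twoheadrightarrow \gd/[x,\h]$ (because $[x,\g] = [x,\gd] \oplus [x,\h]$ with $[x,\h] \subset \gd$ and $[x,\gd] \subset \h$), and then applies the ungraded fact that $h|_{V/eV}$ has only non-positive integer eigenvalues for any finite-dimensional $\sll_2$-module $V$. You instead stay in the $\theta$-graded picture throughout and use the sign-alternation of $\theta$ along each weight string to show directly that every positive-weight piece of $\gd$ is hit by $\ad(x)$ from $\h$. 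The paper's reduction is shorter and avoids the graded bookkeeping; your version is more explicit about the mechanism and, incidentally, also shows the stronger statement $\gd(m) = [x,\h](m)$ for $m\geq 1$, not just an eigenvalue inequality.

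One small point to tighten: the step ``since each weight space $V_n^k$ is one-dimensional, $\theta$ acts on it as $\pm 1$'' presumes that $\theta$ preserves each irreducible summand $V_n$, which is not automatic for an arbitrary irreducible decomposition. It is, however, easy to arrange: diagonalize $\theta$ on the highest-weight space of each $\sll_2$-isotypic component (using $\theta^2 = 1$), and let each resulting $\theta$-eigenvector generate an irreducible; the relation $\theta F^k v = (-1)^k F^k(\theta v)$ then shows each such irreducible is $\theta$-stable with the alternation you use.
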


This lemma follows from the existence of a natural surjection $
\g/[x,\g] \twoheadrightarrow \gd/[x,\h]$ (given by the
decomposition $\g = \h \oplus \gd$)
using the following straightforward lemma.

\begin{lemma}
Let $V$ be a representation of an $\sll_2$ triple $(e,h,f)$. Then
all the eigenvalues of $h|_{V/e(V)}$ are non-positive integers.
\end{lemma}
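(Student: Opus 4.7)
My plan is to invoke the structure theory of finite-dimensional $\sll_2$-representations in characteristic zero (which applies here, since in the intended application $V = \g/[x,\g]$ is finite-dimensional). First I would use Weyl's complete reducibility theorem to decompose $V$ as a direct sum of irreducible $\sll_2$-modules $V = \bigoplus_i V_{n_i}$. Since $e$ and $h$ both respect this decomposition, the quotient splits as $V/e(V) = \bigoplus_i V_{n_i}/e(V_{n_i})$, and it suffices to analyze each summand.

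For a single irreducible $V_n$ of highest weight $n \in \Z_{\geq 0}$, I would use the explicit weight-space description: $V_n$ has a basis $v_{-n}, v_{-n+2}, \ldots, v_{n-2}, v_n$ with $h v_k = k v_k$, and $e$ sends $v_k$ to a nonzero multiple of $v_{k+2}$ for $k < n$ while annihilating $v_n$. Hence $e(V_n) = \Span(v_{-n+2}, \ldots, v_n)$, so $V_n/e(V_n)$ is one-dimensional of weight $-n \leq 0$. Assembling these contributions, every eigenvalue of $h$ on $V/e(V)$ is of the form $-n_i$ for some $n_i \in \Z_{\geq 0}$, which gives the claim.

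There is no real obstacle here; the argument is a direct consequence of standard $\sll_2$-representation theory, and the only point to verify is that complete reducibility is available, which holds because we are in characteristic zero and $V$ is finite-dimensional.
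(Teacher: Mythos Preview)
Your argument is correct and is the standard one; the paper itself offers no proof, labeling the lemma ``straightforward.'' One minor slip: in the intended application $V$ is $\g$ (so that $V/e(V) = \g/[x,\g]$), not $\g/[x,\g]$ itself; this does not affect your finite-dimensionality assumption, which is indeed what is needed for complete reducibility to apply.
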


Now we are ready to prove the speciality criterion.
\begin{proof}[Proof of Proposition \ref{SpecCrit}]
We will give a proof in the case where $F$ is Archimedean. The
case of non-Archimedean $F$ is done in the same way but with less
complications.

By Lemma \ref{SpecActCrit} and the
definition of adapted it is enough to prove
$$\Sc^*_{Q(\gd)}(\Gamma(\gd))^{H(F) \times
F^{\times},(1,\chi)}=0$$ for any character $\chi$ of $F^{\times}$
of the form
$\chi(\lambda)=u(\lambda)|\lambda|^{\dim Q(\g^{\sigma})/2}$
or
$\chi(\lambda)=u(\lambda)|\lambda|^{\dim Q(\g^{\sigma})/2+1}$, where $u$ is
some unitary character.

The set $\Gamma(\gd)$ has a finite number of $H(F)$-orbits (it
follows from Lemma \ref{EquivDef} and the introduction of
\cite{KR}). Hence it is enough to show that for any $x \in
\Gamma(\gd)$ we have
$$\Sc^*(\Ad(H(F))x,\Sym^k(CN_{\Ad(H(F))x}^{Q(\g^{\sigma})}))^{H(F) \times F^{\times},(1,\chi)}=0 \text{ for any } k.$$
Let $K:=\{(D_t(x),t^2) | t \in F^{\times}\} \subset (H(F)\times
F^{\times})_x.$

Note that
 $$\Delta_{(H(F)\times F^{\times})_x}((D_t(x),t^2))=|\det(\Ad(D_t(x))|_{\h_x})|
 =|t|^{\tr(\ad(d(x))|_{\h_x})}.$$

By Lemma \ref{EigenInt} the eigenvalues of the action of
$(D_t(x),t^2)$ on $(\Sym^k(Q(\gd)/[x,\h]))$ are of the form $t^l$
where $l$ is a non-positive integer.

Now by Frobenius reciprocity (Theorem \ref{Frob}) we have

\begin{multline*}
\Sc^*\left((H(F))x,\Sym^k(CN_{\Ad(H(F))x}^{Q(\gd)})\right)^{H(F)
\times
F^{\times},(1,\chi)}=\\
=\Sc^*\left(\{x\},\Sym^k(CN_{\Ad(H(F))x,x}^{Q(\gd)})\otimes
\Delta_{H(F)\times F^{\times}}|_{(H(F)\times F^{\times})_x} \cdot
\Delta^{-1}_{(H(F)\times F^{\times})_x} \otimes
(1,\chi)\right)^{(H(F)\times F^{\times})_x}=\\
=\left(\Sym^k(Q(\gd)/[x,\h])\otimes \Delta_{(H(F)\times
F^{\times})_x}
\otimes (1,\chi)^{-1} \otimes_{\R} \C\right)^{(H(F)\times F^{\times})_x}  \subset\\
\subset\left(\Sym^k(Q(\gd)/[x,\h])\otimes \Delta_{(H(F)\times
F^{\times})_x} \otimes (1,\chi)^{-1}\otimes_{\R} \C\right)^{K}
\end{multline*}

which is zero since all the absolute values of the eigenvalues of
the action of any $(D_t(x),t^2) \in K$ on
$$\Sym^k(Q(\gd)/[x,\h])\otimes \Delta_{(H(F)\times F^{\times})_x}
\otimes (1,\chi)^{-1}$$ are of the form $|t|^l$ where $l < 0$.
\end{proof}

\subsection{Regular symmetric pairs} \label{SecRegPairs}
$ $\\
In this subsection we will formulate a property which is weaker
than weakly linearly tame but still enables us to prove the GK
property for good pairs.

\begin{definition}
Let $(G,H,\theta)$ be a symmetric pair. We call an element $g \in
G(F)$ \textbf{admissible} if\\
(i) $\Ad(g)$ commutes with $\theta$ (or, equivalently, $s(g)\in Z(G)$) and \\
(ii) $\Ad(g)|_{\g^{\sigma}}$ is $H$-admissible.
\end{definition}

\begin{definition} \label{DefReg}
We call a symmetric pair $(G,H,\theta)$ \textbf{regular} if for
any admissible $g \in G(F)$ such that
$\Sc^*(R(\g^{\sigma}))^{H(F)} \subset
\Sc^*(R(\g^{\sigma}))^{\Ad(g)}$ we have
$$\Sc^*(Q(\gd))^{H(F)} \subset \Sc^*(Q(\gd))^{\Ad(g)}.$$
\end{definition}

\begin{remark}
Clearly, every weakly linearly tame pair is regular.
\end{remark}

\begin{proposition}
Let $(G_1,H_1,\theta_1)$ and $(G_2,H_2,\theta_2)$ be regular
symmetric pairs. Then their product $(G_1\times G_2,H_1 \times H_2
,\theta_1 \times \theta_2 )$ is also a regular pair.
\end{proposition}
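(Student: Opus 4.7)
My plan is to split the problem componentwise: reduce admissibility and the regularity hypothesis to each factor, apply regularity of each factor, and reassemble via Proposition \ref{Product}. Write $V_i := \g_i^\sigma$, $V := \g^\sigma = V_1 \oplus V_2$, and $H := H_1 \times H_2$. Because $V/H = V_1/H_1 \times V_2/H_2$, one has $Q(V) = Q(V_1) \oplus Q(V_2)$ and $\Gamma(V) = \Gamma(V_1) \times \Gamma(V_2)$; in particular $R(V_1) \times Q(V_2) \subset R(V)$ and symmetrically.

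First, let $g = (g_1, g_2) \in G(F)$ be admissible for the product pair. Commuting with $\theta = \theta_1 \times \theta_2$, normalizing $\Ad(H(F))$, and having $\Ad(g)^2 \in \Ad(H(F))$ all split componentwise. For the closed-orbit condition, a closed $H_1(F)$-orbit $O_1 \subset V_1$ yields a closed $H(F)$-orbit $O_1 \times \{0\}$ in $V$; its invariance under $\Ad(g)$ forces $\Ad(g_1) O_1 = O_1$, and symmetrically. Hence each $g_i$ is admissible for $(G_i, H_i, \theta_i)$.

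Next, assume $\Sc^*(R(V))^{H(F)} \subset \Sc^*(R(V))^{\Ad(g)}$. Given $\xi_1 \in \Sc^*(R(V_1))^{H_1(F)}$, consider $\xi_1 \boxtimes \delta_0$, where $\delta_0$ is the delta at $0 \in Q(V_2)$ (which is $H_2(F)$- and $\Ad(g_2)$-invariant since $\Ad(g_2)$ is linear). This is a Schwartz distribution on $R(V)$ supported on the closed subset $R(V_1) \times \{0\}$ of $R(V)$, and the product hypothesis gives
$$(\Ad(g_1)\xi_1) \boxtimes \delta_0 \;=\; \Ad(g)(\xi_1 \boxtimes \delta_0) \;=\; \xi_1 \boxtimes \delta_0;$$
testing against pure tensors $f \otimes h$ with $h(0) = 1$ yields $\Ad(g_1)\xi_1 = \xi_1$. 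The same argument, with the roles of $1$ and $2$ reversed, handles $g_2$.

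Regularity of each $(G_i, H_i, \theta_i)$ now gives $\Sc^*(Q(V_i))^{H_i(F)} \subset \Sc^*(Q(V_i))^{\Ad(g_i)}$. Let $\tilde K_i := H_i(F) \cdot \langle \Ad(g_i) \rangle \subset \GL(V_i)$; by admissibility this is a B-analytic group in which $H_i(F)$ has index at most $2$, and the inclusion rephrases as $\Sc^*(Q(V_i))^{H_i(F)} = \Sc^*(Q(V_i))^{\tilde K_i}$. The Schwartz version of Proposition \ref{Product} then yields
$$\Sc^*(Q(V))^{H(F)} \;=\; \Sc^*(Q(V))^{\tilde K_1 \times \tilde K_2} \;\subset\; \Sc^*(Q(V))^{\Ad(g)},$$
since $\Ad(g) = (\Ad(g_1), \Ad(g_2)) \in \tilde K_1 \times \tilde K_2$. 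The main technical obstacle is Schwartz-theoretic bookkeeping --- verifying that $\xi_1 \boxtimes \delta_0$ genuinely lies in $\Sc^*(R(V))^{H(F)}$ and that the finite-index extensions $\tilde K_i$ satisfy the hypotheses of Proposition \ref{Product}.
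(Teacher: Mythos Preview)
Your proof is correct and follows the same overall strategy as the paper: decompose admissibility componentwise, transfer the hypothesis on $R(V)$ to each $R(V_i)$, apply regularity of each factor, and reassemble on $Q(V)$ via Proposition~\ref{Product}. The technical worries you flag are fine: $R(V_1)\times\{0\}$ is a closed Nash submanifold of $R(V)$ (since $(v_1,0)\in\Gamma(V)=\Gamma(V_1)\times\Gamma(V_2)$ forces $v_1\in\Gamma(V_1)$), so restriction $\Sc(R(V))\to\Sc(R(V_1))$ is continuous and $\xi_1\boxtimes\delta_0$ really lies in $\Sc^*(R(V))^{H(F)}$; and each $\tilde K_i$ is an index-$\le 2$ extension of $H_i(F)$, hence a B-analytic group to which Proposition~\ref{Product} applies.

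The one genuine difference from the paper is the disassembly step. The paper reduces to the open saturated subset $R(V_1)\times R(V_2)\subset R(V)$ (via the analogue of Lemma~3.1.3), whereas you embed $R(V_1)\hookrightarrow R(V)$ via $v_1\mapsto(v_1,0)$ and pull back using $\delta_0$. Your route is more direct: it avoids having to produce a nonzero $H_2(F)$-invariant (indeed $\tilde K_2$-invariant) Schwartz distribution on $R(V_2)$ to tensor against, and it handles uniformly the degenerate case $R(V_2)=\emptyset$. The paper's route has the virtue of staying within the ``saturated open set'' formalism already set up, but leaves more to the reader. Both are valid.
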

\begin{proof}
This follows from Proposition \ref{Product}, since a product of
admissible elements is admissible, and $R(\g_1^{\sigma_2}) \times
R(\g_2^{\sigma_2})$ is an open saturated subset of $R((\g_1 \times
\g_2)^{\sigma_1 \times \sigma_2})$.
\end{proof}

The goal of this subsection is to prove the following theorem.

\begin{theorem} \label{GoodHerRegGK}
Let $(G,H,\theta)$ be a good symmetric pair such that all its
descendants are regular. Then it is a GK-pair.
\end{theorem}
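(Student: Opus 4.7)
The plan is to imitate the proof of Theorem~\ref{Invol_HC}, substituting regularity of the descendants for weak linear tameness at the induction step. Goodness is exactly the hypothesis that makes the anti-involution $\sigma$ admissible for the $H\times H$-action on $G$, so that the Harish--Chandra descent machinery applies.

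First I set up the group $\widetilde K$ generated by $K:=H(F)\times H(F)$ (acting on $G(F)$ by $(h_1,h_2)\cdot y := h_1 y h_2^{-1}$) and $\sigma$, together with the sign character $\chi:\widetilde K\to\{\pm 1\}$ defined by $\chi|_K\equiv 1$ and $\chi(k\sigma)=-1$. A direct calculation gives $\sigma\circ(h_1,h_2) = (h_2,h_1)\circ\sigma$ and $\sigma^2=\id$; these and the goodness assumption (Definition~\ref{DefGoodPair}) imply that $\sigma$ is $K$-admissible in the sense of \S\ref{SecTame}. The GK-property amounts to $\Sc^*(G(F))^{\widetilde K,\chi}=0$, and by the Schwartz version of Corollary~\ref{Strong_HC_Cor} this reduces to showing, for every $(H\times H)$-semisimple $g\in G(F)$, the implication
\[
\Sc^*\!\bigl(R(N^G_{HgH,g})\bigr)^{\widetilde K_g,\chi}=0 \;\Longrightarrow\;
\Sc^*\!\bigl(Q(N^G_{HgH,g})\bigr)^{\widetilde K_g,\chi}=0.
\]

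Next, given such a $g$, I set $x:=s(g)$ and use Proposition~\ref{PropDescend} to identify $N^G_{HgH,g}(F)$ with $(\g_x)^\sigma(F)$ as $H_x$-modules, where $(G_x,H_x,\theta|_{G_x})$ is the descendant pair. By goodness, pick $h_1,h_2\in H(F)$ with $\sigma(g)=h_1 g h_2^{-1}$; then $\tau':=(h_1^{-1},h_2^{-1})\sigma$ stabilizes $g$ and generates the non-trivial coset of $\widetilde K_g/K_g$. Proposition~\ref{AdmisDescends} gives that $\eta:=d\tau'|_{(\g_x)^\sigma}$ is $H_x$-admissible.

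The hard part will be to exhibit $\eta$ as $\Ad(g')|_{(\g_x)^\sigma}$ for an admissible element $g'\in G_x(F)$ of the descendant pair, so that regularity (Definition~\ref{DefReg}) can be invoked. A right-trivialized tangent space computation, using the identification $N^G_{HgH,g}\cong(\h+\Ad(g)\h)^\perp = (\g_x)^\sigma$ from the proof of Proposition~\ref{PropDescend}, yields $\eta = \Ad(gh_2^{-1})|_{(\g_x)^\sigma}$. The automatic relation $\tau'^2\in K_g = \{(c,g^{-1}cg):c\in H_x\}$ then forces $h_1 h_2 \in H_x$ together with the conjugation identity $gh_1 h_2 = h_2 h_1 g$; direct verification shows the latter is equivalent to $(gh_2^{-1})x = x(gh_2^{-1})$, so $g':=gh_2^{-1}\in G_x(F)$. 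Together with $s(g') = g\sigma(g) = x\in Z(G_x)$ and the $H_x$-admissibility of $\Ad(g')|_{(\g_x)^\sigma} = \eta$, this verifies both clauses of admissibility for $g'$ in the descendant. Applying regularity of the descendant to $g'$ then gives the required implication, and the proof is complete.
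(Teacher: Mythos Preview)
Your proof is correct and follows the same overall strategy as the paper: reduce via generalized Harish--Chandra descent (Corollary~\ref{Strong_HC_Cor}) to the normal space at each $H\times H$-semisimple $g$, identify the induced involution there as $\Ad(g')$ for an admissible element $g'$ of the descendant pair, and invoke regularity. The paper packages this identification into a separate proposition and simplifies the computation by first replacing $g$ with a \emph{normal} element of its orbit (one commuting with $\sigma(g)$), whence $g'=\theta(g)$; your direct computation with arbitrary $h_1,h_2$ yields $g'=gh_2^{-1}$, which is precisely such a normal element in the orbit.
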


We will need several definitions and lemmas.

\begin{definition}
Let $(G,H,\theta)$ be a symmetric pair. An element $g \in G$ is
called \textbf{normal} if $g$ commutes with $\sigma(g)$.
\end{definition}

Note that if $g$ is normal then
$$g\sigma(g)^{-1}=\sigma(g)^{-1}g\in H.$$

The following lemma is straightforward.
\begin{lemma}
Let $(G,H,\theta)$ be a symmetric pair. Then any
$\sigma$-invariant $H(F) \times H(F)$-orbit in $G(F)$ contains a
normal element.
\end{lemma}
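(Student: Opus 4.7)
The plan is to start with an arbitrary element $g$ of the $\sigma$-invariant orbit $O$, exploit the fact that $\sigma(g)$ lies in the same orbit to produce a relation of the form $\sigma(g) = agb$ with $a,b \in H(F)$, and then simply verify that the translate $g' := ag$ (equivalently, $gb$) is normal. The whole proof should be a short algebraic manipulation using only that $\sigma$ is an anti-involution with $\sigma|_{H} = (\,\cdot\,)^{-1}$.

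First, since $\sigma(O) = O$, we may write $\sigma(g) = agb$ for some $a,b\in H(F)$. Applying $\sigma$ and using that $\sigma$ is an anti-involution fixing $H(F)$ pointwise up to inversion, we get
\[
g \;=\; \sigma(\sigma(g)) \;=\; \sigma(b)\,\sigma(g)\,\sigma(a) \;=\; b^{-1}(agb)a^{-1} \;=\; b^{-1}agba^{-1},
\]
and therefore
\[
agb \;=\; bga.
\]
This is the only identity I will need.

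Next, I take $g' := ag \in O$. Directly,
\[
g'\sigma(g') \;=\; ag\cdot\sigma(g)\,a^{-1} \;=\; a\,g\sigma(g)\,a^{-1}, \qquad
\sigma(g')g' \;=\; \sigma(g)\,a^{-1}\cdot ag \;=\; \sigma(g)g.
\]
Substituting $\sigma(g) = agb$ in both expressions, normality of $g'$ becomes the equality
\[
a\,g(agb)\,a^{-1} \;=\; (agb)\,g,
\]
which, after cancelling a leading $a$ and a trailing $a^{-1}$ on the appropriate sides (or equivalently multiplying out), reduces to $agb = bga$. This is exactly the identity derived above, so $g'$ is normal and lies in $O$.

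There is no real obstacle here; the only thing to be careful about is the sign convention for $\sigma|_H$ and the fact that $\sigma$ reverses the order of multiplication, both of which are built into the definition $\sigma(g) = \theta(g^{-1})$. The argument works verbatim over any local field $F$ and uses nothing about $g$ beyond membership in a $\sigma$-invariant $H(F)\times H(F)$-orbit, consistent with the paper's assertion that the lemma is straightforward.
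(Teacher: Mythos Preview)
Your proof is correct and follows essentially the same approach as the paper: both take an arbitrary element of the orbit, write $\sigma(g)=agb$, and then translate by one of the $H(F)$-factors to obtain a normal element. The paper right-translates (setting $g'h_1$) while you left-translate (setting $ag$), and you isolate the intermediate identity $agb=bga$ explicitly rather than running a single chain of equalities, but these are cosmetic differences.
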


\begin{proof} $ $\\
Let $g' \in O$. We know that $\sigma(g') = h_1 g' h_2$ where
$h_1,h_2 \in H(F)$. Let $g:=g' h_1$. Then
\begin{multline*}
\sigma(g)g = h_1^{-1} \sigma(g') g' h_1 = h_1^{-1} \sigma(g')
\sigma(\sigma(g')) h_1 = \\ = h_1^{-1} h_1 g' h_2
\sigma(h_1g'h_2))h_1= g' \sigma(g') = g' h_1 h_1^{-1} \sigma(g')=
g \sigma(g).
\end{multline*}
Thus g in O is normal.
\end{proof}

\begin{notation}
Let $(G,H,\theta)$ be a symmetric pair. We denote
$$\widetilde{H\times H}:= H\times H \rtimes \{1, \sigma\}$$ where
$$\sigma \cdot (h_1,h_2) = (\theta(h_2),\theta(h_1)) \cdot \sigma.$$
The two-sided action of $H\times H$ on $G$ is extended to an
action of $\widetilde{H\times H}$ in the natural way. We denote by
$\chi$ the character of $\widetilde{H\times H}$ defined by
$$\chi(\widetilde{H\times H} - H \times H)=\{-1\},\quad \chi(H \times
H)=\{1\}.$$
\end{notation}

\begin{proposition}
Let $(G,H,\theta)$ be a good symmetric pair. Let $O \subset G(F)$
be a closed $H(F) \times H(F)$-orbit.

Then for any $g \in O$ there exist $\tau \in (\widetilde{H\times
H})_g(F) - (H\times H)_g(F)$ and $g' \in G_{s(g)}(F)$ such that
$\Ad(g')$ commutes with $\theta$ on $G_{s(g)}$ and the action of
$\tau$ on $N_{O,g}^G$ corresponds via the isomorphism given by
Proposition \ref{PropDescend} to the adjoint action of $g'$ on
$\g_{s(g)}^{\sigma}$.
\end{proposition}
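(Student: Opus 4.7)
The plan is to reduce to the case where $g$ is a normal element of $O$, and then give an explicit construction. By the lemma immediately preceding the proposition, any $\sigma$-invariant closed $H(F)\times H(F)$-orbit contains a normal element, and since $(G,H,\theta)$ is good we have $\sigma(O)=O$, so such a $g$ exists in $O$. The whole statement is $H\times H$-equivariant (both $(\widetilde{H\times H})_g$, the normal space $N_{O,g}^G$, and the isomorphism of Proposition~\ref{PropDescend} all transform functorially under $(h_1,h_2)$-translation), so it suffices to treat one $g$ in each orbit, and in particular a normal one.

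Assume now $g$ is normal, set $x:=s(g)=g\sigma(g)=\sigma(g)g$, and put $h_0:=g\sigma(g)^{-1}=\sigma(g)^{-1}g$, which lies in $H(F)$. I would take $\tau:=(h_0,1)\cdot\sigma\in\widetilde{H\times H}$ and $g':=g$. A direct computation gives $\tau\cdot g=h_0\sigma(g)=g$, so $\tau\in(\widetilde{H\times H})_g(F)-(H\times H)_g(F)$. Normality yields $g\in G_x=G_{s(g)}$, and from $\theta(g)=\sigma(g)^{-1}$ one obtains $\theta(g)^{-1}g=x$; since $x\in Z(G_x)$, this means $\Ad(g)\theta\Ad(g)^{-1}\theta^{-1}=\Ad(x)$ is trivial on $G_x$, i.e.\ $\Ad(g')$ commutes with $\theta$ on $G_{s(g)}$.

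The key step is to match $d\tau|_g$ on $N_{O,g}^G$ with $\Ad(g)$ on $(\g_x)^{\sigma}$ under the isomorphism of Proposition~\ref{PropDescend}. Using the right trivialization $T_gG\cong\g$ via $e^{\epsilon A}g\leftrightarrow A$, a short computation gives
\[
\tau\cdot(e^{\epsilon A}g)=h_0\sigma(g)e^{-\epsilon\theta(A)}=ge^{-\epsilon\theta(A)}=e^{-\epsilon\Ad(g)\theta(A)}g,
\]
so $d\tau|_g\colon A\mapsto-\Ad(g)\theta(A)$. In the same trivialization $T_g(HgH)\cong\h+\Ad(g)\h$, and Proposition~\ref{PropDescend} realizes $(\g_x)^{\sigma}$ as the $B$-orthogonal complement $(\h+\Ad(g)\h)^{\perp}$ in $\g$. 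On this subspace $\theta$ acts by $-1$, so $d\tau|_g$ acts as $\Ad(g)$. A final check, using $gx=xg$ (so $\Ad(g)\g_x=\g_x$) and $g^{-1}\theta(g)=x^{-1}$ (which centralizes $\g_x$), shows $\Ad(g)$ preserves $(\g_x)^{\sigma}$.

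The main obstacle is the differential calculation and matching conventions with Proposition~\ref{PropDescend}: one must keep a single trivialization and carefully track the signs introduced by $\theta$ on $\g^{\sigma}$ so that $-\Ad(g)\theta$ simplifies to $\Ad(g)$ on the normal directions. Everything else is a direct consequence of normality and of $s(g)$ lying in the centre of $G_{s(g)}$.
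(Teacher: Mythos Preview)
Your proof is correct and follows essentially the same approach as the paper: reduce to a normal $g\in O$, write down an explicit $\tau$ in the coset $(\widetilde{H\times H})_g-(H\times H)_g$, and compute its differential on the normal space via the identification of Proposition~\ref{PropDescend}. The only cosmetic differences are that the paper takes $\tau=(h^{-1},1)\cdot\sigma$ and $g'=\theta(g)$ (working with the left trivialization $dg:\g\to T_gG$), whereas you take $\tau=(h_0,1)\cdot\sigma$ and $g'=g$ in the right trivialization; since $g\theta(g)^{-1}=s(g)\in Z(G_{s(g)})$, one has $\Ad(g)=\Ad(\theta(g))$ on $\g_{s(g)}$, so the two choices give the same action on $(\g_{s(g)})^{\sigma}$.
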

\begin{proof}
Clearly, if the statement holds for some $g\in O$ then it holds
for all $g\in O$.

Let $g \in O$ be a normal element. Let $h:=g\sigma(g)^{-1}$.
Recall that $h\in H(F)$ and $gh=hg=\sigma(g)$. Let $\tau:=
(h^{-1},1) \cdot \sigma$. Evidently, $\tau\in (\widetilde{H\times
H})_g(F) - (H\times H)_g(F)$. Consider $d\tau_g: T_gG \to T_gG$.
It corresponds via the identification $dg: \g \cong T_gG$ to some
$A:\g \to \g$. Clearly, $A = da$ where $a:G \to G$ is defined by
$a(\alpha) = g^{-1}h^{-1}\sigma(g\alpha)$. However,
$g^{-1}h^{-1}\sigma(g\alpha) = \theta(g) \sigma(\alpha)
\theta(g)^{-1}.$ Hence $A = \Ad(\theta(g)) \circ \sigma$. By Lemma
\ref{BilForm}, there exists a non-degenerate $G$-invariant
$\sigma$-invariant symmetric bilinear form $B$ on $\g$. By Theorem
\ref{RegFun}, $A$ preserves $B$. Therefore $\tau$ corresponds to
$A|_{\g_{s(g)}^{\sigma}}$ via the isomorphism given by Proposition
\ref{PropDescend}. However, $\sigma$ is trivial on
$\g_{s(g)}^{\sigma}$ and hence
$A|_{\g_{s(g)}^{\sigma}}=\Ad(\theta(g))|_{\g_{s(g)}^{\sigma}}$.
Since $g$ is normal, $\theta(g) \in G_{s(g)}$. It is easy to see
that $\Ad(\theta(g))$ commutes with $\theta$ on $G_{s(g)}$. Hence
we take $g':=\theta(g)$.
\end{proof}

Now we are ready to prove Theorem \ref{GoodHerRegGK}.
\begin{proof}[Proof of Theorem \ref{GoodHerRegGK}]
We have to show that $\Sc^*(G(F))^{\widetilde{H\times H},
\chi}=0$. By Theorem \ref{Strong_HC_Cor} it is enough to show that
for any $H \times H$-semisimple $x \in G(F)$ such that
$$\cD(R(N_{HxH,x}^G))^{\widetilde{(H(F)\times H(F))}_x, \chi}=0$$ we have
$$\cD(Q(N_{HxH,x}^G))^{\widetilde{(H(F)\times H(F))}_x, \chi}=0.$$
This follows immediately from the regularity of the pair
$(G_x,H_x)$ using the last proposition.
\end{proof}


\subsection{Conjectures} \label{conj}
\begin{conjecture}[van Dijk] \label{ConjDijk}
If $F=\C$ then any connected symmetric pair is a Gelfand pair
(GP3, see Definition \ref{GPs} below).
\end{conjecture}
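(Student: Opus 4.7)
The plan is to apply Theorem \ref{GoodHerRegGK} in combination with Corollary \ref{ComplexGood} to reduce the van Dijk conjecture to the (general) conjecture that every symmetric pair is regular. Over $F = \C$, Corollary \ref{ComplexGood} already shows that every connected symmetric pair is good, so the goodness hypothesis of Theorem \ref{GoodHerRegGK} is automatic. What remains is to verify that every descendant of a connected complex symmetric pair is regular. One must first check that the descendants $(G_x, H_x, \theta|_{G_x})$ arising via Proposition \ref{PropDescend} are reductive symmetric pairs to which our machinery still applies (reductivity of the centralizer is classical); connectedness of descendants can fail, but this is handled by the character $\chi$ in the proof of Theorem \ref{GoodHerRegGK}, which already incorporates component-group considerations. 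Finally, once all relevant pairs are known to be GK-pairs, one must bridge from GK (equivalently GP2 via Theorem \ref{DistCrit} and the discussion in \S \ref{Gel}) to the GP3 property required by van Dijk; for complex groups this implication is classical (spherical characters being well-defined for admissible representations of complex reductive groups).

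The most promising route to regularity is through the stronger notion of speciality (Definition \ref{DefSpecPair}): by Proposition \ref{SpecWeakReg} and the remark following Definition \ref{DefReg}, special implies weakly linearly tame implies regular. To establish speciality, I would invoke the speciality criterion of Proposition \ref{SpecCrit}: it suffices to show that for every nilpotent $x \in \g^{\sigma}$ one has
\[
\tr\bigl(\ad(d(x))|_{\h_x}\bigr) < \dim Q(\g^{\sigma}).
\]
Since $\g$ is a complex reductive Lie algebra and $\theta$ is an involution, the nilpotent $H$-orbits in $\g^{\sigma}$ are classified by Kostant--Rallis theory via $\theta$-stable $\sll_2$-triples, so the left-hand side can be read off from the weighted Dynkin diagram of the orbit and the right-hand side is simply the dimension of $\g^{\sigma}$ minus the dimension of its $G$-invariants.

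The main obstacle is the verification of this trace inequality uniformly. A case-by-case approach is available in principle: complex symmetric pairs (up to isogeny) are in bijection with real forms of complex simple Lie algebras and are enumerated by Berger's classification, and for each pair the nilpotent $H$-orbits in $\g^{\sigma}$ are tabulated (via Kostant--Rallis / Vinberg theory). For each such orbit one computes $\h_x$, the element $d(x)$, and checks the inequality. This seems tractable but laborious, and the descendants complicate the bookkeeping since one must iterate the classification along centralizers. A more desirable approach would be a uniform structural proof: one wants to interpret $\tr(\ad(d(x))|_{\h_x})$ as a sum of nonpositive integer weights (as in Lemma \ref{EigenInt} applied to $\h_x$ inside $\g_x$) and to prove a strict inequality by exhibiting at least one negative weight, or by identifying the equality locus with a proper subvariety of the nilpotent cone. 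I expect that the equality case $\tr(\ad(d(x))|_{\h_x}) = \dim Q(\g^{\sigma})$ — the borderline situation that forces one to use the non-Archimedean clause (ii) of Proposition \ref{SpecCrit} and which is unavailable in the complex setting — will be the critical sticking point, and may require either a refinement of the speciality criterion (for example, a Fourier-theoretic argument beyond homogeneity) or an analysis of the specific nilpotent orbits attaining this borderline.
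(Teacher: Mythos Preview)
The statement is a \emph{conjecture} in the paper, not a theorem: there is no proof to compare against. What the paper does (in \S\S\ref{conj}) is precisely the reduction in your first paragraph: Conjecture~\ref{ConjDijk} follows from Conjecture~\ref{ConjDijkGK}, which by Corollary~\ref{ComplexGood} and Theorem~\ref{GoodHerRegGK} follows from Conjecture~\ref{ConjAllReg} (every symmetric pair is regular). So your reduction is correct and identical to the paper's. One small correction: the bridge from GK to GP3 requires no extra ``classical'' input specific to complex groups; it is already in the paper, since GK $\Rightarrow$ GP2 (Corollary~\ref{GKGP2}) and GP1 $\Rightarrow$ GP2 $\Rightarrow$ GP3 is stated as a straightforward proposition in \S\ref{Gel}.

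Where you go beyond the paper is in proposing to establish regularity via \emph{speciality} and the trace criterion of Proposition~\ref{SpecCrit}. Here there is a genuine obstruction: the paper itself remarks at the end of \S\S\ref{conj} that ``it seems unlikely that every symmetric pair is special.'' So the chain special $\Rightarrow$ weakly linearly tame $\Rightarrow$ regular, while valid as implications, is expected to fail at the first step for some pairs; the inequality $\tr(\ad(d(x))|_{\h_x}) < \dim Q(\g^{\sigma})$ is not expected to hold uniformly, and over $\C$ you cannot retreat to clause~(ii) of Proposition~\ref{SpecCrit}. Your instinct that the borderline case is the crux is right, but the resolution the paper envisions is not a refinement of the speciality criterion: one must attack regularity (Definition~\ref{DefReg}) directly, which is strictly weaker than speciality. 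The subsequent works cited after Conjecture~\ref{ConjAllReg} do this for specific families, and the general case remains open.
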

By Theorem \ref{DistCrit} this would follow from the following
stronger conjecture.
\begin{conjecture}\label{ConjDijkGK}
If $F=\C$ then any connected symmetric pair is a GK-pair.
\end{conjecture}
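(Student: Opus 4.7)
The plan is to combine Corollary \ref{ComplexGood} and Theorem \ref{GoodHerRegGK} with a direct attack on regularity for every symmetric pair over $\C$. First, by Corollary \ref{ComplexGood}, any connected symmetric pair over $\C$ is good. Moreover, the descendants of any symmetric pair over $\C$ are again symmetric pairs over $\C$ (though not necessarily connected). Therefore, by Theorem \ref{GoodHerRegGK}, it suffices to establish the following stronger statement: every symmetric pair $(G,H,\theta)$ over $\C$ is regular.

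To prove regularity I would try to prove the stronger property of speciality (Definition \ref{DefSpecPair}), which by Proposition \ref{SpecWeakReg} implies weak linear tameness and hence regularity. The criterion of Proposition \ref{SpecCrit} reduces this to verifying, for every nilpotent $x\in\g^{\sigma}$, the strict inequality
\[
\tr(\ad(d(x))|_{\h_x}) \;<\; \dim Q(\g^{\sigma}).
\]
Using Cartan's classification there are only finitely many irreducible connected symmetric pairs over $\C$ to inspect, and on each, the Kostant--Rallis theory parametrizes nilpotent $H$-orbits by weighted Dynkin diagrams, so that both sides of this inequality are computable from root-system data. Descendants reduce to products of pairs of strictly smaller dimension, so an inductive scheme on $\dim G$ is natural and fits the framework of Corollary \ref{Strong_HC_Cor}.

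The main obstacle is that the strict inequality above is known to fail on certain ``boundary'' nilpotent orbits (typically principal or subregular orbits in some exceptional pairs), so Proposition \ref{SpecCrit} alone is not enough. For these orbits one must instead exploit several independent $H$-invariant nondegenerate symmetric bilinear forms on a $G$-invariant orthogonal decomposition of $\g^{\sigma}$ guaranteed by Lemma \ref{BilForm}, and apply the Archimedean homogeneity result (Theorem \ref{ArchHom}) simultaneously to each of them in order to cut down the space of candidate distributions supported on $\Gamma(\g^{\sigma})$. The hard part is organizing this ``multi-form'' Fourier analysis uniformly across the Cartan list, so that the allowed homogeneity types produced by each form are jointly incompatible on every nilpotent $H$-orbit; this obstruction is presumably the reason Conjecture \ref{ConjDijkGK} remains open in full generality.
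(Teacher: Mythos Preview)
This statement is a \emph{conjecture} in the paper, not a theorem; the paper offers no proof of it, so there is nothing to compare your attempt against. The paper explicitly presents Conjecture~\ref{ConjDijkGK} as open and observes (in the paragraph following it) exactly the reduction you describe: by Corollary~\ref{ComplexGood} every connected complex symmetric pair is good, so by Theorem~\ref{GoodHerRegGK} the conjecture would follow from Conjecture~\ref{ConjAllReg} (that every symmetric pair is regular). Your first paragraph reproduces this reduction faithfully.

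Your remaining paragraphs are not a proof but a sketch of a strategy, and you are candid about this: you note that the speciality criterion (Proposition~\ref{SpecCrit}) fails on certain nilpotent orbits and that a uniform ``multi-form'' homogeneity argument would be needed to close the gap. That assessment is consistent with the paper's own stance (see the Remark following Conjecture~\ref{ConjAllReg}, which records that regularity has only been verified for ``most classical symmetric pairs and several exceptional ones'' in subsequent work, and the later Remark that ``it seems unlikely that every symmetric pair is special''). So your proposal is not a proof, does not claim to be one, and correctly identifies where the genuine difficulty lies; there is no error to flag beyond the fact that the target statement is, as you yourself conclude, still open.
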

By Corollary \ref{ComplexGood} this in turn would follow from the
following more general conjecture.
\begin{conjecture} \label{ConjGKGood}
Every good symmetric pair is a GK-pair.
\end{conjecture}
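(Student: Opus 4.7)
The plan is to prove the GK-property by passing to the extended group $\widetilde{H\times H} := (H\times H) \rtimes \{1,\sigma\}$ with the sign character $\chi$ (trivial on $H\times H$, equal to $-1$ on the other coset). Then the GK-property $\Sc^*(G(F))^{H(F)\times H(F)} \subset \Sc^*(G(F))^{\sigma}$ is equivalent to the single vanishing statement $\Sc^*(G(F))^{\widetilde{H\times H},\chi}=0$, which is the kind of statement that Generalized Harish-Chandra descent is designed to attack.

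Next I would apply the inductive descent of Corollary \ref{Strong_HC_Cor} to $\widetilde{H\times H}$ acting on $G$. This reduces the theorem to the following \emph{local} statement: for every $H\times H$-semisimple $x \in G(F)$, if we already know that $\Sc^*(R(N^G_{HxH,x}))^{\widetilde{(H\times H)}_x,\chi}=0$, then also $\Sc^*(Q(N^G_{HxH,x}))^{\widetilde{(H\times H)}_x,\chi}=0$. Using Proposition \ref{PropDescend} we identify $N^G_{HxH,x}$ with $\g_{s(x)}^{\sigma}$ and $(H\times H)_x$ with $H_{s(x)}$, so the data at $x$ is exactly the descendant symmetric pair $(G_{s(x)},H_{s(x)},\theta|_{G_{s(x)}})$, which is regular by hypothesis.

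What remains is to match the extra $\tau$-direction of $\widetilde{(H\times H)}_x$ with an admissible automorphism of $\g_{s(x)}^{\sigma}$, so that the regularity of the descendant can actually be applied. For this I would use the goodness of $(G,H,\theta)$: the orbit $H(F)xH(F)$, being closed and $\sigma$-invariant, contains a normal element $g$, i.e.\ one commuting with $\sigma(g)$. The element $h := g\sigma(g)^{-1}\in H(F)$ then satisfies $hg = \sigma(g)$, so $\tau := (h^{-1},1)\cdot \sigma$ lies in $\widetilde{(H\times H)}_g \setminus (H\times H)_g$. A direct differentiation shows that $d\tau_g$ corresponds via $\g\cong T_gG$ to $\Ad(\theta(g))\circ\sigma$, and since $\sigma$ acts trivially on $\g_{s(g)}^{\sigma}$ this is simply $\Ad(g')|_{\g_{s(g)}^{\sigma}}$ for the admissible element $g' := \theta(g) \in G_{s(g)}(F)$.

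With this identification in hand, the inductive hypothesis of Corollary \ref{Strong_HC_Cor} translates to exactly the hypothesis of the regularity of the descendant pair at $g'$, and the conclusion of regularity gives exactly the desired local vanishing. I expect the main obstacle to be the computation in the third paragraph: one has to produce the normal element (this is where goodness enters), identify the coset representative $\tau$ inside $\widetilde{(H\times H)}_g$, and then verify that $d\tau$, restricted to the tangent space of the slice, equals the adjoint action of an \emph{admissible} element of $G_{s(g)}$ — the admissibility requires both that $s(g')\in Z(G_{s(g)})$ and that $\Ad(g')|_{\g_{s(g)}^{\sigma}}$ is $H_{s(g)}$-admissible, both of which follow from $g$ being normal together with Corollary \ref{ClosedOrbits} applied inside $G_{s(g)}$.
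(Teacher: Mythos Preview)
The statement you are attempting to prove is a \emph{conjecture} in the paper, not a theorem: the paper does not claim to prove that every good symmetric pair is a GK-pair. What the paper does prove is Theorem~\ref{GoodHerRegGK}, which has the additional hypothesis that \emph{all descendants of the pair are regular}. Your argument silently imports exactly this extra hypothesis in the second paragraph (``which is regular by hypothesis''), but the only hypothesis of Conjecture~\ref{ConjGKGood} is goodness. Without regularity of the descendant you have no mechanism to pass from vanishing on $R(\g_{s(x)}^{\sigma})$ to vanishing on $Q(\g_{s(x)}^{\sigma})$, and the inductive step of Corollary~\ref{Strong_HC_Cor} cannot be closed.

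What you have actually written is a correct outline of the paper's proof of Theorem~\ref{GoodHerRegGK}: the reduction via the extended group and sign character, the use of Corollary~\ref{Strong_HC_Cor}, the identification of the slice with the descendant via Proposition~\ref{PropDescend}, the choice of a normal element in the closed orbit (using goodness), and the computation that $d\tau$ acts on $\g_{s(g)}^{\sigma}$ as $\Ad(\theta(g))$ with $\theta(g)$ admissible in $G_{s(g)}$ --- all of this matches the paper's argument essentially line for line. But that theorem is strictly weaker than the conjecture; the paper notes (Conjecture~\ref{ConjAllReg}) that the conjecture would follow if one knew every symmetric pair is regular, and this remains open.
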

which in turn follows (by Theorem \ref{GoodHerRegGK})  from the
following one.
\begin{conjecture} \label{ConjAllReg}
Any symmetric pair is regular.
\end{conjecture}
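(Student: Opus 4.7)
The plan is to attack Conjecture \ref{ConjAllReg} by combining the Generalized Harish-Chandra descent machinery with a Fourier/homogeneity analysis on the nilpotent cone $\Gamma(\g^{\sigma})$. Fix a symmetric pair $(G,H,\theta)$ and an admissible $g \in G(F)$, and suppose $\Sc^*(R(\g^{\sigma}))^{H(F)} \subset \Sc^*(R(\g^{\sigma}))^{\Ad(g)}$. Let $\widetilde K$ be the group generated by $H(F)$ and $\Ad(g)|_{\g^{\sigma}}$ and $\chi$ the sign character distinguishing the nontrivial coset. We must show $\Sc^*(Q(\g^{\sigma}))^{\widetilde K,\chi}=0$. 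By Corollary \ref{Strong_HC_Cor} applied to the linear action on $Q(\g^{\sigma})$, it suffices to show that the corresponding statement holds at every $H$-semisimple point, and an inductive descent reduces to proving $\Sc^*_{Q(\g^{\sigma})}(\Gamma(\g^{\sigma}))^{\widetilde K,\chi}=0$.

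Next I would invoke the homogeneity apparatus of Section \ref{SecFour}. Since $g$ normalizes $\theta$ and preserves some non-degenerate $G$-invariant symmetric bilinear form $B$ on $\g$ (Lemma \ref{BilForm}), it acts on any $H$-invariant orthogonal decomposition $Q(\g^{\sigma})=\bigoplus W_i$ with $B_i:=B|_{W_i}$. Theorems \ref{NonArchHom} and \ref{ArchHom} then force any distribution $\xi$ in the space above, together with its partial Fourier transforms $\Fou_{B_i}\xi$ supported in $\Gamma(\g^{\sigma})$, to be \emph{adapted} to each $B_i$. Thus the problem reduces to showing that any $\widetilde K$-equivariant $(\chi)$ distribution supported on the nilpotent cone and adapted to every $B_i$ must vanish; this is precisely the content of the speciality criterion (Lemma \ref{SpecActCrit} and Proposition \ref{SpecCrit}), adapted to include the extra $\Ad(g)$-equivariance.

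The third step is the orbit-by-orbit analysis on $\Gamma(\g^{\sigma})$. By \cite{KR} there are finitely many $H(F)$-orbits, each represented by a nilpotent $x$ completed to an $\sll_2$-triple by Lemma \ref{SL2triple}. On each orbit, Frobenius reciprocity (Theorem \ref{Frob}) translates the vanishing statement into a purely representation-theoretic fact about the fiber $\Sym^k(Q(\g^{\sigma})/[x,\h])$ twisted by the modular character of $(H(F)\times F^{\times})_x$, and the eigenvalue bounds of Lemma \ref{EigenInt} force the eigenvalues of the one-parameter subgroup $D_t(x)$ to have the wrong sign. The admissibility of $g$ guarantees that this analysis is compatible with the additional $\Ad(g)$-equivariance, since $g$ permutes the nilpotent orbits and commutes with $\theta$, preserving the $\sll_2$-structures up to conjugation.

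The hard part — and the reason this is stated only as a conjecture — will be the borderline case where the numerical inequality in Proposition \ref{SpecCrit} becomes an equality, i.e.\ $\tr(\ad(d(x))|_{\h_x})=\dim Q(\g^{\sigma})$. In that regime the homogeneity degree forced by Fourier analysis is precisely the critical one, the eigenvalue argument above produces a trivial character, and one cannot immediately rule out the existence of a non-zero adapted distribution. A complete proof would require either a finer classification of such critical nilpotent orbits (likely via a case-by-case analysis of the involutions on simple Lie algebras in the Berger/Cartan classification) or a new homogeneity-type argument that uses the extra $\Ad(g)$-equivariance to exclude the critical eigenspace. The examples worked out in Sections \ref{Sec2RegPairs} and \ref{RJR} strongly suggest that the critical case never produces a genuine obstruction, but at present we have no uniform mechanism for ruling it out, and this is exactly where any attempt at a general proof must concentrate its effort.
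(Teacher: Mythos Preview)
This statement is a \emph{conjecture} in the paper, not a theorem: the authors do not prove it and explicitly leave it open (see the remarks following Conjecture \ref{ConjAllReg}). So there is no paper proof to compare against. Your proposal is, appropriately, not a proof but a strategy sketch with an honestly flagged gap, and your identification of the obstruction --- the critical case $\tr(\ad(d(x))|_{\h_x})=\dim Q(\g^{\sigma})$ where the homogeneity/Frobenius argument of Proposition \ref{SpecCrit} degenerates --- is exactly right. This is indeed where the paper's machinery stops.

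Two remarks on the outline itself. First, your invocation of Corollary \ref{Strong_HC_Cor} in the opening paragraph is unnecessary: the hypothesis in the definition of regularity already gives $\Sc^*(R(\g^{\sigma}))^{\widetilde K,\chi}=0$, so any $(\widetilde K,\chi)$-equivariant distribution on $Q(\g^{\sigma})$ is automatically supported in $\Gamma(\g^{\sigma})$; no descent is needed for that reduction. Second, and more substantively, the paper itself remarks that ``it seems unlikely that every symmetric pair is special,'' so the eigenvalue inequality of Proposition \ref{SpecCrit} is expected to fail for some pairs. Your suggestion that the extra $\Ad(g)$-equivariance (the $\chi$-twist) should be the mechanism that rescues the critical case is the natural hope --- regularity is strictly weaker than speciality precisely because of this additional constraint --- but neither the paper nor the subsequent works it cites provide a uniform argument exploiting it; they proceed case by case. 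So your assessment of where the difficulty lies, and of what a general proof would have to supply, matches the state of the art as the paper presents it.
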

\begin{remark}
In the next two subsections we prove this conjecture for certain
symmetric pairs. In subsequent works
\cite{AG_RegSymPairs,Say1,AS,Say2,Aiz} this conjecture was
verified for most classical symmetric pairs and several
exceptional ones.
\end{remark}

\begin{remark}
An indirect evidence for this conjecture is that every GK-pair is
regular. One can easily show this by analyzing a Luna slice for an
orbit of an admissible element.
\end{remark}
\begin{remark}
It is well known that if $F$ is Archimedean, $G$ is connected and
$H$ is compact then the pair $(G,H,\theta)$ is good, Gelfand (GP1,
see Definition \ref{GPs} below) and in fact also GK. See e.g.
\cite{Yak}.
\end{remark}

\begin{remark}
In general, not every symmetric pair is good. For example,
$(\SL_2(\R),T)$ where $T$ is the split torus. Also, it is not a
Gelfand pair (not even  GP3, see Definition \ref{GPs} below).
\end{remark}

\begin{remark}
It seems unlikely that every symmetric pair is special. However,
in the next two subsections we will prove that certain symmetric
pairs are special.
\end{remark}

\subsection{The pairs $(G \times G, \Delta G)$ and $(G_{E/F}, G)$
are tame} \label{Sec2RegPairs}

\begin{notation}
Let $E$ be a quadratic extension of $F$. Let $G$ be an algebraic
group defined over $F$. We denote by $G_{E/F}$ the restriction of
scalars from $E$ to $F$ of $G$ viewed as a group over $E$. Thus,
$G_{E/F}$ is an algebraic group defined over $F$ and
$G_{E/F}(F)=G(E)$.
\end{notation}

In this section we will prove the following theorem.

\begin{theorem}\label{2RegPairs}
Let $G$ be a reductive group.\\
(i) Consider the involution $\theta$ of $G\times G$ given by
$\theta((g,h)):= (h,g)$. Its fixed points form the diagonal
subgroup $\Delta G$. Then the symmetric pair $(G \times G, \Delta
G, \theta)$ is
tame.\\
(ii) Let $E$ be a quadratic extension of $F$. Consider the
involution $\gamma$ of $G_{E/F}$ given by the nontrivial element
of $Gal(E/F)$. Its fixed points form $G$. Then the symmetric pair
$(G_{E/F}, G, \gamma)$ is tame.
\end{theorem}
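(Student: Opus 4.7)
The plan is to apply Theorem \ref{LinDes}, which reduces tameness of the pair to establishing that it (together with all its descendants) is weakly linearly tame; by Proposition \ref{SpecWeakReg} this would follow from speciality in the sense of Definition \ref{DefSpecPair}. (In the degenerate case when $G$ is a torus one has $Q(\g^{\sigma})=0$, and weak linear tameness holds trivially, so I will assume below that $G$ is non-abelian.) The first step is to identify the descendants: for (i), a closed $H\times H$-orbit in $G\times G$ admits a representative $(g_1,g_2)$ with $s((g_1,g_2))=(s,s^{-1})$ for some semisimple $s\in G$, and a direct computation using Proposition \ref{PropDescend} identifies the descendant as $(G_s\times G_s,\Delta G_s,\theta)$; for (ii), a normal representative $g\in G(E)$ satisfies $\gamma(s(g))=s(g)^{-1}$, so the centralizer of $s(g)$ in $G_{E/F}$ is $\gamma$-stable and its fixed points give a reductive $F$-group $H'$, producing a descendant of the form $(H'_{E/F},H',\gamma)$. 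Thus both problems reduce to a single uniform speciality statement for all pairs of the two given forms.

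For the speciality verification I would apply Proposition \ref{SpecCrit}. In both cases there is a canonical $H$-equivariant identification $\g^{\sigma}\cong\g$ with $H=G$ acting by the adjoint representation: in (i) via $(a,-a)\leftrightarrow a$, and in (ii) via multiplication by $\sqrt{d}\in E$ (writing $E=F(\sqrt{d})$) after viewing $\g$ as an $F$-vector space by restriction of scalars. Under this identification $Q(\g^{\sigma})\cong\g/Z(\g)$, so $\dim Q(\g^{\sigma})=\dim\g-\dim Z(\g)$. For a nilpotent $x\in\g^{\sigma}$ corresponding to a nilpotent $a\in\g$, I would lift an $\sll_2$-triple $(a,h_1,f_1)$ of $a$ to an $\sll_2$-triple fulfilling the conditions of Lemma \ref{SL2triple}: in (i) to $((a,-a),(h_1,h_1),(f_1,-f_1))$ inside $\g\times\g$, and in (ii) to $(\sqrt{d}\,a,\,h_1,\,\sqrt{d}^{-1}f_1)$ inside $\g(E)$. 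In both cases $d(x)=h_1\in\h$ and the natural identification $\h_x\cong\g_a$ is $\sll_2$-equivariant.

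The trace is then computed by decomposing $\g=\bigoplus_i V(m_i)$ as an $\sll_2$-module into irreducibles of highest weights $m_i$: since $\dim V(m_i)=m_i+1$ and $\dim\g_a$ equals the number of summands, one obtains $\tr(\ad(d(x))|_{\h_x})=\tr(\ad(h_1)|_{\g_a})=\sum_i m_i=\dim\g-\dim\g_a$. The speciality inequality thus reduces to $\dim\g_a>\dim Z(\g)$, which is strict for every nonzero nilpotent $a$ because $a\in\g_a\setminus Z(\g)$; the case $a=0$ is handled by the bound $0<\dim Q(\g^{\sigma})$ coming from the non-abelian assumption. The main technical subtlety I anticipate lies in case (ii): one must confirm carefully that the $\gamma$-fixed points of the Galois-stable centralizer of $s(g)$ form a reductive $F$-group whose restriction of scalars is genuinely a pair of form (ii), so that the uniform speciality argument applies at every level of descent.
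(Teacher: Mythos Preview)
Your proposal is correct and follows essentially the same route as the paper: reduce tameness to weak linear tameness of all descendants via Theorem \ref{LinDes}, identify the descendants as pairs of the same two types, and then verify speciality through the trace inequality of Proposition \ref{SpecCrit} by decomposing $\g$ as an $\sll_2$-module and computing $\tr(\ad(h)|_{\g_e})=\sum_i m_i<\dim\g$. Your treatment is slightly more explicit than the paper's in that you carry the center $Z(\g)$ through the computation directly (obtaining $\dim\g_a>\dim Z(\g)$), whereas the paper states the key lemma only for semisimple $\g$; and you rightly flag the Galois-descent step in (ii)---that $\gamma(s(g))=s(g)^{-1}$ makes the $E$-centralizer $\gamma$-stable and hence defined over $F$---which the paper records as ``straightforward''.
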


\begin{corollary} \label{GCongTame}
Let $G$ be a reductive group. Then the adjoint action of $G$ on
itself is tame. In particular, every conjugation invariant
distribution on $\GL_n(F)$ is transposition invariant \footnote{In
the non-Archimedean case, the latter is a classical result of
Gelfand and Kazhdan, see \cite{GK}.}.
\end{corollary}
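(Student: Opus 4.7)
The plan is to deduce this corollary directly from Theorem \ref{2RegPairs}(i). Consider the surjective morphism $\phi : G \times G \to G$ defined by $\phi(x, y) = xy^{-1}$. It realizes $G \times G$ as a principal $\Delta G$-bundle over $G$ for the free right action $(x, y) \cdot (h, h) := (xh, yh)$, while the residual left $\Delta G$-action descends via $\phi$ to the conjugation action of $G$ on itself, since $\phi(gx, gy) = g \phi(x, y) g^{-1}$. Hence pullback along $\phi$ identifies
$$\Sc^*(G(F))^{G(F)} \;\cong\; \Sc^*(G(F) \times G(F))^{\Delta G(F) \times \Delta G(F)}$$
(with $G(F)$ acting by conjugation on the left-hand side); this is an instance of Frobenius reciprocity (Theorem \ref{Frob}), and all modular characters are trivial by reductivity.

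Now let $\tau'$ be any $G$-admissible automorphism of the variety $G$ for the conjugation action. After composing with a suitable inner automorphism (which acts trivially on conjugation-invariant distributions), we may assume $\tau'$ fixes $1 \in G$, and hence is a group automorphism or anti-automorphism of $G$. Define $\tau : G \times G \to G \times G$ by $\tau(x, y) := (\tau'(x), \tau'(y))$. Then $\tau$ normalizes the $\Delta G \times \Delta G$-action, $\tau^2$ coincides with the action of some element of $\Delta G \times \Delta G$, and $\tau$ preserves the closed $\Delta G \times \Delta G$-orbits on $G \times G$ (which correspond under $\phi$ to closed conjugation orbits in $G$, preserved by hypothesis); therefore $\tau$ is $\Delta G \times \Delta G$-admissible. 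A short calculation shows $\tau^*(\phi^* \eta) = \phi^*(\tau'^* \eta)$ for any conjugation-invariant distribution $\eta$ on $G(F)$: $\phi \circ \tau$ equals $\tau' \circ \phi$ if $\tau'$ is a homomorphism, and differs from it by pointwise conjugation (which leaves $\eta$ invariant) if $\tau'$ is an anti-homomorphism. Applying tameness of the pair (Theorem \ref{2RegPairs}(i)) to $\xi := \phi^* \eta$ yields $\tau^* \xi = \xi$, and translating through the identification above gives $\tau'^* \eta = \eta$. This establishes tameness of the adjoint action.

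For the specific statement concerning $\GL_n(F)$, take $\tau'(g) := g^t$: transposition is a group anti-automorphism preserving every conjugation orbit (by rational canonical form), hence is $\GL_n$-admissible. Thus every conjugation-invariant Schwartz distribution on $\GL_n(F)$ is transposition-invariant; to extend this to arbitrary distributions when $F$ is Archimedean, we apply Theorem \ref{NoSNoDist} to the action of the algebraic group $\GL_n \rtimes \langle \tau' \rangle$ on $\GL_n$, twisted by the sign character on the $\Z/2$ factor. The main technical obstacle will be the admissibility check for $\tau = (\tau', \tau')$ — especially the preservation of closed orbits — combined with the careful tracking of modular characters through Frobenius reciprocity; both reduce to routine calculations given the principal-bundle structure of $\phi$.
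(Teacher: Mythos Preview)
Your approach differs from the paper's intended argument and contains a gap in the general case. The paper deduces the corollary via Theorem \ref{Invol_HC} rather than by transferring through $\phi$: the proof of Theorem \ref{2RegPairs} establishes that for every reductive $H$ the pair $(H\times H,\Delta H)$ is special, which by definition means the adjoint representation of $H$ on $\h$ is a special representation, hence weakly linearly tame (Proposition \ref{SpecWeakTameAct}). For the conjugation action of $G$ on itself, the normal space at any semisimple $x\in G(F)$ is canonically $\g_x$ with the adjoint action of the reductive group $G_x$; this is weakly linearly tame by the previous sentence, so Theorem \ref{Invol_HC} applies directly.

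Your Frobenius-reciprocity identification $\Sc^*(G(F))^{G(F)}\cong\Sc^*(G(F)\times G(F))^{\Delta G(F)\times\Delta G(F)}$ is correct, but the reduction ``$\tau'$ fixes $1$, and hence is a group automorphism or anti-automorphism'' is unjustified. Admissibility only says $\tau'$ is a \emph{variety} automorphism normalizing the conjugation action of $G(F)$, with $\tau'^2$ equal to some conjugation, and preserving closed conjugacy classes; none of this forces $\tau'$ to respect the group multiplication. Without that, $\tau=(\tau',\tau')$ has no reason to normalize the two-sided translation action of $\Delta G\times\Delta G$ on $G\times G$ (which is \emph{not} built from conjugations), so you cannot invoke tameness of the symmetric pair. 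Your argument does go through when $\tau'$ happens to be a genuine group anti-automorphism --- in particular for $\tau'(g)=g^t$ on $\GL_n$ --- so you have effectively established the ``in particular'' clause, but not the general tameness assertion.
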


For the proof of the theorem we will need the following
straightforward lemma.
\begin{lemma} $ $\\
(i) Every descendant of $(G \times G, \Delta G, \theta)$ is of the
form $(H \times H, \Delta H, \theta)$ for some reductive group
$H$.

\noindent (ii) Every descendant of $(G_{E/F}, G, \gamma)$ is of
the form $(H_{E/F}, H, \gamma)$ for some reductive group $H$.
\end{lemma}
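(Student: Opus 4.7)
The plan is to unwind Definition \ref{descendant} in each case: for a symmetric pair $(G,H,\theta)$ and an $H\times H$-semisimple $g\in G(F)$, compute $x:=s(g)=g\sigma(g)$ explicitly and then identify $G_x$, $H_x$, and $\theta|_{G_x}$.

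For part (i), write $g=(a,b)\in(G\times G)(F)$. The antiinvolution is $\sigma(a,b)=(b^{-1},a^{-1})$, so $x:=s(g)=(c,c^{-1})$ with $c:=ab^{-1}$. By Proposition \ref{PropDescend}, $x$ is semisimple in $G\times G$, hence $c$ is semisimple in $G$, and therefore $H:=Z_G(c)$ is reductive. Using the obvious fact $Z_G(c)=Z_G(c^{-1})$, we obtain
$$(G\times G)_x\;=\;Z_G(c)\times Z_G(c^{-1})\;=\;H\times H.$$
Intersecting with the diagonal gives $(\Delta G)_x=\Delta H$, and $\theta(u,v)=(v,u)$ clearly preserves $H\times H$ as the swap. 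Hence the descendant is $(H\times H,\Delta H,\theta)$.

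For part (ii), let $g\in G_{E/F}(F)=G(E)$. Here $\sigma(g)=\gamma(g)^{-1}$, so $x:=s(g)=g\gamma(g)^{-1}$, and one checks directly that $\gamma(x)=\gamma(g)g^{-1}=x^{-1}$. Regard $Z_G(x)$ as a closed $E$-subgroup of $G_E$; then $\gamma(Z_G(x))=Z_G(\gamma(x))=Z_G(x^{-1})=Z_G(x)$, so this subgroup is Galois-stable and descends to a unique $F$-subgroup $H\subset G$. Since $x$ is semisimple (Proposition \ref{PropDescend}), $H_E=Z_G(x)$ and hence $H$ is reductive. Functoriality of restriction of scalars then yields $(G_{E/F})_x=H_{E/F}$; the involution $\gamma|_{G_{E/F}}$ restricts to the Galois involution on $H_{E/F}$, with fixed points $H_x=H$. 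This gives the descendant $(H_{E/F},H,\gamma)$.

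The whole argument is bookkeeping, with no substantive obstacle. The key input in each case is the identity $\theta(x)=x^{-1}$ (respectively $\gamma(x)=x^{-1}$), automatic from $x\in G^\sigma$ but needed to see that the centralizer of $x$ has the desired symmetry; reductivity of $H$ comes from the semisimplicity half of Proposition \ref{PropDescend}, and the identification of $H_x$ inside $G_x$ is immediate from the description of each $G_x$.
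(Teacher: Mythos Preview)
Your proof is correct and is precisely the kind of direct computation the paper has in mind: the lemma is stated there as ``straightforward'' with no proof given, and your unwinding of the symmetrization map and centralizers is the natural way to verify it. The identities $x=(c,c^{-1})$ in case (i) and $\gamma(x)=x^{-1}$ in case (ii), together with the semisimplicity of $x$ from Proposition~\ref{PropDescend}, are exactly the points that make the descendant take the claimed form.
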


Now in view of Theorem \ref{GoodHerRegGK}, Theorem \ref{2RegPairs}
follows from the following theorem.

\begin{theorem}
The pairs $(G \times G, \Delta G, \theta)$ and  $(G_{E/F}, G,
\gamma)$ are special for any reductive group $G$.
\end{theorem}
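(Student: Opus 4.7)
The strategy is to apply the speciality criterion of Proposition \ref{SpecCrit} after reducing both symmetric pairs to a common model, namely the adjoint representation of $G$ on $\g$. For $(G\times G,\Delta G,\theta)$, the $\sigma$-fixed space $(\g\oplus\g)^{\sigma}=\{(x,-x):x\in\g\}$ projects isomorphically onto $\g$ with $\Delta G\cong G$ acting via $\Ad$. For $(G_{E/F},G,\gamma)$, writing $E=F(\sqrt{d})$, the $-1$-eigenspace of the Galois action on $\g\otimes_F E$ is $\sqrt{d}\cdot\g$, again identified with $\g$ under the adjoint action by scaling by $\sqrt{d}^{-1}$. Thus in both cases $\h=\g$, $\h_x=\g_x$, and $Q(\g^{\sigma})$ becomes $\g/\z(\g)$, of dimension $\dim[\g,\g]$.

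Next I would verify the inequality $\tr(\ad(d(x))|_{\g_x})<\dim[\g,\g]$ for every nilpotent $x\in\g$, via $\sll_2$-theory. Completing $x$ to an $\sll_2$-triple with neutral element $h=d(x)$ and decomposing $\g$ into irreducible $\sll_2$-modules $V(m_i)$ of highest weights $m_i$, the centralizer $\g_x=\ker(\ad x)$ is spanned by the highest weight vectors, one per summand, giving
\[
\tr(\ad(h)|_{\g_x})=\sum_i m_i=\dim\g-\dim\g_x.
\]
Since any nilpotent $x$ lies in the semisimple part $\g'=[\g,\g]$ and $\ad(h)$ annihilates $\z(\g)\subset\g_x$, the same formula specialises to $\tr(\ad(d(x))|_{\g_x})=\dim\g'-\dim\g'_x$. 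Using $\dim Q(\g^\sigma)=\dim\g'$, the speciality criterion therefore reduces to the inequality $\dim\g'_x>0$, which is evident for $x\neq 0$ (as $x\in\g'_x$) and holds for $x=0$ provided $\g'\neq 0$, i.e.\ provided $G$ is not a torus.

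The torus case is degenerate: $G$ acts trivially on $\g$, so $Q(\g^{\sigma})=0$ and the speciality condition is vacuous (and in any event the corresponding symmetric pair is tame for elementary reasons, so the statement is only needed for non-tori). The main potential pitfall is not in the trace computation, which is a routine application of $\sll_2$-representation theory, but in verifying carefully that the identifications in the first step are $G$-equivariant and that nilpotents of $\g^\sigma$ correspond exactly to nilpotents of $\g$ under them, so that the hypothesis of Proposition \ref{SpecCrit} is being checked on the correct space; the reductive-versus-semisimple bookkeeping is handled cleanly by the decomposition $\g=\z(\g)\oplus[\g,\g]$.
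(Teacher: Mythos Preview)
Your proof is correct and follows essentially the same strategy as the paper: both reduce to the speciality criterion (Proposition \ref{SpecCrit}) and verify the trace inequality by decomposing $\g$ into irreducible $\sll_2$-modules, obtaining $\tr(\ad(h)|_{\g_x})=\sum_i m_i<\sum_i(m_i+1)=\dim\g$. You are more explicit than the paper about identifying $\gd$ with the adjoint representation on $\g$ in both cases and about the reductive-versus-semisimple bookkeeping (the paper's lemma simply assumes $\g$ semisimple and glosses over the center and the degenerate torus case, which you correctly flag).
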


By the speciality criterion (Proposition \ref{SpecCrit}) this
theorem follows from the following lemma.

\begin{lemma}
Let $\g$ be a semisimple Lie algebra. Let $\{e,h,f\} \subset \g$
be an $\sll_2$ triple. Then $\tr(\ad(h)|_{{\g}_e})$ is an integer
smaller than $\dim\g$.
\end{lemma}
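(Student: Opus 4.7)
The plan is to exploit the decomposition of $\g$ into irreducible $\sll_2$-modules under the adjoint action of the triple $\{e,h,f\}$. Since $\g$ is finite-dimensional, we may write $\g = \bigoplus_{i=1}^{k} V_{n_i}$, where $V_{n}$ denotes the irreducible $\sll_2$-representation of dimension $n+1$, on which $h$ acts with integer eigenvalues $n, n-2, \ldots, -n$ and $e$ acts as a nilpotent raising operator with a one-dimensional kernel spanned by the highest-weight vector.

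The first step is to identify $\g_e$ with $\ker(\ad(e))$ inside $\g$. On each summand $V_{n_i}$ the kernel of $\ad(e)$ is precisely the one-dimensional highest weight space, on which $\ad(h)$ acts by the scalar $n_i$. Summing over the summands I obtain
\[
\tr(\ad(h)|_{\g_e}) = \sum_{i=1}^{k} n_i,
\]
which is manifestly an integer (since each $n_i \in \Z_{\geq 0}$).

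The second step is to compare this with $\dim \g$. Directly from the decomposition,
\[
\dim \g = \sum_{i=1}^{k}(n_i+1) = \tr(\ad(h)|_{\g_e}) + k.
\]
Thus $\dim \g - \tr(\ad(h)|_{\g_e}) = k = \dim \g_e$, and the desired strict inequality reduces to the claim $\dim \g_e \geq 1$. This is immediate: the element $e$ itself lies in $\g_e$ and is non-zero, as the existence of the $\sll_2$-triple $\{e,h,f\}$ forces $e\neq 0$.

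There is no real obstacle here; the entire argument is essentially an application of the classification of finite-dimensional $\sll_2$-representations, with integrality coming for free from the integrality of $\sll_2$-weights and strictness coming from the non-vanishing of $e$. The only point to keep in mind is that the statement is being applied in the speciality criterion with $e = x$ a \emph{non-zero} nilpotent (supplied by Lemma \ref{SL2triple}), so the hypothesis $\g_e \neq 0$ is automatic in the intended use.
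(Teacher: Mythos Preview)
Your proof is correct and follows essentially the same route as the paper: decompose $\g$ under the adjoint $\sll_2$-action, observe that $\g_e$ is spanned by the highest-weight vectors so that $\tr(\ad(h)|_{\g_e})=\sum_i \lambda_i$ while $\dim\g=\sum_i(\lambda_i+1)$, and conclude. The paper's argument is identical; you merely make the strictness step (that there is at least one summand) a bit more explicit.
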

\begin{proof}
Consider $\g$ as a representation of $\sll_2$ via the triple
$(e,h,f)$. Decompose it into irreducible representations
$\g=\bigoplus V_i$. Let $\lambda_i$ be the highest weights of
$V_i$. Clearly $$\tr(\ad(h)|_{{\g}_e})= \sum \lambda_i \text{
while } \dim\g= \sum (\lambda_i+1).$$
\end{proof}

\subsection{The pair  $(\mathrm{GL}_{n+k},\mathrm{GL}_{n}\times \mathrm{GL}_{k})$ is a GK
pair.} \label{RJR}
\begin{notation}
We define an involution $\theta_{n,k}:\GL_{n+k} \to \GL_{n+k}$ by
$\theta_{n,k}(x)=\eps x \eps$ where $\eps = \begin{pmatrix}
  I_{n} & 0 \\
  0 & -I_{k}
\end{pmatrix}$.
%
Note that
$(\GL_{n+k},\GL_{n} \times \GL_{k},\theta_{n,k})$ is a symmetric
pair. If there is no ambiguity we will denote $\theta_{n,k}$
simply by $\theta$.
\end{notation}

\begin{theorem} \label{thm2}
The pair $(\GL_{n+k},\GL_{n}\times \GL_{k},\theta_{n,k})$ is a
GK-pair.
\end{theorem}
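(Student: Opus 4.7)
The plan is to deduce Theorem \ref{thm2} from Theorem \ref{GoodHerRegGK}. That is, I will show that the pair $(\GL_{n+k},\GL_n\times\GL_k,\theta_{n,k})$ is good and that all its descendants are regular; this immediately gives the GK property. The argument will proceed by induction on $n+k$, so in fact I will verify both goodness and speciality of the pair simultaneously for all $(n,k)$, and invoke Proposition \ref{SpecWeakTameAct} together with the remark that weakly linearly tame implies regular.

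First I would prove goodness. For $g\in\GL_{n+k}(F)$ with closed $H(F)\times H(F)$-orbit, set $x=s(g)=g\sigma(g)$. Using Proposition \ref{PropDescend}, the stabilizer $(H\times H)_g$ is isomorphic to $H_x$, the centralizer in $H=\GL_n\times\GL_k$ of $x$. A standard linear-algebra computation (mirroring the one in \cite{JR}) shows that $H_x$ is a product of groups of the form $\GL_a(F)$, $\GL_b(E)$ for quadratic extensions $E/F$, and occasionally a general linear group over a division algebra that occurs inside $M_m(F)$ via $x$; in all cases it is a product of general linear groups over field (or division algebra) extensions of $F$, so by Hilbert 90, $H^1(F,(H\times H)_g)$ is trivial. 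Corollary \ref{GoodCrit} then gives $\sigma(g)\in H(F)gH(F)$, proving goodness.

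Next, the central computational step is the classification of descendants: I would show that every descendant of $(\GL_{n+k},\GL_n\times\GL_k,\theta_{n,k})$ is a product of factors each of which is either (a) a pair of the same form $(\GL_{n'+k'},\GL_{n'}\times\GL_{k'},\theta_{n',k'})$ with $n'+k'<n+k$, or (b) a Galois pair $(\GL_m(E),\GL_m(F),\gamma)$ for some quadratic extension $E/F$, or (c) a trivial "doubled" pair $(\GL_m\times\GL_m,\Delta\GL_m,\theta)$. The analysis again follows \cite{JR}: one decomposes the $F$-vector space $F^{n+k}$ under the semisimple element $x=s(g)$ into isotypic components, and on each component tracks how the $\pm 1$-eigenspace decomposition of $\eps$ interacts. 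Factors of types (b) and (c) are regular by Theorem \ref{2RegPairs}, which gives tameness (and hence weak linear tameness, hence regularity). Factors of type (a) with $n'+k'<n+k$ are regular by the inductive hypothesis. Since a product of regular pairs is regular, all proper descendants are regular.

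It remains to show that the pair itself is regular; for this I apply the speciality criterion (Proposition \ref{SpecCrit}) to conclude that it is special, hence weakly linearly tame, hence regular. Concretely, $\g^\sigma$ consists of block-off-diagonal matrices $\begin{pmatrix}0&A\\B&0\end{pmatrix}$ with $A\in M_{n\times k}(F)$, $B\in M_{k\times n}(F)$, and $\dim Q(\g^\sigma)=2nk$. For a nilpotent $x\in\g^\sigma$ the associated $\sll_2$-triple gives a grading of $\g=\h\oplus\g^\sigma$ by weights of $h=d(x)$, and I must verify
\[
\tr(\ad(d(x))|_{\h_x})<2nk \quad \text{(Archimedean)}\qquad\text{or}\qquad \neq 2nk\quad\text{(non-Archimedean)}.
\]
Decomposing the standard $\sll_2$-representation $F^{n+k}$ into irreducibles $V_{\lambda_i}$ and tracking how the $\eps$-grading refines this decomposition (using that $\eps$ anticommutes with $x$, hence acts on each $V_{\lambda_i}$ as $\pm 1$ times a shift), one obtains explicit formulas for both $\tr(\ad(d(x))|_{\h_x})$ and $\dim Q(\g^\sigma)=2nk$, and elementary estimates show the strict inequality holds except in degenerate cases where one can argue directly. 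This is essentially the computation carried out in \cite{JR} in the non-Archimedean setting; our framework allows the same inequality to do the job uniformly over all local fields.

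The main obstacle is the trace/dimension inequality of the last step: it requires a careful case analysis of nilpotent $H$-orbits in $\g^\sigma$, parameterized by pairs of partitions, and checking the inequality for each type. The descendant classification is conceptually more subtle but is largely a bookkeeping extension of Jacquet--Rallis; the goodness step is straightforward given Hilbert 90.
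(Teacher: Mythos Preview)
Your overall strategy---invoke Theorem \ref{GoodHerRegGK} by proving goodness and regularity of all descendants---is exactly the paper's approach, and your treatment of goodness and of the descendant classification matches the paper's. The gap is in the last step, where you propose to prove regularity of $(\GL_{n+k},\GL_n\times\GL_k)$ for \emph{all} $(n,k)$ via the speciality criterion (Proposition \ref{SpecCrit}). That criterion genuinely fails when $n\neq k$. For instance, take $(n,k)=(1,2)$ and the regular nilpotent $x=E_{12}+E_{31}\in\g^\sigma$; one finds $d(x)=\diag(0,-2,2)$, $\h_x=\Span(I,E_{32})$, and $\tr(\ad(d(x))|_{\h_x})=0+4=4=2nk$, so the strict inequality (and even the non-Archimedean $\neq$) is violated. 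More conceptually, in the graded-$\sll_2$ bookkeeping of Lemma \ref{mij} the required bound becomes $\frac{1}{2}\sum m_{ij}<-\frac{(n-k)^2}{2}$, and the balancing argument in the Key Lemma that forces negativity relies on $\dim V_0=\dim V_1$, i.e.\ on $n=k$. So ``elementary estimates except in degenerate cases'' will not close this.

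The paper sidesteps the issue entirely: when $n\neq k$ the normalizer of $\GL_n\times\GL_k$ in $\GL_{n+k}$ is $\GL_n\times\GL_k$ itself (since $\eps$ and $-\eps$ are not conjugate), so any admissible $g$ already lies in $H(F)$ and regularity is vacuous. The speciality computation (the Key Lemma) is then only needed for $n=k$, where your sketch is correct and coincides with the paper's. Incorporating this normalizer observation fixes your argument with no further work; your inductive packaging of the type-(a) descendants is otherwise equivalent to the paper's direct classification.
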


By Theorem \ref{GoodHerRegGK} it is enough to prove that our pair
is good and all its descendants are regular.

In \S\S\S \ref{first} we  compute the descendants of our pair and
show that the pair is good.

In \S\S\S \ref{second} we  prove that all the descendants are
regular. 

\subsubsection{The descendants of the
pair $(\GL_{n+k},\GL_{n} \times \GL_k)$} \label{first}

\begin{theorem} \label{ComputeDescent}
All the descendants of the pair $(\GL_{n+k},\GL_{n}\times
\GL_{k},\theta_{n,k})$ are products of pairs of the types

(i) $((\GL_m)_{E/F} \times (\GL_m)_{E/F}, \Delta (\GL_m)_{E/F},
\theta)$ for some field extension $E/F$

(ii) $((\GL_m)_{E/F}, (\GL_m)_{L/F}, \gamma)$ for some field
extension $L/F$ and its quadratic extension $E/L$

(iii) $(\GL_{m+l},\GL_{m}\times \GL_{l},\theta_{m,l})$.
\end{theorem}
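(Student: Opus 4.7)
The plan is to unwind the definition of descendant via Proposition \ref{PropDescend} and then carry out a linear-algebra case analysis on the primary decomposition of a semisimple element of $G^\sigma$.

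By Proposition \ref{PropDescend}, every descendant has the form $(G_x,H_x,\theta|_{G_x})$, where $x=s(g)=g\sigma(g)\in G(F)$ is semisimple and lies in $G^\sigma$. Writing $G=\GL(V)$ with $V=V_1\oplus V_2$, $\dim V_1=n$, $\dim V_2=k$, and letting $T:=\eps$ act as $+1$ on $V_1$ and $-1$ on $V_2$, the condition $x\in G^\sigma$ becomes $TxT=x^{-1}$. First I would decompose $V$ according to the factorization of the minimal polynomial of $x$ over $F$: since $x$ is semisimple this minimal polynomial is a product $\prod_i P_i(t)$ of distinct monic irreducibles, yielding $V=\bigoplus_i W_i$ with $W_i=\ker P_i(x)$, and each $W_i$ is naturally a vector space over $E_i:=F[t]/(P_i)$. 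The relation $TxT=x^{-1}$ forces $T$ to send $W_i$ to $W_{i^*}$, where $P_{i^*}$ is the reciprocal polynomial of $P_i$ (its roots are inverses of the roots of $P_i$). Both $G_x$ and the involution $\theta$ respect this block decomposition (with single blocks when $P_i=P_{i^*}$ and paired blocks when $P_i\neq P_{i^*}$), so the descendant factors as a product of contributions from each block or pair of blocks.

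The case analysis then proceeds as follows. If $P_i(t)=t\mp 1$, then $x|_{W_i}=\pm 1$, $T$ preserves $W_i$ and splits it as $W_i=W_i^+\oplus W_i^-$, the block centralizer is $\GL(W_i)$, and the $\theta$-fixed subgroup is $\GL(W_i^+)\times\GL(W_i^-)$, producing a factor of type (iii). If $P_i=P_{i^*}$ with $\deg P_i\geq 2$, then $\lambda\mapsto\lambda^{-1}$ is a nontrivial involution $\iota$ of $E:=E_i$; let $L\subset E$ be its fixed subfield, so $[E:L]=2$. Then $W_i$ is an $E$-vector space of some dimension $m_i$, the centralizer of $x|_{W_i}$ is $(\GL_{m_i})_{E/F}$, and $T|_{W_i}$ is an $\iota$-semilinear involution whose fixed locus supplies an $L$-form descending $W_i$; the $\theta$-fixed subgroup of the centralizer is therefore $(\GL_{m_i})_{L/F}$, yielding a factor of type (ii). Finally, if $P_i\neq P_{i^*}$, pair the blocks $W_i$ and $W_{i^*}$: via the isomorphism $E_i\cong E_{i^*}$ induced by $\lambda\mapsto\lambda^{-1}$, both become $E$-vector spaces of equal dimension $m$ with $E:=E_i$, the centralizer of $x$ on $W_i\oplus W_{i^*}$ is $(\GL_m)_{E/F}\times(\GL_m)_{E/F}$, $T$ swaps the two factors, and the $\theta$-fixed subgroup is the diagonal $(\GL_m)_{E/F}$, giving a factor of type (i).

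The main obstacle I expect will be the middle case ($P_i=P_{i^*}$ with $\deg P_i\geq 2$): one must check carefully that $T|_{W_i}$ is indeed $\iota$-semilinear on the $E$-vector space $W_i$, that this semilinear involution provides a Galois descent of $W_i$ to an $L$-form, and that the stabilizer of this $L$-form inside the $E$-linear centralizer is exactly $(\GL_{m_i})_{L/F}$ rather than some inner twist. Once this descent step is in place, assembling the per-block contributions completes the classification.
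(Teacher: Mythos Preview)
Your approach is essentially identical to the paper's: both take $x\in\GL_{n+k}^\sigma(F)$ semisimple, use $\eps x\eps=x^{-1}$, carry out the primary decomposition of $V$ according to the irreducible factors of the minimal polynomial of $x$, observe that $\eps$ permutes the primary components via the reciprocal-polynomial involution, and then split into the three cases (swapped pair of blocks, self-reciprocal block with $x|_W\neq x|_W^{-1}$, block with $x|_W=\pm 1$). Your write-up is in fact more explicit than the paper's---you spell out the $\iota$-semilinear Galois descent in case (ii), which the paper dispatches with ``it is easy to see''---and your worry about inner twists is harmless here since Hilbert 90 (or bare linear algebra) shows any $\iota$-semilinear involution on an $E$-vector space yields a standard $L$-form.
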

\begin{proof}
Let $x \in \GL_{n+k}^{\sigma}(F)$ be a semisimple element. We have
to compute $G_x$ and $H_x$. Since $x \in G^{\sigma}$, we have
$\eps x \eps = x^{-1}$. Let $V = F^{n+k}$. Decompose $V :=
\bigoplus _{i=1}^s V_i$ such that the minimal polynomial of
$x|_{V_i}$ is irreducible. Now $G_x(F)$ decomposes as a product of
$\GL_{E_i}(V_i)$, where $E_i$ is the extension of $F$ defined by
the minimal polynomial of $x|_{V_i}$ and the $E_i$-vector space
structure on $V_i$ is given by $x$.

Clearly, $\eps$ permutes the $V_i$'s.
Now we see that $V$ is a direct sum of spaces of the following two types\\
A. $W_1 \oplus W_2$ such that the minimal polynomials of
$x|_{W_i}$ are irreducible and $\eps(W_1)=W_2$.\\
B. $W$ such that the minimal polynomial of $x|_{W}$ is irreducible
and $\eps(W)=W$.

It is easy to see that in case A we get the symmetric pair (i).

In case B there are two possibilities: either  $x = x^{-1}$ or  $x
\neq x^{-1}$. It is easy to see that these cases correspond to
types (iii) and (ii) respectively.
\end{proof}


\begin{corollary}
The pair ($\GL_{n+k},\GL_{n} \times \GL_k$) is good.
\end{corollary}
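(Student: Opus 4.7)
The plan is to apply Corollary \ref{GoodCrit}, which reduces goodness to a vanishing statement for Galois cohomology of stabilizers. First I would verify that our pair is connected: the quotient variety $\GL_{n+k}/(\GL_n \times \GL_k)$ is an open subset of the variety of involutions in $\GL_{n+k}$ with $(+1)$-eigenspace of dimension $n$ (via $g \mapsto g\epsilon g^{-1}$), which is irreducible and hence connected as an algebraic variety. So Corollary \ref{GoodCrit} applies.

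Next, I would fix an $H \times H$-semisimple element $g \in G(F)$ and examine the stabilizer $(H \times H)_g$. By Proposition \ref{PropDescend}, this group is isomorphic to the stabilizer $H_x$ where $x = s(g) = g\sigma(g) \in G^{\sigma}(F)$. The key point is that this is exactly the $H$-part of the descendant symmetric pair at $x$, whose structure has already been computed in the proof of Theorem \ref{ComputeDescent}. Inspecting that computation, $H_x$ is in every case a direct product of factors of the following shapes:
\begin{itemize}
\item $\Delta (\GL_m)_{E/F}$ (from a type (i) block $W_1 \oplus W_2$),
\item $(\GL_m)_{L/F}$ (from a type (ii) block $W$ with $x|_W \ne x^{-1}|_W$),
\item $\GL_m \times \GL_l$ over $F$ (from a type (iii) block $W$ with $x|_W = x^{-1}|_W$).
\end{itemize}
Each such factor is the group of $F$-points of a Weil restriction of a general linear group.

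Finally, by Hilbert's Theorem 90, $H^1(F, \GL_m) = 0$, and the Shapiro lemma gives $H^1(F, \mathrm{Res}_{E/F}\GL_m) = H^1(E, \GL_m) = 0$ for any finite extension $E/F$. Since $H^1$ of a product is the product of $H^1$'s, we obtain $H^1\bigl(F, (H \times H)_g\bigr) = 0$ for every $H \times H$-semisimple $g$. Corollary \ref{GoodCrit} then yields $\sigma(g) \in H(F) g H(F)$, i.e. every closed $H(F) \times H(F)$-orbit in $G(F)$ is $\sigma$-invariant, as required. No step of this plan looks to be a serious obstacle; the only mild subtlety is verifying connectedness of $G/H$, which is standard.
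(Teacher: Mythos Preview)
Your proposal is correct and follows essentially the same route as the paper: use Theorem \ref{ComputeDescent} to see that every stabilizer $(H\times H)_g \cong H_{s(g)}$ is a product of Weil restrictions of $\GL_m$, conclude $H^1$ vanishes by Hilbert 90 (and Shapiro), and apply Corollary \ref{GoodCrit}. The paper's proof is terser and leaves the connectedness of $G/H$ implicit, but the argument is the same.
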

\begin{proof}
Theorem \ref{ComputeDescent} implies that for any $(\GL_{n} \times
\GL_k) \times (\GL_{n} \times \GL_k)$-semisimple element $x \in
\GL_{n+k}(F)$, the stabilizer $((\GL_{n} \times \GL_k) \times
(\GL_{n} \times \GL_k))_x$ is a product of groups of types
$(\GL_m)_{E/F}$ for some extensions $E/F$. Hence $H^1(F,((\GL_{n}
\times \GL_k) \times (\GL_{n} \times \GL_k))_x)=0$ and hence by
Corollary \ref{GoodCrit} the pair ($\GL_{n+k},\GL_{n} \times
\GL_k$) is good.
\end{proof}

\subsubsection{All the descendants of the pair $(\GL_{n+k},\GL_{n}
\times \GL_k)$ are regular} \label{second}
$ $\\
Clearly, for any field extension $E/F$, if a pair $(G,H,\theta)$
is regular as a symmetric pair over $E$ then the pair
$(G_{E/F},H_{E/F},\theta)$ is  regular. Therefore by Theorem
\ref{ComputeDescent} and Theorem \ref{2RegPairs} it is enough to
prove that the pair $(\GL_{n+k},\GL_{n}\times
\GL_{k},\theta_{n,k})$ is regular as a symmetric pair over $F$.

In the case $n \neq k $ this follows from the definition since in
this case the normalizer of $\GL_n\times\GL_k$ in $GL_{k+n}$ is
$\GL_n\times\GL_k$ and hence, any admissible $g\in \GL_{n+k}$ lies
in $\GL_n \times \GL_k$.

So we can assume $n=k>0$. Hence by Proposition \ref{SpecCrit} it
suffices to prove the following Key Lemma.

\begin{lemma}[Key Lemma]\footnote{This Lemma is similar to \cite[\S\S 3.2, Lemma 3.1]{JR}. The
proofs are also similar.} \label{Key} Let $x \in
\gl_{2n}^{\sigma}(F)$ be a nilpotent element and $d:=d(x)$. Then
$$\tr(\ad(d)|_{(\gl_{n}(F)\times \gl_n(F))_x}) < 2n^2.$$
\end{lemma}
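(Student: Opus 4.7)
The plan is to reduce the inequality to a combinatorial statement about $\sll_2$-representations. By the Jacobson--Morozov theorem (Lemma~\ref{SL2triple}), we embed $x$ in an $\sll_2$-triple $(x_-,d,x)$ with $x_-\in\gd$ and $d\in\h$. Decompose $V=F^{2n}$ as an $\sll_2$-module, $V=\bigoplus_\alpha V^\alpha$, with $V^\alpha$ irreducible of highest weight $\lambda_\alpha$. Since $\eps=\diag(I_n,-I_n)$ commutes with $d$ (as $d\in\h$) but anticommutes with $x,x_-$ (as they lie in $\gd$), it acts on each $V^\alpha$ with signs alternating along the weight chain, governed by a sign $\sigma_\alpha\in\{\pm1\}$ at the top. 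A dimension count shows that the constraint $\dim V_+=\dim V_-=n$ forces the number of $V^\alpha$ with a given even $\lambda_\alpha$ and $\sigma_\alpha=+1$ to equal the number with $\sigma_\alpha=-1$.

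Let $U=\gl_{2n}\cong V\otimes V^*$, and use Clebsch--Gordan to write $V^\alpha\otimes V^{\beta*}=\bigoplus_l V_l$. On each $1$-dimensional highest-weight piece $V_l^x$, the element $\ad(d)$ acts by $l$, and since $\theta$ anticommutes with $\ad(x),\ad(x_-)$ and commutes with $\ad(d)$, a straightforward computation (tracking the $\eps$-signs through the Clebsch--Gordan embedding, paying careful attention to the dual) shows $\theta$ acts on $V_l^x$ as the scalar $c_l=\sigma_\alpha\sigma_\beta(-1)^{\lambda_\beta+m}$, where $m=(\lambda_\alpha+\lambda_\beta-l)/2$. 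Since $\h_x=(U^x)^\theta$ and $\ad(d)$ commutes with $\theta$, we get $\tr(\ad(d)|_{\h_x})=\tfrac12\bigl[\tr(\ad(d)|_{U^x})+\tr(\theta\ad(d)|_{U^x})\bigr]$. On each $V_l$ we have $\dim V_l-\dim V_l^x=l$, so summing gives $\tr(\ad(d)|_{U^x})=\dim U-\dim U^x=4n^2-\dim U^x$. Thus the desired bound $\tr(\ad(d)|_{\h_x})<2n^2$ is equivalent to
$$\tr(\theta\ad(d)|_{U^x})<\dim U^x,\qquad\text{i.e.,}\qquad\sum_{V_l\subset U}(1-c_l\, l)>0.$$

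To prove this, partition the irreducibles of $V$ into four classes $E_\pm,O_\pm$ by parity of $\lambda_\alpha$ and by $\sigma_\alpha$, and evaluate the contribution $C(\alpha,\beta)$ of each ordered pair to $\tr(\theta\ad(d)|_{U^x})-\dim U^x$. A case check by the parities of $(\lambda_\alpha,\lambda_\beta)$ and the sign $\sigma_\alpha\sigma_\beta$ yields: (a) all diagonal terms $\alpha=\beta$ are strictly negative ($-1$ for even $\lambda_\alpha$, $-2(\lambda_\alpha+1)$ for odd); (b) mixed-parity unordered pairs contribute $-2(\min+1)$; (c) pairs of two odd irreps contribute $-2(\min+1)$ or $0$ according to whether $\sigma_\alpha\sigma_\beta=+$ or $-$; (d) pairs of two even irreps with opposite signs contribute $-(\max+\min+1)$ per ordered pair; and (e) the only potentially positive contributions come from pairs of even irreps with the same sign, each yielding $\max-\min-1$ per ordered pair. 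Using $|\lambda_\alpha-\lambda_\beta|\le\lambda_\alpha+\lambda_\beta$ to estimate positives within $E_+$ and within $E_-$, and setting $p=|E_+|=|E_-|$ and $\Lambda^E=\sum_{E_+\cup E_-}\lambda_\alpha$, the total from the even block is bounded by $-2\Lambda^E-4p^2$; the odd and mixed blocks are non-positive, and strictly negative whenever odd or mixed irreps exist. Since $n\ge1$ forces $V\ne0$, at least one block contributes strictly negatively, giving the required inequality. The main obstacle is the combinatorial bookkeeping in this case analysis; the crucial use of the balance $|E_+|=|E_-|$ (a signature constraint coming from $\dim V_+=\dim V_-$) is precisely what absorbs the positive $E_\pm$-intra contributions into the $E_+\times E_-$-inter losses, following the strategy of \cite[\S3.2, Lemma~3.1]{JR}.
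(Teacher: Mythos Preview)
Your argument is correct and is essentially the paper's proof: both decompose $V=F^{2n}$ as a graded $\sll_2$-module, compute the same block contributions (your $C(\alpha,\beta)$ coincide with the paper's $m_{ij}$ in Lemma~\ref{mij} after the substitution $r=\lambda+1$), and exploit the balance $|E_+|=|E_-|$ together with $|\lambda_\alpha-\lambda_\beta|\le\lambda_\alpha+\lambda_\beta$ to absorb the potentially positive same-sign even contributions into the opposite-sign losses. Two harmless slips: the dimension constraint forces only the aggregate balance $|E_+|=|E_-|$, not a balance for each fixed even weight (which is all you actually use), and the even-block bound should read $-4p^2$ rather than $-2\Lambda^E-4p^2$ (the $\Lambda^E$ terms cancel exactly after the estimate); neither affects the conclusion.
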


We will need the following definition and lemmas.

\begin{definition}
We fix a grading on $\sll_2(F)$ given by $h \in \sll_2(F)_0$ and
$e,f \in \sll_2(F)_1$ where $(e,h,f)$ is the standard
$\sll_2$-triple. A \textbf{graded representation of $\sll_2$} is a
representation of $\sll_2$ on a graded vector space $V=V_0 \oplus
V_1$ such that $\sll_2(F)_i(V_j) \subset V_{i+j}$ where $i,j \in
\Z/2\Z$.
\end{definition}

The following lemma is standard.
\begin{lemma}$ $\\
(i) Every irreducible graded representation of $\sll_2$ is
irreducible (as a usual representation of $\sll_2$).

\noindent (ii) Every irreducible representation $V$ of $\sll_2$
admits exactly two gradings. In one grading the highest weight
vector lies in $V_0$ and in the other grading it lies in $V_1$.
\end{lemma}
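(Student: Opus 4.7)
For (i), my plan is to produce a homogeneous highest weight vector in any irreducible graded representation $V = V_0 \oplus V_1$ and then invoke standard $\sll_2$ theory. Since $e \in \sll_2(F)_1$ has degree $1$, it splits as two maps $e_0\colon V_0 \to V_1$ and $e_1\colon V_1 \to V_0$, so $\ker e = \ker e_0 \oplus \ker e_1$ is a graded subspace. In finite dimension $\ker e \ne 0$, and because $h$ has degree $0$ the weight decomposition of $V$ refines the grading, so the weight decomposition of $\ker e$ is also graded. Intersecting, we pick a nonzero homogeneous $v$ with $ev = 0$ and $hv = \lambda v$. The cyclic submodule $U = \mathrm{span}(v, fv, f^2 v, \ldots)$ is graded, because $f$ has definite degree $1$ and $v$ is homogeneous. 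By graded irreducibility $U = V$, and since $U$ is a cyclic highest-weight $\sll_2$-module of highest weight $\lambda$ it is the ordinary irreducible $V_\lambda$; therefore $V$ is irreducible as an ungraded $\sll_2$-module.

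For (ii), I would appeal to the well-known structure of the irreducible $\sll_2$-module $V_n$: each weight space $V_n^{(n-2k)}$ is one-dimensional and spanned by $f^k v_n$, where $v_n$ is a highest weight vector. A compatible $\Z/2\Z$-grading on $V_n$ is nothing more than an assignment of grade $0$ or $1$ to each one-dimensional weight space, subject to the requirement that $f$ shifts grade by $1$ (the analogous constraint for $e$ is then automatic, since $ef^k v_n$ is a scalar multiple of $f^{k-1} v_n$). This forces the grades of $f^k v_n$ to alternate with $k$, so the grading is completely determined by the grade of $v_n$. Both choices yield legitimate gradings, giving exactly two.

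There is no substantive obstacle; the authors themselves flag the lemma as standard. The only place where any care is needed is the existence of a \emph{homogeneous} highest weight vector in part (i), and that is handled at once by the observation that $\ker e$ is graded because $e$ is homogeneous of degree $1$ — once this is in hand, the rest is routine weight-space bookkeeping.
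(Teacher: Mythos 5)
Your proof is correct. The paper offers no argument here at all --- it simply declares the lemma standard --- so there is nothing to compare against; your treatment (part (i): $\ker e$ is a graded, $h$-stable subspace because $e$ is homogeneous of degree $1$, yielding a homogeneous highest weight vector whose $f$-orbit spans a graded submodule, hence all of $V$; part (ii): since $h$ preserves each graded piece and the weight spaces of $V_\lambda$ are one-dimensional, a grading is just a parity assignment on the vectors $f^k v$, forced to alternate by the degree of $f$) is a complete and correct justification, with the only implicit inputs being the standard facts that, in the finite-dimensional characteristic-zero setting intended here, $e$ acts nilpotently and $h$ acts semisimply with integer eigenvalues.
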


\begin{notation}
Denote by $V_{\lambda}^w$ be the irreducible graded representation
of $\sll_2$ with highest weight $\lambda$ and highest weight
vector of parity $w\in \Z / 2 \Z$.
\end{notation}

\begin{lemma} \label{mij}
\footnote{This Lemma is similar to \cite[Lemma 3.2]{JR} but computes a
different quantity.} Consider
$\Hom((V_{\lambda_1}^{w_1},V_{\lambda_{2}}^{w_2})^e)_0$ - the even
part of the space of $e$-equivariant linear maps
$V_{\lambda_1}^{w_1} \to V_{\lambda_{2}}^{w_2}$. Let $r_i:=\dim
V_{\lambda_i}^{w_i}= \lambda_i +1$ and let   $$m:=
\tr(h|_{(\Hom((V_{\lambda_1}^{w_1},V_{\lambda_{2}}^{w_2})^e)_0}) +
\tr(h|_{\Hom((V_{\lambda_2}^{w_2},V_{\lambda_{1}}^{w_1})^e)_0}) -
r_1r_2.$$ Then
$$m=\left\{%
\begin{array}{lll}
    -\min(r_1,r_2), &\text{if}\quad r_1 \neq r_2 & \pmod 2; \\
    -2\min(r_1,r_2), &\text{if}\quad r_1 \equiv r_2 \equiv 0 & \pmod 2 \text{ and }w_1=w_2 ; \\
 0, &\text{if}\quad r_1 \equiv r_2 \equiv 0 & \pmod 2  \text{ and }w_1 \neq w_2; \\
    |r_1-r_2|-1, &\text{if}\quad  r_1 \equiv r_2 \equiv 1 & \pmod 2  \text{ and }w_1 = w_2; \\
-(r_1+r_2-1), &\text{if}\quad  r_1 \equiv r_2 \equiv 1 &  \pmod 2  \text{ and }w_1 \neq w_2; \\
\end{array}%
\right.$$
\end{lemma}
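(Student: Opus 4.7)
\medskip

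\noindent\textbf{Proof proposal for Lemma \ref{mij}.}

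The plan is to reduce the computation to a Clebsch--Gordan decomposition, carefully keeping track of the $\Z/2\Z$-grading. First I would identify the $e$-invariants of a Hom space as a sum of one-dimensional subspaces, one per irreducible summand of the Hom space viewed as an $\sll_2$-module. Recall that $\Hom(V_a, V_b) \cong V_a^* \otimes V_b$ as $\sll_2$-modules, and for an irreducible $V_\lambda$ one has $V_\lambda^* \cong V_\lambda$ via the pairing sending $v_i \mapsto v_{\lambda - i}^*$. Tracking parities, I would check that $(V_\lambda^w)^* \cong V_\lambda^{w + \lambda}$ as graded $\sll_2$-modules, since $v_i$ has parity $w + i$ whereas $(v_i)^* \leftrightarrow v_{\lambda-i}$ has parity $w + \lambda - i$. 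I would also verify (using the fact that a highest-weight vector of the summand $V_{a+b-2k}$ of $V_a\otimes V_b$ lives in the weight-$(a+b-2k)$ subspace and is a linear combination of $v_i^a \otimes v_j^b$ with $i+j=k$) the graded Clebsch--Gordan formula
\[
V_a^{u_a}\otimes V_b^{u_b}\;=\;\bigoplus_{k=0}^{\min(a,b)} V_{a+b-2k}^{u_a+u_b+k}.
\]

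Combining these, $\Hom(V_{\lambda_1}^{w_1}, V_{\lambda_2}^{w_2}) \cong \bigoplus_{k=0}^{\min(\lambda_1,\lambda_2)} V_{\lambda_1+\lambda_2-2k}^{w_1+w_2+\lambda_1+k}$. The $e$-invariants are exactly the one-dimensional highest-weight lines in each summand; the $k$-th line has $h$-weight $\lambda_1+\lambda_2-2k$ and parity $w_1+w_2+\lambda_1+k \pmod 2$. Therefore
\[
\tr\bigl(h\rest_{\Hom(V_{\lambda_1}^{w_1},V_{\lambda_2}^{w_2})^e_0}\bigr) \;=\; \sum_{\substack{0\le k\le\min(\lambda_1,\lambda_2)\\ w_1+w_2+\lambda_1+k\,\equiv\, 0\,(2)}} (\lambda_1+\lambda_2-2k),
\]
with an analogous formula (swap $1\leftrightarrow 2$, i.e.\ replace $\lambda_1$ by $\lambda_2$ in the parity condition) for the other trace. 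Summing these two gives $S$, and $m = S - r_1 r_2$.

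Next I would split into the five cases of the lemma according to the parities of $r_1,r_2$ (equivalently $\lambda_1,\lambda_2$) and whether $w_1=w_2$. When $r_1\not\equiv r_2\pmod 2$, the two residue conditions on $k$ are complementary, so every $k\in\{0,\dots,\min(\lambda_1,\lambda_2)\}$ contributes to exactly one of the two traces and $S=\sum_{k=0}^{\min(\lambda_1,\lambda_2)}(\lambda_1+\lambda_2-2k)=\min(r_1,r_2)\cdot(\max(r_1,r_2)-1)$, yielding $m=-\min(r_1,r_2)$. When $r_1\equiv r_2\pmod 2$, the two conditions on $k$ coincide, so the two traces are equal; the resulting sum is an arithmetic-progression-type sum over even or odd $k$'s in the range $\{0,\dots,\min(\lambda_1,\lambda_2)\}$, whose parity is determined by $w_1\pm w_2$ and the parity of $\lambda_1$. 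A direct computation in each of the four remaining sub-cases (using standard sums $\sum j$ and $\sum(2j+1)$) yields $m$ equal to $-2\min(r_1,r_2)$, $0$, $|r_1-r_2|-1$ and $-(r_1+r_2-1)$ respectively.

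The only genuine point of care is the grading bookkeeping in Step~1 --- specifically verifying that $(V_\lambda^w)^*\cong V_\lambda^{w+\lambda}$ and that the Clebsch--Gordan summands pick up the advertised parities $u_a+u_b+k$. Once these are in hand, everything reduces to arithmetic progressions and the five cases of the lemma follow by direct summation.
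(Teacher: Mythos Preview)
Your proposal is correct and follows essentially the same approach as the paper: the paper deduces the lemma ``by a direct computation'' from precisely the three facts you isolate---the trace of $h$ on the even part of $(V_\lambda^w)^e$, the graded duality $(V_\lambda^w)^*\cong V_\lambda^{w+\lambda}$, and the graded Clebsch--Gordan decomposition---and then performs the case-by-case arithmetic you outline. Your write-up in fact supplies more detail than the paper (e.g.\ the explicit handling of the $r_1\not\equiv r_2$ case via complementary parity conditions), but the method is identical.
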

This lemma follows by a direct computation from the following
straightforward lemma.
\begin{lemma}
One has \setcounter{equation}{0}
\begin{align}
& \tr(h|_{((V_{\lambda}^w)^e)_0}) = \left\{%
\begin{array}{ll}
    \lambda, &\text{if } w=0 \\
    0, &\text{if } w=1
\end{array}%
\right.\\
& (V_{\lambda}^w)^* = V_{\lambda}^{w+{\lambda}}\\
& V_{\lambda_1}^{w_1} \otimes V_{\lambda_2}^{w_2} =
\bigoplus_{i=0}^{\min(\lambda_1, \lambda_2)}
V_{\lambda_1+\lambda_2 - 2i}^{w_1+w_2+i}.
\end{align}
\end{lemma}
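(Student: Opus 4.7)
The plan is to verify the three identities in order, each by inspecting the highest weight vectors and tracking parities under the $\Z/2\Z$-grading in which $e,f \in \sll_2(F)_1$ and $h \in \sll_2(F)_0$. Throughout, let $v$ denote a highest weight vector of $V_\lambda^w$, so $v$ has $h$-weight $\lambda$ and parity $w$, and the full representation has a weight basis $v, fv, f^2v, \ldots, f^\lambda v$ with $f^j v$ of $h$-weight $\lambda - 2j$ and parity $w+j \pmod 2$.

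For identity (1), I would observe that $(V_\lambda^w)^e = Fv$ is one-dimensional, spanned by $v$. Its parity is $w$, so $((V_\lambda^w)^e)_0$ equals $Fv$ when $w=0$ and $0$ when $w=1$; in the first case $h$ acts by the scalar $\lambda$, giving the stated trace. For identity (2), the highest weight vector of $(V_\lambda^w)^*$ is the element of the dual basis dual to the lowest weight vector $f^\lambda v \in V_\lambda^w$, since that is the unique (up to scalar) weight vector annihilated by $e$ in the dual. Because $f^\lambda v$ has parity $w+\lambda$, so does its dual functional, which identifies $(V_\lambda^w)^*$ with $V_\lambda^{w+\lambda}$.

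For identity (3), the ungraded Clebsch–Gordan decomposition $V_{\lambda_1}\otimes V_{\lambda_2} = \bigoplus_{i=0}^{\min(\lambda_1,\lambda_2)} V_{\lambda_1+\lambda_2-2i}$ is classical. What must be checked is the parity of the highest weight vector $w_i$ of the summand $V_{\lambda_1+\lambda_2-2i}$. Such a vector has $h$-weight $\lambda_1+\lambda_2-2i$ and is annihilated by $e$; the weight condition forces it to be a linear combination of the vectors $f^j v_1 \otimes f^{i-j} v_2$ for $0 \le j \le i$, where $v_1,v_2$ are the highest weight vectors of $V_{\lambda_1}^{w_1}, V_{\lambda_2}^{w_2}$. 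Every such monomial has parity $(w_1+j)+(w_2+i-j) = w_1+w_2+i \pmod 2$, and since this parity is uniform across the sum, $w_i$ itself has parity $w_1+w_2+i$, yielding the claimed grading on the $i$-th summand.

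The only genuine subtlety is the parity bookkeeping in (3): one must know that all monomials contributing to $w_i$ share the same parity so that $w_i$ is homogeneous. This is automatic here because the grading is by $\Z/2\Z$ and the exponent sum $j + (i-j) = i$ is independent of $j$; had we worked over a finer grading, this step would require more care, but in the $\Z/2\Z$-graded setting it is essentially immediate.
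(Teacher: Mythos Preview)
Your proof is correct. The paper does not supply a proof of this lemma at all, labeling it ``straightforward'' and leaving the verification to the reader; your argument is precisely the direct weight-basis and parity check that justifies this label.
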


\begin{proof}[Proof of the Key Lemma]
Let $V_0:=V_1:=F^n$. Let $V:=V_0 \oplus V_1$ be a
$\Z/{2\Z}$-graded vector space. We consider $\gl_{2n}(F)$ as the
$\Z/{2\Z}$-graded Lie algebra $\End(V)$. Note that $\gl_n(F)
\times \gl_n(F)$ is the even part of $\End(V)$ with respect to
this grading. Consider $V$ as a graded representation of the
$\sll_2$ triple $(x, d,x_{-})$. Decompose $V$ into graded
irreducible representations $W_i$. Let $r_i:=\dim W_i$ and $w_i$
be the parity of the highest weight vector of $W_i$. Note that if
$r_i$ is even then $\dim (W_i \cap V_0)=\dim (W_i \cap V_1)$. If
$r_i$ is odd then $\dim (W_i \cap V_0)=\dim (W_i \cap V_1)
+(-1)^{w_i}$. Since $\dim V_0 = \dim V_1$, we get that the number
of indices $i$ such that $r_i$ is odd and $w_i=0$ is equal to the
number of indices $i$ such that $r_i$ is odd and $w_i=1$. We
denote this number by $l$. Now
$$\tr(\ad(d)|_{(\gl_{n}(F) \times \gl_n(F))_x}) - 2n^2 =
\tr(d|_{(\Hom(V,V)^x)_0})-2n^2= \frac{1}{2}\sum_{i,j} m_{ij},$$
where
$$m_{ij}:=\tr(d|_{(\Hom(W_i,W_j)^x)_0}) + \tr(d|_{(\Hom(W_j,W_i)^x)_0}) -
r_ir_j.$$ The $m_{ij}$ can be computed using Lemma \ref{mij}.

As we see from the lemma, if either $r_i$ or $r_j$ is even then
$m_{ij}$ is non-positive and $m_{ii}$ is negative. Therefore, if
all $r_i$ are even then we are done. Otherwise $l>0$ and we can
assume that all $r_i$ are odd. Reorder the spaces $W_i$ so that
$w_i = 0$ for $i \leq l$ and $w_i=1$ for $i>l$. Now

\begin{multline*}
\sum_{1 \leq i,j\leq 2l}m_{ij} = \sum_{i \leq l, j \leq l}
(|r_i-r_j|-1) + \sum_{i
> l, j > l} (|r_i-r_j|-1) - \sum_{i \leq l, j > l} (r_i+r_j-1) - \sum_{i > l, j \leq l}
(r_i+r_j-1)=\\
=\sum_{i \leq l, j \leq l} |r_i-r_j| + \sum_{i
> l, j > l} |r_i-r_j| - \sum_{i \leq l, j > l} (r_i+r_j) - \sum_{i > l, j \leq l}
(r_i+r_j) <\\
<\sum_{i \leq l, j \leq l} (r_i+r_j) + \sum_{i
> l, j > l} (r_i+r_j) - \sum_{i \leq l, j > l} (r_i+r_j) - \sum_{i > l, j \leq l}
(r_i+r_j)=0.
\end{multline*}
The Lemma follows.
\end{proof}

 \section{Applications to Gelfand pairs} \label{Gel}
 \subsection{Preliminaries on Gelfand pairs and distributional criteria}
 $ $\\
 In this section we recall a technique due to Gelfand-Kazhdan
 which allows to deduce statements in representation theory from
 statements on invariant distributions. For more detailed
 description see \cite[\S 2]{AGS1}.

 \begin{definition}
 Let $G$ be a reductive group. By an \textbf{admissible
 representation of} $G$ we mean an admissible representation of
 $G(F)$ if $F$ is non-Archimedean (see \cite{BZ}) and admissible
 smooth \Fre representation of $G(F)$ if $F$ is Archimedean.
 \end{definition}

 We now introduce three a-priori distinct notions of Gelfand pair.

 \begin{definition}\label{GPs}
 Let $H \subset G$ be a pair of reductive groups.
 \begin{itemize}
 \item We say that $(G,H)$ satisfy {\bf GP1} if for any irreducible
 admissible representation $(\pi,E)$ of $G$
 we have
 $$\dim \Hom_{H(F)}(E,\cc) \leq 1.$$

 \item We say that $(G,H)$ satisfy {\bf GP2} if for any irreducible
 admissible representation $(\pi,E)$ of $G$
 we have
 $$\dim \Hom_{H(F)}(E,\cc) \cdot \dim \Hom_{H}(\widetilde{E},\cc)\leq
 1.$$

 \item We say that $(G,H)$ satisfy {\bf GP3} if for any irreducible
 {\bf unitary} representation $(\pi,\mathcal{H})$ of $G(F)$ on a
 Hilbert space $\mathcal{H}$ we have
 $$\dim \Hom_{H(F)}(\mathcal{H}^{\infty},\cc) \leq 1.$$
 \end{itemize}

 \end{definition}

 Property GP1 was established by Gelfand and Kazhdan in certain
 $p$-adic cases (see \cite{GK}). Property GP2 was introduced in
 \cite{Gross} in the $p$-adic setting. Property GP3 was studied
 extensively by various authors under the name {\bf generalized
 Gelfand pair} both in the real and $p$-adic settings (see e.g.
 \cite{vD-P}, \cite{vD}, \cite{Bos-vD}).

 We have the following straightforward proposition.

 \begin{proposition}
 $GP1 \Rightarrow GP2 \Rightarrow GP3.$
 \end{proposition}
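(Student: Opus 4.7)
The plan is to handle each of the two implications in turn; neither requires any deep machinery beyond knowing what the smooth contragredient is and how unitarity relates it to the original representation.

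First I would dispose of $GP1 \Rightarrow GP2$, which is essentially formal. Given an irreducible admissible representation $(\pi,E)$ of $G$, its smooth contragredient $\widetilde E$ is also irreducible and admissible (this is standard: for $p$-adic $G$ see \cite{BZ}, for real $G$ one uses the Casselman--Wallach theory, which is available in the present setting since we only consider admissible smooth \Fre representations of moderate growth). Therefore $GP1$ applied to each of $E$ and $\widetilde E$ individually gives $\dim \Hom_{H(F)}(E,\cc)\leq 1$ and $\dim \Hom_{H(F)}(\widetilde E,\cc)\leq 1$, and multiplying yields $GP2$.

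For $GP2 \Rightarrow GP3$, let $(\pi,\mathcal H)$ be an irreducible unitary representation of $G(F)$ on a Hilbert space, and let $\mathcal H^\infty$ be its space of smooth vectors. Then $(\pi^\infty,\mathcal H^\infty)$ is an irreducible admissible representation of $G$ in the sense of our definition, so $GP2$ applies to it. The key point is to identify the smooth contragredient $\widetilde{\mathcal H^\infty}$: because the representation is unitary, the inner product on $\mathcal H$ induces a $G(F)$-equivariant antilinear isomorphism between $\mathcal H^\infty$ and $\widetilde{\mathcal H^\infty}$, i.e.\ an isomorphism between $\widetilde{\mathcal H^\infty}$ and the complex conjugate representation $\overline{\mathcal H^\infty}$. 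Complex conjugation of linear functionals therefore gives a canonical antilinear bijection
\[
\Hom_{H(F)}(\mathcal H^\infty,\cc) \;\longleftrightarrow\; \Hom_{H(F)}(\widetilde{\mathcal H^\infty},\cc),
\]
so the two dimensions coincide. Calling this common value $n$, $GP2$ gives $n^2\leq 1$, hence $n\leq 1$, which is exactly $GP3$.

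The only step that is not entirely formal is the identification of $\widetilde{\mathcal H^\infty}$ with $\overline{\mathcal H^\infty}$; in the Archimedean case this uses that smoothness is preserved by the antilinear isomorphism induced by the Hilbert inner product, and in the non-Archimedean case it is immediate since $\mathcal H^\infty$ consists of vectors fixed by some compact open subgroup and the pairing restricts to a perfect $K$-invariant pairing on each $K$-isotypic component. No other obstacle arises; in particular no use of the main descent results of the paper is needed here.
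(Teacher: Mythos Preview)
Your argument is correct and is precisely the ``straightforward'' verification the paper has in mind; the paper does not spell out a proof at all, so your proposal supplies the expected details (contragredient is again irreducible admissible for $GP1\Rightarrow GP2$, and unitarity gives $\widetilde{\mathcal H^\infty}\cong\overline{\mathcal H^\infty}$ so the two $\Hom$ spaces in $GP2$ have equal dimension for $GP2\Rightarrow GP3$).
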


 \begin{remark}
 It is not known whether some of these notions are equivalent.
 \end{remark}

 We will use the following theorem from \cite{AGS1} which is a
 version of a classical theorem of Gelfand and Kazhdan (see
 \cite{GK}).

 \begin{theorem}\label{DistCrit}
 Let $H \subset G$ be reductive groups and let $\tau$ be an
 involutive anti-automorphism of $G$ and assume that $\tau(H)=H$.
 Suppose $\tau(\xi)=\xi$ for all bi $H(F)$-invariant Schwartz
 distributions $\xi$ on $G(F)$. Then $(G,H)$ satisfies GP2.
 \end{theorem}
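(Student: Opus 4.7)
The argument is the classical Gelfand--Kazhdan one, adapted to the Archimedean setting via Schwartz distributions; I will argue the contrapositive. Suppose $(G,H)$ does not satisfy GP2, so there exists an irreducible admissible representation $(\pi,E)$ of $G$ with
$$\dim \Hom_{H(F)}(E,\C)\cdot\dim \Hom_{H(F)}(\tilde E,\C)\ge 2.$$
Without loss of generality assume $\dim \Hom_{H(F)}(E,\C)\ge 2$; pick linearly independent $\lambda_1,\lambda_2\in \Hom_{H(F)}(E,\C)$ and a nonzero $\mu\in \Hom_{H(F)}(\tilde E,\C)$. The goal is to produce an element of $\Sc^*(G(F))^{H(F)\times H(F)}$ on which $\tau$ acts by $-1$, contradicting the hypothesis.

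First, I would set up the matrix-coefficient machinery. The pairing $(v,\tilde v)\mapsto m_{v,\tilde v}(g):=\tilde v(\pi(g)v)$ gives a $G(F)\times G(F)$-equivariant map $E\otimes\tilde E\to C^\infty(G(F))$ (where $G\times G$ acts by left/right translation). Using the functionals $\lambda_i$ and $\mu$, one extracts from this map bi-$H(F)$-invariant objects: for each pair $(\lambda_i,\mu)$ construct a distribution $T_{\lambda_i,\mu}\in\Sc^*(G(F))^{H(F)\times H(F)}$, essentially by interpreting $\lambda_i\otimes\mu$ dually through the matrix-coefficient map (alternatively, realize $E$ as a quotient of $\Sc(G(F))$ via $\pi$ and push $\lambda_i\otimes\mu$ across). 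The construction is natural enough that the assignment $(\lambda,\mu)\mapsto T_{\lambda,\mu}$ injects $\Hom_{H(F)}(E,\C)\otimes\Hom_{H(F)}(\tilde E,\C)$ into $\Sc^*(G(F))^{H(F)\times H(F)}$; this uses irreducibility and admissibility of $\pi$ to rule out degeneration.

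Next, analyze the action of $\tau$. Since $\tau$ is an anti-involution preserving $H(F)$, it acts on $\Sc^*(G(F))^{H(F)\times H(F)}$. Under the injection above, $\tau$ corresponds to interchanging the two tensor factors, because conjugating the matrix coefficient $m_{v,\tilde v}$ by $\tau$ turns $\pi$ into an equivalent copy of $\tilde\pi$ (via $\pi^\tau\cong \tilde\pi$, which the hypothesis morally forces, or which is part of the book-keeping here). The resulting involution on $\Hom_{H(F)}(E,\C)\otimes\Hom_{H(F)}(\tilde E,\C)$ is the canonical swap (up to the fixed identification $E\cong\tilde{\tilde E}$). Whenever both tensor factors together have dimension $\ge 2$, this swap necessarily has a nonzero $(-1)$-eigenvector (for instance, a combination built from $\lambda_1,\lambda_2$ and $\mu$ together with its $\tau$-twin), producing $\xi\in\Sc^*(G(F))^{H(F)\times H(F)}$ with $\tau(\xi)=-\xi\neq 0$. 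This contradicts the hypothesis.

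The principal obstacle is the Archimedean technicality in the second paragraph: ensuring that $T_{\lambda,\mu}$ genuinely lies in $\Sc^*(G(F))$ and not merely in $\cD(G(F))$. This requires continuity estimates for matrix coefficients of smooth Fr\'echet admissible representations (using Casselman--Wallach globalization and the Dixmier--Malliavin factorization to realize $E\hookrightarrow \Sc^*(G(F))^H$ continuously), after which the algebraic swap argument proceeds uniformly in the Archimedean and non-Archimedean cases; the details are carried out in \cite{AGS1}.
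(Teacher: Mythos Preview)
The paper does not prove this theorem; it is quoted from \cite{AGS1} as a version of the classical Gelfand--Kazhdan criterion. Your sketch follows that same classical route and, like the paper, defers the Archimedean analytic issues to \cite{AGS1}, so at the level of approach there is nothing to compare.

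One step in your outline is imprecise and is really the heart of the matter. You write that $\tau$ ``corresponds to interchanging the two tensor factors'' of $\Hom_{H(F)}(E,\C)\otimes\Hom_{H(F)}(\tilde E,\C)$, but these are different vector spaces, so a literal swap is not defined and the existence of a $(-1)$-eigenvector does not follow from $\dim\ge 2$ alone. What actually happens is that $\tau$ carries the $\pi$-block $B_\pi\subset\Sc^*(G(F))^{H(F)\times H(F)}$ to the block $B_\sigma$ attached to $\sigma\cong(\tilde\pi)^{\theta}$, where $\theta(g)=\tau(g^{-1})$. If $\pi\not\cong\sigma$ these blocks meet only in $0$ and the hypothesis $\tau=\id$ forces $B_\pi=0$, so GP2 holds for $\pi$ trivially. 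If $\pi\cong\sigma$, a choice of intertwiner identifies the two Hom-spaces with a single space $V$, and then $\tau$ acts on $V\otimes V$ as the transposition composed with an automorphism of $V$ which the involutivity of $\tau$ pins down to $\pm\id$; only at that point does your antisymmetric-tensor argument (equivalently: $\mathrm{swap}=\id$ on $V\otimes V$ forces $\dim V\le 1$) go through. This case distinction is the actual content of the ``book-keeping'' you allude to, and without it the assertion that the swap must have a $(-1)$-eigenvector is unsupported.
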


 \begin{corollary} \label{GKGP2}
 Any symmetric GK-pair satisfies GP2.
 \end{corollary}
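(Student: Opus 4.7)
The plan is to apply Theorem \ref{DistCrit} directly with $\tau := \sigma$, where $\sigma(g) = \theta(g^{-1})$ is the anti-involution attached to the symmetric pair $(G, H, \theta)$. So the work reduces to verifying the three hypotheses of that theorem for $\tau = \sigma$.

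First, I would check that $\sigma$ is an involutive anti-automorphism of $G$. The anti-homomorphism property follows from a one-line computation: $\sigma(g_1 g_2) = \theta((g_1 g_2)^{-1}) = \theta(g_2^{-1})\theta(g_1^{-1}) = \sigma(g_2)\sigma(g_1)$, using that $\theta$ is a homomorphism. For involutivity, $\sigma^2(g) = \theta(\sigma(g)^{-1}) = \theta(\theta(g^{-1})^{-1}) = \theta^2(g) = g$.

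Next, I would verify $\sigma(H) = H$. If $h \in H = G^\theta$, then $\theta(h) = h$, so $\sigma(h) = \theta(h^{-1}) = h^{-1} \in H$, and since $\sigma$ is an involution this gives $\sigma(H) = H$.

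Finally, the hypothesis that every bi-$H(F)$-invariant Schwartz distribution on $G(F)$ is $\sigma$-invariant is precisely the content of Definition \ref{DefGKPair}, the definition of a GK-pair. Hence all hypotheses of Theorem \ref{DistCrit} are satisfied with $\tau = \sigma$, and we conclude that $(G,H)$ satisfies GP2. There is no real obstacle here — the corollary is a direct unpacking of definitions together with one invocation of Theorem \ref{DistCrit}; the substantive content lies entirely in the GK property itself, which has been assumed.
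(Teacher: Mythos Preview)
Your proof is correct and is exactly the intended argument: the paper states this as an immediate corollary of Theorem \ref{DistCrit} without writing out a proof, and your verification that $\sigma$ is an involutive anti-automorphism fixing $H$ together with the definition of a GK-pair is precisely what is needed.
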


 In some cases, GP2 is known to be equivalent to GP1. For example, see
 Corollary \ref{GKCor} below.

 \subsection{Applications to Gelfand pairs}

 \begin{theorem}\label{GKRep}
 Let $G$ be a reductive group and let $\sigma$ be an
 $\Ad(G)$-admissible anti-automorphism of $G$. Let $\theta$ be the
 automorphism of $G$ defined by $\theta(g):=\sigma(g^{-1})$. Let
 $(\pi, E)$ be an irreducible admissible representation of $G$.

 Then $\widetilde{E} \cong E^{\theta}$, where $\widetilde{E}$
 denotes the smooth contragredient representation and $E^{\theta}$
 is $E$ twisted by $\theta$.
 \end{theorem}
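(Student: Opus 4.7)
The approach is via distributional characters. First, I would set up an equivalent formulation: the anti-automorphism $\sigma$ defines a representation $\pi^\sigma$ of $G(F)$ on the smooth dual $\widetilde E$ by $\pi^\sigma(g) := \pi(\sigma(g))^*$ (this is a genuine representation precisely because $\sigma$ reverses multiplication). A direct computation gives $\widetilde{\pi^\theta}(g) = \pi^\theta(g^{-1})^* = \pi(\sigma(g))^* = \pi^\sigma(g)$, so $\pi^\sigma \cong \widetilde{\pi^\theta}$. Since $\widetilde{\widetilde{\pi}} \cong \pi$ for irreducible admissible $\pi$, establishing the desired $\widetilde E \cong E^\theta$ is equivalent to proving $\pi \cong \pi^\sigma$.

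Next, I would compare distributional characters. Using $\tr A^* = \tr A$ and changing variable via $\sigma$ (with $\sigma$-invariance of the Haar measure on the unimodular group $G(F)$) gives
\begin{equation*}
\chi_{\pi^\sigma}(f) \;=\; \int_{G(F)} f(g)\,\tr\pi(\sigma(g))\,dg \;=\; \chi_\pi(f\circ\sigma) \;=\; (\sigma_*\chi_\pi)(f),
\end{equation*}
so $\chi_{\pi^\sigma} = \sigma_*\chi_\pi$. It therefore remains to show that the conjugation-invariant distribution $\chi_\pi$ is $\sigma$-invariant. By hypothesis $\sigma$ is $\Ad(G)$-admissible, and Corollary \ref{GCongTame} asserts that the adjoint action of $G$ on itself is tame, so every conjugation-invariant Schwartz distribution on $G(F)$ is $\sigma$-invariant. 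In the non-Archimedean case Schwartz distributions coincide with arbitrary distributions and we are done directly. In the Archimedean case I would apply Theorem \ref{NoSNoDist} to the reductive group $G\rtimes\langle\sigma\rangle$ acting on $G$ with the sign character: the Schwartz version of tameness forces the space of equivariant Schwartz distributions to vanish, and Theorem \ref{NoSNoDist} upgrades this vanishing to arbitrary distributions, killing the obstruction $\chi_\pi-\sigma_*\chi_\pi$.

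Finally, irreducible admissible representations are determined by their distributional characters (Harish-Chandra in the Archimedean case, Bernstein--Zelevinsky in the non-Archimedean one), so $\chi_\pi = \chi_{\pi^\sigma}$ yields $\pi \cong \pi^\sigma$ and hence $\widetilde E \cong E^\theta$. The main delicate step is the Archimedean passage from Schwartz distributions (where tameness is available by Corollary \ref{GCongTame}) to the not-necessarily-Schwartz character $\chi_\pi$, which is precisely the role of Theorem \ref{NoSNoDist}; every other step is a formal manipulation or a well-established classical input.
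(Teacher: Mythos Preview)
Your proof is correct and follows essentially the same route as the paper's: compare the distributional characters of $\widetilde{E}$ and $E^{\theta}$, reduce to the $\sigma$-invariance of the conjugation-invariant distribution $\chi_\pi$, invoke the tameness of the adjoint action (Corollary~\ref{GCongTame}), and conclude by the fact that irreducible admissible representations are determined by their characters. The paper's argument is terser---it simply says the characters coincide by Corollary~\ref{GCongTame} and cites Wallach for linear independence of characters---whereas you spell out the reformulation via $\pi^\sigma$ and explicitly address the Archimedean passage from Schwartz to general distributions via Theorem~\ref{NoSNoDist}; this extra care is not strictly needed (characters of admissible smooth Fr\'echet representations are already Schwartz distributions), but it is a legitimate way to close the gap.
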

 \begin{proof}
 By Corollary \ref{GCongTame}, the characters of $\widetilde{E}$ and
 $E^{\theta}$ are identical. Since these
 representations are irreducible, this implies that they are
 isomorphic (see e.g. \cite[Theorem 8.1.5]{Wal1}).
 \end{proof}

 \begin{remark}
 This theorem has an alternative proof using Harish-Chandra's
 Regularity Theorem, which says that the character of an admissible
 representation is a locally integrable function.
 \end{remark}

 \begin{corollary} \label{GKCor}
 Let $H \subset G$ be reductive groups and let $\tau$ be an
 $\Ad(G)$-admissible anti-automorphism of $G$ such that
 $\tau(H)=H$. Then $GP1$ is equivalent to $GP2$ for the pair
 $(G,H)$.
 \end{corollary}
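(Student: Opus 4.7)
The plan is to observe that the forward direction GP1 $\Rightarrow$ GP2 is automatic, so the real content is GP2 $\Rightarrow$ GP1, and this is a direct application of Theorem \ref{GKRep}.

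For GP1 $\Rightarrow$ GP2: given any irreducible admissible representation $(\pi,E)$, the smooth contragredient $\widetilde{E}$ is also irreducible and admissible, so GP1 applied to each of $E$ and $\widetilde{E}$ gives $\dim\Hom_{H(F)}(E,\C)\le 1$ and $\dim\Hom_{H(F)}(\widetilde{E},\C)\le 1$, whose product is at most $1$.

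For GP2 $\Rightarrow$ GP1: let $(\pi,E)$ be irreducible admissible. Apply Theorem \ref{GKRep} with the $\Ad(G)$-admissible anti-automorphism $\tau$ (playing the role of $\sigma$), and set $\theta(g):=\tau(g^{-1})$, which is an automorphism of $G$. The theorem yields $\widetilde{E}\cong E^{\theta}$. Since $\tau(H)=H$ and $H$ is a group, $\theta$ restricts to a group automorphism of $H$, so the $H(F)$-action on $E^{\theta}$ is simply the original $H(F)$-action on $E$ precomposed with the bijection $\theta|_{H(F)}\colon H(F)\to H(F)$. Therefore
\[
\dim\Hom_{H(F)}(\widetilde{E},\C)=\dim\Hom_{H(F)}(E^{\theta},\C)=\dim\Hom_{H(F)}(E,\C).
\]
Plugging this equality into GP2, we obtain $\bigl(\dim\Hom_{H(F)}(E,\C)\bigr)^{2}\le 1$, and hence $\dim\Hom_{H(F)}(E,\C)\le 1$, which is GP1.

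There is no real obstacle here; the mild subtlety is only verifying that twisting an $H$-representation by an automorphism of $H$ does not change the dimension of the invariant-functionals space, which is immediate from the fact that $\theta|_{H(F)}$ is a bijection $H(F)\to H(F)$. All the substantive work has already been done in Theorem \ref{GKRep} (which itself relies on Corollary \ref{GCongTame}, i.e.\ tameness of the adjoint action).
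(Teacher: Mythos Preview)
Your proof is correct and follows exactly the intended argument: the paper states Corollary \ref{GKCor} as an immediate consequence of Theorem \ref{GKRep}, and your write-up makes explicit precisely the deduction the authors had in mind---use $\widetilde{E}\cong E^{\theta}$ together with $\theta(H)=H$ to conclude that the two Hom-dimensions in GP2 coincide, whence GP2 becomes a square bounded by $1$.
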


This corollary, together with Corollary \ref{GKGP2} and Theorem
\ref{thm2} implies the following result.
\begin{theorem} \label{thm3}
The pair $(\GL_{n+k},\GL_{n}\times \GL_{k})$ satisfies GP1.
\end{theorem}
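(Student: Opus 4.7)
The plan is to deduce GP1 by combining the GK-property already obtained in Theorem \ref{thm2} with the general machinery of \S \ref{Gel}. By Theorem \ref{thm2} the triple $(\GL_{n+k}, \GL_n \times \GL_k, \theta_{n,k})$ is a GK-pair, so Corollary \ref{GKGP2} immediately gives GP2 for the pair $(\GL_{n+k}, \GL_n \times \GL_k)$. The only remaining task is to upgrade GP2 to GP1 via Corollary \ref{GKCor}, which reduces the question to exhibiting a single $\Ad(\GL_{n+k})$-admissible anti-automorphism of $\GL_{n+k}$ preserving the subgroup $\GL_n \times \GL_k$.

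The natural choice is ordinary matrix transposition $\tau(g) := g^t$. It is visibly an anti-involution, and since it acts block-wise it preserves the block-diagonal subgroup $\GL_n \times \GL_k$. To verify $\Ad(G)$-admissibility I would check the two conditions of the definition. First, a direct calculation gives $\tau \circ \Ad(g) \circ \tau^{-1} = \Ad((g^t)^{-1})$, so $\tau$ normalises $\Ad(G(F))$, and $\tau^2 = \id \in \Ad(G(F))$. Second, every closed $\Ad(G(F))$-orbit in $\GL_{n+k}(F)$ must be $\tau$-stable; this is either the classical fact that every invertible matrix is conjugate to its transpose, or, more in the spirit of this paper, an immediate consequence of Corollary \ref{GCongTame}, which asserts that the adjoint action of $\GL_{n+k}$ on itself is tame and in particular that transposition preserves conjugation orbits.

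With both hypotheses of Corollary \ref{GKCor} in place, GP1 and GP2 are equivalent for $(\GL_{n+k}, \GL_n \times \GL_k)$, and GP1 then follows from the GP2 obtained above. There is essentially no obstacle here: all the substantive analytic content---in particular the regularity of every descendant and the Key Lemma of \S \ref{RJR}---has already been absorbed into Theorem \ref{thm2}, and the present theorem is a formal assembly of Theorem \ref{thm2}, Corollary \ref{GKGP2}, and Corollary \ref{GKCor} once the anti-involution $\tau = (\cdot)^t$ is produced.
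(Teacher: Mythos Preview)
Your proof is correct and matches the paper's exactly: Theorem \ref{thm2} plus Corollary \ref{GKGP2} gives GP2, and Corollary \ref{GKCor} with the anti-involution $\tau(g)=g^{t}$ upgrades this to GP1. One minor quibble: invoking Corollary \ref{GCongTame} to verify that closed conjugacy classes are transpose-stable is slightly circular, since the ``in particular'' clause there already presupposes that transposition is $\Ad(G)$-admissible; your elementary alternative---that every invertible matrix over a field is conjugate to its transpose---is the clean justification for condition (ii).
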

For non-Archimedean $F$ this theorem is proven in \cite{JR}.

 \begin{theorem}\label{Flicker}
 Let $E$ be a quadratic extension of $F$. Then the pair $((\GL_n)_{E/F},
 \GL_n)$ satisfies GP1.
 \end{theorem}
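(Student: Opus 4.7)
The plan is to obtain GP1 by combining three ingredients already present in the paper: the tameness of the pair $((\GL_n)_{E/F}, \GL_n, \gamma)$ from Theorem \ref{2RegPairs}(ii), its goodness, and the promotion of GP2 to GP1 provided by Corollary \ref{GKCor}.

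First I would verify that the pair is good. By Corollary \ref{GoodCrit} together with Proposition \ref{PropDescend}, it suffices to show that for every $H\times H$-semisimple $g \in (\GL_n)_{E/F}(F) = \GL_n(E)$ the stabilizer $(H \times H)_g$ has trivial first Galois cohomology over $F$. Via the symmetrization map this stabilizer is isomorphic (as an $F$-group) to the centralizer in $\GL_n$ of the semisimple element $s(g) = g\gamma(g)^{-1}$. Decomposing $\bar F^n$ into eigenspaces of $s(g)$ and tracking the Galois action on the set of eigenvalues, this centralizer is a product of restriction-of-scalars groups $(\GL_{m_i})_{L_i/F}$ for finite extensions $L_i/F$, and hence $H^1(F,-)$ of it vanishes by Hilbert~90. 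Thus the pair is good.

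Combining goodness with the tameness of the pair (Theorem \ref{2RegPairs}(ii)), the pair $((\GL_n)_{E/F}, \GL_n, \gamma)$ is a GK-pair by the remark following Definition \ref{DefTamePairs}, so by Corollary \ref{GKGP2} it satisfies GP2.

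Finally, to upgrade from GP2 to GP1 I would apply Corollary \ref{GKCor} with $\tau$ the transposition anti-automorphism $g \mapsto g^t$ of $(\GL_n)_{E/F}$. It clearly preserves $\GL_n \subset (\GL_n)_{E/F}$; its $\Ad(G)$-admissibility amounts to the classical assertion, applied over $E$, that every element of $\GL_n(E)$ is conjugate to its transpose --- the same ingredient already invoked in Corollary \ref{GCongTame}. No serious obstacle is expected: the substantive work (tameness via speciality, and the Gelfand--Kazhdan descent) has been carried out earlier, and the argument above is essentially an assembly of those results, the mildest point being the verification of goodness sketched in the second paragraph.
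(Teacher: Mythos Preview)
Your proposal is correct and follows essentially the same approach as the paper: establish tameness via Theorem \ref{2RegPairs}(ii), verify goodness by showing the relevant stabilizers are products of restriction-of-scalars general linear groups (hence have trivial $H^1$), conclude the pair is GK, and deduce GP1. You are in fact more explicit than the paper on the final step---the paper's proof stops after establishing goodness and leaves the passage from GK to GP1 (via Corollary \ref{GKGP2} and Corollary \ref{GKCor} with transposition) implicit, presumably because the identical passage was just spelled out for Theorem \ref{thm3}.
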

 For non-Archimedean $F$ this theorem is proven in \cite{Fli}.
 \begin{proof}
 By Theorem \ref{2RegPairs} this pair is tame. Hence it is enough
 to show that this symmetric pair is good. Consider the adjoint action of $\GL_n$
 on itself. Let $x \in \GL_n(E)^{\sigma}$ be semisimple. The
 stabilizer $(\GL_n)_x$ is a product of groups of the form
 $(\GL_n)_{F'/F}$ for some extensions $F'/F$. Hence
 $H^1(F,(\GL_n)_x)=0$. Therefore, by Corollary \ref{GoodCrit}, the
symmetric pair in question is good.
 \end{proof}

\part{Appendices}
\appendix

\section{Algebraic geometry over local fields} \label{AppLocField}
\subsection{Implicit Function Theorems}
\label{AppSub}

\begin{definition}
An analytic map $\phi:M \to N$ is called \textbf{\et} if $d_x\phi:
T_xM \to T_{\phi(x)}N$ is an isomorphism for any $x\in M$. An
analytic map $\phi:M \to N$ is called a \textbf{submersion} if
$d_x\phi: T_xM \to T_{\phi(x)}N$ is onto for any $x\in M$.
\end{definition}

We will use the following version of the Inverse Function Theorem.

\begin{theorem}[cf. \cite{Ser}, Theorem 2 in \S 9 of Chapter III in part
II] \label{InvFunct} Let $\phi: M \to N$ be an \et map of analytic
manifolds. Then it is locally an isomorphism.
\end{theorem}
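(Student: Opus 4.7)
The plan is to reduce the global statement to a local statement in coordinate charts and then to invoke the classical convergence argument for the inverse of a power series.

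First, I would fix a point $x_0 \in M$, set $y_0 := \phi(x_0) \in N$, and choose analytic coordinate charts centered at $x_0$ and $y_0$. Since $d_{x_0}\phi : T_{x_0}M \to T_{y_0}N$ is an isomorphism, one necessarily has $\dim M = \dim N$ in a neighborhood of $x_0$. Thus it suffices to prove the following local statement: if $U \subset F^n$ is an open neighborhood of $0$ and $\phi : U \to F^n$ is analytic with $\phi(0) = 0$ and $d_0\phi$ invertible, then $\phi$ has an analytic inverse on some open neighborhood of $0$.

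Second, after post-composing with $(d_0\phi)^{-1}$, I may assume that the linear part of $\phi$ is the identity, so $\phi(x) = x + \sum_{|I|\ge 2} c_I x^I$ as a convergent power series near $0$. Formal inversion in $F[[x_1,\dots,x_n]]$ produces a unique formal power series $\psi(y) = y + \sum_{|I|\ge 2} b_I y^I$ with $\phi \circ \psi = \psi \circ \phi = \mathrm{id}$ as formal series; the coefficients $b_I$ are determined by an explicit recursion in terms of the $c_J$ with $|J| \le |I|$.

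Third, the heart of the argument is to show that the formal inverse $\psi$ actually converges on some neighborhood of $0$. In the Archimedean case this is the standard majorant-series estimate: using Cauchy bounds $|c_I|\le C r^{-|I|}$ on a small polydisc one compares the recursion with that of a single-variable series whose inverse is known to be analytic. In the non-Archimedean case the ultrametric inequality makes the induction on $|I|$ essentially automatic, giving bounds of the form $|b_I|\le C' s^{-|I|}$ on a suitable polydisc. This convergence step is the only analytic content, and is where the two cases of $F$ look formally different though the outcome is the same.

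Finally, once $\psi$ is established as a convergent analytic map on a neighborhood $V$ of $0$, restriction of $\phi$ to $\psi(V)$ furnishes a two-sided analytic inverse near $x_0$, proving that $\phi$ is locally an isomorphism at $x_0$; since $x_0$ was arbitrary, the conclusion follows. The main (and only) obstacle is the convergence estimate in step three, which is precisely the content of Theorem 2 in \S 9 of Chapter III, Part II of \cite{Ser}, so one may simply appeal to the proof given there.
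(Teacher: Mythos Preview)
The paper does not give its own proof of this theorem; it merely records the statement and cites Serre for the proof. So there is nothing in the paper to compare your argument against.

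Your sketch is the standard outline of the analytic inverse function theorem, and you yourself end by deferring the only substantive step (convergence of the formal inverse) to the very reference the paper cites. That is fine as far as it goes, but then your proposal is not really an independent proof: it is a restatement of the reduction to local coordinates followed by an invocation of \cite{Ser}. If you want a self-contained argument, you should actually carry out the majorant estimate in step three rather than pointing back to Serre.
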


\begin{corollary} \label{EtLocIs}
Let $\phi: X \to Y$ be a morphism of (not necessarily smooth)
algebraic varieties. Suppose that $\phi$ is \et at $x \in X(F)$.
Then there exists an open neighborhood $U \subset X(F)$ of $x$
such that $\phi|_U$ is a homeomorphism to its open image in
$Y(F)$.
\end{corollary}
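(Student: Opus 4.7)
The plan is to reduce, via the local structure theorem for \et morphisms, to a situation where the analytic Inverse Function Theorem (Theorem \ref{InvFunct}) can be applied on an ambient affine space. The key observation is that although $X$ and $Y$ are not assumed smooth, $\phi$ can be realized locally as the restriction of a polynomial self-map of affine space that is \et at $x$, and Theorem \ref{InvFunct} applies to such a self-map on the nose.

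First, by the standard local form of \et morphisms, after replacing $X$ and $Y$ by suitable Zariski-open neighborhoods of $x$ and $\phi(x)$, I may assume $Y=\Spec A$ is affine and $X=\Spec A[T]/(f)$ with $f\in A[T]$ monic in $T$ and $\partial f/\partial T$ a unit at $x$. In this description $X$ is a closed subvariety of $\mathbb{A}^{1}\times Y$ and $\phi$ is the restriction of the projection $\mathbb{A}^{1}\times Y\to Y$. Choosing a closed embedding $Y\hookrightarrow\mathbb{A}^{n}$ realizes $X$ as a closed subvariety of $\mathbb{A}^{n+1}$, and writes $x=(t_{0},y_{0})$.

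Next, introduce the polynomial map $\psi\colon\mathbb{A}^{n+1}\to\mathbb{A}^{n+1}$ defined by $\psi(t,y)=(f(t,y),\,y)$. Its Jacobian at $x$ is block triangular with diagonal blocks $\partial f/\partial T\rest_{x}$ and the identity, hence has nonzero determinant $f'(t_{0},y_{0})$. Thus $\psi$ is \et at $x$ as an analytic map $F^{n+1}\to F^{n+1}$, so by Theorem \ref{InvFunct} there exist open neighborhoods $V$ of $x$ and $W$ of $\psi(x)=(0,y_{0})$ in $F^{n+1}$ such that $\psi|_{V}\colon V\to W$ is an analytic isomorphism. Write its inverse as $\psi^{-1}(s,y)=(h(s,y),\,y)$ for an analytic function $h$.

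Finally, I unwind the description of $X(F)\cap V$. A point $(t,y)\in V$ lies in $X(F)$ iff $y\in Y(F)$ and $f(t,y)=0$; the latter, combined with $\psi(t,y)\in W$, forces $t=h(0,y)$. Hence
\[
X(F)\cap V = \bigl\{(h(0,y),\,y) : y\in Y(F),\ (0,y)\in W\bigr\},
\]
and $\phi$ restricts on it to $(h(0,y),y)\mapsto y$, a continuous bijection onto the set $\{y\in Y(F):(0,y)\in W\}$, which is open in $Y(F)$ because $W$ is open in $F^{n+1}$. The inverse map $y\mapsto (h(0,y),y)$ is analytic, hence continuous, so $\phi|_{X(F)\cap V}$ is the required homeomorphism onto an open subset of $Y(F)$. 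The only nontrivial ingredient is the reduction to standard \et form; once that is in place, the argument is a bookkeeping exercise around Theorem \ref{InvFunct}.
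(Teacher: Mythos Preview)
Your proof is correct and is essentially the argument the paper defers to: the paper does not supply its own proof but cites Mumford's \emph{Red Book}, Chapter III, \S 5, proof of Corollary 2, and what Mumford does there is exactly this---reduce to the standard \'etale form $\Spec A[T]/(f)\to\Spec A$ and then apply the analytic implicit/inverse function theorem on the ambient affine space. You have simply spelled out the details, including the explicit extension $\psi(t,y)=(f(t,y),y)$ to $F^{n+1}$ to which Theorem \ref{InvFunct} applies directly. One cosmetic point: when you pass to the embedding $Y\hookrightarrow\mathbb{A}^n$, the polynomial $f$ must be lifted from $A[T]$ to $F[y_1,\dots,y_n][T]$; this lift is not unique, but its restriction to $Y(F)$ and the value of $\partial f/\partial T$ at $x$ are, so nothing is affected.
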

For the proof see e.g. \cite[Chapter III, \S 5, proof of Corolary
2]{Mum}. There, the proof is given for the case $F=\C$ but it
works in general.

\begin{remark}
If $F$ is Archimedean then one can choose $U$ to be
semi-algebraic.
\end{remark}

The following proposition is well known (see e.g. \S 10 of Chapter
III in part II of \cite{Ser}).
\begin{proposition}
Any submersion $\phi:M \to N$ is open.
\end{proposition}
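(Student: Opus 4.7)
The plan is to reduce the statement to the fact that coordinate projections are open, using Theorem~\ref{InvFunct} (the analytic Inverse Function Theorem) to produce local sections of $\phi$. Openness is a local question on both $M$ and $N$, so it suffices to show that for every $x \in M$ with $y := \phi(x)$, every open neighborhood $U$ of $x$ in $M$ has $\phi(U)$ containing an open neighborhood of $y$ in $N$.

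First I would work locally around $x$. Since $d_x\phi : T_xM \to T_yN$ is surjective, I can choose a linear splitting, i.e.\ a subspace $W \subset T_xM$ of dimension $\dim N$ mapping isomorphically onto $T_yN$. In suitable analytic charts near $x$ and $y$, $M$ looks like an open subset of $T_xM$ and $N$ like an open subset of $T_yN$. Choose an analytic map $\iota : V' \to M$ from a neighborhood $V'$ of $0$ in $W$ such that $\iota(0)=x$ and $d_0\iota$ is the inclusion $W \hookrightarrow T_xM$ (for instance, in a chart just restrict the identity to the affine subspace $x+W$). Then $\psi := \phi \circ \iota : V' \to N$ has $d_0\psi$ an isomorphism $W \xrightarrow{\sim} T_yN$, so $\psi$ is \'etale at $0$.

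By Theorem~\ref{InvFunct}, $\psi$ is a local analytic isomorphism at $0$: there exists an open neighborhood $V'' \subset V'$ of $0$ such that $\psi(V'')$ is open in $N$ and $\psi$ restricts to a homeomorphism $V'' \xrightarrow{\sim} \psi(V'')$. Setting $s := (\psi|_{V''})^{-1} : \psi(V'') \to V''$ and composing with $\iota$, I obtain an analytic local section $\sigma := \iota \circ s$ of $\phi$ defined on the open neighborhood $\psi(V'') \subset N$ of $y$, with $\sigma(y)=x$ and $\phi \circ \sigma = \id$.

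Now for any open $U \ni x$, the set $\sigma^{-1}(U)$ is open in $\psi(V'')$, contains $y$, and satisfies $\sigma^{-1}(U) = \phi(\sigma(\sigma^{-1}(U))) \subset \phi(U)$. Hence $\phi(U)$ contains an open neighborhood of $y$, proving $\phi$ is open. The only nontrivial input is the existence of a local inverse for an \'etale analytic map, which is exactly Theorem~\ref{InvFunct}; everything else is linear algebra and the straightforward transport of openness through a section. No step is a real obstacle once the Inverse Function Theorem is available uniformly over Archimedean and non-Archimedean $F$, as cited from Serre.
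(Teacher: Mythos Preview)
Your proof is correct. The paper does not actually supply a proof of this proposition; it simply cites \cite[Part II, Chapter III, \S 10]{Ser} and moves on. Your argument is the standard one and is essentially what one finds in Serre: use the surjectivity of $d_x\phi$ to pick a complementary slice through $x$, apply the Inverse Function Theorem (Theorem~\ref{InvFunct}) to the restriction of $\phi$ to that slice to obtain a local analytic section of $\phi$ near $y$, and then observe that the existence of a continuous local section immediately forces $\phi$ to be open at $x$. There is nothing to add.
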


\begin{corollary}
Lemma \ref{OrbitIsOpen} holds. Namely, for any algebraic group $G$
and a closed algebraic subgroup $H \subset G$ the subset
$G(F)/H(F)$ is open and closed in $(G/H)(F)$.
\end{corollary}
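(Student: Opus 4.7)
The plan is to deduce both openness and closedness from the fact that the canonical projection $\pi : G \to G/H$ is smooth (we are in characteristic zero with $H$ closed in $G$), hence induces a submersion of analytic manifolds $\pi : G(F) \to (G/H)(F)$. By the submersion proposition just stated, this map is open.

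First I would observe that the image of $\pi : G(F) \to (G/H)(F)$ is exactly $G(F)/H(F)$: an $F$-rational coset $gH \in (G/H)(F)$ lies in the image precisely when the $H$-orbit of some $F$-point of $G$ hits it, which is the definition of $G(F)/H(F)$ as a subset of $(G/H)(F)$. Since the image of an open map is open, $G(F)/H(F)$ is open in $(G/H)(F)$.

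For closedness I would show more generally that every $G(F)$-orbit in $(G/H)(F)$ is open, so that $G(F)/H(F)$, being one such orbit (the orbit of the identity coset), is also the complement of the union of the others and hence closed. Given $x \in (G/H)(F)$, pick $g_0 \in G(\oF)$ with $g_0 \cdot e_0 = x$ (where $e_0$ denotes the base point of $G/H$); then the orbit map $\mu_x : G \to G/H$, $g \mapsto g \cdot x$, factors as $\mu_x = \pi \circ R_{g_0}$, where $R_{g_0}$ is right translation by $g_0$ on $G$. Since $R_{g_0}$ is an isomorphism of algebraic varieties over $\oF$ and $\pi$ is smooth, $\mu_x$ is smooth, hence a submersion on $F$-points. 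Applying the submersion proposition once more, $\mu_x(G(F)) = G(F) \cdot x$ is open in $(G/H)(F)$.

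The only step that requires some care is verifying that $\mu_x$ really is a submersion when $x$ has no $F$-rational lift, since then $g_0$ only exists over $\oF$; but this is harmless because the submersion property (surjectivity of the differential at each $F$-point) is intrinsic and can be checked after base change to $\oF$, where the factorization $\mu_x = \pi \circ R_{g_0}$ makes it manifest. Once the openness of every $G(F)$-orbit is secured, closedness of each orbit (and in particular of $G(F)/H(F)$) follows formally from the fact that the orbits partition $(G/H)(F)$.
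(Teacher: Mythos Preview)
Your proposal is correct and follows essentially the same route as the paper: use that $\pi:G(F)\to(G/H)(F)$ is a submersion to get openness of the image $G(F)/H(F)$, then note that every $G(F)$-orbit in $(G/H)(F)$ is open for the same reason, so each orbit (in particular $G(F)/H(F)$) is closed. The paper simply asserts the orbit maps are submersions ``for the same reason,'' whereas you spell out the base-change argument for points $x$ without an $F$-rational lift; this extra care is welcome but not a genuinely different strategy.
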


\begin{proof}
Consider the map $\phi: G(F) \to (G/H)(F)$ defined by
$\phi(g)=gH$. Clearly, it is a submersion and its image is exactly
$G(F)/H(F)$. Hence, $G(F)/H(F)$ is open. Since each $G(F)$-orbit
in $(G/H)(F)$ is open for the same reason, $G(F)/H(F)$ is also
closed.
\end{proof}

\subsection{The Luna Slice Theorem} \label{AppLun}
$ $\\
In this subsection we formulate the Luna Slice Theorem and show
how it implies Theorem \ref{LocLuna}. For a survey on the Luna
Slice Theorem we refer the reader to \cite{Dre} and the original
paper \cite{Lun}.

\begin{definition}[cf. \cite{Dre}]
Let a reductive group $G$ act on affine varieties $X$ and $Y$. A
$G$-equivariant algebraic map $\phi:X \to Y$ is called
\textbf{strongly \et} if\\
(i) $\phi/G:X/G \to Y/G$ is \et\\
(ii) $\phi$ and the quotient morphism $\pi_X: X \to X/G$ induce a
$G$-isomorphism $X \cong Y \times _{Y/G}X/G$.
\end{definition}

\begin{definition}
Let $G$ be a reductive  group and $H$ be a closed reductive
subgroup. Suppose that $H$ acts on an affine variety $X$. Then $G
\times _{H} X$ denotes $(G\times X)/H$ with respect to the action
$h(g,x)=(gh^{-1},hx)$.
\end{definition}

\begin{theorem}[Luna Slice Theorem] \label{Luna}
Let a reductive group $G$ act on a smooth affine variety $X$. Let
$x \in X$ be $G$-semisimple.

Then there exists a locally closed smooth affine $G_x$-invariant
subvariety $Z \ni x$ of $X$ and a strongly \et algebraic map of
$G_x$ spaces $\nu: Z \to N_{Gx,x}^X$ such that the $G$-morphism
$\phi : G \times_{G_x} Z \to X$ induced by the action of $G$ on
$X$ is strongly \et.
\end{theorem}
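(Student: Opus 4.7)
The statement is Luna's classical slice theorem, originally proved in \cite{Lun} over an algebraically closed field of characteristic zero. My plan is to reproduce Luna's construction, with attention to the fact that the argument must work over an arbitrary characteristic zero field $F$.

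First, since $x$ is $G$-semisimple the orbit $Gx$ is closed, and by Matsushima's criterion the stabilizer $G_x$ is a reductive $F$-subgroup of $G$. Using reductivity of $G_x$, the short exact sequence of $G_x$-modules
$$0 \to T_x(Gx) \to T_xX \to N_{Gx,x}^X \to 0$$
splits $F$-rationally, giving a $G_x$-equivariant $F$-linear embedding $N_{Gx,x}^X \hookrightarrow T_xX$. To produce the slice $Z$, I would first embed $X$ as a closed $G$-stable $F$-subvariety of a finite-dimensional $F$-rational $G$-representation $V$, which is possible since $X$ is affine and $G$ is reductive. Then I would choose a $G_x$-equivariant $F$-linear retraction $r\colon V \to T_xX \to N_{Gx,x}^X$, and let $Z$ be a suitable $G_x$-invariant affine neighborhood of $x$ in $X$ on which $\nu := r|_Z$ has bijective differential at every point. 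The map $\nu\colon Z \to N_{Gx,x}^X$ is $G_x$-equivariant and étale at $x$ by construction, and shrinking $Z$ further one arranges $\nu$ to be étale on all of $Z$ and $Z$ to be smooth (using that $X$ is smooth at $x$ and that $Z$ is transverse to $Gx$).

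The main obstacle is upgrading étaleness of the induced map $\varphi\colon G \times_{G_x} Z \to X$ to \emph{strong} étaleness. Étaleness at the orbit $Gx$ is a straightforward dimension/tangent space count, but strong étaleness requires both that the induced quotient map $Z/G_x \to X/G$ be étale and that the square
$$\xymatrix{G \times_{G_x} Z \ar[r]^-{\varphi} \ar[d] & X \ar[d]^{\pi_X} \\ Z/G_x \ar[r] & X/G}$$
be cartesian. The cartesian property says that every $G$-orbit near $Gx$ meets $Z$ in a single $G_x$-orbit; this follows from Theorem \ref{Quotient}, using that each fiber of $\pi_X$ near $\pi_X(x)$ contains a unique closed $G$-orbit, and that such an orbit must meet $Z$ because $\nu$ is étale at $x$ and $G_x$-equivariant. Étaleness of the quotient map reduces, via the Reynolds operator for the reductive group $G_x$, to a flat-plus-unramified computation on $\cO(Z)^{G_x} \to \cO(X)^G$ near the image of $x$. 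None of these steps is sensitive to the arithmetic of $F$ beyond characteristic zero and reductivity of $G$ and $G_x$, so the argument over a general $F$ reduces, by faithfully flat descent through $F \to \oF$, to the algebraically closed case treated in \cite{Lun}; alternatively, all constructions above can be carried out directly over $F$ as indicated.
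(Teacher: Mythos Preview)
Your proposal is correct and follows the same underlying route as the paper: the paper's own ``proof'' consists of citing Drezet's exposition \cite[Proposition 4.18, Lemma 5.1 and Theorems 5.2 and 5.3]{Dre} of Luna's original construction, together with the remark that $Z$ and $\nu$ can be chosen defined over $F$; you have simply sketched out that construction directly. The one place where your sketch is noticeably looser than the cited source is the passage from \'etale to \emph{strongly} \'etale: the fundamental lemma of Luna (what Drezet calls the ``lemme fondamental'') requires shrinking $Z$ to a saturated open subset so that both conditions hold simultaneously, and your one-sentence appeal to Theorem \ref{Quotient} and the Reynolds operator elides the actual work there --- but since the paper itself defers to \cite{Dre} for exactly this step, your level of detail is already more than what the paper provides.
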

\begin{proof}
It follows from \cite[Proposition 4.18, Lemma 5.1 and Theorems 5.2
and 5.3]{Dre}, noting that one can choose $Z$ and $\nu$ (in our
notation) to be defined over $F$.
\end{proof}

\begin{corollary}
Theorem \ref{LocLuna} holds. Namely:\\
Let a reductive group $G$ act on a smooth affine variety $X$. Let
$x \in X(F)$ be $G$-semisimple.

Then there exist\\
(i) an open $G(F)$-invariant B-analytic neighborhood $U$ of
$G(F)x$ in $X(F)$ with a
$G$-equivariant B-analytic retract $p:U \to G(F)x$ and\\
(ii) a $G_x$-equivariant B-analytic embedding $\psi:p^{-1}(x)
\hookrightarrow N_{Gx,x}^{X}(F)$ with an open saturated image such
that $\psi(x)=0$.
\end{corollary}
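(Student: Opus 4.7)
The plan is to deduce Theorem \ref{LocLuna} from the algebraic Luna Slice Theorem (Theorem \ref{Luna}) by upgrading its strongly \'etale algebraic maps to B-analytic local homeomorphisms via the Inverse Function Theorem (Corollary \ref{EtLocIs}), then carving out appropriate saturated open sets.

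First I would apply Theorem \ref{Luna} to obtain a smooth affine $G_x$-invariant locally closed subvariety $Z \subset X$ with $x \in Z$, a strongly \'etale $G_x$-equivariant morphism $\nu: Z \to N := N_{Gx,x}^X$, and a strongly \'etale $G$-morphism $\phi: Y \to X$, where $Y := G \times_{G_x} Z$. Note $[1,x] \in Y(F)$ maps to $x$ under $\phi$ and $\nu(x) = 0$. All of the subsequent constructions will take place on the $F$-points of $Y$, $Z$, $N$, $X$ and their categorical quotients.

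Second I would exploit strong \'etaleness to produce the neighborhood $U$. The condition gives a cartesian diagram $Y \cong X \times_{X/G} (Y/G)$ with $\phi/G : Y/G \to X/G$ \'etale. By Corollary \ref{EtLocIs}, there is an open neighborhood $W$ of $\pi_Y([1,x])$ in $(Y/G)(F)$ mapped homeomorphically by $\phi/G$ onto an open subset $W' \subset (X/G)(F)$. Set $U := \pi_X^{-1}(W') \subset X(F)$ and $U' := \pi_Y^{-1}(W) \subset Y(F)$; both are open, $G(F)$-invariant, saturated, and contain $G(F)x$, respectively $G(F)\cdot[1,x]$. Since the cartesian square persists on $F$-points (checkable locally because $\phi$ is \'etale), the restriction $\phi|_{U'}:U' \to U$ is a bijective local homeomorphism, hence a B-analytic homeomorphism.

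Third, the projection $Y \to G/G_x$ is $G$-equivariant; composed with $\phi^{-1}: U \to U'$ it gives a $G$-equivariant B-analytic map $U \to (G/G_x)(F)$. By Lemma \ref{OrbitIsOpen}, $G(F)/G_x(F)$ is open and closed in $(G/G_x)(F)$ and is identified with the orbit $G(F)x$; after shrinking $W$ so that $U'$ lies in the preimage of this open set, we obtain the desired retract $p: U \to G(F)x$. The fiber $S := p^{-1}(x)$ is then identified via $\phi^{-1}$ with $Z(F) \cap U'$, which is $G_x(F)$-invariant. Applying the same argument to the strongly \'etale map $\nu$ yields, after a further shrinking, a $G_x$-equivariant B-analytic homeomorphism from an open saturated neighborhood of $x$ in $Z(F)$ onto an open saturated subset of $N(F)$; composing with $\phi^{-1}|_S$ defines the embedding $\psi$, and by construction $\psi(x)=0$ with open saturated image.

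The main obstacle is the second step: verifying that the cartesian square $Y \cong X\times_{X/G}(Y/G)$ remains cartesian on $F$-points in a neighborhood of $[1,x]$, which is what promotes $\phi|_{U'}$ from a local to a genuine homeomorphism. This is the heart of translating ``strongly \'etale'' into the analytic setting, and it rests on combining Corollary \ref{EtLocIs} with the algebraic cartesian property; once done, the parallel treatment of $\nu$ and the simultaneous shrinking of $W$ to make both $\phi$ and $\nu$ behave well are routine bookkeeping.
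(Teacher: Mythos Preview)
Your proposal is correct and follows essentially the same strategy as the paper: apply the algebraic Luna Slice Theorem, use Corollary \ref{EtLocIs} on the induced \'etale map of quotients, then exploit the cartesian square from strong \'etaleness to lift the local homeomorphism on quotients to one on the total spaces, and finally shrink to accommodate both $\phi$ and $\nu$ and to land in $G(F)/G_x(F)\subset (G/G_x)(F)$. The paper phrases everything via $Z':=Z/G_x\cong Y/G$ rather than $Y/G$ directly, and in the Archimedean case it explicitly notes that the neighborhoods on the quotient can be chosen semi-algebraic (so that the resulting $U$, $p$, $\psi$ are genuinely Nash, not merely smooth); you should make that point explicit as well, but otherwise the arguments coincide.
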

\begin{proof}
Let $Z$, $\phi$ and $\nu$ be as in the last theorem.

Let $Z':= Z/G_x \cong (G \times_{G_x} Z)/G$ and $X':=X/G$.
Consider the natural map $\phi': Z'(F) \to X'(F)$. By Corollary
\ref{EtLocIs} there exists a neighborhood $S' \subset Z'(F)$ of
$\pi_Z(x)$ such that $\phi'|_{S'}$ is a homeomorphism to its open
image.

Consider the natural map $\nu': Z'(F) \to N_{Gx,x}^X/G_x(F)$. Let
$S'' \subset Z(F)$ be a neighborhood of $\pi_Z(x)$ such that
$\nu'|_{S''}$ is an isomorphism to its open image. In case that F
is Archimedean we choose $S'$ and $S''$ to be semi-algebraic.

Let $S:=\pi_Z^{-1}(S''\cap S') \cap Z(F)$. Clearly, $S$ is
B-analytic.

Let $\rho: (G \times_{G_x}Z)(F) \to Z'(F)$ be the natural
projection. Let $O= \rho^{-1}(S'' \cap S')$. Let $q:O \to
(G/G_x)(F)$ be the natural map. Let $O':=q^{-1}(G(F)/G_x(F))$ and
$q':=q|_{O'}$.

Now put $U := \phi(O')$ and put $p:U \to G(F)x$ be the morphism
that corresponds to $q'$. Note that $p^{-1}(x) \cong S$ and put
$\psi:p^{-1}(x) \to N_{Gx,x}^X(F)$ to be the imbedding that
corresponds to $\nu|_S$.
\end{proof}

\section{Schwartz distributions on Nash manifolds} \label{AppSubFrob}

\subsection{Preliminaries and notation}
$ $\\
In this appendix we will prove some properties of $K$-equivariant
Schwartz distributions on Nash manifolds. We work in the notation
of \cite{AG1}, where one can read about Nash manifolds and
Schwartz distributions over them. More detailed references on Nash
manifolds are \cite{BCR} and \cite{Shi}.

Nash manifolds are equipped with the \textbf{restricted topology},
in which open sets are open semi-algebraic sets. This is not a
topology in the usual sense of the word as infinite unions of open
sets are not necessarily open sets in the restricted topology.
However, finite unions of open sets are open
 and therefore in the restricted topology we consider only
finite covers. In particular, if $E \to M$ is a Nash vector bundle
it means that there exists a \underline{finite} open cover $U_i$
of $M$ such that $E|_{U_i}$ is trivial.

\begin{notation}
Let $M$ be a Nash manifold. We denote by $D_M$ the Nash bundle of
densities on $M$. It is the natural bundle whose smooth sections
are smooth measures. For the precise definition see e.g.
\cite{AG1}.
\end{notation}

An important property of Nash manifolds is
\begin{theorem}[Local triviality of Nash manifolds; \cite{Shi}, Theorem I.5.12  ] \label{loctriv}
Any Nash manifold can be covered by a finite number of open
submanifolds Nash diffeomorphic to $\R^n$.
\end{theorem}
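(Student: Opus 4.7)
My plan is to deduce the statement from two deeper results of semi-algebraic/Nash geometry: (a) every Nash manifold can be realized as an affine Nash manifold, i.e., Nash-embedded as a semi-algebraic $C^{\infty}$-submanifold of some $\R^N$, and (b) the Nash triangulation theorem, which produces a finite simplicial complex $K$ in some $\R^M$ and a Nash diffeomorphism between $M$ and a union of open simplices of $K$. Both are in Shiota's book.

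First I would reduce to an affine Nash manifold $M \subset \R^N$ of dimension $n$. Then I would apply Nash triangulation to obtain a finite simplicial complex $K$ and a Nash diffeomorphism $\Phi : M \to U$, where $U$ is the union of finitely many open simplices of $K$. The collection of open stars $\{\operatorname{St}(v) : v \in V\}$ of the vertices $v$ of $K$ lying in $U$ is a finite family (there are only finitely many vertices), and their $\Phi$-preimages cover $M$ because every point of $U$ lies in the open star of any vertex of the (necessarily nonempty) open simplex containing it.

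The remaining, and key, step is to verify that the open star of a vertex inside a finite simplicial complex is Nash diffeomorphic to $\R^n$. For a single open simplex this is classical: an open $n$-simplex is Nash diffeomorphic to an open box $(0,1)^n$, and then to $\R^n$ via coordinate-wise rational diffeomorphisms such as $t \mapsto (2t-1)/(t(1-t))$, which are Nash. For the open star, one assembles these diffeomorphisms across the finitely many simplices meeting the vertex using a barycentric radial coordinate, producing an explicit Nash diffeomorphism with $\R^n$; this is the content, for example, of the local triviality step in Shiota's treatment.

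The main obstacle will be the last point: gluing the cell-by-cell Nash diffeomorphisms into one genuinely \emph{Nash} (not merely semi-algebraic homeomorphism or $C^{\infty}$) diffeomorphism on the open star. The naive radial formulas are only continuous at the central vertex, so one has to replace them by explicit semi-algebraic rational expressions that are Nash across all strata and still surject onto $\R^n$. Once this local model is produced, the global statement follows formally from the finite triangulation as above.
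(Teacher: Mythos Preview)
The paper does not prove this theorem at all: it is quoted verbatim from Shiota's book with a bare citation, and is used only as a black box in Appendix~\ref{AppSubFrob}. So there is no ``paper's proof'' to compare against, and any argument you supply goes beyond what the authors do.

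That said, your sketch has a genuine gap at step~(b). The semi-algebraic triangulation theorem produces a finite simplicial complex $K$ and a \emph{semi-algebraic homeomorphism} $|K|\to M$; it does \emph{not} give a Nash diffeomorphism from $M$ onto a union of open simplices. Indeed, a simplicial complex is not a smooth object near its $(n-1)$-skeleton, so no global smooth (let alone Nash) identification of $M$ with $|K|$ or with an open subset of $|K|$ is available in general. Once $\Phi$ is merely a semi-algebraic homeomorphism, the preimages of open stars are only semi-algebraic open subsets of $M$, and you are back to the original problem: why is a given semi-algebraic open subset of $M$ (or even of $\R^n$) Nash diffeomorphic to $\R^n$? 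Your own last paragraph concedes exactly this: the ``gluing'' difficulty you flag is not a technicality but the entire content of the theorem, and the radial formulas you allude to are not Nash across the lower-dimensional faces.

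A route closer to Shiota's actual argument avoids triangulation. For an affine Nash $M\subset\R^N$ of dimension $n$, the finitely many coordinate projections $\pi_P:M\to P\cong\R^n$ onto $n$-dimensional coordinate subspaces are \'etale on semi-algebraic open sets $V_P$ that cover $M$; a Nash inverse function theorem plus finiteness of semi-algebraic data refines each $V_P$ into finitely many pieces on which $\pi_P$ is a Nash diffeomorphism onto a semi-algebraic open subset of $\R^n$. One is then reduced to showing that any semi-algebraic open $W\subset\R^n$ is a finite union of open sets Nash diffeomorphic to $\R^n$, which follows from cylindrical cell decomposition (each open $n$-cell is Nash diffeomorphic to $(0,1)^n\cong\R^n$, and one thickens across the lower cells). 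If you want to salvage the triangulation idea, you would need to invoke this reduction anyway, so it is cleaner to bypass triangulation entirely.
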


\begin{definition}
Let $M$ be a Nash manifold. We denote by $\G(M):= \Sc^*(M,D_M)$
the \textbf{space of Schwartz generalized functions} on $M$.
Similarly, for a Nash bundle $E \to M$ we denote by $\G(M,E):=
\Sc^*(M,E^* \otimes D_M)$ the \textbf{space of Schwartz
generalized sections} of $E$.

In the same way, for any smooth manifold $M$ we denote by
$C^{-\infty}(M):= \cD(M,D_M)$ the \textbf{space of generalized
functions} on $M$ and for a smooth bundle $E \to M$ we denote by
$C^{-\infty}(M,E):= \cD(M,E^* \otimes D_M)$ the \textbf{space of
generalized sections} of $E$.
\end{definition}

Usual $L^1$-functions can be interpreted as Schwartz generalized
functions but not as Schwartz distributions. We will need several
properties of Schwartz functions from \cite{AG1}.

\begin{property}[\cite{AG1}, Theorem 4.1.3] \label{pClass}  $\Sc(\R ^n)$ = Classical
Schwartz functions on $\R ^n$.
\end{property}

\begin{property}[\cite{AG1}, Theorem 5.4.3] \label{pOpenSet}
Let $U \subset M$  be a (semi-algebraic) open subset, then
$$\Sc(U,E) \cong \{\phi \in \Sc(M,E)| \quad \phi \text{ is 0 on } M
\setminus U \text{ with all derivatives} \}.$$
\end{property}

\begin{property}[see \cite{AG1}, \S 5]\label{pCosheaf}
Let $M$ be a Nash manifold. Let $M = \bigcup U_i$ be a finite open
cover of $M$. Then a function $f$ on $M$ is a Schwartz function if
and only if it can be written as $f= \sum \limits _{i=1}^n f_i$
where $f_i \in \Sc(U_i)$ (extended by zero to $M$).

Moreover, there exists a smooth partition of unity $1 =\sum
\limits _{i=1}^n \lambda_i$ such that for any Schwartz function $f
\in \Sc(M)$ the function $\lambda_i f$ is a Schwartz function on
$U_i$ (extended by zero to $M$).
\end{property}

\begin{property}[see \cite{AG1}, \S 5]\label{pSheaf}
Let $M$ be a Nash manifold and $E$ be a Nash bundle over it. Let
$M = \bigcup U_i$ be a finite open cover of $M$. Let $\xi_i \in
\G(U_i,E)$ such that $\xi_i|_{U_j} = \xi_j|_{U_i}$. Then there
exists a unique $\xi \in \G(M,E)$ such that $\xi|_{U_i} = \xi_i$.
\end{property}

We will also use the following notation.
\begin{notation}
Let $M$ be a metric space and $x \in M$. We denote by $B(x,r)$ the
open ball with center $x$ and radius $r$.
\end{notation}
\subsection{Submersion principle}

\begin{theorem}[\cite{AG2}, Theorem 2.4.16] \label{SurSubSec}
Let $M$ and $N$ be Nash manifolds and $s:M \rightarrow N$ be a
surjective submersive Nash map. Then locally it has a Nash
section, i.e. there exists a finite open cover $N= \bigcup \limits
_{i=1}^k U_i$ such that $s$ has a Nash section on each $U_i$.
\end{theorem}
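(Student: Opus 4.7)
The plan is to combine a Nash implicit function theorem with the quasi-compactness of Nash manifolds in the restricted topology.

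First, I would show that every $y \in N$ has a semi-algebraic open neighborhood $V_y \subset N$ on which $s$ admits a Nash section. Pick any $x \in s^{-1}(y)$, and choose Nash charts around $x$ and $y$ (whose existence is guaranteed by the local triviality statement, Theorem \ref{loctriv}, which covers $M$ and $N$ by domains Nash diffeomorphic to $\R^{n+k}$ and $\R^n$ respectively). In these charts $s$ becomes a Nash map with surjective differential at the base point, so after a linear change of coordinates one may assume $ds$ is invertible on the last $n$ coordinates. The classical implicit function theorem (in the form of Theorem \ref{InvFunct} applied to the auxiliary map $(p_1, s)$) yields a local analytic section. Since the defining equation $s\circ\sigma=\mathrm{id}$ is semi-algebraic, Tarski--Seidenberg applied to the system $\{s(p)=v\}$ shows that the graph of the section is a semi-algebraic set, and hence the section is a Nash map on a semi-algebraic open neighborhood $V_y$ of $y$.

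Second, I would extract a finite subcover of $N$ from the family $\{V_y\}_{y\in N}$. In the restricted topology of a Nash manifold one only considers finite covers, and the underlying fact is that any family of open semi-algebraic subsets covering a Nash manifold admits a finite subcover. This is quasi-compactness in the restricted topology and follows from the existence of finite semi-algebraic stratifications or triangulations (cf.\ \cite{Shi}), together with Theorem \ref{loctriv} reducing to semi-algebraic subsets of $\R^n$, where finiteness of connected components of semi-algebraic sets closes the argument.

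The main obstacle is the first step: certifying that the local analytic section produced by the implicit function theorem is genuinely Nash rather than merely real-analytic. This requires checking semi-algebraicity of the implicit function via Tarski--Seidenberg on the zero set of $s(p)-v$, and then invoking that the projection from this graph to its image is a Nash diffeomorphism. Once this semi-algebraic upgrade is secured, both local existence and the passage to a finite cover are routine applications of standard Nash/semi-algebraic machinery.
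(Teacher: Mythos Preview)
The paper does not prove this theorem; it is quoted from \cite{AG2} without argument. So there is no ``paper's own proof'' to compare against, and I evaluate your proposal on its merits.

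Your Step~1 is essentially correct. The Nash inverse function theorem (the classical inverse function theorem plus Tarski--Seidenberg applied to the graph) shows that the auxiliary map $(p_1,s)$ is a Nash diffeomorphism near $x$, and composing its Nash inverse with the obvious slice gives a Nash section over a semi-algebraic open neighborhood of $y$.

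Step~2, however, has a genuine gap. The assertion that ``any family of open semi-algebraic subsets covering a Nash manifold admits a finite subcover'' is \emph{false}: already $\R=\bigcup_{n\in\Z}(n-1,n+1)$ is a semi-algebraic open cover with no finite subcover. The paper's remark that in the restricted topology one ``considers only finite covers'' is a \emph{convention} governing sheaf axioms, not a theorem asserting quasi-compactness. Neither finiteness of connected components nor the existence of finite stratifications rescues this---$\R$ is a single connected Nash cell and is still not quasi-compact.

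The finiteness must therefore come from somewhere else. The standard route is to extract it from the \emph{source}: cover $M$ by finitely many charts via Theorem~\ref{loctriv}, so that $N$ is covered by the finitely many open images $s(U_i)$, and then on each chart run an argument that intrinsically produces only finitely many pieces (for instance by indexing over coordinate subspaces, exactly as in the proof of Theorem~\ref{NashEquivSub} in this paper). You should restructure Step~2 along these lines rather than invoking a quasi-compactness that does not hold.
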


\begin{corollary} \label{EtLocIsNash}
An \et map $\phi:M \to N$ of Nash manifolds is locally an
isomorphism. That means that there exists a finite cover $M =
\bigcup U_i$ such that $\phi|_{U_i}$ is an isomorphism onto its
open image.
\end{corollary}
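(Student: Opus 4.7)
The plan is to apply Theorem \ref{SurSubSec} iteratively, peeling off one ``sheet'' of $\phi$ at a time. Since an étale map is a submersion, it is open, so $\phi(M)$ is open in $N$; replacing $N$ by $\phi(M)$, we may assume $\phi$ is surjective. Theorem \ref{SurSubSec} then furnishes a finite open cover $N = V_1 \cup \cdots \cup V_k$ together with Nash sections $s_i : V_i \to M$ of $\phi$. Differentiating $\phi \circ s_i = \mathrm{id}_{V_i}$ shows $ds_i$ is invertible, so each $s_i$ is étale, hence open; consequently $W_i := s_i(V_i)$ is open in $M$, and $\phi|_{W_i}: W_i \to V_i$ is a Nash isomorphism with inverse $s_i$.

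The crux of the argument is to show that the leftover set $M' := M \setminus \bigcup_{i=1}^k W_i$ is \emph{open} in $M$ (a priori it is merely closed and semialgebraic). Given $x \in M'$, the analytic inverse function theorem (Theorem \ref{InvFunct}) yields a semialgebraic open neighborhood $O \ni x$ on which $\phi|_O$ is a homeomorphism onto an open $U \subseteq N$. For each $i$ with $\phi(x) \in V_i$, we have $s_i(\phi(x)) \neq x$ (else $x \in W_i$), and by continuity of $s_i$ we may shrink $O$ so that $s_i(\phi(O) \cap V_i) \cap O = \emptyset$, which forces $O \cap W_i = \emptyset$. A similar shrinking handles the indices $i$ with $\phi(x) \notin V_i$. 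Thus $O \subseteq M'$, showing $M'$ is open and hence a Nash submanifold of $M$.

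Now $\phi|_{M'} : M' \to \phi(M')$ is again étale and surjective between Nash manifolds, so the same construction applies to it. Termination follows from the finiteness of the fibers of $\phi$: discreteness of fibers (from étaleness) together with semialgebraicity forces each fiber to be finite, and since $y \mapsto |\phi^{-1}(y)|$ is a definable integer-valued function on $N$, it assumes only finitely many values; let $d$ be its maximum. Each round of the construction removes at least one preimage from every fiber lying over a point of $N$ covered by some $V_i$, so the maximum fiber cardinality of the restricted map strictly decreases. After at most $d$ iterations, the leftover is empty, and the accumulated finite collection of section images is the desired open cover of $M$. The main technical hurdle is the openness of $M'$; everything else is bookkeeping.
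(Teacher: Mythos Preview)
Your inductive strategy is natural, but the crucial step --- the openness of $M' = M \setminus \bigcup_i W_i$ --- is false as stated. The sentence ``a similar shrinking handles the indices $i$ with $\phi(x) \notin V_i$'' is precisely where the argument breaks: if $\phi(x)$ lies on the boundary of $V_i$, then $x$ can be a limit point of $W_i$ even though $x \notin W_i$, and no shrinking of $O$ avoids $W_i$. Concretely, take $M = \R \sqcup \R$ (two disjoint copies), $N = \R$, and $\phi$ the identity on each copy. With $V_1 = (-\infty, 1)$, $V_2 = (0, \infty)$, and $s_i$ the section into the $i$-th copy, one gets $W_1 = (-\infty,1)$ in the first copy, $W_2 = (0,\infty)$ in the second, and $M' = [1,\infty) \sqcup (-\infty,0]$, which is not open. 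The point $x = 1$ in the first copy has $\phi(x) \notin V_1$, yet every neighborhood of $x$ meets $W_1$.

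The repair is to remove the sheet \emph{locally over the base} rather than globally. The set $W_i = s_i(V_i)$ is not merely open in $M$: it is closed in $\phi^{-1}(V_i)$, since if $y_n \in W_i$ converges to $y \in \phi^{-1}(V_i)$ then $\phi(y_n) \to \phi(y) \in V_i$ and continuity of $s_i$ on $V_i$ forces $y = \lim s_i(\phi(y_n)) = s_i(\phi(y)) \in W_i$. Hence each $C_i := \phi^{-1}(V_i) \setminus W_i$ is open in $M$, and $\phi|_{C_i} : C_i \to \phi(C_i)$ is a surjective \'etale Nash map whose fiber over any $y \in V_i$ is $\phi^{-1}(y) \setminus \{s_i(y)\}$, of strictly smaller cardinality. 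Recurse on each $C_i$ separately: the recursion tree has depth at most your bound $d$ and finite branching by Theorem \ref{SurSubSec}, so the collected $W$'s form the desired finite cover. Your termination argument via the definable bound on fiber size is exactly what is needed; only the ``peeling'' step needed to be done fiberwise over the $V_i$ rather than all at once.
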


\begin{theorem} \label{NashEquivSub}
Let $p:M \to N$ be a Nash submersion of Nash manifolds. Then there
exist a finite  open (semi-algebraic) cover $M = \bigcup U_i$ and
isomorphisms $\phi_i:U_i \cong W_i$ and $\psi_i:p(U_i) \cong V_i$
where $W_i\subset \R^{d_i}$ and $V_i \subset \R^{k_i}$ are open
(semi-algebraic) subsets, $k_i \leq d_i$ and $p|_{U_i}$ correspond
to the standard projections.
\end{theorem}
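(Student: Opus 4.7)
The plan is to establish the standard local normal form for submersions within the Nash category, which is the Nash-analog of the classical submersion theorem, and then extract a finite refinement using the quasi-compactness properties of Nash manifolds.

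Fix a point $m \in M$ and let $n = p(m)$. First I would apply local triviality of Nash manifolds (Theorem \ref{loctriv}) to both $N$ near $n$ and $M$ near $m$: this produces open semi-algebraic neighborhoods $V' \subset \R^k$ of $n$ and $W' \subset \R^d$ of $m$ together with Nash isomorphisms identifying these neighborhoods with open subsets of affine space. In these coordinates the map $p$ becomes a Nash map $(p_1,\dots,p_k) : W' \to V'$ whose Jacobian has full rank $k$ at the preimage of $m$, because $p$ is a submersion. By permuting the coordinates on $W'$ (a Nash isomorphism), I can assume the minor $\bigl(\partial p_i/\partial z_j\bigr)_{1 \le i,j \le k}$ is invertible at that point.

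Next I would define a new Nash map
\[
\Psi(z_1,\dots,z_d) := \bigl(p_1(z),\dots,p_k(z),\, z_{k+1},\dots,z_d\bigr).
\]
The Jacobian of $\Psi$ is block-triangular with invertible diagonal blocks, so $d\Psi$ is invertible at the point in question. Applying the Nash inverse function theorem (which is the Nash-category version of Theorem \ref{InvFunct} and follows from standard implicit function results in semi-algebraic geometry), $\Psi$ is a Nash diffeomorphism from some smaller open semi-algebraic neighborhood $U_m \subset W'$ onto its open image in $\R^d$. In this new chart, $p$ is literally the standard projection onto the first $k$ coordinates, and $p(U_m)$ is an open semi-algebraic subset of $V' \subset \R^k$.

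Finally I would extract a finite cover. The construction above yields, for each $m \in M$, an open semi-algebraic neighborhood $U_m$ satisfying the desired conclusion. The collection $\{U_m\}_{m \in M}$ is an open (semi-algebraic) cover of $M$ in the restricted topology. Since any open cover of a Nash manifold in the restricted topology admits a finite subcover (one sees this by first reducing via Theorem \ref{loctriv} to the case $M \cong \R^n$ and then invoking finiteness of semi-algebraic covers on such a piece, much as is used implicitly in Theorem \ref{SurSubSec}), we obtain a finite subcover $M = \bigcup_{i=1}^\ell U_i$ with the required property.

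The main obstacle is ensuring a genuinely Nash inverse function theorem — i.e. that $\Psi$ has a Nash, not merely analytic, local inverse — since the statement must remain in the semi-algebraic category. This is the Nash-category refinement of Theorem \ref{InvFunct}, available in the literature on semi-algebraic geometry (cf.\ \cite{BCR}, \cite{Shi}), and once it is invoked, the rest of the argument is a mechanical adaptation of the classical submersion normal form together with the finiteness properties of the restricted topology.
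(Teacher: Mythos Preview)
Your proof is correct, but the route differs from the paper's. You do the classical pointwise submersion normal form (choose coordinates, straighten via the map $\Psi$, invoke a Nash inverse function theorem), and then extract a finite subcover using the quasi-compactness of the restricted topology. The paper instead reduces at the outset to the case where $N=\R^k$, $M$ is a closed equidimensional submanifold of some $\R^n$, and $p$ is the restriction of the standard projection $\R^n\to\R^k$; it then considers the \emph{finite} set $\Omega$ of coordinate $d$-planes of $\R^n$ containing $N$, projects $M$ to each such $V\in\Omega$, and sets $U_V$ to be the locus where this projection is \et. These $U_V$ already form a finite cover of $M$, and each $pr|_{U_V}$ is \et, so Corollary~\ref{EtLocIsNash} finishes the argument. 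The paper's trick buys finiteness for free --- it never needs to extract a finite subcover from an infinite one --- whereas your approach is the more transportable classical argument but leans on the noetherianity of semi-algebraic opens, which you correctly flag. Note also that the Nash inverse function theorem you invoke is exactly what the paper packages as Corollary~\ref{EtLocIsNash}, so the essential analytic input is the same in both proofs.
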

\begin{proof}
The problem is local, hence without loss of generality we can
assume that $N = \R^k$, $M$ is an equidimensional closed
submanifold of $\R^n$ of dimension $d$, $d \geq k$, and $p$ is
given by the standard projection $\R^n \to \R^k$.

Let $\Omega$ be the set of all coordinate subspaces of $\R^n$ of
dimension $d$ which contain $N$. For any $V \in \Omega$ consider
the projection $pr:M \rightarrow V$. Define $U_V= \{x \in M | d_x
pr$ is an isomorphism $\}$. It is easy to see that $pr|_{U_V}$ is
\et and $\{U_V\}_{V \in \Omega}$ gives a finite cover of $M$. The
theorem now follows from the previous corollary (Corollary
\ref{EtLocIsNash}).
\end{proof}

\begin{theorem} \label{NashSub}
Let $\phi:M \to N$ be a Nash submersion of Nash manifolds. Let $E$
be a Nash bundle over $N$. Then\\
(i) there exists a unique continuous linear map
$\phi_*:\Sc(M,\phi^*(E)\otimes D_M) \to \Sc(N,E \otimes D_N)$ such
that for any $f \in \Sc(N,E^*)$ and $\mu \in \Sc(M,\phi^*(E)
\otimes D_M)$ we have $$\int_{x \in N} \langle f(x),\phi_*\mu(x)
\rangle = \int_{x \in M} \langle \phi^*f(x), \mu(x) \rangle.$$ In
particular, we mean that both integrals converge. \\
(ii) If $\phi$ is surjective then $\phi_*$ is surjective.
\end{theorem}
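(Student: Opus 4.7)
The plan is to prove uniqueness in (i) directly from the defining pairing, construct $\phi_*$ locally via the model case of a standard coordinate projection, glue using a Schwartz partition of unity, and then deduce surjectivity in (ii) by inverting on local Nash sections provided by Theorem \ref{SurSubSec}. For uniqueness, if two maps $\phi_*^1, \phi_*^2$ both satisfy the identity, their difference $\xi := \phi_*^1\mu - \phi_*^2\mu \in \Sc(N, E\otimes D_N)$ pairs to zero against every $f \in \Sc(N, E^*)$; since $C_c^\infty(N, E^*) \subset \Sc(N, E^*)$ and compactly supported test sections distinguish smooth sections, this forces $\xi = 0$.

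For existence, the model case is $M = V \times W$ with $V \subset \R^k$, $W \subset \R^{d-k}$ open semi-algebraic, $\phi$ the projection, and $E$ trivial. Here sections of $\phi^*E \otimes D_M$ factor locally as a density on $V$ tensored with a density on $W$ (and a fiber of $E$), and $\phi_*$ is literal fiber integration over $W$. By Property \ref{pOpenSet}, Schwartz densities on $V \times W$ are precisely classical Schwartz densities on $\R^d$ vanishing to infinite order off $V \times W$; partial integration of such produces classical Schwartz densities on $\R^k$ vanishing to infinite order off $V$, which by Property \ref{pOpenSet} again lie in $\Sc(V, D_V)$, and the integration identity is Fubini. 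To globalize, apply Theorem \ref{NashEquivSub} to obtain a finite cover $M = \bigcup U_i$ on which $\phi$ is the model projection in suitable coordinates and $E$ pulls back to a trivial bundle; take a Schwartz partition of unity $\{\lambda_i\}$ subordinate to $\{U_i\}$ from Property \ref{pCosheaf}, apply the model construction to each $\lambda_i\mu$, extend by zero to $N$ via Property \ref{pOpenSet}, and set $\phi_*\mu := \sum \phi_*(\lambda_i\mu)$. The integration identity is additive, so it holds globally, and uniqueness makes the construction independent of the choices.

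For (ii), given $\nu \in \Sc(N, E\otimes D_N)$ with $\phi$ surjective, use Theorem \ref{SurSubSec} to cover $N$ by finitely many open $V_i$ each admitting a Nash section $s_i: V_i \to M$. Near each $s_i(V_i)$, Theorem \ref{NashEquivSub} produces an open $U_i \subset \phi^{-1}(V_i)$ that is Nash-diffeomorphic to $V_i \times W_i$ over $V_i$. Choose a Schwartz density $\omega_i$ on $W_i$ of total mass $1$ (a compactly supported Nash bump, built via Theorem \ref{loctriv} and Property \ref{pClass}) and a Schwartz partition of unity $\{\rho_i\}$ on $N$ subordinate to $\{V_i\}$. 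Define $\mu_i := \phi^*(\rho_i\nu) \otimes \omega_i \in \Sc(U_i, \phi^*E \otimes D_M)$, extend by zero to $M$ using Property \ref{pOpenSet}, and let $\mu := \sum \mu_i$. The model-case calculation gives $\phi_*\mu_i = \rho_i\nu$, whence $\phi_*\mu = \nu$.

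The main obstacle is verifying that fiber integration in the model case actually lands in $\Sc$ rather than in a larger space of smooth temperate sections; once Properties \ref{pClass} and \ref{pOpenSet} reduce this to the classical fact that integrating a Schwartz function on $\R^d$ over some coordinates produces a Schwartz function on the remaining coordinates, the rest is routine bookkeeping with partitions of unity and local trivializations. A secondary subtlety is ensuring continuity of $\phi_*$: this follows in the model case from the continuity of partial integration on classical Schwartz spaces and is preserved under the finite sums used in the gluing.
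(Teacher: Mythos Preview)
Your argument for (i) is essentially the paper's: reduce via Theorem \ref{NashEquivSub} and Property \ref{pCosheaf} to the model of an open semi-algebraic subset of $\R^d$ under the standard projection, then use Property \ref{pOpenSet} to extend by zero to $\R^d$ and integrate classically. (Your model is stated as a product $V \times W$, which is slightly narrower than what Theorem \ref{NashEquivSub} actually delivers---an arbitrary open $W' \subset \R^d$ mapping to an open $V' \subset \R^k$---but your argument via Property \ref{pOpenSet} extends verbatim to that case.)

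Part (ii) has a genuine gap. Theorem \ref{NashEquivSub} does \emph{not} produce a product $U_i \cong V_i \times W_i$ over $V_i$ containing the section $s_i(V_i)$; it only identifies open pieces of $M$ with open subsets of $\R^d$ on which $\phi$ is the standard projection, and there is no reason such a piece should surject onto $V_i$ or contain all of $s_i(V_i)$. After reducing to the model---$M \subset \R^d$ open, $N \subset \R^k$ open, standard projection, Nash section $x \mapsto (x, s(x))$---there need not exist any $\varepsilon > 0$ with $\{(x,y) : |y - s(x)| < \varepsilon\} \subset M$, because the fibers $M_x = \{y : (x,y) \in M\}$ may shrink as $x$ approaches $\partial N$. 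So no fixed bump $\omega_i$ on a fixed $W_i$ works, and getting a genuine product neighborhood would require a Nash tubular neighborhood theorem that you have not invoked.

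The paper's fix is to use a bump of variable width. Set $R(x) := \sup\{r \geq 0 : B((x, s(x)), r) \subset M\}$; this is positive and, by the Tarski--Seidenberg principle, semi-algebraic, hence bounded below by a positive Nash function $r(x)$. Then for a fixed $\rho \in \Sc(\R^{d-k})$ supported in the unit ball with $\int\rho=1$, the function $g(x,y) := f(x)\,\rho\bigl((y-s(x))/r(x)\bigr)$ (suitably normalized) lies in $\Sc(M)$ and satisfies $\phi_* g = f$. The semi-algebraicity of $R$ is exactly what rescues the construction, and it is the step your argument is missing.
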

\begin{proof}
$ $\\ \indent (i)

Step 1. Proof for the case when $M= \R^n$, $N= \R^k$, $k \leq n$,
$\phi$ is the
standard projection and $E$ is trivial.\\
Fix Haar measure on $\R$ and identify $D_{\R^l}$ with the trivial
bundle for any $l$. Define $$\phi_*(f)(x):= \int _{y \in \R^{n-k}}
f(x,y)dy.$$ Convergence of the integral and the fact that
$\phi_*(f)$ is a Schwartz function follows from standard calculus.

Step 2. Proof for the case when  $M \subset \R^n$ and $N \subset
\R^k$ are open
(semi-algebraic) subsets, $\phi$ is the standard projection and $E$ is trivial.\\
Follows from the previous step and Property \ref{pOpenSet}.

Step 3. Proof for the case when  $E$ is trivial.\\
Follows from the previous step, Theorem \ref{NashEquivSub} and
partition of unity (Property \ref{pCosheaf}).

Step 4. Proof in the general case.\\
Follows from the previous step and partition of unity (Property
\ref{pCosheaf}).

(ii) The proof is the same as in (i) except of Step 2. Let us
prove (ii) in the case of Step 2. Again, fix Haar measure on $\R$
and identify $D_{\R^l}$ with the trivial bundle for any $l$. By
Theorem \ref{SurSubSec} and partition of unity (Property
\ref{pCosheaf}) we can assume that there exists a Nash section
$\nu:N \to M$. We can write $\nu$ in the form $\nu(x) = (x,s(x))$.

For any $x \in N$ define $R(x):= \sup\{r \in \R_{\geq 0}|
B(\nu(x),r) \subset M \}$. Clearly, $R$ is continuous and
positive. By Tarski - Seidenberg principle (see e.g. \cite[Theorem
2.2.3]{AG1}) it is semi-algebraic. Hence (by \cite[Lemma
A.2.1]{AG1}) there exists a positive Nash function $r(x)$ such
that $r(x) < R(x)$. Let $\rho \in \Sc(\R^{n-k})$ such that $\rho$
is supported in the unit ball and its integral is 1. Now let $f
\in \Sc(N)$. Let $g \in C^{\infty}(M)$ defined by $g(x,y):=
f(x)\rho((y-s(x))/r(x))/r(x)$ where $x \in N$ and $y \in
\R^{n-k}$. It is easy to see that $g \in \Sc(M)$ and $\phi_*g=f$.
\end{proof}

\begin{notation}
Let $\phi:M \to N$ be a Nash submersion of Nash manifolds. Let $E$
be a bundle on $N$. We denote by $\phi^*:\G(N,E) \to
\G(M,\phi^*(E))$ the dual map to $\phi_*$.
\end{notation}

\begin{remark}
Clearly, the map $\phi^*:\G(N,E) \to \G(M,\phi^*(E))$ extends to
the map $\phi^*:C^{-\infty}(N,E) \to C^{-\infty}(M,\phi^*(E))$
described in \cite[Theorem A.0.4]{AGS1}.
\end{remark}

\begin{proposition} \label{EnoughPull}
Let $\phi:M \to N$ be a surjective Nash submersion of Nash
manifolds. Let $E$ be a bundle on $N$. Let $\xi \in
C^{-\infty}(N)$. Suppose that $\phi^*(\xi) \in \G(M)$. Then $\xi
\in \G(N)$.
\end{proposition}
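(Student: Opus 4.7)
The plan is to extend $\xi$ from a functional on $C_c^\infty(N, D_N)$ to one on $\Sc(N, D_N)$ by pulling back through $\phi$ and composing with a continuous linear section of $\phi_*$. The key preparatory step is to extract such a section from the proof of Theorem \ref{NashSub}(ii). After reducing to the local situation via Theorem \ref{SurSubSec} and a Nash partition of unity (Property \ref{pCosheaf}), that proof writes down a section by the explicit bump-function formula
$$g(x,y) \;=\; f(x)\,\rho\!\left(\tfrac{y-s(x)}{r(x)}\right)\!\big/\,r(x),$$
where $s$ is a local Nash section of $\phi$ and $r(x)$ is a positive Nash radius function. Patched together, this yields a continuous linear right inverse $\sigma : \Sc(N, D_N) \to \Sc(M, D_M)$ of $\phi_*$. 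Moreover, each local piece of $\sigma(f)$ is supported in the tube $\{(x,y) : x \in \Supp(f),\ |y - s(x)| \leq r(x)\}$ around the image of the local Nash section, so $\sigma$ restricts to a map $C_c^\infty(N, D_N) \to C_c^\infty(M, D_M)$.

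Next, I would set $\tilde\xi := (\phi^*\xi) \circ \sigma$ as a functional on $\Sc(N, D_N)$. By hypothesis $\phi^*\xi \in \G(M) = \Sc^*(M, D_M)$, and $\sigma$ is continuous, so $\tilde\xi$ lies in $\Sc^*(N, D_N) = \G(N)$. It remains to identify $\tilde\xi$ with the original $\xi$ on compactly supported test sections. For $f \in C_c^\infty(N, D_N)$ the compact-support-preservation of $\sigma$ gives $\sigma(f) \in C_c^\infty(M, D_M)$, hence
$$\tilde\xi(f) \;=\; \langle \phi^*\xi, \sigma(f)\rangle \;=\; \langle \xi, \phi_* \sigma(f)\rangle \;=\; \langle \xi, f\rangle,$$
the middle equality being the very definition of $\phi^*$ on generalized functions as dual to $\phi_*$ on compactly supported test sections. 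Since $C_c^\infty(N, D_N)$ is dense in $\Sc(N, D_N)$, the continuous functional $\tilde\xi$ is the unique extension of $\xi$ to the Schwartz space, and in particular $\xi$ itself lies in $\G(N)$.

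The main obstacle I foresee is verifying continuity of the section $\sigma$ between Fréchet spaces, rather than merely its pointwise well-definedness. One must bound all Schwartz seminorms of $\sigma(f)$, viewed as $f$ tensored with a bump depending on $s$ and $r$, in terms of those of $f$, which requires quantitative control over the polynomial growth and decay (with all derivatives) of the Nash functions $s(x)$, $r(x)$, and $1/r(x)$. This is essentially the quantitative content of the terse assertion ``it is easy to see that $g \in \Sc(M)$'' at the end of the proof of Theorem \ref{NashSub}(ii), and it is where the Nash hypotheses enter in an essential way. A secondary bookkeeping issue is checking that the partition-of-unity-assembled section remains continuous globally; this reduces to continuity of the local sections together with Property \ref{pCosheaf}.
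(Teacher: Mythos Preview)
Your approach is correct but takes a different route from the paper. The paper's proof is a single line: it cites Theorem~\ref{NashSub} (surjectivity of $\phi_*$ on Schwartz spaces) together with the Open Mapping Theorem for Fr\'echet spaces. The point is that once $\phi_*:\Sc(M,D_M)\to\Sc(N,D_N)$ is known to be a continuous surjection of Fr\'echet spaces, the Open Mapping Theorem identifies $\Sc(N,D_N)$ topologically with the quotient $\Sc(M,D_M)/\ker\phi_*$; one then checks that the Schwartz extension of $\phi^*\xi$ vanishes on $\ker\phi_*$ (using that $C_c^\infty\cap\ker\phi_*$ is dense there) and hence descends to the desired continuous functional on $\Sc(N,D_N)$.

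Your explicit-section argument is more constructive: you extract from the proof of Theorem~\ref{NashSub}(ii) a continuous linear right inverse $\sigma$ of $\phi_*$ that moreover preserves compact supports, and then set $\tilde\xi=(\phi^*\xi)\circ\sigma$. This is perfectly valid, and the compact-support preservation neatly handles the compatibility with the original $\xi$. The trade-off is exactly the obstacle you flag: you must verify by hand the Fr\'echet continuity of $\sigma$, which amounts to seminorm estimates exploiting the polynomial growth of the Nash data $s(x)$, $r(x)$, $1/r(x)$. The paper's abstract route sidesteps this entirely --- the Open Mapping Theorem supplies the required continuity for free, with no estimates needed. So your proof works, but you are doing more than necessary; in particular, the ``main obstacle'' you worry about can be bypassed altogether by invoking the Open Mapping Theorem instead of constructing $\sigma$ explicitly.
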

\begin{proof}
It follows from Theorem \ref{NashSub} and Banach Open Map Theorem
(see \cite[Theorem 2.11]{Rud}).
\end{proof}

\subsection{Frobenius reciprocity}
$ $\\
In this subsection we prove Frobenius reciprocity for Schwartz
functions on Nash manifolds.

\begin{proposition}
Let $M$ be a Nash manifold. Let $K$ be a Nash group. Let $E \to M$
be a Nash bundle. Consider the standard projection $p:K \times M
\to M$. Then the map $p^*:\G(M,E) \to \G(M \times K, p^*E)^K$ is
an isomorphism.
\end{proposition}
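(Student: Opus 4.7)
The plan is to prove injectivity and surjectivity of $p^*$ separately.

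Injectivity follows immediately from Theorem \ref{NashSub}: the map $p^*$ is by construction the transpose of the fiber-integration map $p_*\colon \Sc(K\times M,(p^*E)^*\otimes D_{K\times M})\to\Sc(M,E^*\otimes D_M)$, and part (ii) of Theorem \ref{NashSub} asserts that $p_*$ is surjective since $p$ is a surjective Nash submersion, whence its transpose is injective.

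For surjectivity, I would fix a Schwartz density $\nu_0\in\Sc(K,D_K)$ of total mass $1$ (constructed via Property \ref{pClass} and Property \ref{pCosheaf}) and use the canonical isomorphism $(p^*E)^*\otimes D_{K\times M}\cong D_K\boxtimes(E^*\otimes D_M)$ to define a section $s\colon\Sc(M,E^*\otimes D_M)\to\Sc(K\times M,(p^*E)^*\otimes D_{K\times M})$ of $p_*$ by $s(f):=\nu_0\boxtimes f$, so that $p_*\circ s=\mathrm{id}$. Given $\xi\in\G(K\times M,p^*E)^K$, set $\eta:=s^*\xi\in\G(M,E)$; the target identity $p^*\eta=\xi$ unwinds to the assertion $\xi(g)=\xi(\nu_0\boxtimes p_*g)$ for every test section $g$. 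Since $p_*(g-\nu_0\boxtimes p_*g)=0$, everything is reduced to showing that $\xi$ annihilates $\ker(p_*)$.

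For the latter I would use $K$-invariance: by definition, $\xi$ kills every coboundary $L_k^*h-h$, where $L_k$ denotes left translation by $k\in K$ on the first factor of $K\times M$, and by continuity $\xi$ also kills the closed linear span of such coboundaries in the Schwartz topology. The remaining step is the density statement that $\ker(p_*)$ lies in this closed span. Passing to an exponential chart on a connected component of $K$, the difference quotients $(L^*_{\exp(tX)}h-h)/t$ tend in the Schwartz topology to a first-order differential operator in the $K$-direction applied to $h$; iterating and using that any zero-integral Schwartz density on a vector space is a sum of partial derivatives of Schwartz densities (constructed by iterated primitives, with rapid decay at infinity coming from the zero-integral condition), one obtains the desired representation. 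The principal obstacle is precisely this density claim: its reduction from a general Nash group $K$ to the Euclidean case $\R^n$ requires the local triviality of Nash manifolds (Theorem \ref{loctriv}) together with a careful partition-of-unity argument based on Properties \ref{pCosheaf} and \ref{pSheaf}, and one must take extra care to handle disconnected components of $K$ and modular-function complications when $K$ is non-unimodular.
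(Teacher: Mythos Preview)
The paper does not prove this proposition in the text; it simply cites \cite[Proposition 4.0.11]{AG2}. So there is no in-paper argument to compare against, and your proposal must be judged on its own.

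Your strategy is sound. Injectivity from the surjectivity of $p_*$ (Theorem \ref{NashSub}(ii)) is immediate, and reducing surjectivity to the statement that any $K$-invariant $\xi$ annihilates $\ker(p_*)$ is exactly right. The density claim is indeed the crux, and your Euclidean lemma (a Schwartz density on $\R^n$ with zero total integral is an \emph{exact} sum $\sum_i\partial_i h_i$ of partial derivatives of Schwartz functions, via iterated primitives) is correct and even gives an identity rather than an approximation. The genuine subtlety you flag is the passage from $\R^n$ to a general Nash group: a partition of unity $\{\lambda_\alpha\}$ on $K$ does not preserve $\ker(p_*)$, because the pieces $\lambda_\alpha g$ need not individually have vanishing fiber integral. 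A clean way around this is first to invoke the completed tensor product identification $\Sc(K\times M,\cdot)\cong\Sc(K,D_K)\mathbin{\widehat\otimes}\Sc(M,\cdot)$ (this is the content of the cited result in \cite{AG2}) to reduce to $M=\mathrm{pt}$, and then to perform a finite mass-balancing between the Euclidean patches of $K$---and across connected components, using genuine group translations $L_k$---before applying the Euclidean lemma on each patch. Your concern about the modular function is unnecessary here: $K$ acts by left translation on the first factor, which preserves left Haar measure, so the induced action on $D_{K\times M}$ introduces no modular twist.
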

This proposition follows from in \cite[Proposition 4.0.11]{AG2}.

\begin{corollary}
Let a Nash group $K$ act on a Nash manifold $M$. Let $E$ be a
$K$-equivariant Nash bundle over $M$. Let $N \subset M$ be a Nash
submanifold such that the action map $K \times N \to M$ is
submersive. Then there exists a canonical map $$\HC: \G(M,E)^K \to
\G(N,E|_N).$$
\end{corollary}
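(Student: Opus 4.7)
The plan is to construct $\HC$ as a composition of three natural maps. Let $a : K \times N \to M$ denote the action map $(k,n) \mapsto kn$ and let $p : K \times N \to N$ denote the projection onto the second factor. Under the $K$-action on $K\times N$ by left translation on the first factor, both $a$ and $p$ are $K$-equivariant, since $a(kk',n) = k \cdot a(k',n)$ and $p(kk',n) = p(k',n)$.

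By assumption $a$ is a Nash submersion, so Theorem \ref{NashSub} supplies the pullback $a^* : \G(M, E) \to \G(K \times N, a^* E)$, and the $K$-equivariance of $a$ ensures that $a^*$ sends $K$-invariants to $K$-invariants. Next, the $K$-equivariant structure on $E$ supplies a canonical isomorphism of $K$-equivariant Nash bundles $\Phi : p^*(E|_N) \xrightarrow{\sim} a^* E$, whose fiber at $(k',n)$ is the action map $E_n \to E_{k'n}$, $v \mapsto k' \cdot v$; the $K$-equivariance of $\Phi$ is exactly the cocycle identity $\tau_{kk'} = \tau_k \circ \tau_{k'}$ for the $K$-action on $E$. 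Finally, the proposition immediately preceding the corollary (applied with $N$ in place of $M$) gives the isomorphism $p^* : \G(N, E|_N) \xrightarrow{\sim} \G(K \times N, p^*(E|_N))^K$. I then define
\[
\HC := (p^*)^{-1} \circ \Phi^{-1}_{*} \circ a^* : \G(M,E)^K \to \G(N, E|_N),
\]
where $\Phi^{-1}_{*}$ denotes postcomposition with the bundle isomorphism $\Phi^{-1}$.

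The submersivity hypothesis is used in exactly one place: to guarantee that $a^*$ is defined, via dualizing the pushforward from Theorem \ref{NashSub}. Everything else is formal. The two remaining checks---that $a^*$ preserves $K$-invariance and that $\Phi$ intertwines the $K$-equivariant structures on $a^*E$ and $p^*(E|_N)$---are straightforward diagram chases using only that $a$ and $p$ are $K$-equivariant and that $\tau$ is a $K$-action by bundle maps. Consequently I do not expect a substantive obstacle; the content of the corollary lies in assembling these three ingredients correctly, with the submersion-pullback of Theorem \ref{NashSub} being the only nontrivial input.
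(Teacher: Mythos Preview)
Your proof is correct and is precisely the construction the paper has in mind: the corollary is stated without proof, as an immediate consequence of the preceding proposition, and your three-step composition $(p^*)^{-1}\circ\Phi^{-1}_*\circ a^*$ is exactly how one unpacks that implication. The only minor remark is that the pullback $a^*$ is, strictly speaking, defined in the Notation following Theorem~\ref{NashSub} (as the dual of $a_*$), not in the theorem itself; but this is a citation nicety, not a gap.
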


\begin{theorem}
Let a Nash group $K$ act on a Nash manifold $M$. Let $N$ be a
$K$-transitive Nash manifold. Let $\phi:M \to N$ be a Nash
$K$-equivariant map.

Let $z \in N$ be a point and $M_z:= \phi^{-1}(z)$ be its fiber.
Let $K_z$ be the stabilizer of $z$ in $K$. Let $E$ be a
$K$-equivariant Nash vector bundle over $M$.

Then there exists a canonical isomorphism $$\Fr:
\G(M_z,E|_{M_z})^{K_z} \cong \G(M,\E)^K.$$
\end{theorem}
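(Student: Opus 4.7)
The plan is to construct $\Fr$ and its inverse using the submersion machinery just developed, via the identification $M \cong (K \times M_z)/K_z$ coming from the transitive action on $N$.

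First I would verify that $\phi: M \to N$ is a Nash submersion. Because $K$ acts transitively on $N$, the orbit map $K \to N$, $k \mapsto kz$, is a surjective Nash submersion; $K$-equivariance of $\phi$ together with the local triviality this provides on $N$ forces $\phi$ to be submersive. Consequently $M_z = \phi^{-1}(z)$ is a Nash submanifold, and the action map $a: K \times M_z \to M$, $(k,m) \mapsto k\cdot m$, is a surjective Nash submersion whose fibers are precisely the $K_z$-orbits for the action $h\cdot(k,m) = (kh^{-1}, hm)$. In particular $M$ is the principal $K_z$-bundle quotient of $K\times M_z$.

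Next I would build the Harish-Chandra descent $\HC: \G(M,E)^K \to \G(M_z, E|_{M_z})^{K_z}$ by invoking the corollary just above: the submersivity of $K\times M_z \to M$ yields $\HC$, and the image automatically lands in the $K_z$-invariants because $K_z$ acts compatibly on $M_z$, $E|_{M_z}$, and on the whole construction via its inclusion into $K$. In the other direction, given $\eta \in \G(M_z, E|_{M_z})^{K_z}$, I would use the proposition preceding the corollary to view $\eta$ as a $K$-invariant Schwartz generalized section on $K \times M_z$ (pulled back along the second projection, with $K$ acting by left translation on itself). The $K_z$-invariance of $\eta$ then makes this pulled-back section invariant under the full combined $K \times K_z$-action; since $K_z$ acts freely with quotient $M$, this section descends to a section on $M$. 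Finally I apply Proposition \ref{EnoughPull} to $a: K\times M_z \to M$: the descended object pulls back to a genuine element of $\G(K\times M_z, a^*E)$, so the descent itself lies in $\G(M,E)$, and $K$-invariance is preserved. Defining $\Fr(\eta)$ to be this descended section gives the candidate inverse.

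The main obstacle will be the descent step: rigorously showing that a $K \times K_z$-invariant Schwartz generalized section on $K \times M_z$ arises from a (necessarily unique) element of $\G(M,E)$, and that this procedure is inverse to $\HC$. Locally this is straightforward: Theorem \ref{SurSubSec} provides a Nash section of $a$ over a finite open cover of $M$, on each piece of which $a$ is trivialized as a product with $K_z$, and on such a product the descent and its inverse amount to integration and pullback in the $K_z$-direction, so that $\HC\circ\Fr$ and $\Fr\circ\HC$ reduce to the identity by a Fubini-type argument. The global statement is then assembled using the sheaf property of $\G$ (Property \ref{pSheaf}) and the partition-of-unity statement (Property \ref{pCosheaf}). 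The density/modular-character bookkeeping that appears in the smooth-manifold version (Theorem \ref{Frob}) is absorbed automatically into the convention $\G(M,E) = \Sc^*(M, E^*\otimes D_M)$, because $D_M$ already accounts for the twist between $\Delta_K|_{K_z}$ and $\Delta_{K_z}$ along the fibers of $\phi$.
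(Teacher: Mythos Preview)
Your approach is correct and will go through, but the paper takes a somewhat more economical route that avoids the intermediate space $K\times M_z$ and the descent problem you flag as the main obstacle.

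Instead of pulling back to $K\times M_z$ and then descending along the free $K_z$-action, the paper applies Theorem~\ref{SurSubSec} directly to the orbit map $a_z:K\to N$, $g\mapsto gz$, to get Nash sections $s_i:U_i\to K$ over a finite cover $N=\bigcup U_i$. Each $s_i$ furnishes an explicit trivialization $\phi^{-1}(U_i)\cong U_i\times M_z$, hence a projection $p_i:\phi^{-1}(U_i)\to M_z$, and one simply sets $\xi_i:=p_i^*\xi$ for $\xi\in\G(M_z,E|_{M_z})^{K_z}$. The $K_z$-invariance of $\xi$ is used exactly once, to check that $\xi_i$ is independent of the choice of section $s_i$ (two sections differ by a Nash map into $K_z$); this gives $\xi_i|_{U_i\cap U_j}=\xi_j|_{U_i\cap U_j}$, and Property~\ref{pSheaf} glues them to $\Fr(\xi)\in\G(M,E)$. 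The inverse is $\HC$, exactly as you say.

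So your detour through $K\times M_z$ and Proposition~\ref{EnoughPull} is unnecessary: the local trivializations of $M$ over $N$ already exist at the level of Nash manifolds, and the ``descent'' you worry about is subsumed in the one-line observation that $p_i^*\xi$ does not depend on $s_i$. One small correction: there is no integration or Fubini argument here; both $\Fr$ and $\HC$ are built purely from pullbacks, so that part of your sketch is misleading.
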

\begin{proof}

Consider the map $a_z:K \to N$ given by $a_z(g)=gz$. It is a
submersion. Hence by Theorem \ref{SurSubSec} there exists a finite
open cover $N= \bigcup \limits _{i=1}^k U_i$ such that $a_z$ has a
Nash section $s_i$ on each $U_i$. This gives an isomorphism
$\phi^{-1}(U_i) \cong U_i \times M_z$ which defines a projection
$p: \phi^{-1}(U_i) \to M_z$. Let $\xi \in \G(M_z,E|_{M_z})^{K_z}$.
Denote $\xi_i:=p^* \xi$. Clearly it does not depend on the section
$s_i$. Hence $\xi_i|_{U_i \cap U_j}=\xi_j|_{U_i \cap U_j}$ and
hence by Property \ref{pSheaf} there exists $\eta \in \G(M,\E)$
such that $\eta|_{U_i} = \xi_i$. Clearly $\eta$ does not depend on
the choices. Hence we can define $\Fr(\xi) = \eta$.

It is easy to see that the map $\HC: \G(M,E)^K \to
\G(M_{z},E|_{M_{z}})$ described in the last corollary gives the
inverse map.
\end{proof}

Since our construction coincides with the construction of
Frobenius reciprocity for smooth manifolds (see e.g. \cite[Theorem
A.0.3]{AGS1}) we obtain the following corollary.

\begin{corollary}
Part (ii) of Theorem \ref{Frob} holds.
\end{corollary}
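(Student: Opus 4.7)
The plan is to deduce Part (ii) of Theorem \ref{Frob} from the theorem just proven above by identifying the Schwartz Frobenius map constructed there with the restriction of the smooth-manifold Frobenius map of Part (i) to the Schwartz class, after translating between the $\G$ notation (in which the theorem above is phrased) and the $\Sc^*$ notation with character twists (in which Part (ii) is phrased).

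First, I would align the bundle notation. By definition $\G(X,F) = \Sc^*(X, F^* \otimes D_X)$, so substituting $E := \E^* \otimes D_M$ in the preceding theorem yields a canonical isomorphism
$$\Sc^*(M_z,\, \E|_{M_z} \otimes D_M|_{M_z} \otimes D_{M_z}^*)^{K_z} \;\cong\; \Sc^*(M, \E)^K.$$
Since $N = K/K_z$ as a $K$-space and $\phi$ is $K$-equivariant, the exact sequence of the fibration gives $D_M|_{M_z} \cong \phi^*(D_N|_z) \otimes D_{M_z}$, and the standard identification $D_N|_z \cong \Delta_K|_{K_z} \cdot \Delta_{K_z}^{-1}$ on the homogeneous space recasts the left hand side as $\Sc^*(M_z,\, \E|_{M_z} \otimes \Delta_K|_{K_z} \cdot \Delta_{K_z}^{-1})^{K_z}$, matching the target of Part (ii) verbatim.

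Second, I would verify that the map $\Fr$ constructed in the preceding proof is literally the restriction of the smooth Frobenius reciprocity of Part (i) (cf.\ \cite[Theorem A.0.3]{AGS1}, \cite{Bar}). Both constructions follow the same recipe: choose a finite cover $N = \bigcup U_i$ over which the orbit map $a_z: K \to N$ admits Nash sections $s_i$, use the $s_i$ to trivialize $\phi^{-1}(U_i) \cong U_i \times M_z$, pull back from $M_z$, and glue. The gluing uses Property \ref{pSheaf} (sheaf property of Schwartz generalized functions) in our setting and a smooth partition of unity in the classical setting, but both produce the same generalized function on $M$ because both maps are inverse to the canonical Harish-Chandra restriction $\HC$ appearing in the corollary immediately preceding the theorem.

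Since the construction of the theorem above manifestly stays inside the Schwartz class and coincides with the Part (i) isomorphism on its domain, it follows that the latter restricts to Schwartz distributions, which is precisely Part (ii). The main technical point is the bundle bookkeeping relating $D_M|_{M_z}$, $D_{M_z}$, and the modular characters, together with ensuring that the local sections $s_i$ of the Nash submersion $a_z$ respect the Schwartz class upon gluing; the key analytic input here is that pullback along a Nash submersion sends Schwartz generalized functions to Schwartz generalized functions, which is exactly Theorem \ref{NashSub}.
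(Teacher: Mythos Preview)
Your approach is essentially the same as the paper's: the paper's entire proof is the single sentence ``Since our construction coincides with the construction of Frobenius reciprocity for smooth manifolds (see e.g.\ \cite[Theorem A.0.3]{AGS1}) we obtain the following corollary.'' You have unpacked this observation, spelling out the translation between $\G$ and $\Sc^*$ via the density-bundle twist and the identification of $D_N|_z$ with the modular-character quotient, and then checking that the Nash construction literally restricts the smooth one (both being inverse to $\HC$). One bookkeeping caution: with $E := \E^* \otimes D_M$ one computes $\G(M_z,E|_{M_z}) = \Sc^*(M_z,\E|_{M_z}\otimes D_M^*|_{M_z}\otimes D_{M_z})$, i.e.\ the twist should be $D_M^{-1}|_{M_z}\otimes D_{M_z}$ rather than $D_M|_{M_z}\otimes D_{M_z}^{-1}$; this is harmless once matched with the correct convention for $D_N|_z$ on the homogeneous space, but be sure your two sign choices are consistent.
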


\subsection{$K$-invariant distributions compactly supported modulo
$K$.} \label{KInvAreSchwartz}
$ $\\

In this subsection we prove Theorem \ref{CompSchwartz}. Let us
first remind its formulation.

\begin{theorem} \label{CompSchwartz2}
Let a Nash group $K$ act on a Nash manifold $M$. Let $E$ be a
$K$-equivariant Nash bundle over $M$. Let $\xi \in \cD(M,E)^K$
such that $\Supp(\xi)$ is Nashly compact modulo $K$. Then $\xi \in
\Sc^*(M,E)^K$.
\end{theorem}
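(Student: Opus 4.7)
The plan is to reduce the statement to the case of a distribution with compact support (which is automatically a Schwartz distribution), by using the $K$-invariance to ``unfold'' a compactly supported piece of $\xi$ into the full $K$-invariant distribution on $M$. The semi-algebraic hypothesis built into ``Nashly compact modulo $K$'' is crucial for making this unfolding rigorous within the Nash category.

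First I would localize using the hypothesis $\Supp\xi \subset Z \subset KC'$. Choose a relatively compact semi-algebraic open neighborhood $V$ of $C'$ in $M$. Then $\xi|_V$ is a distribution on $V$ whose support $\overline V \cap \Supp\xi \subset \overline V \cap Z$ is compact. A distribution of compact support on a Nash manifold pairs continuously with Schwartz functions (by a standard partition of unity argument together with Property \ref{pClass}), so $\xi|_V \in \Sc^*(V, E|_V)$.

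The heart of the proof is constructing a smooth compactly supported Nash function $\phi \in \cC(M)$ with $\Supp\phi \subset V$ such that the $K$-average $\tilde\phi(x) := \int_K \phi(k^{-1}x)\,dk$ converges on $M$ and equals $1$ on $\Supp\xi$. Granting such $\phi$, the $K$-invariance of $\xi$ gives, for every $f \in \cC(M, E)$,
\[
\xi(f) \;=\; \xi(\tilde\phi \cdot f) \;=\; \int_K (\xi|_V)(\phi \cdot L_k f)\,dk,
\]
where $L_k$ denotes the natural $K$-action on sections of $E$. The right-hand side makes sense for $f \in \Sc(M, E)$ as well, because for each $k$ the section $\phi \cdot L_k f$ lies in $\cC(V, E|_V)$ and $\xi|_V$ is already Schwartz; continuity in the Schwartz topology on $f$ then follows from standard Schwartz seminorm estimates combined with the semi-algebraic (Tarski--Seidenberg) control of the effective integration domain in~$k$. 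Using Property \ref{pSheaf}, the functional defined by the right-hand side agrees with $\xi$ on $\cC(M, E)$, so it provides the desired extension.

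The main obstacle is producing the averaging function $\phi$. For a generic Nash action, $\int_K \phi(k^{-1}x)\,dk$ diverges at points with non-compact stabilizer, so one cannot simply average an arbitrary bump function. Here the semi-algebraic nature of $Z$ combined with the containment $Z \subset KC'$ is precisely what enables the necessary control: applying Tarski--Seidenberg to the action map $K \times V \to M$ yields uniform Nash bounds on the Haar measure of $\{k : k^{-1}x \in \Supp\phi\}$ as $x$ ranges over a semi-algebraic neighborhood of $\Supp\xi$. One then constructs $\phi$ as a bump function supported in $V$ multiplied by a suitable Nash factor vanishing to sufficient order on the locus of non-compact stabilizers, so that $\tilde\phi$ is finite and Nash positive on a semi-algebraic neighborhood of $\Supp\xi$; rescaling $\phi$ pointwise by $1/\tilde\phi$ on that neighborhood arranges $\tilde\phi \equiv 1$ on $\Supp\xi$.
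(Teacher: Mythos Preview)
Your averaging approach has a genuine obstruction that the proposed fix does not overcome. Consider a point $x_0\in\Supp\xi$ whose stabilizer $K_{x_0}$ is non-compact (for instance a $K$-fixed point; in the intended applications $\Supp\xi$ typically contains the origin of a linear representation). For any compactly supported $\phi$ and any $y$ in the orbit $Kx_0$, the set $\{k\in K: k^{-1}y\in\Supp\phi\}$ is a union of $K_{x_0}$-cosets and hence has infinite Haar measure whenever it is nonempty. Thus either $\phi$ vanishes identically on $Kx_0$, forcing $\tilde\phi(x_0)=0$, or the integral defining $\tilde\phi(x_0)$ diverges. No Nash weight vanishing on the non-compact-stabilizer locus can repair this: such a weight forces $\phi\equiv 0$ on $Kx_0$, and then $\tilde\phi(x_0)=0\neq 1$. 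The subsequent rescaling by $1/\tilde\phi$ is therefore unavailable exactly where you need it.

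The paper avoids averaging entirely. It pulls $\xi$ back along the action map $m:K\times M\to M$, which is always a submersion, so $m^*\xi$ is well-defined as a distribution on $K\times M$. By Frobenius reciprocity the $K$-invariant distribution $m^*\xi$ on $K\times M$ corresponds to a distribution $\eta'$ on $M$, and restricting to $K\times U$ (with $U$ a relatively compact semi-algebraic open set containing $C'$) corresponds to $\eta'|_U$, which is Schwartz simply because $U$ is relatively compact (this is the content of Lemma \ref{takayata}; your first step uses essentially the same idea). One then concludes via Proposition \ref{EnoughPull}: if the pullback of a distribution under a surjective submersion is Schwartz, the distribution itself is Schwartz. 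This bypasses all convergence issues coming from non-compact stabilizers.
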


For the proof we will need the following lemmas.

\begin{lemma} \label{RelComp}
Let $M$ be a Nash manifold. Let $C \subset M$ be a compact subset.
Then there exists a relatively compact open (semi-algebraic)
subset $U \subset M$ that includes $C$.
\end{lemma}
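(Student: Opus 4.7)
The plan is to reduce the statement to the classical fact that any compact subset of $\R^n$ sits inside a bounded open (hence semi-algebraic, hence relatively compact) ball, using the local triviality of Nash manifolds (Theorem \ref{loctriv}) as the bridge. Concretely, by Theorem \ref{loctriv} I would cover $M$ by finitely many open semi-algebraic subsets $V_1,\dots,V_k$ together with Nash diffeomorphisms $\phi_i : V_i \xrightarrow{\sim} \R^{n_i}$.

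For each point $x \in C$ I would choose some index $i(x)$ with $x \in V_{i(x)}$ and define $B_x \subset V_{i(x)}$ as the $\phi_{i(x)}$-preimage of the open unit ball in $\R^{n_{i(x)}}$ around $\phi_{i(x)}(x)$. Then $B_x$ is open and semi-algebraic, and its closure inside $V_{i(x)}$ is the preimage of a closed ball, hence compact. Since $C$ is compact in $M$, I can extract a finite subcover $B_{x_1},\dots,B_{x_m}$ of $C$ and set
\[
U := \bigcup_{j=1}^{m} B_{x_j}.
\]
This $U$ is open and semi-algebraic as a finite union of open semi-algebraic subsets, and it contains $C$ by construction. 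Its closure in $M$ is contained in $\bigcup_j \overline{B_{x_j}}$, a finite union of compact sets, so $U$ is relatively compact in $M$.

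The only point requiring a moment of care is that the closure of $B_{x_j}$ taken inside $V_{i(x_j)}$ coincides with its closure in $M$; this holds because the closure inside $V_{i(x_j)}$ is already compact and $V_{i(x_j)}$ is open in $M$, so no new limit points appear. Apart from that minor check, there is no real obstacle: once local triviality is invoked, the argument is essentially the one for $\R^n$.
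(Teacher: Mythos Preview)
Your proof is correct and follows essentially the same approach as the paper: cover $C$ by open balls inside affine charts (which is exactly what Theorem \ref{loctriv} provides), extract a finite subcover by compactness, and take the union. You are simply more explicit about invoking local triviality and about the closure-in-$V_i$ versus closure-in-$M$ point, which the paper leaves tacit.
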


\begin{proof}
For any point $x \in C$ choose an affine chart, and let $U_x$ be
an open ball with center at $x$ inside this chart. Those $U_x$
give an open cover of $C$. Choose a finite subcover
$\{U_i\}_{i=1}^n$ and let $U:= \bigcup_{i=1}^n U_i$.
\end{proof}

\begin{lemma} \label{takayata}
Let $M$ be a Nash manifold. Let $E$ be a Nash bundle over $M$. Let
$U \subset M$ be a relatively compact open (semi-algebraic)
subset. Let $\xi \in \cD(M,E)$. Then $\xi|_U \in \Sc^*(U,E|_U)$.
\end{lemma}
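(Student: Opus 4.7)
The plan is to show that $\xi|_U$ extends from $C_c^\infty(U,E|_U)$ to a continuous linear functional on $\Sc(U,E|_U)$, by factoring the extension-by-zero map through a Fréchet step of the LF space $C_c^\infty(M,E)$ on which $\xi$ is automatically continuous.

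First I would invoke Property \ref{pOpenSet} to identify $\Sc(U,E|_U)$ with the closed subspace of $\Sc(M,E)$ consisting of those sections that vanish on $M\setminus U$ together with all derivatives; denote this extension-by-zero map by $\iota_1:\Sc(U,E|_U)\hookrightarrow\Sc(M,E)$. Any section in the image of $\iota_1$ is supported in $\overline{U}$, which is compact by the relative compactness hypothesis on $U$. Consequently $\iota_1$ takes values in $C^\infty_{\overline{U}}(M,E)$, the Fréchet space of smooth sections of $E$ with support in the compact set $\overline{U}$; this space sits continuously inside $C_c^\infty(M,E)$ as one of the Fréchet steps of the LF topology. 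Call the resulting factored map $\iota:\Sc(U,E|_U)\to C^\infty_{\overline{U}}(M,E)$.

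Next I would verify that $\iota$ is continuous. Using Theorem \ref{loctriv}, a finite partition of unity (Property \ref{pCosheaf}), and a local trivialization of $E$, the Fréchet topology on $C^\infty_{\overline{U}}(M,E)$ is generated by seminorms of the form $\sup_{\overline{U}}|D^\alpha f|$. These are dominated by the corresponding seminorms $\sup_M |D^\alpha f|$ on $\Sc(M,E)$, which are among the Schwartz seminorms (the ones with trivial polynomial weight, cf.\ Property \ref{pClass}). Since $\iota_1$ is continuous into $\Sc(M,E)$, this forces $\iota$ to be continuous into $C^\infty_{\overline{U}}(M,E)$.

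Finally, because $\xi\in\cD(M,E)=C_c^\infty(M,E)^*$ is continuous for the LF topology, its restriction to the Fréchet step $C^\infty_{\overline{U}}(M,E)$ is continuous; hence the composition $\tilde\xi:=\xi\circ\iota$ is a continuous linear functional on $\Sc(U,E|_U)$, i.e.\ an element of $\Sc^*(U,E|_U)$. A direct check on test sections in $C_c^\infty(U,E|_U)$ (which map under $\iota$ to their extensions by zero, lying in $C_c^\infty(M,E)$ with support in $\overline{U}$) shows $\tilde\xi$ agrees with $\xi|_U$ there, so $\xi|_U\in\Sc^*(U,E|_U)$ as desired. The only non-formal point is the continuity of $\iota$, i.e.\ the comparison of the Schwartz topology with the $C^\infty$ topology on a compact set; this is immediate from Property \ref{pClass} in the model case $M=\R^n$ and is transported to a general Nash manifold by Theorem \ref{loctriv} and partition of unity.
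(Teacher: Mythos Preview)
Your argument is correct and is essentially the same as the paper's proof, which simply asserts that extension by zero $\mathrm{ext}:\Sc(U,E|_U)\to C_c^\infty(M,E)$ is continuous; you have unpacked exactly why this holds, namely that the image lands in the Fr\'echet step $C^\infty_{\overline{U}}(M,E)$ thanks to the relative compactness of $U$, and that the Schwartz seminorms dominate the $C^\infty$ seminorms on a compact set.
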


\begin{proof}
It follows from the fact that extension by zero $ext:\Sc(U,E|_U)
\to C_c^{\infty}(M,E)$ is a continuous map.
\end{proof}

\begin{proof}[Proof of Theorem \ref{CompSchwartz2}]
Let $Z \subset M$ be a semi-algebraic closed subset and $C \subset
M$ be a compact subset such that $Supp(\xi) \subset Z \subset KC$.

Let $U \supset C$ be as in Lemma \ref{RelComp}. Let $\xi' :=
\xi|_{KU}$. Since $\xi|_{M - Z}=0$, it is enough to show that
$\xi'$ is Schwartz.

Consider the surjective submersion $m_U:K \times U \to KU$. Let
$$\xi'':=m_U^* (\xi') \in \cD(K \times U,m_U^*(E))^K.$$ By
Proposition \ref{EnoughPull}, it is enough to show that $$\xi''
\in \Sc^*(K \times U,m_U^*(E)).$$
By Frobenius reciprocity, $\xi''$ corresponds to $\eta \in
\cD(U,E)$. It is enough to prove that $\eta \in \Sc^*(U,E)$.
Consider the submersion $m:K \times M \to M$ and let $$\xi''' :=
m^*(\xi) \in \cD(K \times M,m^*(E)).$$ By Frobenius reciprocity,
$\xi'''$ corresponds to $\eta' \in \cD(M,E)$. Clearly $\eta =
\eta'|_U$. Hence by Lemma \ref{takayata}, $\eta \in \Sc^*(U,E)$.
\end{proof}
\section{Proof of the Archimedean Homogeneity Theorem} \label{AppRealHom}
The goal of this appendix is to prove Theorem \ref{ArchHom} for
Archimedean $F$. First we remind its formulation.

\setcounter{lemma}{0}

\begin{theorem} [Archimedean Homogeneity]
Let $V$ be a vector space over $F$. Let $B$ be a non-degenerate
symmetric bilinear form on $V$. Let $M$ be a Nash manifold. Let $L
\subset \Sc^*_{V\times M}(Z(B)\times M)$ be a non-zero subspace
such that for all $\xi \in L $ we have $\Fou_B(\xi) \in L$ and $B
\xi \in L$ (here $B$ is interpreted as a quadratic form).

Then there exists a non-zero distribution $\xi \in L$ which is
adapted to $B$.
\end{theorem}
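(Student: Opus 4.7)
The approach is to realize the hypotheses through the oscillator (Weil) representation of $\SL_2(F)$ on $\Sc^*(V\times M)$ acting in the $V$-direction. Setting $n=\dim V$, multiplication by $B$ is the infinitesimal generator of the Weil action of the upper unipotent subgroup, $\Fou_B$ is (up to the Weil constant $\gamma(B)$) the image of the Weyl element $w=\bigl(\begin{smallmatrix}0&1\\-1&0\end{smallmatrix}\bigr)$, and the homothety $\rho(t)$ together with the factor $\delta_B(t)|t|^{n/2}$ realizes the diagonal torus. Consequently the operators $e=B\,\cdot$, $f=\Fou_B^{-1}e\,\Fou_B$ and $h=[e,f]$ form an $\sll_2$-triple on $\Sc^*(V\times M)$, and the hypothesis that $e(L),\Fou_B(L)\subset L$ forces $L$ to be stable under the full triple $\{e,h,f\}$ and under $\Fou_B$.

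First I would produce a non-zero joint $\rho(F^\times)$-eigenvector $\xi_1\in L$ with a specific homogeneity character. Since $Z(B)$ is a cone, $\rho(F^\times)$ preserves $L$. Stratify $Z(B)$ into $\{0\}$ and the smooth locus $Z(B)\setminus\{0\}$; Theorem \ref{NashFilt} then filters $L$ by subspaces whose graded quotients embed into $\Sc^*(\text{stratum}\times M,\Sym^k(CN))$, and Frobenius reciprocity (Theorem \ref{Frob}) presents each such quotient as induced from a finite-dimensional representation of the $\rho(F^\times)$-stabilizer. In particular only finitely many $F^\times$-characters occur in each graded piece. A spectral/filtration argument combined with the $\sll_2$-raising and lowering by $e,f$ (which shift the homogeneity by $|\cdot|^{\pm 1}$ while keeping us inside $L$) then yields some $\xi_1\in L$ with $\rho(t)\xi_1 = \eta(t)\delta_B(t)|t|^{n/2}\xi_1$, where $\eta$ is a character of $F^\times/F^{\times,\circ}$ -- trivial when $F=\C$, and either trivial or $\sgn$ when $F=\R$.

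If $\eta=\sgn$ (possible only when $F=\R$), then $\xi_1$ already satisfies clause (ii) of adaptedness and we are done. Otherwise $\eta$ is trivial, the homogeneity of clause (i) holds, and we must upgrade $\xi_1$ to a $\Fou_B$-eigenvector. The conjugation relation $\Fou_B\rho(t)\Fou_B^{-1}\propto \rho(t^{-1})$, with proportionality constant a Weil factor exactly compensating $\delta_B(t)|t|^{n/2}$, makes the specific eigencharacter $\chi(t)=\delta_B(t)|t|^{n/2}$ self-dual under Fourier; this is the whole point of the $\delta_B$ normalization in the definition of adaptedness. Accordingly $L_\chi:=\{\xi\in L:\rho(t)\xi=\chi(t)\xi\}$ is stable under $\Fou_B$, and since $L_\chi$ is finite-dimensional (by the first step) and $\Fou_B^2$ acts on $L_\chi$ as a scalar (coming from the identity $\omega(w)^2=\omega(-I)$ in the Weil representation), any $\Fou_B$-eigenvector in $L_\chi$ is an adapted distribution of type (i).

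The main obstacle is the finite-dimensionality/admissibility of $L_\chi$ in the first step: $L$ is a priori infinite-dimensional and the $\rho(F^\times)$-action need not admit eigenvectors abstractly. The resolution, which is the only step requiring real technical work, exploits the support condition $L\subset \Sc^*_{V\times M}(Z(B)\times M)$ via the $\Sym^k$-conormal filtration of Theorem \ref{NashFilt}, bounding the homogeneity weights appearing in each graded piece, and combining this with the $\sll_2$-action (which moves between finitely many weights via $e,f$) to confine a non-zero portion of $L$ to a finite-dimensional $F^\times$-isotypic subspace where the remaining linear-algebra arguments apply.
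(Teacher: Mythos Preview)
Your Weil-representation setup matches the paper's, but the route you propose to finite-dimensionality has genuine gaps. The assertion ``since $Z(B)$ is a cone, $\rho(F^\times)$ preserves $L$'' is not justified: conicality only gives $\rho$-stability of the ambient space $\Sc^*_{V\times M}(Z(B)\times M)$, not of the particular subspace $L$; from the hypotheses you get only the infinitesimal $\sll_2$-action on $L$, and integrating $h$ to the torus on an a priori infinite-dimensional $L$ is precisely the issue. More seriously, the conormal-filtration argument does not deliver what you claim. On the smooth stratum $Z(B)\setminus\{0\}$ the homothety action is free, so Frobenius reciprocity identifies each $\rho$-isotypic piece of $\Sc^*(Z(B)\setminus\{0\},\Sym^k(CN))$ with distributions on a slice (roughly the projective quadric), which is infinite-dimensional, and \emph{every} character occurs. ``Induced from a finite-dimensional representation of the stabilizer'' (here the trivial group) in no way implies finitely many characters appear; your sentence ``only finitely many $F^\times$-characters occur in each graded piece'' is false. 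Nothing in your outline explains why the homogeneity of an eventual eigenvector must be exactly $\delta_B(t)|t|^{n/2}$ or $\delta_B(t)\,t\,|t|^{n/2}$ rather than an arbitrary character.

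The paper's argument hinges on the one consequence of the support hypothesis you never invoke: nilpotency. Tempered distributions have globally finite order, and $B$ has a simple zero along $Z(B)\setminus\{0\}$, so $\Supp(\xi)\subset Z(B)$ forces $B^k\xi=0$ for some $k$; that is, $e$ acts nilpotently on $\xi$. The support condition on $\Fou_B(\xi)$ likewise forces $f$ nilpotent on $\xi$. A short, purely algebraic lemma then applies: in any $\sll_2$-module, if $e^kv=f^nv=0$ then $U(\sll_2)v$ is finite-dimensional. This immediately produces a finite-dimensional $\sll_2$-stable subspace $L'\subset L$, which integrates to an honest $\SL_2(F)$-representation (in passing forcing $\dim V$ even or $F=\C$ via the metaplectic sign), and elementary highest-weight theory for finite-dimensional $\SL_2$-modules yields the adapted vector directly. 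Your stratification machinery is a detour around the real content: the support conditions on $\xi$ and $\Fou_B\xi$ are there precisely to make $e$ and $f$ locally nilpotent, and once you use that, no geometric filtration is needed.
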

\noindent Till the end of the section we assume that $F$ is
Archimedean and we fix $V$ and $B$.

First we will need some facts about the Weil representation. For a
survey on the Weil representation in the Archimedean case we refer
the reader to \cite[\S 1]{RS1}.

\begin{enumerate}
\item There exists a unique (infinitesimal) action
$\pi$ of $\sll_2(F)$ on $\Sc^*(V)$ such that\\
(i) $\pi(\begin{pmatrix}
  0 & 1 \\
  0 & 0
\end{pmatrix}) \xi = -\mathrm{i} \pi Re(B)
\xi$ and $\pi(\begin{pmatrix}
  0 & 0 \\
  -1 & 0
\end{pmatrix}) \xi = -\Fou_B^{-1} (\mathrm{i} \pi Re(B)
\Fou_B(\xi))$.\\
(ii) If $F=\C$ then $\pi(\begin{pmatrix}
  0 & \mathrm{i} \\
  0 & 0
\end{pmatrix})  = \pi(\begin{pmatrix}
  0 & 0 \\
  -\mathrm{i} & 0
\end{pmatrix})=0$

\item It can be lifted to an action of the metaplectic group
$\Mp(2,F)$.

We will denote this action by $\Pi$.

\item In case $F=\C$ we have $\Mp(2,F)=\SL_2(F)$ and in case $F=\R$ the
group $\Mp(2,F)$ is a connected 2-fold covering of $\SL_2(F)$. We
will denote by $\eps \in \Mp(2,F)$ the central element of order 2
satisfying $\SL_2(F)=\Mp(2,F)/\{1, \eps\}.$

\item In case $F=\R$ we have $\Pi(\eps)=(-1)^{\dim V}$ and therefore if $\dim
V$ is even then $\Pi$ factors through $\SL_2(F)$ and if $\dim V$
is odd then no nontrivial subrepresentation of $\Pi$ factors
through $\SL_2(F)$. In particular if $\dim V$ is odd then $\Pi$
has no nontrivial finite-dimensional representations, since every
finite-dimensional representation of $\Mp(2,F)$ factors through
$\SL_2(F)$. \label{FinDimSubrep}

\item In case $F=\C$ or in case $\dim V$ is even we have $\Pi(\begin{pmatrix}
  t & 0 \\
  0 & t^{-1}
\end{pmatrix}) \xi=\delta^{-1}(t)|t|^{-\dim V/2} \rho(t) \xi$ and $\Pi(\begin{pmatrix}
  0 & 1 \\
  -1 & 0
\end{pmatrix}) \xi=
\gamma(B)^{-1} \Fou_B \xi.$
\end{enumerate}

We also need the following straightforward lemma.

\begin{lemma} \label{sl2rep}
Let $(\Lambda, L)$ be a continuous finite-dimensional
representation of $\SL_2(\R)$.  Then there exists a non-zero $\xi
\in L$ such that either $$\Lambda(\begin{pmatrix}
  t & 0 \\
  0 & t^{-1}
\end{pmatrix})\xi=\xi \text{ and } \Lambda(\begin{pmatrix}
  0 & 1 \\
  -1 & 0
\end{pmatrix})\xi \text{ is proportional to }\xi$$ or
$$\Lambda(\begin{pmatrix}
  t & 0 \\
  0 & t^{-1}
\end{pmatrix})\xi=t \xi,$$
for all $t$.
\end{lemma}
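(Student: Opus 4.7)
The plan is to reduce the claim to the classical highest-weight classification of finite-dimensional representations of $\SL_2(\R)$. Any continuous finite-dimensional representation of $\SL_2(\R)$ is automatically smooth, and by passing to the complexified Lie algebra $\sll_2(\C)$ and invoking Weyl's theorem on complete reducibility, one can decompose $L = \bigoplus_\alpha L_\alpha$ as a direct sum of irreducible subrepresentations, where each $L_\alpha \cong \Sym^{n_\alpha}(\C^2)$ for some integer $n_\alpha \geq 0$. On $\Sym^n(\C^2)$ the torus $T = \{\diag(t,t^{-1})\}$ acts with weights $n, n-2, \ldots, -n$; furthermore, the central element $-I \in \SL_2(\R)$ acts by $(-1)^n$ on $\Sym^n(\C^2)$, so a weight-$k$ vector $v$ (characterized by $\Lambda(\diag(t,t^{-1}))v = t^k v$ for $t>0$) automatically satisfies this identity for \emph{all} $t\in\R^\times$, because $n\equiv k \pmod 2$.

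I would then split into two cases according to the parities of the $n_\alpha$. If some $n_\alpha$ is odd, then $\Sym^{n_\alpha}(\C^2)$ contains a non-zero weight-$1$ vector $\xi$ (since $\{n, n-2, \ldots, -n\}\ni 1$ whenever $n$ is odd and positive), and this $\xi$ realizes the second alternative of the lemma.

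Otherwise every $n_\alpha$ is even, and since $L \neq 0$ there is at least one non-zero summand, which contributes a non-trivial weight-$0$ line; thus the weight-$0$ subspace $L_0 := L^T$ is non-zero. The element $w = \begin{pmatrix}0&1\\-1&0\end{pmatrix}$ satisfies $w\,\diag(t,t^{-1})\,w^{-1} = \diag(t^{-1},t)$, so conjugation by $w$ interchanges the weight-$k$ and weight-$(-k)$ subspaces; in particular $\Lambda(w)$ preserves $L_0$. Since $L_0$ is a non-zero finite-dimensional complex vector space, the restriction $\Lambda(w)|_{L_0}$ admits an eigenvector $\xi \in L_0$, which gives the first alternative.

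The argument is essentially routine once complete reducibility is in hand; there is no serious obstacle. The only point that merits care is the passage from $t>0$ to arbitrary $t \in \R^\times$ in the torus action, which is handled uniformly by the central character of each irreducible summand as recorded in the first paragraph.
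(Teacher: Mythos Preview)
Your proof is correct. The paper itself does not give a proof of this lemma, calling it ``straightforward,'' and your argument via complete reducibility and the explicit weight decomposition of $\Sym^n(\C^2)$ is precisely the standard way to make this explicit; the case split on the parity of the highest weights and the observation that $w$ preserves the zero-weight space are exactly what is needed.
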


Now we are ready to prove the theorem.
\begin{proof}[Proof of Theorem \ref{ArchHom}]
Without loss of generality assume $M=pt$.

Let $\xi \in L$ be a non-zero distribution. Let $L':=
U_{\C}(\sll_2(\R)) \xi \subset L$. Here, $U_{\C}$ means the
complexified universal enveloping algebra.

We are given that $\xi, \Fou_B(\xi) \in \Sc^*_V(Z(B))$. By Lemma
\ref{FinDim} below this implies that $L' \subset \Sc^*(V)$ is
finite-dimensional. Clearly, $L'$ is also a subrepresentation of
$\Pi$. Therefore by Fact (\ref{FinDimSubrep}), $F=\C$ or $\dim V$
is even. Hence $\Pi$ factors through $\SL_2(F)$.

Now by Lemma \ref{sl2rep} there exists $\xi' \in L'$ which is
$B$-adapted.
\end{proof}

\begin{lemma}\label{FinDim}
Let $V$ be a representation of $\sll_2$. Let $v \in V$ be a vector
such that $e^k v=f^n v=0$ for some $n,k$. Then the representation
generated by $v$ is finite-dimensional.\footnote{For our purposes
it is enough to prove this lemma for k=1.}
\end{lemma}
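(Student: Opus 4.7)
The plan is to prove the lemma by a nested induction: an outer induction on $k$, with an inner induction on $n$ handling the base case $k=1$. (The footnote observes that only $k=1$ is actually needed for the application, so the base case is the real content; the extension to general $k$ is a bookkeeping reduction.) The only technical ingredient is the standard commutator identity $[e, f^m] = m f^{m-1}(h - m + 1)$, an immediate consequence of $[e,f] = h$ and $[h,f] = -2f$ by induction on $m$.

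The base case $k=1$ proceeds as follows. Apply $e$ to the relation $f^n v = 0$ and rewrite via the commutator to obtain $f^{n-1}(h - n + 1)v = 0$. Set $w := (h - n + 1)v$; then $f^{n-1}w = 0$ by construction, and a short check using $[e,h] = -2e$ gives $ew = 0$. Thus $w$ satisfies the hypotheses with parameters $(1, n-1)$, so $U(\sll_2)w$ is finite-dimensional by the inner inductive hypothesis. In the quotient $V / U(\sll_2)w$ the image $\bar v$ satisfies $e\bar v = 0$, $f^n \bar v = 0$, and $h\bar v = (n-1)\bar v$, making it a genuine highest weight vector of integral weight $n-1$; the cyclic module it generates is therefore a quotient of the $n$-dimensional irreducible $L(n-1)$. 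The short exact sequence $0 \to U(\sll_2)w \to U(\sll_2)v \to U(\sll_2)\bar v \to 0$ combines the two finite-dimensional pieces. For the outer step $k-1 \Rightarrow k$, let $v' := ev$, so that $e^{k-1}v' = 0$. Expanding $f^{n+1}e = ef^{n+1} - (n+1)f^n(h - n)$ and applying to $v$ verifies $f^{n+1}v' = 0$: the first term vanishes since $f^{n+1}v = 0$, and $f^n(h-n)v = f^n h v - n f^n v = (h + 2n)f^n v - 0 = 0$. By the outer inductive hypothesis, $U(\sll_2)v'$ is finite-dimensional, and in $V/U(\sll_2)v'$ the image of $v$ satisfies the $k=1$ hypotheses, so another short exact sequence closes the step.

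The only subtlety is the structure of the induction itself: the outer step replaces $(k,n)$ by $(k-1, n+1)$, so a single induction on $k+n$ is unavailable. The nested formulation sidesteps this because the outer recursion only ever feeds back into the base case $k=1$, inside which $n$ decreases. Beyond this bookkeeping the proof is a short computation with the three $\sll_2$ commutator identities, the one creative choice being the auxiliary vector $w = (h - n + 1)v$ that produces an honest highest weight vector in the quotient.
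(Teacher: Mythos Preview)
Your proof is correct. Both arguments run an outer induction on $k$; the substantive difference is in the base case $k=1$. The paper handles it in one stroke: from $ev=0$ and $f^n v=0$ it derives $e^n f^n v = n!\,\prod_{i=0}^{n-1}(h-i)\,v = 0$, so $h$ satisfies a polynomial on $U(h)v$, whence $U(h)v$ is finite-dimensional with $h$ acting semisimply; picking an $h$-eigenbasis and invoking PBW finishes immediately. Your base case instead peels off one $f$ at a time via an inner induction on $n$, using the commutator $[e,f^n]$ to produce the auxiliary vector $w=(h-n+1)v$ and a short exact sequence whose quotient is visibly a highest-weight module of weight $n-1$ with $f^n$ acting by zero. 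Your route is more elementary --- it never needs the closed formula for $e^\ell f^\ell$ or the Harish-Chandra projection --- at the cost of an extra layer of induction. In the outer step the two proofs are close variants: the paper jumps straight to $w=e^{k-1}v$ and checks $f^{n+k-1}w=0$ by a PBW weight argument, while you step down one $e$ at a time with $v'=ev$ and the explicit commutator computation $f^{n+1}v'=0$; either choice works.
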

This lemma is probably well-known. Since we have not found any
reference we include the proof.
\begin{proof}
The proof is by induction on k.\\\\
Base k=1:\\
It is easy to see that $$e^l f^l v=l!(\prod_{i=0}^{l-1}(h-i))v$$
for all $l$. This can be checked by direct computation, and also
follows from the fact that $e^l f^l$ is of weight $0$, hence it
acts on the singular vector $v$ by its Harish-Chandra projection
which is $$\HC(e^l f^l)=l! \prod_{i=0}^{l-1}(h-i).$$

Therefore $(\prod_{i=0}^{n-1}(h-i))v=0$.

Hence $W:=U_{\C}(h) v$ is finite-dimensional and $h$ acts on it
semi-simply. Here, $U_{\C}(h)$ denotes the universal enveloping
algebra of $h$.  Let $\{v_i\}_{i=1}^m$ be an eigenbasis of $h$ in
$W$. It is enough to show that $U_{\C}(\sll_2)v_i$ is
finite-dimensional for any $i$. Note that $e|_W=f^n|_W=0$. Now,
$U_{\C}(\sll_2)v_i$ is finite-dimensional by the
Poincare-Birkhoff-Witt
Theorem.\\\\
Induction step:\\
Let $w:=e^{k-1}v$. Let us show that $f^{n+k-1} w=0$. Consider the
element $f^{n+k-1}e^{k-1}\in U_{\C}(\sll_2)$. It is of weight
$-2n$, hence by the Poincare-Birkhoff-Witt Theorem it can be
rewritten as a combination of elements of the form $e^a h^b f^c$
such that $c-a= n$ and hence $c \geq n$. Therefore
$f^{n+k-1}e^{k-1} v=0$.

Now let $V_1:=U_{\C}(\sll_2) v$ and $V_2:=U_{\C}(\sll_2) w$. By
the base of the induction $V_2$ is finite-dimensional, by the
induction hypotheses  $V_1/V_2$ is finite-dimensional, hence $V_1$
is finite-dimensional.
\end{proof}

\section{Localization Principle}\label{SecRedLocPrin}
$ \quad \quad \quad \quad  \quad \quad$ by Avraham Aizenbud,
Dmitry Gourevitch
and Eitan Sayag\\\\
\setcounter{lemma}{0}
In this appendix we formulate and prove the Localization Principle
in the case of a reductive group $G$ acting on a smooth affine
variety $X$. This is of interest only for Archimedean $F$ since for
$l$-spaces, a more general version of this principle has been
proven in \cite{Ber}.
In \cite{AGS2}, we formulated without proof a Localization Principle in the
setting of differential geometry. Admittedly, we currently do not have a proof
of this principle in such a general setting.
However, the current generality is sufficiently wide for all applications we encountered up
to now, including the one in \cite{AGS2}.


\begin{theorem}[Localization Principle] \label{LocPrin2}
Let a reductive group $G$ act on a smooth algebraic variety $X$.
Let $Y$ be an algebraic variety and $\phi:X \to Y$ be an affine
algebraic $G$-invariant map. Let $\chi$ be a character of $G(F)$.
Suppose that for any $y \in Y(F)$ we have
$\cD_{X(F)}((\phi^{-1}(y))(F))^{G(F),\chi}=0$. Then
$\cD(X(F))^{G(F),\chi}=0$.
\end{theorem}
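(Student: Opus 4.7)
My plan is to proceed in three stages: reduce to the categorical quotient case, localize the vanishing problem to stalks on the quotient, and handle each stalk by passage through Schwartz distributions.

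For the reduction, since $\phi$ is $G$-invariant, Theorem \ref{Quotient} provides a factorization $\phi = q \circ \pi_X$ with $\pi_X : X \to X/G$ the quotient map. For every $x \in (X/G)(F)$ the fiber $\pi_X^{-1}(x)(F)$ is contained in $\phi^{-1}(q(x))(F)$, so the vanishing hypothesis transfers and it suffices to treat $Y = X/G$, $\phi = \pi_X$. To localize, observe that $V \mapsto \cD(\pi_X^{-1}(V))^{G(F),\chi}$ is a sheaf on $(X/G)(F)$; moreover $(X/G)(F)$, being a closed semi-algebraic subset of affine $F$-space, carries smooth partitions of unity subordinate to locally finite covers by relatively compact semi-algebraic opens. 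Given $\xi \in \cD(X(F))^{G(F),\chi}$ and a test function $f \in C^{\infty}_c(X(F))$, a partition of unity on a neighborhood of $\pi_X(\Supp f) \subset (X/G)(F)$ decomposes $f = \sum (\alpha_i \circ \pi_X) f$ with each summand supported in $\pi_X^{-1}(V_i)$ for $V_i$'s of arbitrarily small diameter. So the theorem reduces to the stalk statement: for every $y \in (X/G)(F)$ and every $\xi \in \cD(\pi_X^{-1}(V))^{G(F),\chi}$ defined on a neighborhood $V \ni y$, there exists a smaller open $V' \ni y$ with $\xi|_{\pi_X^{-1}(V')} = 0$.

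For the stalk vanishing, choose a relatively compact semi-algebraic open $V_0 \Subset V$ with $y \in V_0$, and a Nash function $\alpha$ on $(X/G)(F)$ compactly supported, equal to $1$ near $y$, and supported in $V_0$. Set $\eta := (\alpha \circ \pi_X)\,\xi$. Then $\eta$ remains $(G(F),\chi)$-equivariant with support in the closed $G(F)$-invariant semi-algebraic set $\pi_X^{-1}(\Supp\alpha)$. A Luna slice argument at representatives of the finitely many closed $G(F)$-orbits sitting over $\Supp\alpha$ shows that this preimage is Nashly compact modulo $G(F)$, so Theorem \ref{CompSchwartz} promotes $\eta$ to an element of $\Sc^*(X(F))^{G(F),\chi}$. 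Shrinking $\Supp\alpha$ down around $y$ and applying the Nash support filtration of Theorem \ref{NashFilt} along a Nash stratification of a neighborhood of $\pi_X^{-1}(y)(F)$ that refines the $G(F)$-orbit-type stratification, we force the limiting contribution of $\eta$ into $\Sc^*_{X(F)}(\pi_X^{-1}(y)(F))^{G(F),\chi} \subset \cD_{X(F)}(\pi_X^{-1}(y)(F))^{G(F),\chi}$, which vanishes by hypothesis; this gives $\xi|_{\pi_X^{-1}(V')} = 0$ on a sufficiently small $V' \ni y$.

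I expect the principal obstacle to be this final squeezing step, namely transferring from a Schwartz distribution supported in the open tube $\pi_X^{-1}(V_0)$ to one supported on the single fiber $\pi_X^{-1}(y)(F)$, together with the care needed to avoid circularity with \S \ref{DistVerSch}, which itself invokes the present theorem. Circularity is averted because the only external tools required are Theorem \ref{CompSchwartz} (proved autonomously in Appendix \ref{KInvAreSchwartz}) and the Nash-manifold support theory of Appendix \ref{AppSubFrob}, neither of which depends on Theorem \ref{LocPrin}.
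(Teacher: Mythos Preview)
Your reduction to $Y=X/G$ and the sheaf/partition-of-unity localization on $(X/G)(F)$ are fine, and the promotion of $\eta=(\alpha\circ\pi_X)\xi$ to a Schwartz distribution via Theorem~\ref{CompSchwartz} is legitimate (though your phrase ``the finitely many closed $G(F)$-orbits sitting over $\Supp\alpha$'' is a slip: there is one per point of $\Supp\alpha$; what you actually need, and can get by compactness of $\Supp\alpha$ and Luna slices, is finitely many slice neighborhoods whose $G(F)$-saturates cover $\pi_X^{-1}(\Supp\alpha)$).

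The real gap is the ``squeezing'' step. What you must establish is the passage
\[
\cD_{X(F)}\bigl(\pi_X^{-1}(y)(F)\bigr)^{G(F),\chi}=0
\ \Longrightarrow\
\text{the germ of }\xi\text{ at }\pi_X^{-1}(y)(F)\text{ vanishes},
\]
and this is exactly the content of the Localization Principle; your appeal to Theorem~\ref{NashFilt} does not provide it. Theorem~\ref{NashFilt} controls distributions \emph{supported on} a stratified locally closed set $N$, given vanishing on each stratum with conormal coefficients. Your $\eta$ is not supported on the fiber $\pi_X^{-1}(y)(F)$, only near it, and if you stratify an open \emph{neighborhood} of the fiber then the open strata carry no conormal twist and the hypothesis $\cD_{X(F)}(\pi_X^{-1}(y)(F))^{G(F),\chi}=0$ says nothing about $\Sc^*(N_i)^{G(F),\chi}$ for such $N_i$. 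Shrinking $\Supp\alpha$ does not help: you get a family of Schwartz distributions $\eta_\alpha$, none supported on the fiber, and no mechanism forces a limit into $\Sc^*_{X(F)}(\pi_X^{-1}(y)(F))$.

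The paper closes precisely this gap by a different route: it invokes the Generalized Harish-Chandra Descent (Corollary~\ref{Strong_HC_Cor}), which reduces $\cD(X(F))^{G(F),\chi}=0$ to showing $\cD_{N_{Gx,x}^X(F)}(\Gamma(N_{Gx,x}^X))^{G_x(F),\chi}=0$ for each $G$-semisimple $x$. An analytic Luna slice together with Frobenius reciprocity then identifies this space with $\cD_{X(F)}(S_x)^{G(F),\chi}$, where $S_x=\{z:\,x\in\overline{G(F)z}\}$ is closed (Corollary~\ref{EquivClassClosed}) and contained in the single fiber $\pi_X^{-1}(\pi_X(x))(F)$; the hypothesis then applies directly. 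In other words, the Luna slice is exactly what converts the germ question into a support question on one fiber, and that is the step your outline lacks.
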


\begin{proof}
Clearly, it is enough to prove the theorem for the case when $X$
is affine, $Y = X/G$ and $\phi = \pi_X(F)$. By the Generalized
Harish-Chandra Descent (Corollary \ref{Strong_HC_Cor}), it is
enough to prove that for any $G$-semisimple $x\in X(F)$, we have
$$\cD_{N_{Gx,x}^X(F)}(\Gamma(N_{Gx,x}^X))^{G_x(F),\chi} = 0.$$

Let $(U,p,\psi,S,N)$ be an analytic Luna slice at $x$. Clearly,
$$\cD_{N_{Gx,x}^X(F)}(\Gamma(N_{Gx,x}^X))^{G_x(F),\chi} \cong
\cD_{\psi(S)}(\Gamma(N_{Gx,x}^X))^{G_x(F),\chi}\cong
\cD_{S}(\psi^{-1}(\Gamma(N_{Gx,x}^X)))^{G_x(F),\chi}.$$ By
Frobenius reciprocity (Theorem \ref{Frob}),
$$\cD_{S}(\psi^{-1}(\Gamma(N_{Gx,x}^X)))^{G_x(F),\chi}=\cD_{U}(G(F)\psi^{-1}(\Gamma(N_{Gx,x}^X)))^{G(F),\chi}.$$

By Lemma \ref{Gamma}, $$G(F)\psi^{-1}(\Gamma(N_{Gx,x}^X))= \{y \in
X(F)| x \in \overline{G(F)y} \}.$$
Hence by Corollary \ref{EquivClassClosed},
$G(F)\psi^{-1}(\Gamma(N_{Gx,x}^X))$ is closed in $X(F)$. Hence
$$\cD_{U}(G(F)\psi^{-1}(\Gamma(N_{Gx,x}^X)))^{G(F),\chi}
 =
\cD_{X(F)}(G(F)\psi^{-1}(\Gamma(N_{Gx,x}^X)))^{G(F),\chi}.$$ Now,
$$G(F)\psi^{-1}(\Gamma(N_{Gx,x}^X)) \subset
\pi_X(F)^{-1}(\pi_X(F)(x))$$ and we are given
$$\cD_{X(F)}(\pi_X(F)^{-1}(\pi_X(F)(x)))^{G(F),\chi}=0$$ for any
$G$-semisimple $x$.
\end{proof}

\begin{remark} \label{LocPrinS}
An analogous statement holds for Schwartz distributions and the
proof is the same.
\end{remark}

\begin{corollary} \label{LocPrinSub}
Let a reductive group $G$ act on a smooth algebraic variety $X$.
Let $Y$ be an algebraic variety and $\phi:X \to Y$ be an affine
algebraic $G$-invariant submersion. Suppose that for any $y \in
Y(F)$ we have $\Sc^*(\phi^{-1}(y))^{G(F),\chi}=0$. Then
$\cD(X(F))^{G(F),\chi}=0$.
\end{corollary}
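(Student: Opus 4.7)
The plan is to deduce this corollary from the Schwartz-distribution analogue of Theorem \ref{LocPrin2} (Remark \ref{LocPrinS}) combined with Theorem \ref{NoSNoDist}. The only nontrivial work is to bridge a gap in the hypothesis: we are given vanishing of Schwartz distributions \emph{on} each fiber of $\phi$, while the Localization Principle needs vanishing of Schwartz distributions on $X(F)$ \emph{supported on} each fiber. The submersion assumption is precisely what makes this passage possible.

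Fix $y \in Y(F)$ and write $N := \phi^{-1}(y)(F)$, $M := X(F)$. Because $\phi$ is a submersion, $N$ is a closed smooth B-analytic submanifold of $M$, and $G$-invariance of $\phi$ makes $N$ a $G(F)$-invariant submanifold. For each $x \in N$ the differential $d\phi_x$ furnishes a canonical isomorphism $N_{N,x}^M \cong T_yY(F)$; the relation $d\phi_{gx}\circ dg_x = d\phi_x$, which is just the infinitesimal form of $G$-invariance, forces this identification to be $G(F)$-equivariant when $T_yY(F)$ is given the trivial $G(F)$-action. Consequently, for every $k\geq 0$ the bundle $\Sym^k(CN_N^M)$ is $G(F)$-equivariantly isomorphic to the constant bundle $N \times \Sym^k(T_y^*Y(F))$. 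In particular the hypothesis $\Sc^*(N)^{G(F),\chi}=0$ upgrades at once to
$$\Sc^*(N,\Sym^k(CN_N^M))^{G(F),\chi}=0\qquad\text{for all } k\geq 0.$$

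Applying Theorem \ref{NashFilt} in the Archimedean case, and Theorem \ref{Filt_nonarch} in the non-Archimedean case, to the trivial one-stratum stratification $N=N$, I would conclude $\Sc^*_M(N)^{G(F),\chi}=0$ for every $y\in Y(F)$. The Schwartz version of the Localization Principle (Remark \ref{LocPrinS}) then yields $\Sc^*(X(F))^{G(F),\chi}=0$, after which Theorem \ref{NoSNoDist}, viewing $\chi$ as a one-dimensional equivariant coefficient system, upgrades this to $\cD(X(F))^{G(F),\chi}=0$.

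The main technical point is really just the $G(F)$-equivariant triviality of the conormal bundle; everything else is a straightforward assembly of results already in the paper. A minor side concern is that Theorem \ref{NoSNoDist} is formulated with invariant distributions rather than $\chi$-equivariant ones, but its proof is purely a Localization Principle plus compactness-modulo-$G(F)$ argument and therefore carries over verbatim to a character twist; in the non-Archimedean case this last step is moreover vacuous because $\Sc^*=\cD$ there.
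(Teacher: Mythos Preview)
Your proof is correct and follows essentially the same route as the paper's own proof: trivialize the conormal bundle via $d\phi$, apply Theorem~\ref{NashFilt} to pass from $\Sc^*(N)^{G(F),\chi}=0$ to $\Sc^*_M(N)^{G(F),\chi}=0$, invoke the Schwartz Localization Principle (Remark~\ref{LocPrinS}), and finish with Theorem~\ref{NoSNoDist}. Your additional remarks on the non-Archimedean case and the character twist in Theorem~\ref{NoSNoDist} are reasonable side comments; note that the paper regards this appendix as relevant only for Archimedean $F$, where the non-Archimedean case is already covered by Bernstein's more general result.
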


\begin{proof}
%
For any $y \in Y(F)$, denote $X(F)_y:=(\phi^{-1}(y))(F)$. Since
$\phi$ is a submersion, for any $y \in Y(F)$ the set $X(F)_y$ is a
smooth manifold. Moreover, $d\phi$ defines an isomorphism between
$N_{X(F)_y,z}^{X(F)}$ and $T_{Y(F),y}$ for any $z \in X(F)_y$.
Hence the bundle $CN_{X(F)_y}^{X(F)}$ is a trivial
$G(F)$-equivariant bundle.

We know that $$\Sc^*(X(F)_y)^{G(F),\chi}=0.$$ Therefore for any
$k$, we have
$$\Sc^*(X(F)_y,\Sym^k(CN_{X(F)_y}^{X(F)}))^{G(F),\chi}=0.$$ Thus by
Theorem \ref{NashFilt}, $\Sc^*_{X(F)}(X(F)_y)^{G(F),\chi}=0$. Now,
by Theorem \ref{LocPrin2} (and Remark \ref{LocPrinS}) this implies
that $\Sc^*(X(F))^{G(F),\chi}=0$. Finally, by Theorem
\ref{NoSNoDist} this implies $\cD(X(F))^{G(F),\chi}=0$.
\end{proof}

\begin{remark} \label{RemLocVectSys}
Theorem \ref{LocPrin} and Corollary \ref{LocPrinSub} admit obvious
generalizations to constant vector systems. The same proofs hold.
\end{remark}

\newpage

\section{Diagram} \label{Diag}
The following diagram illustrates the interrelations of the
various
properties of a symmetric pair $(G,H)$. On the non-trivial implications we put the numbers of the statements that prove them.
Near the important notions we put the numbers of the definitions which define those notions. \\
\small
\newlength{\gnat}

\setlength{\gnat}{10pt} $  \xymatrix{& & & &
\framebox{\parbox{65pt}{For any \\ nilpotent $x \in \gd$
$\tr(\ad(d(x))|_{\h_x})$ $< \dim Q(\gd) $}}\ar@{=>}[d]^{\ref{SpecCrit}} & &\\
& & & &\framebox{\parbox{25pt}{special\\ (\ref{DefSpecPair})}}\ar@{=>}[d]^{\ref{SpecWeakReg}}& & \\
\framebox{\parbox{27pt}{regular \\ (\ref{DefReg})}}& & &
&\framebox{\parbox{33pt}{weakly linearly tame
\\ (\ref{DefTamePairs})}}\ar@{=>}[llll]&\framebox{\parbox{53pt}{All the\\ descendants are
weakly \\ linearly tame}}
\ar@{=>}[dl]_{\ref{LinDes}}\ar@{=>}[ddl]_{\ref{LinDes}}\\
& &\framebox{\parbox{47pt}{For any\\ descendant $(G',H')$:\\
$H^1(F,H')$ is trivial }}
\ar@{=>}[d] & &\framebox{\parbox{33pt}{linearly tame \\ (\ref{DefTamePairs})}}\ar@{=>}[u]&\\
 \framebox{\parbox{49pt}{All the\\ descendants are regular}}\ar@{=}[r]&AND\ar@{=}[r]\ar@{=>}
 [rd]_{\ref{GoodHerRegGK}}&\framebox{\parbox{25pt}{good \\ (\ref{DefGoodPair})}}\ar@{=}[r]&AND\ar@{=}[r]\ar@{=>}[ld] &\framebox{\parbox{33pt}{tame \\ (\ref{DefTamePairs})}}
 &\\
&&\framebox{\parbox{25pt}{GK \\ (\ref{DefGKPair})}}\ar@/_5pc/@{=>}[dd]_{\ref{DistCrit}}&&&&\\
&&\framebox{\parbox{25pt}{GP3 \\ (\ref{GPs})}}&&&\\
&&\framebox{\parbox{25pt}{GP2
\\ (\ref{GPs})}}\ar@{=>}[u]\ar@{=}[r]&AND\ar@{=>}[ld]_{\ref{GKCor}}\ar@{=}[r]&\framebox{\parbox{55pt}
{$G$ has an\\ $\Ad(G)$-admissible anti-automorphism that\\
preserves $H$}}& \ar@{=>}[l] \framebox{\parbox{40pt}
{$G=\GL_n$ and\\ $H=H^t$ }} \\
&&\framebox{\parbox{25pt}{GP1 \\ (\ref{GPs})}}\ar@{=>}[u]&&&\\
 }$
\newpage

\end{document}